\documentclass[a4paper,10pt]{amsart}
\usepackage[utf8]{inputenc}
\usepackage{amsfonts, amsmath, amssymb, amsthm, mathtools}
\usepackage{xcolor}
\definecolor{darkblue}{rgb}{0,0,0.6}
\usepackage{tikz}
\usetikzlibrary{matrix,arrows,trees}
\usepackage{tikz-cd}
\usepackage[cmtip]{xypic}
\usepackage[ocgcolorlinks,colorlinks=true, citecolor=darkblue, filecolor=darkblue, linkcolor=darkblue, urlcolor=darkblue]{hyperref}
\usepackage[nameinlink, capitalise]{cleveref}

\hypersetup{bookmarksopen=true}

\numberwithin{equation}{section}

\swapnumbers
\theoremstyle{definition}
\newtheorem{ddd-alt}[equation]{Definition}
\newtheorem{ex-alt}[equation]{Example}
\newtheorem{const-alt}[equation]{Construction}
\newtheorem{rem-alt}[equation]{Remark}
\theoremstyle{plain}
\newtheorem{theorem}[equation]{Theorem}
\newtheorem{prop}[equation]{Proposition}
\newtheorem{lem}[equation]{Lemma}
\newtheorem{kor}[equation]{Corollary}

\crefname{theorem}{Theorem}{Theorems}

\newenvironment{ddd}    
{%
	\pushQED{\qed}\begin{ddd-alt}}
	{\popQED\end{ddd-alt}}
	
\newenvironment{ex}    
{%
	\pushQED{\qed}\begin{ex-alt}}
	{\popQED\end{ex-alt}}
	
\newenvironment{rem}    
{%
	\pushQED{\qed}\begin{rem-alt}}
	{\popQED\end{rem-alt}}

\newenvironment{const}    
{%
	\pushQED{\qed}\begin{const-alt}}
	{\popQED\end{const-alt}}

\renewcommand{\epsilon}{\varepsilon}
\renewcommand{\theta}{\vartheta}
\renewcommand{\phi}{\varphi}

\renewcommand{\emptyset}{\varnothing}


\newcommand{\bbN}{\mathbb{N}}

\newcommand{\bA}{\mathbf{A}}

\newcommand{\bC}{\mathbf{C}}
\newcommand{\bD}{\mathbf{D}}

\newcommand{\bK}{\mathbf{K}}

\newcommand{\bN}{\mathbf{N}}

\newcommand{\bR}{\mathbf{R}}

\newcommand{\bW}{\mathbf{W}}


\newcommand{\cB}{\mathcal{B}}
\newcommand{\cC}{\mathcal{C}}
\newcommand{\ccD}{\mathcal{D}}
\newcommand{\cE}{\mathcal{E}}

\newcommand{\ccH}{\mathcal{H}}

\newcommand{\cM}{\mathcal{M}}

\newcommand{\cU}{\mathcal{U}}
\newcommand{\cV}{\mathcal{V}}

\newcommand{\cX}{\mathcal{X}}
\newcommand{\cY}{\mathcal{Y}}


\newcommand{\fN}{\mathfrak{N}}

\newcommand{\abs}[1]{\lvert #1 \rvert}

\DeclareMathOperator*{\colim}{colim}
\DeclareMathOperator{\cofib}{cofib}

\DeclareMathOperator{\id}{id}

\DeclareMathOperator{\Fun}{Fun}

\DeclareMathOperator{\ind}{Ind}
\DeclareMathOperator{\Map}{Map}

\DeclareMathOperator{\res}{Res}

\DeclareMathOperator{\ass}{\alpha}
\DeclareMathOperator{\AX}{\mathbf{A}\cX}
\DeclareMathOperator{\cells}{\diamond}
\DeclareMathOperator{\cof}{co}
\DeclareMathOperator{\diag}{diag}

\DeclareMathOperator{\hcnerve}{N^{hc}}

\DeclareMathOperator{\Idem}{Idem}
\DeclareMathOperator{\inc}{inc}
\DeclareMathOperator{\Ind}{Ind}

\DeclareMathOperator{\orb}{Or}

\DeclareMathOperator{\sk}{sk}
\DeclareMathOperator{\stab}{Stab}
\DeclareMathOperator{\UAX}{\mathrm{U}\mathbf{A}\cX}
\DeclareMathOperator{\uloc}{\cU_\mathrm{loc}}
\DeclareMathOperator{\wK}{\bK^{\mathrm{W}}}
\DeclareMathOperator{\wuloc}{\cU^{\mathrm{W}}_\mathrm{loc}}

\newcommand{\sSet}{\mathrm{sSet}}
\newcommand{\BC}{\mathbf{BornCoarse}}
\newcommand{\catex}{\mathrm{Cat}^{\mathrm{ex}}_\infty}
\newcommand{\catinf}{\mathrm{Cat}_\infty}
\newcommand{\catperf}{\mathrm{Cat}^{\mathrm{perf}}_\infty}
\newcommand{\catrperf}{\mathrm{Cat}^{\mathrm{Rperf}}_\infty}
\newcommand{\catrex}{\mathrm{Cat}^{\mathrm{Rex}}_\infty}

\newcommand{\cf}{\mathrm{cf}}
\newcommand{\ccw}[3]{\bC^{#1}(#2;#3)}
\newcommand{\CW}{\mathbf{CW}}
\newcommand{\f}{\mathrm{f}}
\newcommand{\free}{\mathrm{free}}
\newcommand{\Fin}{\mathbf{Fin}}

\newcommand{\gen}[1]{\langle #1 \rangle}
\newcommand{\lf}{\mathrm{lf}}
\newcommand{\mloc}{\cM_{\mathrm{loc}}}
\newcommand{\RelCat}{\mathrm{RelCat}_{(2,1)}}
\newcommand{\RelCatone}{\mathrm{RelCat}}
\newcommand{\ret}[4]{\bR^{#1}_{#4}(#2;#3)}
\newcommand{\retuc}[3]{\bR^{#1}_{#3}(#2)}
\newcommand{\Sp}{\mathrm{Sp}}
\newcommand{\Top}{\mathrm{Top}}
\newcommand{\tr}{\mathrm{tr}}
\newcommand{\Vcyc}{\mathbf{VCyc}}
\newcommand{\Waldone}{\mathrm{Wald^{\mathrm{ho}}}}
\newcommand{\Wald}{\mathrm{Wald^{\mathrm{ho}}_{(2,1)}}}

 \newcommand{\FDC}{\mathbf{FDC}}

\newcounter{commentcounter}

\hyphenation{e-qui-va-ri-ant}
\crefname{section}{Section}{Sections}

\title{Split injectivity of A-theoretic assembly maps}

\author{Ulrich Bunke}
\address{Fakult{\"a}t f{\"u}r Mathematik, Universit{\"a}t Regensburg, 93040 Regensburg, Germany}
\email{ulrich.bunke@mathematik.uni-regensburg.de} 

\author{Daniel Kasprowski}
\address{Rheinische Friedrich-Wilhelms-Universit\"at Bonn, Mathematisches Institut,\newline\indent Endenicher Allee 60, 53115 Bonn, Germany}
\email{kasprowski@uni-bonn.de}

\author{Christoph Winges}
\address{Rheinische Friedrich-Wilhelms-Universit\"at Bonn, Mathematisches Institut,\newline\indent Endenicher Allee 60, 53115 Bonn, Germany}
\email{winges@math.uni-bonn.de}

\date{}
\keywords{Novikov conjecture, algebraic K-theory of spaces, A-theory, assembly map, coarse homology theories}
\subjclass[2010]{19D10 (Primary); 51F99 (Secondary)}

\begin{document}
 \begin{abstract}
 We construct an equivariant coarse homology theory arising from the algebraic $K$-theory of spherical group rings
 and use this theory to derive split injectivity results for associated assembly maps.
 
 On the way, we prove that the fundamental structural theorems for Waldhausen's algebraic $K$-theory functor carry over to its nonconnective counterpart defined by Blumberg--Gepner--Tabuada.
\end{abstract}

\maketitle

\section{Introduction}
For a group $G$, let $P$ be the total space of a principal $G$-bundle and let $\bA$ denote the functor of nonconnective $A$-theory (taking values in the $\infty$-category of spectra). Then
$P$ gives rise to an $\orb(G)$-spectrum $\bA_P$ sending a transitive $G$-set $S$ to the spectrum $\bA(P \times_G S)$.  We will show the following split injectivity results for $\bA_P$. 

\begin{theorem}
	\label{thm:injectivity-linear}
Let $G$ be a group and assume
\begin{enumerate}
\item $G$ is finitely generated;
\item $G$ admits a finite-dimensional model for $\underline{E}G$;
\item one of:
\begin{enumerate}
\item $G$ is a subgroup  of a linear group over a commutative ring
\item $G$ is a subgroup of a virtually connected Lie group.
\end{enumerate}\end{enumerate}
Then the $A$-theoretic assembly map for the family of finite subgroups
	\[ \ass_{\bA_{P}}^\Fin \colon \colim_{ {\orb_\Fin(G)}} \bA_P \to \bA_{P}(*) \simeq \bA(P/G) \]
	is split injective.
\end{theorem}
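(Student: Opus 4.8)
The plan is to realize the assembly map $\ass^\Fin_{\bA_P}$ as (a retract of) the forget-control map of an equivariant coarse homology theory manufactured from nonconnective $A$-theory, and then to feed the geometric hypotheses (1)--(3) into the existing split-injectivity machinery for such theories. Throughout, the word metric turns the finitely generated group $G$ into a bornological coarse space, which is what makes the coarse geometry available.

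\emph{Step 1: an equivariant coarse homology theory from $A$-theory.} I would build a functor $\AX_P \colon G\BC \to \Sp$ sending a $G$-bornological coarse space $X$ to the nonconnective $K$-theory of a homotopical Waldhausen $\infty$-category of controlled retractive spaces --- $G$-equivariant finite retractive spaces over $P$ carrying a locally finite, controlled labelling by $X$, with cofibrations and weak equivalences detected pointwise. Verifying that $\AX_P$ is an equivariant coarse homology theory --- coarse invariance, excision (a Mayer--Vietoris square for coarsely excisive decompositions), vanishing on flasque spaces, $u$-continuity and strong additivity --- reduces, along the familiar pattern, to the fundamental structural theorems (additivity, fibration, approximation, cofinality) for the Blumberg--Gepner--Tabuada nonconnective $K$-theory functor, the establishment of which is precisely the auxiliary programme announced in the abstract. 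I would then record the identification $\AX_P(*) \simeq \bA(P/G)$ and, after evaluating on a model of $\underline{E}G$ with its canonical coarse structure and untwisting by means of the cone and free-flasqueness constructions, the identification $\colim_{\orb_\Fin(G)} \bA_P \simeq \AX_P(\underline{E}G)$, under which $\ass^\Fin_{\bA_P}$ becomes the map induced by $\underline{E}G \to *$.

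\emph{Steps 2 and 3: geometric input and descent.} Hypothesis (3a) gives, by Guentner--Tessera--Yu, that a countable subgroup of a linear group over a commutative ring has finite decomposition complexity; hypothesis (3b) likewise yields finite decomposition complexity for a finitely generated subgroup of a virtually connected Lie group, via its structure theory. So in either case $G \in \FDC$. I would then invoke the general split-injectivity principle: for a finitely generated group admitting a finite-dimensional model for $\underline{E}G$ and having finite decomposition complexity, and for any continuous, strongly additive equivariant coarse homology theory $E$, the associated forget-control map --- equivalently, after the standard comparison, the Farrell--Jones-type assembly map for $E$ and the family $\Fin$ --- is split injective; this is proved by descending along a decomposition of $G$, using excision and strong additivity of $E$ at each stage and the finite-dimensionality of $\underline{E}G$ to bound and terminate the induction. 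Applying this to $E = \AX_P$ and transporting the conclusion along the identifications of Step 1 gives the theorem.

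The main obstacle is Step 1. Because the coefficients of $A$-theory are spherical group rings rather than ordinary rings, the linear-algebraic categories of geometric modules used in the algebraic $K$-theory story must be replaced by genuinely homotopy-coherent categories of controlled retractive spaces; and one is forced to pass to nonconnective $K$-theory to obtain excision, which is why Waldhausen's structural package has to be re-proved for the Blumberg--Gepner--Tabuada functor before any coarse-homology axiom can even be stated. Once $\AX_P$ is in place and its assembly map has been matched with $\ass^\Fin_{\bA_P}$ --- the matching itself being a delicate bookkeeping exercise involving the cone construction and transfers along finite coverings --- the injectivity statement follows formally from the coarse-geometric machinery, so essentially all of the genuinely new work is concentrated in constructing the theory and identifying its assembly map.
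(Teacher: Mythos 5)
Your overall architecture is the same as the paper's: build a coarse $A$-homology theory from locally finite controlled retractive spaces over a bornological coarse space, verify the coarse-homology axioms by re-proving Waldhausen's structural theorems for the Blumberg--Gepner--Tabuada nonconnective $K$-theory functor, identify $\bA_P$ with the restriction of this theory to orbits (the paper does this via $\AX^G_P(G_{can,min}\otimes S_{min,max})\simeq \bA_P(S)$, \cref{prop:org-spectra}), and then feed the hypotheses into the descent machinery of \cite{injectivity}, using Guentner--Tessera--Yu to place linear groups and subgroups of virtually connected Lie groups in $\FDC$. That part of your plan is sound and matches \cref{thm:a-cp} together with the citation of \cite[Corollary~2.11]{injectivity}.

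The one genuine gap is your statement of the descent principle. You invoke: ``for any continuous, strongly additive equivariant coarse homology theory $E$, the assembly map for $\Fin$ is split injective'' (given FDC and finite-dimensional $\underline{E}G$). Continuity and strong additivity alone do not suffice; the machinery of \cite{injectivity} requires in addition that $E$ \emph{admits transfers} in the sense of \cref{def:transfers} --- this is the fourth axiom in the definition of a CP-functor, and the splitting of the assembly map is constructed out of the transfer maps (wrong-way maps along bounded coverings), not merely out of excision. You mention transfers only as ``bookkeeping'' in the identification of the assembly map, but in fact supplying them for $\ret{G}{-}{P}{\lf}$ is a substantial piece of the paper: one must define $w^*$ for every bounded covering $w$ via the pullback over $\pi_0(-)^+$ and verify the full list of $2$-categorical coherences for composable coverings and admissible squares in order to obtain a functor out of $G\BC_{\tr}$ (\cref{prop:ret-with-transfers}). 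Note also that the paper's proof of strong additivity (\cref{prop:a-homology-strongly-additive}) itself goes through the transfers together with the infinite-product theorem \cref{thm:products}, so transfers cannot be postponed to the end. With the transfer construction added as an explicit step, your proof closes.
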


\begin{theorem}\label{thm:split-injective-relative}
	The relative $A$-theoretic assembly map from the family of finite subgroups to the family of virtually cyclic subgroups
	\[ \ass_{\bA_P}^{\Fin,\Vcyc} \colon \colim_{\orb_\Fin(G)} \bA_P \to \colim_{\orb_\Vcyc(G)} \bA_P \]
	is split injective.
\end{theorem}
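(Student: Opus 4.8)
The statement carries no hypotheses on $G$, so I would prove it as a formal consequence of the structural theorems for $\bA$ established earlier in the paper. The plan is to reduce, via Lück's transitivity principle together with the Lück--Weiermann description of $E_\Vcyc G$ relative to $E_\Fin G$, to the following local assertion: for every infinite virtually cyclic subgroup $V \le G$ the $\Fin$-restricted assembly map
\[ a_V \colon \colim_{\orb_\Fin(V)} \res_V \bA_P \longrightarrow \res_V \bA_P(V/V) \simeq \bA(P/V) \]
is split injective, via a section natural in $V$ and compatible with the commensurator data occurring in the Lück--Weiermann pushout. Granting this, the cofiber of $\ass_{\bA_P}^{\Fin,\Vcyc}$ is a homotopy colimit, indexed by conjugacy classes of maximal infinite virtually cyclic subgroups, of copies of the cofibers of the maps $a_V$ induced up along the relevant subgroups; since each of these cofibers splits off compatibly, the canonical map $\colim_{\orb_\Vcyc(G)}\bA_P \to \cofib(\ass_{\bA_P}^{\Fin,\Vcyc})$ acquires a section and hence $\ass_{\bA_P}^{\Fin,\Vcyc}$ is split injective.

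To establish the local assertion I would use that every infinite virtually cyclic group is of type I (it surjects onto $\bbZ$) or of type II (it surjects onto $D_\infty$), and treat the two cases separately, using throughout that $V$ acts freely on $P$ so that quotients by subgroups of $V$ are honest orbit spaces. For $V$ of type I, choose a finite normal $F \trianglelefteq V$ with $V/F \cong \bbZ$; then $\bbR$, with $V$ acting through $V \to \bbZ$ by translations, is a model for $\underline{E}V$ with a single $0$-cell orbit and a single $1$-cell orbit, both of type $V/F$. Evaluating $\bA_P$ cellwise along this model identifies the source of $a_V$ with the homotopy coequalizer of the identity and the monodromy on $\bA(P/F)$, that is, with the homotopy orbits $\bA(P/F)_{h\bbZ}$, whereas $\bA(P/V)$ is $\bA$ of the mapping torus $T_\phi$ of the self-equivalence $\phi$ of $P/F$ induced by a generator of $\bbZ$. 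Under these identifications $a_V$ becomes the inclusion of the linear summand into $\bA(T_\phi)$ in the (twisted) Bass--Heller--Swan decomposition, which the nonconnective fundamental theorem for $\bA$ exhibits as split injective with the two (twisted) Nil-summands as a natural complement.

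For $V$ of type II, write it as an amalgamated product $V = V_1 *_{V_0} V_2$ with $V_0 \le V_1, V_2$ finite of index two; the Bass--Serre tree is a line, so $\bbR$, with $V$ acting through $V \to D_\infty$, is a model for $\underline{E}V$ with two $0$-cell orbits $V/V_1, V/V_2$ and one $1$-cell orbit $V/V_0$. Cellwise evaluation of $\bA_P$ identifies the source of $a_V$ with the homotopy pushout $\bA(P/V_1) \cup^h_{\bA(P/V_0)} \bA(P/V_2)$, while $\bA(P/V)$ is $\bA$ of the homotopy pushout $P/V_1 \cup^h_{P/V_0} P/V_2$ (the structure maps $P/V_0 \to P/V_i$ being $\pi_1$-injective double covers). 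Hence $a_V$ is the comparison map in Waldhausen's decomposition of $\bA$ of an amalgamated product, which the nonconnective version of that theorem exhibits as split injective with the corresponding Nil-summand as a natural complement.

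I expect the main obstacle to be the naturality bookkeeping rather than any individual identification: one must verify that the Bass--Heller--Swan and Waldhausen splittings are natural for the restriction and transfer maps among virtually cyclic subgroups and equivariant for the actions of the relevant commensurators, so that the pointwise retractions genuinely glue to a retraction of $\ass_{\bA_P}^{\Fin,\Vcyc}$; this is exactly the point at which the precise form of the nonconnective fundamental theorem --- its naturality and its behaviour under change of groups --- has to be exploited. A minor additional point is to confirm that evaluating $\bA_P$ along the one-dimensional models for $\underline{E}V$ above really computes $\colim_{\orb_\Fin(V)} \res_V \bA_P$ as the stated homotopy (co)limits and that $\bA$ of the quotients $P/V$ agrees with $\bA$ of the mapping-torus, respectively homotopy-pushout, models, which follows from the homotopy invariance of $\bA$.
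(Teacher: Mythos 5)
Your strategy is not the one the paper uses, and as written it has genuine gaps. The paper proves \cref{thm:split-injective-relative} purely formally: the whole content of \cref{sec:k-theory,sec:controlled-retractive-spaces,sec:a-homology,sec:injectivity} is to verify that $\bA_P$ is a hereditary CP-functor (\cref{thm:a-cp}), i.e.\ that it is represented by a continuous, strongly additive equivariant coarse homology theory admitting transfers; split injectivity of $\ass_{\bA_P}^{\Fin,\Vcyc}$ is then quoted from \cite[Corollary~1.13]{injectivity}, whose proof is coarse-geometric (transfers plus the coarse geometry of virtually cyclic groups) and involves no Bass--Heller--Swan decomposition and no Nil-terms. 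Your route is the classical one for discrete group rings, and it founders on exactly the inputs the coarse approach is designed to avoid.

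Concretely, two steps would fail or rest on unavailable ingredients. First, the local splittings: for type I you need the \emph{twisted} nonconnective Bass--Heller--Swan theorem for $\bA$ of a mapping torus, with a splitting natural in $V$ and equivariant for the relevant commensurator actions; for type II you need a Waldhausen-type splitting of $\bA$ of an amalgamated product over finite groups with naturally split Nil-terms. Neither is proved in this paper, and the type II statement---as well as the naturality and equivariance of both splittings, which you correctly identify as ``the main obstacle'' but then defer---is not available for $A$-theory in the required generality; even for discrete rings this bookkeeping is the content of substantial separate work (L\"uck--Weiermann, L\"uck--Steimle). Second, the reduction itself is not quite right: the L\"uck--Weiermann pushout does not express the cofiber of $\ass_{\bA_P}^{\Fin,\Vcyc}$ in terms of the cofibers of the maps $a_V$ for virtually cyclic $V$, but in terms of relative assembly maps for the commensurators $N_G[V]$ with respect to the family of subgroups that are finite or commensurable to $V$, and these commensurators need not be virtually cyclic; so the ``local assertion'' you reduce to is not the statement your cellwise computations over $\bbR$ address. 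Executing your plan correctly would amount to proving an $A$-theoretic analogue of the L\"uck--Steimle splitting results, a much larger undertaking than the theorem at hand and precisely what the CP-functor machinery circumvents.
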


Similar results about the $K$- and $L$-theoretic assembly maps for discrete group rings were originally obtained by Carlsson--Pedersen \cite{CP95} and subsequently generalized by Bartels and Rosenthal \cite{BR07}, Ramras, Tessera and Yu \cite{RTY14} and Kasprowski \cite{Kasprowski15}. The analog of \cref{thm:split-injective-relative} in the case of discrete group rings is originally due to Bartels \cite{Bartels03}.   

More precisely, we will show in \cref{thm:a-cp} that $\bA_P$ is a (hereditary) CP-functor, a notion introduced in \cite{injectivity}. We then apply results from \cite{injectivity} in order to deduce the above theorems in \cref{rgioergegegerge}.

Recall from \cite[Definition~1.5]{injectivity} that the family of subgroups $\FDC$ consists of those subgroups $H$ of $G$ such that the family $\{F\backslash H\mid F\leq H \text{ finite}\}$ has finite decomposition complexity as defined by Guentner, Tessera and Yu in \cite{GTY12, GTY13}.

\begin{theorem}
	\label{thm:FDC}
Assume that $G$ admits a finite-dimensional model $\underline EG$.
The relative $A$-theoretic assembly map from the family of finite subgroups to the family $\FDC$
\[ \ass_{\bA_P}^{\Fin,\FDC} \colon \colim_{\orb_\Fin(G)} \bA_P \to \colim_{\orb_\FDC(G)} \bA_P \]
is split injective.
\end{theorem}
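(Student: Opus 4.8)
The plan is to deduce the statement from \cref{thm:a-cp} together with the abstract injectivity machinery of \cite{injectivity}. By \cref{thm:a-cp} the $\orb(G)$-spectrum $\bA_P$ is a \emph{hereditary CP-functor} in the sense of \cite{injectivity}, and \cite{injectivity} establishes that for every hereditary CP-functor $E$ on $\orb(G)$ such that $G$ admits a finite-dimensional model for $\underline EG$, the relative assembly map $\ass_E^{\Fin,\FDC}$ is split injective. Applying this with $E = \bA_P$ and using the hypothesis on $\underline EG$ gives the theorem. Note that although $\orb_\Vcyc(G)\subseteq\orb_\FDC(G)$---so that $\ass_{\bA_P}^{\Fin,\FDC}$ factors through the map of \cref{thm:split-injective-relative}---this factorization alone does not imply the claim, which is why the extra finite-dimensionality hypothesis appears here.

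The work therefore splits into two inputs. The first is \cref{thm:a-cp}. Here one realizes $\bA_P$ as the nonconnective $K$-theory (in the sense of Blumberg--Gepner--Tabuada) of a suitable $\orb(G)$-indexed system of homotopy Waldhausen categories and equips it with the structure of a CP-functor: continuity, compatibility with the passage to the associated equivariant coarse homology theory, and the \emph{hereditary} condition that for every subgroup $H\le G$ the restriction $\res^G_H\bA_P$ is again (equivalent to) a CP-functor for $H$. Verifying these axioms is exactly where the structural theorems for nonconnective $A$-theory proved earlier in the paper---Additivity, the Fibration theorem, cofinality, and the relevant approximation statements---are used; this is the technical heart of the article.

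The second input is the general theorem of \cite{injectivity} quoted above. Its proof follows the Carlsson--Pedersen template adapted to the $\infty$-categorical framework: from $E$ one builds the associated equivariant coarse homology theory, identifies a retract of the relative assembly map with a forget-control map over a coarsening of $\underline EG$, uses the finite dimension to control a skeletal filtration, and finally runs a transfinite induction on the decomposition complexity of the collection $\{F\backslash H\mid F\leq H\text{ finite}\}$ for $H\in\FDC$---invoking permanence of finite decomposition complexity under subgroups, finite unions and the decomposition step---to annihilate the relevant obstruction groups. The hereditary hypothesis on $E$ is what makes this induction over subgroups go through.

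The main obstacle is \cref{thm:a-cp}: once $\bA_P$ is known to be a hereditary CP-functor, \cref{thm:FDC} follows by citing \cite{injectivity}, the only ingredient specific to the present statement---as opposed to \cref{thm:injectivity-linear}---being the finite-dimensionality of $\underline EG$, which here replaces the geometric largeness conditions on $G$.
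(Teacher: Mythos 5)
Your proposal is correct and follows exactly the paper's route: the paper proves \cref{thm:FDC} by combining \cref{thm:a-cp} (that $\bA_P$ is a hereditary CP-functor) with \cite[Theorem~1.11]{injectivity}, which is precisely the reduction you describe. Your additional remarks on what goes into \cref{thm:a-cp} and into the cited theorem of \cite{injectivity} are accurate background but not part of the proof of this particular statement.
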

For more details on this and a slightly more general result see \cite[Theorem~1.11]{injectivity}. We also obtain the following $A$-theoretic analog of \cite[Theorem~1.15]{injectivity}.
\begin{theorem}
	\label{thm:relhyp}  {We assume the following:}
	\begin{enumerate}
		\item $G$ admits a finite-dimensional model for $\underline{E}G$; 
		\item $G$  is relatively hyperbolic  {to groups $H_1,\ldots,H_n$;}
		\item for every $i\in \{1,\dots n\}$ we  have one of:
		\begin{enumerate} 
			\item $H_i$ is contained in $\FDC$;
			\item  for every total space $P_i$ of a principal $H_i$-bundle the $A$-theoretic assembly map for the family of virtually cyclic subgroups
			\[ \ass_{\bA_{P_i}}^\Vcyc \colon \colim_{\orb_\Vcyc(H_i)} \bA_{P_i} \to \bA_{P_i}(*) \simeq \bA(P_i/H_i) \]
			is an equivalence. \end{enumerate}\end{enumerate}
	Then the $A$-theoretic assembly map for the family of finite subgroups of $G$
	\[ \ass_{\bA_P}^\Fin \colon \colim_{\orb_\Fin(G)} \bA_P \to \bA_{P}(*) \simeq \bA(P/G) \]
	is split injective.
\end{theorem}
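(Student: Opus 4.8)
The plan is to deduce the statement from the abstract injectivity machinery for CP-functors developed in \cite{injectivity}, following the proof of \cite[Theorem~1.15]{injectivity} essentially verbatim. By \cref{thm:a-cp}, the $\orb(G)$-spectrum $\bA_P$ is a hereditary CP-functor, and its restriction along $\orb(H)\hookrightarrow\orb(G)$ for a subgroup $H\leq G$ is again a CP-functor of this type --- the one associated with $P$ regarded as a principal $H$-bundle --- so in particular it is of the form to which hypothesis~(3)(b) refers. Let $\cF$ denote the smallest family of subgroups of $G$ that contains all finite subgroups and all subgroups subconjugate to one of $H_1,\dots,H_n$, and factor the assembly map as
\[
\colim_{\orb_\Fin(G)}\bA_P\longrightarrow\colim_{\orb_\cF(G)}\bA_P\longrightarrow\bA_P(*)\simeq\bA(P/G).
\]
It suffices to show that each of the two maps is split injective, as a composite of split injections is split injective.

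For the right-hand map one exploits the geometry of the relatively hyperbolic pair $(G;H_1,\dots,H_n)$ together with the finite-dimensional model for $\underline EG$: these furnish the coarse-geometric data --- a Carlsson--Pedersen-type compactification relative to the peripheral subgroups, as in the existing split-injectivity results for relatively hyperbolic groups --- to which the injectivity criterion of \cite{injectivity} applies, yielding split injectivity of $\colim_{\orb_\cF(G)}\bA_P\to\bA_P(*)$. This step uses nothing about $\bA_P$ beyond its being a CP-functor and is identical to the corresponding step for the algebraic $K$-theory of discrete group rings.

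For the left-hand map one invokes the transitivity principle, in the form established in \cite{injectivity}: it is enough to check, for each $H\in\cF$, that the $\Fin$-assembly map for $H$ with coefficients in the restricted CP-functor is split injective. This is trivial when $H$ is finite, so suppose $H$ is subconjugate to some $H_i$. If $H_i\in\FDC$, then also $H\in\FDC$, since finite decomposition complexity passes to subgroups, and split injectivity of the $\Fin$-assembly map of $H$ follows from the injectivity result for $\FDC$-groups that underlies \cref{thm:FDC}. If instead $H_i$ satisfies~(3)(b), then so does $H$ --- the property that the $\Vcyc$-assembly map be an equivalence for all principal-bundle coefficients is inherited by subgroups via the standard induction argument --- so the $\Vcyc$-assembly map of $H$ is an equivalence; combined with \cref{thm:split-injective-relative} applied to $H$ (which needs no hypothesis on the group and gives that the relative assembly map from $\Fin$ to $\Vcyc$ is split injective), this shows that the $\Fin$-assembly map of $H$ is split injective. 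Composing the two splittings completes the argument.

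The genuinely new content is \cref{thm:a-cp}; granted that, everything above is formal, and the points that require attention are bookkeeping ones: that the hereditary CP-functor formalism of \cite{injectivity} accommodates the $\orb(G)$-spectra $\bA_P$ and their restrictions to subgroups, that these restrictions coincide with the bundle-coefficient functors appearing in~(3)(b), and that the relatively hyperbolic coarse geometry is available in exactly the shape the criterion of \cite{injectivity} demands. I expect none of these to be a real obstacle.
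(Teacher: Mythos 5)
Your overall strategy diverges from the paper's and contains two genuine gaps. The paper's proof works with the family $\ccH$ generated by \emph{all virtually cyclic subgroups} together with the $H_i$, and proves that $\ass_{\bA_P}^{\ccH}$ is an \emph{equivalence} by adapting the dynamical/flow-space argument of Bartels for relatively hyperbolic groups (with the $A$-theoretic input supplied by \cite{ELPUW18}); it then uses the transitivity principle to see that $\ass_{\bA_P}^{\ccH\cap\FDC,\ccH}$ is an equivalence (this is where hypotheses (3)(a)/(3)(b) enter), and finally applies \cite[Theorem~1.11]{injectivity} together with \cref{thm:a-cp} to split $\ass_{\bA_P}^{\Fin,\ccH\cap\FDC}$. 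Your first step replaces this with a claimed split injection of $\colim_{\orb_\cF(G)}\bA_P\to\bA_P(*)$ for the smaller family $\cF$ (which omits the non-peripheral virtually cyclic subgroups) via a ``Carlsson--Pedersen-type compactification.'' No such result is established in this paper or invoked from \cite{injectivity}; the CP-machinery there yields injectivity for $\FDC$, linear groups, etc., not for relatively hyperbolic boundaries, and in your version the hypotheses on the $H_i$ play no role in this step at all, which should already be a warning sign.

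The second gap is in your treatment of the left-hand map. The transitivity principle converts \emph{equivalences} for the subgroups in a family into an \emph{equivalence} of the relative assembly map; it does not convert split injections for the individual $H\in\cF$ into a split injection of $\ass_{\bA_P}^{\Fin,\cF}$, because the splittings produced by the CP-machinery for each $H$ are not natural over the orbit category and cannot be assembled. This is precisely why the paper arranges matters so that every relative step except the final one is an honest equivalence, and the single split-injectivity input is one application of \cite[Theorem~1.11]{injectivity} to $G$ itself. Relatedly, your claim that hypothesis (3)(b) is inherited by subgroups of $H_i$ ``via the standard induction argument'' is unjustified: a principal $H$-bundle for $H\leq H_i$ need not arise from a principal $H_i$-bundle, and isomorphism statements without coefficients are not known to pass to subgroups in general. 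To repair the argument you should follow the paper's route: enlarge the family to $\ccH\supseteq\Vcyc$, get an equivalence there from the relative hyperbolicity, reduce to $\ccH\cap\FDC$ by transitivity using (3)(a)/(3)(b), and only then split from $\Fin$ using the $\FDC$ result.
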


Finally, our methods imply the following result which was previously obtained by Barwick \cite[Example~C]{Barwick17} and Malkiewich and Merling \cite[Section~4]{MM}.
\begin{theorem}\label{thm:mackey-functor}
	If $G$ is a finite group, then the $\orb(G)$-spectrum $\bA_P$ extends to a spectral Mackey functor.
\end{theorem}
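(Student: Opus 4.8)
The plan is to exhibit $\bA_P$ as the restriction, along the covariant inclusion $\orb(G)\hookrightarrow\mathrm{Span}(\Fin_G)$ of the orbit category into the effective Burnside $\infty$-category of finite $G$-sets, of a product-preserving functor $\underline{\bA}_P\colon\mathrm{Span}(\Fin_G)\to\Sp$; by definition \cite{Barwick17} such a functor is a spectral Mackey functor for $G$, and restriction along this inclusion returns the underlying $\orb(G)$-spectrum, so this is exactly what has to be produced. Here $\mathrm{Span}(\Fin_G)$ has finite $G$-sets as objects and spans $S\leftarrow U\rightarrow T$, composed by pullback, as morphisms. The functoriality going beyond the evident covariant functor $S\mapsto\bA(P\times_G S)$ is the finite-covering transfer in $A$-theory: because $G$ is finite, for every map $f\colon S\to T$ of finite $G$-sets the induced map $P\times_G f\colon P\times_G S\to P\times_G T$ is a finite covering, and finite coverings carry wrong-way maps on $\bA$.

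To make this precise I would work with the presentation of nonconnective $A$-theory as $\bA(X)\simeq\bK(\mathcal{L}(X))$, where $\bK$ denotes Blumberg--Gepner--Tabuada nonconnective $K$-theory and $\mathcal{L}(X)\in\catperf$ is the stable $\infty$-category of perfect local systems of spectra on $X$ — equivalently, of perfect $\mathbb{S}[\Omega X]$-modules for connected $X$ — on which the transfers and pushforwards of $\bA$ along coverings are computed by restriction $f^{*}$ and left Kan extension $f_{!}$. (Staying closer to Waldhausen's setup, and to \cite[Section~4]{MM}, one may instead use the Waldhausen $\infty$-category of homotopy finite retractive spaces over $P\times_G(-)$; pullback and pushforward of retractive spaces along the finite coverings $P\times_G f$ again preserve finiteness.) I would then consider $\cC\colon\Fin_G\to\catperf$, $S\mapsto\mathcal{L}(P\times_G S)$, together with its covariant structure $f\mapsto(P\times_G f)_{!}$ and its contravariant structure $f\mapsto(P\times_G f)^{*}$. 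Both functorialities preserve perfect objects precisely because $P\times_G f$ is a finite covering: $(P\times_G f)^{*}$ since its right adjoint is a finite product over the (finite discrete) fibres and therefore preserves filtered colimits, and $(P\times_G f)_{!}$ since it is left adjoint to the colimit-preserving functor $(P\times_G f)^{*}$.

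Next I would verify the Beck--Chevalley condition. Since the $G$-action on $P$ is free, $P\times_G(-)$ carries pullback squares of finite $G$-sets to homotopy pullback squares of spaces, and parametrized spectra over spaces satisfy base change; hence the Beck--Chevalley transformation attached to each pullback square of finite $G$-sets is an equivalence. By Barwick's unfurling construction — the mechanism behind \cite[Example~C]{Barwick17} — the two functorialities of $\cC$ then assemble into a functor $\widetilde{\cC}\colon\mathrm{Span}(\Fin_G)\to\catperf$ sending a span $S\leftarrow U\rightarrow T$ to the composite $\mathcal{L}(P\times_G S)\to\mathcal{L}(P\times_G U)\to\mathcal{L}(P\times_G T)$. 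Since $P\times_G(S\sqcup S')=(P\times_G S)\sqcup(P\times_G S')$ and local systems turn disjoint unions into products, $\widetilde{\cC}$ is product-preserving, and composing with $\bK$, which preserves finite products, yields a spectral Mackey functor $\underline{\bA}_P:=\bK\circ\widetilde{\cC}$. Its restriction along $\orb(G)\hookrightarrow\mathrm{Span}(\Fin_G)$ is $G/H\mapsto\bK(\mathcal{L}(P/H))\simeq\bA(P/H)$ with the covering pushforward maps, which is exactly $\bA_P$.

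I expect the main obstacle to be the coherence that the unfurling machine consumes: one has to upgrade the homotopy-commuting Beck--Chevalley squares to a genuine adjointable bicartesian fibration (or a Waldhausen cocartesian fibration, in the retractive-spaces model) over $\Fin_G$, using that $P\times_G(-)$ respects pullbacks and disjoint unions and that the wrong-way functors preserve the chosen class of perfect (resp. finite) objects \emph{exactly} along finite coverings; it is this last property that singles out $\Fin_G$, rather than the category of all spaces, as the correct indexing category. A secondary issue — and the reason the nonconnective theory is genuinely at stake here — is that the unfurling formalism is classically phrased with connective Waldhausen $K$-theory, so one needs the fact, established in this paper on the way to its other results, that additivity, localization and cofinality remain valid for the nonconnective $K$-theory functor; granting this, $\bK$ may be substituted for connective $K$-theory throughout without disturbing either the construction of $\widetilde{\cC}$ or the identification of its restriction with $\bA_P$.
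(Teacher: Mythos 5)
Your argument is correct in outline, but it is not the paper's proof: it is essentially the route of Barwick \cite[Example~C]{Barwick17} and Malkiewich--Merling \cite[Section~4]{MM}, which the paper explicitly cites as the \emph{prior} proofs of this statement. The paper instead obtains the Mackey structure as a corollary of coarse geometry: it constructs, by hand, a transfer $w^*$ on the Waldhausen categories $\ret{G}{-}{P}{\lf}$ of controlled retractive spaces along every bounded covering $w$ of $G$-bornological coarse spaces, checks the strict $2$-categorical compatibilities of the data $(F,w^*,a_{v,w},b_{f,w})$ required by \cite[Lemma~3.1]{coarsetrans} to produce a functor $G\BC_{\tr}\to\fN(\Wald)$ (\cref{prop:ret-with-transfers} and \eqref{eq:transfers-compatible}), identifies $\bA_P$ with the restriction of the resulting coarse homology theory along $S\mapsto G_{can,min}\otimes S_{min,max}$ (\cref{prop:org-spectra}), and then invokes the general result \cite[Corollaries~4.7 \& 4.9]{coarsetrans} that an equivariant coarse homology theory with transfers restricts, for finite $G$, to a spectral Mackey functor. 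The trade-off is instructive. In the paper's approach the coherence problem you correctly identify as the main obstacle is dissolved: because everything is done with honest $1$-categories and genuine pullbacks of sets, the associativity and Beck--Chevalley constraints hold on the nose by uniqueness in universal properties, and the transfer structure is needed anyway for strong additivity and the CP-functor property, so the Mackey functor comes for free. Your approach avoids coarse geometry entirely and lands directly on $\mathrm{Span}(\Fin_G)$, but it outsources precisely the hard coherence step (building the adjointable/unfurlable fibration over $\Fin_G$) to Barwick's machinery, and it additionally needs either the comparison of the paper's Waldhausen-categorical $\bA$ with $\bK$ of perfect local systems, or a Waldhausen-$\infty$-categorical model of retractive spaces over $P\times_G(-)$. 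One small simplification to your last paragraph: once $\widetilde{\cC}$ lands in $\catperf$, you only need $\bK$ to preserve finite products (which it does, being additive); the nonconnective Fibration and Cofinality theorems of \cref{sec:k-theory} are not required for this particular corollary.
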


To prove \cref{thm:a-cp}, we recast the definitions of coarse versions of $A$-theory given by Weiss \cite{Weiss02} and Ullmann and Winges \cite{UW} in the setting of bornological coarse spaces.
Since this construction relies on a sufficiently well-behaved version of nonconnective algebraic $K$-theory,
\cref{sec:k-theory} discusses the properties of the nonconnective $K$-theory functor introduced by Blumberg--Gepner--Tabuada \cite{BGT13} as a functor on Waldhausen categories.
Specifically, we show that the validity of the Additvity, Fibration, Approximation and Cofinality theorems are preserved in passing to the nonconnective version.

The necessary translation of the categories of controlled retractive spaces from \cite{UW} to the setting of bornological coarse spaces is done in \cref{sec:controlled-retractive-spaces}.
Using the results from \cref{sec:k-theory}, we are then able to give a streamlined proof of the fact that coarse $A$-homology is a coarse homology theory in \cref{sec:a-homology}.

The final \cref{sec:injectivity} establishes the last properties needed to obtain \cref{thm:a-cp}. In particular, we discuss the construction of transfer maps.

\tableofcontents

\subsection*{Acknowledgments} U.~Bunke  was supported by the SFB 1085 ``Higher Invariants'' funded by the Deutsche Forschungsgemeinschaft DFG.
C.~Winges was supported by the Max Planck Society and Wolfgang L\"uck's ERC Advanced Grant ``KL2MG-interactions" (no.~662400).
D.~Kasprowski and C.~Winges are members of the Hausdorff Center for Mathematics at the University of Bonn.


\section{Algebraic \texorpdfstring{$K$}{K}-theory}\label{sec:k-theory}

The algebraic $K$-theory functor originally defined by Waldhausen \cite{Wald85} takes values in connective spectra.
For the applications in \cref{sec:a-homology} and \cref{sec:injectivity}, we require a nonconnective version of algebraic $K$-theory.
A nonconnective version of $A$-theory can be derived from Waldhausen's connective $K$-theory functor using the methods of \cref{sec:controlled-retractive-spaces,sec:a-homology}, cf.~\cite[Section~5]{UW}.
However, we prefer to base our discussion on the axiomatic approach of Blumberg--Gepner--Tabuada~\cite{BGT13}.

Waldhausen's $K$-theory functor is particularly useful to prove structural results. The key tools in Waldhausen's approach
are the Additivity theorem~\cite[Theorem~1.4.2]{Wald85}, the Fibration theorem~\cite[Theorem~1.6.4]{Wald85}, the Approximation theorem~\cite[Theorem~1.6.7]{Wald85}
and the Cofinality theorem~\cite[1.10.1]{TT90}, \cite[Theorem~1.6]{Vogell90}.
In the present section we provide analogs of these theorems for the  {universal} nonconnective $K$-theory of Blumberg--Gepner--Tabuada:
the Additivity theorem (\cref{thm:additivity}) holds as a corollary of the Fibration theorem (\cref{thm:fibration}); the appropriate analog of the Approximation theorem is recalled in \cref{thm:approximation}, and a version of the Cofinality theorem is given in \cref{thm:cofinality}.

We also discuss the compatibility of $K$-theory with infinite products---the connective case was originally established by Carlsson \cite{Carlsson95}---in \cref{sec:products}.

\subsection{Waldhausen categories as right-exact \texorpdfstring{$\infty$}{infinity}-categories}\label{sec:right-exact}

In order to be able to employ
the theory developed in \cite{BGT13}, we consider a class of Waldhausen categories (called homotopical) whose homotopy theory can be adequately described in terms of $\infty$-categories.

\begin{ddd}\label{def:homotopical-Waldhausen-cat}
 Let $\bC$ be a Waldhausen category.
 \begin{enumerate}
  \item $\bC$ \emph{admits factorizations} if every morphism in $\bC$ can be factored into a cofibration followed by a weak equivalence; we assume no functoriality.
  \item $\bC$ is \emph{homotopical} if it admits factorizations and the weak equivalences satisfy the two-out-of-six property. \qedhere
 \end{enumerate}
\end{ddd}

Recall that the two-out-of-six property means that if
\[  {C_0 \xrightarrow{x_1} C_1 \xrightarrow{x_2} C_2 \xrightarrow{x_3} C_3} \]
are composable morphisms such that both $x_2\circ x_1$ and $x_3\circ x_2$ are weak equivalences,
then also $x_1$, $x_2$ and $x_3$ (and hence also $x_3\circ x_2\circ x_1$) are weak equivalences.

\begin{rem}
 Homotopical Waldhausen categories as defined above are precisely the Waldhausen categories considered in \cite{BGT13}.
 The term ``homotopical" has been borrowed from \cite[Chapter~5]{DHKS04}.
 
 Let us comment shortly why it is sensible to restrict to this class of Waldhausen categories.
 
The existence of factorizations in $\bC$ is the most natural condition to guarantee that the $\infty$-categorical localization $\bC[w\bC^{-1}]$ admits all finite colimits,
 which makes it amenable to the methods of \cite{BGT13};  see also \cite[Definition~9.30]{BGT13}.
 
 Moreover, note that equivalences in any $\infty$-category satisfy the two-out-of-six property. In order to ensure that notions defined in terms of Waldhausen categories match up with their $\infty$-categorical counterparts, it is natural to require that the localization $\bC\to \bC[w\bC^{-1}]$ detects weak equivalences.
 By \cite[Corollary~7.5.19]{CisBook}, this is the case if and only if the weak equivalences in $\bC$ satisfy the two-out-of-six property, see also \cref{prop:exact-localization}\eqref{it:exact-localization-1}.
 See also \cite[Lemma~A.2.3, Theorem~B.5.1 and Theorem~6.4]{BM11} and \cite{Weiss99} for similar observations with respect to the hammock localization.
\end{rem}

 Let $\Waldone$ denote the category of homotopical Waldhausen categories and exact functors.
As an auxiliary tool, we introduce the category $\RelCatone$ of relative categories and functors between relative categories.
There is a forgetful functor
\[ u \colon \Waldone \to \RelCatone,\quad (\bC,\cof\bC,w\bC) \mapsto (\bC,w\bC). \]
Composing with the functor $\RelCatone \to \catinf$ which sends a relative category to its localization,
$u$ induces a functor \begin{equation}\label{vewrverwefwwefwfew}
\ell' \colon \Waldone \to \catinf\ .
\end{equation}

\begin{rem}\label{rem:explicit-localization}
In this remark we provide a point-set model for the functor \eqref{vewrverwefwwefwfew}. We model $\infty$-categories by quasi-categories and write $\hcnerve(\cC)$ for the $\infty$-category represented by the homotopy-coherent nerve of  a fibrant simplicial category $\cC$. In order to apply this to an  ordinary category $\cC$ we consider it  as a simplicial category with discrete mapping spaces.
By abuse of notation, we also use $\cC$ to denote $\hcnerve(\cC)$ for an ordinary category $\cC$ in the main body of the paper.

Let $\sSet^+$ denote the category of marked simplicial sets \cite[Definition~3.1.0.1]{LurieHTT} equipped with the \emph{marked model structure} from \cite[Propositon~3.1.3.7]{LurieHTT}.
This is a combinatorial simplicial model structure by \cite[Proposition~3.1.3.7 and Corollary~3.1.4.4]{LurieHTT}
in which every object is cofibrant and whose fibrant objects are precisely those simplicial sets which are quasicategories equipped with their subcategory of equivalences \cite[Proposition~3.1.4.1]{LurieHTT}.
Following \cite[Chapter~3]{LurieHTT}, we define the $\infty$-category of $\infty$-categories
\[ \catinf := \hcnerve((\sSet^+)^\cf) \]
as the homotopy coherent nerve of the subcategory of cofibrant-fibrant objects in $\sSet^+$.

Let $\cC$ be an $\infty$-category and let $w\cC$ be a wide subcategory.
Recall that a localization of $\cC$ at $w\cC$ is a functor $l \colon \cC \to \cC[w\cC^{-1}]$ such that restriction along $l$ defines for every $\infty$-category $\ccD$ an equivalence of $\infty$-categories
\[ l^* \colon \Fun(\cC[w\cC^{-1}],\ccD) \xrightarrow{\sim} \Fun_{w\cC}(\cC,\ccD), \]
where $\Fun_{w\cC}$ denotes the full subcategory of functors sending all morphisms in $w\cC$ to equivalences.
Localizations always exist and are essentially unique \cite[Proposition~7.1.3]{CisBook}.
Note that $l^*$ restricts to an equivalence on maximal Kan complexes, and thus induces an equivalence
\[ l^* \colon \Map_{\catinf}(\cC_1,\ccD) \xrightarrow{\sim} \Map_{\catinf}^{w\cC}(\cC,\ccD), \]
where $\Map_{\catinf}^{w\cC}$ denotes the collection of all components containing functors which send all morphisms in $w\cC$ to equivalences.
It follows formally that the localization $l$ is equivalently described by the latter universal property.
Consequently, any fibrant replacement of $(\cC,w\cC)$ as a marked simplicial set models the $\infty$-categorical localization $l \colon \cC \to \cC[w\cC^{-1}]$, cf.~\cite[Remark~1.3.4.2]{LurieHA}.

Composing $u$ with the functor
\[ L \colon \RelCatone \to \sSet^+, \quad (\bC,w\bC) \mapsto (\hcnerve(\bC),\hcnerve(w\bC)) \]
and any fibrant replacement functor $R \colon \sSet^+ \to \sSet^+$ produces a concrete model
\begin{equation}\label{eblkjrlkfjrlkf43r3r34r43r43r}
 \ell' := \hcnerve(R \circ L \circ u) \colon \hcnerve(\Waldone) \to \catinf
\end{equation}
of the functor \eqref{vewrverwefwwefwfew}.
Note that
\[ \ell'(\bC,\cof\bC,w\bC) \simeq \bC[w\bC^{-1}] \]
by the previous discussion.
\end{rem}

Recall that an $\infty$-category is \emph{right-exact} if it has a zero object and admits all finite colimits.
A functor between right-exact $\infty$-categories is \emph{exact} if it preserves all finite colimits.
Denote by $\catrex \subseteq \catinf$ the subcategory of right-exact $\infty$-categories and exact functors.
By the dual of \cite[Proposition~7.5.6]{CisBook}, $\ell'$ in \eqref{vewrverwefwwefwfew} factors via the inclusion $\catrex \subseteq \catinf$ to give a localization functor \begin{equation}\label{vrtojp4gf4rvvfeveve}
 \ell \colon \Waldone \to \catrex. 
\end{equation}

Let $\bC$ be a homotopical Waldhausen category and let $\ccD$ be a right-exact $\infty$-category.
\begin{ddd}[{See \cite[Definition~7.5.2 and Example~7.5.9]{CisBook}}]\label{def:exact-functor}
A functor $\bC \to \ccD$ is \emph{exact}
 if it preserves zero objects, sends pushouts along cofibrations in $\bC$ to pushouts in $\bD$, and
 maps all morphisms in $w\bC$ to equivalences in $\ccD$.
\end{ddd}

\begin{prop}\label{prop:exact-localization}
 Let $\bC$ be a homotopical Waldhausen category.
 \begin{enumerate}
  \item\label{it:exact-localization-1} The localization functor $l \colon \bC \to \ell(\bC)$ is exact and detects weak equivalences.
  \item\label{it:exact-localization-2} For every right-exact $\infty$--category $\ccD$, restriction along $l$ induces an equivalence
   \[ l^* \colon \Fun^{\mathrm{ex}}(\ell(\bC), \ccD) \xrightarrow{\sim} \Fun^{\mathrm{ex}}(\bC, \ccD) \]
   between the full subcategories of exact functors.
 \end{enumerate}
\end{prop}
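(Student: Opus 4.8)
The plan is to derive both parts from the universal property of the $\infty$-categorical localization, using the already-cited results of Cisinski on localizations of categories with cofibrations and weak equivalences for the one substantial input.

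I would first dispatch \eqref{it:exact-localization-1}. By construction (\cref{rem:explicit-localization}) the functor $l \colon \bC \to \ell(\bC) = \bC[w\bC^{-1}]$ sends every morphism of $w\bC$ to an equivalence. That $\ell(\bC)$ is right-exact and that $l$ preserves the zero object and sends pushouts along cofibrations in $\bC$ to pushouts in $\ell(\bC)$ is precisely (the dual of) \cite[Proposition~7.5.6]{CisBook} --- the same input already invoked above to see that $\ell'$ lands in $\catrex$; here the factorization hypothesis in \cref{def:homotopical-Waldhausen-cat} is exactly what makes pushouts along cofibrations into homotopy pushouts, so that they are preserved. Hence $l$ is exact in the sense of \cref{def:exact-functor}. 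That $l$ detects weak equivalences --- i.e.\ that $f$ lies in $w\bC$ whenever $l(f)$ is an equivalence --- follows from \cite[Corollary~7.5.19]{CisBook} applied to the relative category $(\bC, w\bC)$, since $w\bC$ satisfies the two-out-of-six property by hypothesis.

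Next I would treat \eqref{it:exact-localization-2}. The universal property of the localization (\cref{rem:explicit-localization}) gives, for any $\infty$-category $\ccD$, an equivalence
\[ l^* \colon \Fun(\ell(\bC), \ccD) \xrightarrow{\sim} \Fun_{w\bC}(\bC, \ccD) \]
onto the full subcategory of functors inverting $w\bC$. It remains to check that, for right-exact $\ccD$, this equivalence identifies exact functors $\ell(\bC) \to \ccD$ with exact functors $\bC \to \ccD$ in the sense of \cref{def:exact-functor}. If $\bar F \colon \ell(\bC) \to \ccD$ is exact then $\bar F \circ l$ inverts $w\bC$, preserves the zero object and sends pushouts along cofibrations to pushouts, because $l$ does by \eqref{it:exact-localization-1} and $\bar F$ preserves all finite colimits; this is the easy direction. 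Conversely, given $F \colon \bC \to \ccD$ exact in the sense of \cref{def:exact-functor}, let $\bar F \colon \ell(\bC) \to \ccD$ be the essentially unique functor with $\bar F \circ l \simeq F$; since $\ccD$ is right-exact it suffices to show that $\bar F$ preserves the zero object and pushouts (which generate all finite colimits in a right-exact $\infty$-category). The former holds because the zero object of $\ell(\bC)$ is $l(0)$ and $\bar F(l(0)) \simeq F(0)$. For the latter I would use once more \cite[Proposition~7.5.6]{CisBook}: every pushout square in $\ell(\bC)$ is equivalent to the image under $l$ of a pushout square along a cofibration in $\bC$ --- this is where factorizations enter, allowing an arbitrary span in $\ell(\bC)$ to be modelled in $\bC$ with one leg a cofibration --- so that exactness of $F$ together with $\bar F \circ l \simeq F$ forces $\bar F$ to preserve it.

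I expect the one genuinely non-formal point, underlying both parts, to be the claim that pushouts in $\ell(\bC)$ are generated by images of pushouts along cofibrations in $\bC$ --- equivalently, that $l$ exhibits $\ell(\bC)$ as the universal right-exact $\infty$-category receiving an exact functor from $\bC$. This is exactly where the factorization hypothesis is indispensable, and it is the substance of \cite[Proposition~7.5.6]{CisBook}; once it is available, everything else is a formal manipulation of universal properties and of \cref{def:exact-functor}.
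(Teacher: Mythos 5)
Your part~\eqref{it:exact-localization-1} matches the paper: exactness of $l$ is the dual of \cite[Proposition~7.5.6]{CisBook}, and detection of weak equivalences comes from \cite[Corollary~7.5.19]{CisBook} plus two-out-of-six. The paper is slightly more explicit about the second point --- it applies Corollary~7.5.19 twice to produce $g\colon Y\to X$ and $f'\colon X\to Y$ with $fg$ and $gf'$ weak equivalences, and only then invokes two-out-of-six to conclude $f\in w\bC$ --- but your one-line version is the same argument compressed.

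For part~\eqref{it:exact-localization-2} the paper simply observes that the statement \emph{is} the dualized assertion of \cite[Proposition~7.5.11]{CisBook}, whereas you reconstruct it from the plain universal property of the localization. Your reduction is sound until the final step, where everything hinges on the claim that every pushout square in $\ell(\bC)$ is equivalent to the image under $l$ of a pushout along a cofibration in $\bC$. You attribute this to Proposition~7.5.6, but that result only says that $\ell(\bC)$ is right-exact and that $l$ is exact; it does not say that $l$ is ``surjective on spans up to equivalence''. A priori a span in $\ell(\bC)$ is built from zig-zags of morphisms of $\bC$, and rectifying it to a strict span in $\bC$ with one leg a cofibration requires the calculus of fractions and the diagram-rectification arguments that constitute the proof of Proposition~7.5.11 itself. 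So as written there is a gap: the ``one genuinely non-formal point'' you isolate is correct as a diagnosis, but it is neither proved nor correctly sourced. The fix is either to cite \cite[Proposition~7.5.11]{CisBook} directly (as the paper does, making your intermediate reductions unnecessary) or to actually carry out the rectification argument.
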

\begin{proof}
 The localization functor $l$ is exact by the dual of \cite[Proposition~7.5.6]{CisBook}.
 If $l(f)$ is an equivalence for some morphism $f \colon X \to Y$ in $\bC$, $l(f)$ defines an isomorphism in the homotopy category of $\ell(\bC)$.
 Applying \cite[Corollary~7.5.19]{CisBook} twice, it follows that there exist morphisms $g \colon Y \to X$ and $f' \colon X \to Y$ such that both $fg$ and $gf'$ are weak equivalences in $\bC$.
 By the two-out-of-six property, it follows that $f$ is a weak equivalence.
 This proves part \eqref{it:exact-localization-1}.
 
 Part \eqref{it:exact-localization-2} is precisely the (dualized) assertion of \cite[Proposition~7.5.11]{CisBook}.
\end{proof}

\subsection{The fundamental theorems}\label{sec:fundamental-theorems}

We can now extend the universal localizing invariant
\[  {\uloc \colon \catex \to \mloc} \]
of \cite{BGT13} to the category of homotopical Waldhausen categories as indicated in \cite[Definition~9.30]{BGT13}.
Recall the stabilization functor  
\[ \stab \colon \catrex \to \catex, \]
which is the left adjoint of the fully faithful inclusion functor $\catex \to \catrex$, see \cite[Section~9.3]{BGT13}.
The stable $\infty$-category $\stab(\cC)$ admits the description
\begin{equation}\label{eq:stab}
 \stab(\cC) \simeq \colim( \cC \xrightarrow{\Sigma} \cC \xrightarrow{\Sigma} \ldots )
\end{equation}
as the colimit over iterations of the suspension functor on $\cC$.
The counit of the adjunction is typically denoted by $\Sigma^\infty$.

Recall the localization functor $\ell \colon \Waldone \to \catrex$ from \eqref{vrtojp4gf4rvvfeveve}.

\begin{ddd}\label{efgieofewfefewfwefwefewfwef}
 Define the \emph{universal localizing invariant for homotopical Waldhausen categories} to be the functor
 \[ \wuloc \colon \Waldone \xrightarrow{\ell} \catrex \xrightarrow{\stab} \catex \xrightarrow{\uloc} \mloc. \qedhere \]
\end{ddd}

The goal of the present section is to show that $\wuloc$ satisfies the obvious analogs of the fundamental theorems of connective Waldhausen $K$-theory.
For the most part, these are direct consequences of localization results from \cite[Chapter~7]{CisBook}.

\begin{prop}\label{prop:filtered-colimits}
 Let $I$ be a filtered poset. Then the canonical comparison map
 \[ \colim_I \circ \ell \to \ell \circ \colim_I \colon \Fun(I, \Waldone) \to \catrex \]
 is an equivalence of functors.
\end{prop}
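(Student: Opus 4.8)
The plan is to prove this by reducing to the point-set model of the localization functor from Remark~\ref{rem:explicit-localization} and invoking the fact that filtered colimits in the marked model structure on $\sSet^+$ are homotopy colimits.

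First I would make the filtered colimits in $\Waldone$ explicit. For a filtered poset $I$ and $F\colon I\to\Waldone$, $i\mapsto\bC_i$, I claim the colimit $\bC_\infty$ is computed levelwise: its underlying category is $\colim_I\bC_i$ in $\mathrm{Cat}$, and its cofibrations and weak equivalences are the subcategories $\colim_I\cof\bC_i$ and $\colim_I w\bC_i$. Since a diagram involving only finitely many objects and morphisms, together with any finite amount of witnessing data (composites, pushout squares, the arrows produced by two-out-of-six, a factorization of a given morphism, \dots), factors through some stage $\bC_j$, every instance of a Waldhausen axiom, of the existence of factorizations, and of the two-out-of-six property in $\bC_\infty$ is inherited from the corresponding $\bC_j$; hence $\bC_\infty$ is a homotopical Waldhausen category. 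The same kind of argument identifies $\bC_\infty$ with the colimit of $F$ in $\Waldone$.

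Next I would reduce to $\catinf$ and to the model of Remark~\ref{rem:explicit-localization}. Because finite colimits commute with filtered colimits in $\catinf$, a filtered diagram in $\catrex$ has a colimit in $\catinf$ which is again right-exact and on which the induced functors are exact; thus $\catrex\subseteq\catinf$ is closed under filtered colimits, and the (fully faithful) inclusion preserves and reflects them, so it suffices to show that $\ell'\colon\Waldone\to\catinf$ preserves filtered colimits. By Remark~\ref{rem:explicit-localization}, $\ell'$ is modelled by $\hcnerve(R\circ L\circ u)$, where $(L\circ u)(\bC)=(\hcnerve(\bC),\hcnerve(w\bC))$ and $R$ is a fibrant replacement functor on $\sSet^+$. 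The forgetful functor $u$ preserves filtered colimits since these are computed levelwise in $\Waldone$, and $\hcnerve$ preserves filtered colimits because each $[n]$ is a finite category; hence $L\circ u$ takes the canonical comparison map for $F$ to an isomorphism $\colim_I(\hcnerve(\bC_i),\hcnerve(w\bC_i))\xrightarrow{\cong}(\hcnerve(\bC_\infty),\hcnerve(w\bC_\infty))$ of marked simplicial sets.

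It remains to see that the \emph{strict} filtered colimit on the left represents the $\infty$-categorical colimit $\colim_I\ell'(\bC_i)$. This rests on the statement that filtered colimits in the marked model structure on $\sSet^+$ preserve weak equivalences --- provable much as for the Kan or Joyal model structures (the relevant generating cofibrations and trivial cofibrations may be chosen between finite marked simplicial sets, so a fibrant replacement functor built from the small object argument commutes with filtered colimits, and one reduces to an $\operatorname{Ex}^\infty$-type argument), or imported from the literature --- together with the cofibrancy of every object, which implies that the strict filtered colimit of any filtered diagram in $\sSet^+$ computes its homotopy colimit. Applying this to $i\mapsto(\hcnerve(\bC_i),\hcnerve(w\bC_i))$ and to its objectwise fibrant replacement $i\mapsto R(\hcnerve(\bC_i),\hcnerve(w\bC_i))=\ell'(\bC_i)$, and using that $R$ is an objectwise weak equivalence which the filtered colimit preserves, we obtain $\colim_I\ell'(\bC_i)\simeq\colim_I(\hcnerve(\bC_i),\hcnerve(w\bC_i))\cong(\hcnerve(\bC_\infty),\hcnerve(w\bC_\infty))$, and the latter is a model for $\ell'(\bC_\infty)$ because $R$ is a fibrant replacement. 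A routine diagram chase identifies this equivalence with the canonical comparison map, and naturality in $F$ is clear. The main obstacle is precisely this last input --- that filtered colimits are homotopy colimits in $\sSet^+$ (equivalently, that the localization functor $\RelCatone\to\catinf$ preserves filtered colimits); everything else is formal bookkeeping with colimits that factor through stages.
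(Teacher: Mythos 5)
Your proposal is correct and follows essentially the same route as the paper's proof: reduce to $\ell'\colon\Waldone\to\catinf$ (the paper cites \cite[Proposition~5.5.7.11]{LurieHTT} for computing filtered colimits of $\catrex$ in $\catinf$), pass to the point-set model $R\circ L\circ u$ of \cref{rem:explicit-localization}, observe that $L\circ u$ commutes with strict filtered colimits, and then use that marked equivalences are closed under filtered colimits (the paper also imports this from the proof of \cite[Proposition~3.1.3.7]{LurieHTT}) together with fibrancy/cofibrancy considerations to identify the strict colimit with the $\infty$-categorical one. The only cosmetic difference is that the paper organizes the last step via a projective cofibrant replacement $Q$ and the observation that a filtered colimit of fibrant marked simplicial sets is fibrant, whereas you argue directly that the strict filtered colimit computes the homotopy colimit; the homotopical input is the same.
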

\begin{proof}
 By \cite[Proposition~5.5.7.11]{LurieHTT}, filtered colimits in $\catrex$ may be computed in $\catinf$,
 so it is enough to consider $\ell' \colon \Waldone \to \catinf$, see \eqref{vewrverwefwwefwfew}.

 In the following argument we use the explicit model $ R\circ L\circ u$ of the functor $\ell^{\prime}$ given in  {\cref{rem:explicit-localization}}.
 Then we must check that the natural morphism
 \begin{equation}\label{ferfou43hfio3f43ff34f43f}
  \colim_{I} \circ Q\circ R_{I}\circ L_{I}\circ u_{I}\to R\circ L\circ u\circ \colim_{I}
 \end{equation} 
 is a weak equivalence in the marked model category structure on $\sSet^{+}$,
 where $Q$ is the cofibrant replacement in the projective model category structure on $\Fun(I,\sSet^{+})$,
 and the subscript $(-)_{I}$ stands for point-wise application of the corresponding functor.
  
 By inspection, $u \colon \Waldone \to \RelCatone$ preserves filtered colimits,
 and so does $L \colon \RelCatone \to \sSet^+$ because the nerve preserves filtered colimits. 
 Consequently  we have a factorization
 \begin{equation}\label{fewrfofoif2f3f32f2f32}
  \colim_{I} \circ Q \circ R_{I} \circ L_{I} \circ u_{I} \to R \circ \colim_{I} \circ L_{I} \circ u_{I} \xrightarrow{\simeq} R \circ L \circ u \circ \colim_{I}
 \end{equation}
 of \eqref{ferfou43hfio3f43ff34f43f}.
 We now consider the following commutative diagram
 \[\xymatrix{
   \colim_{I} \ar[r] \ar[d]^-{(1)} &  R\circ \colim\ar[d]^-{(3)} \\
   \colim_{I} \circ R_{I}\ar[r]^-{(2)}\ar[ur]^-{!} & R \circ \colim_{I} \circ R_{I} \\
 }\]
 As demonstrated in the proof of \cite[Proposition~3.1.3.7]{LurieHTT},
 marked equivalences are preserved under filtered colimits.
 Consequently, the morphisms $(1)$ and $(3)$ are weak equivalences.
 From the explicit description of fibrancy in $\sSet^+$ we deduce that a filtered colimit of fibrant objects is fibrant.
 Hence $(2)$ is a weak equivalence.
 Consequently, the morphism marked by $!$ is a weak equivalence, too.
 From \eqref{fewrfofoif2f3f32f2f32} we therefore get the factorization
 \[ \colim_{I}\circ  Q\circ R_{I}\circ L_{I}\circ u_{I}\to \colim_{I}\circ R_{I}\circ  L_{I}\circ u_{I} \xrightarrow{\simeq} R \circ L \circ u \circ \colim_{I}\ .\]
 We now again use that marked equivalences are preserved under filtered colimits in order to deduce that the first arrow is a weak equivalence, too.
\end{proof}

\begin{kor}\label{prop:k-filtered-colimits}
 The functor $\wuloc$ commutes with filtered colimits.
\end{kor}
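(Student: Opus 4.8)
The plan is to exploit the factorization $\wuloc \colon \Waldone \xrightarrow{\ell} \catrex \xrightarrow{\stab} \catex \xrightarrow{\uloc} \mloc$ from \cref{efgieofewfefewfwefwefewfwef} and to observe that each of the three functors involved preserves filtered colimits. For $\ell$ this is exactly the content of \cref{prop:filtered-colimits}. The functor $\stab \colon \catrex \to \catex$ is by definition a left adjoint (to the fully faithful inclusion $\catex \hookrightarrow \catrex$), and hence preserves all colimits, in particular filtered ones. Finally, $\uloc \colon \catex \to \mloc$ preserves filtered colimits by \cite{BGT13}, since being finitary is part of the requirements on a localizing invariant and $\uloc$ is the universal such invariant.

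Granting these three facts, the corollary follows by concatenating the respective comparison maps. Concretely, for a filtered poset $I$ and a diagram $\bC_\bullet \colon I \to \Waldone$ one obtains the chain of equivalences
\[ \wuloc(\colim_I \bC_\bullet) \simeq \uloc\bigl(\stab(\ell(\colim_I \bC_\bullet))\bigr) \simeq \uloc\bigl(\stab(\colim_I \ell(\bC_\bullet))\bigr) \simeq \uloc\bigl(\colim_I \stab(\ell(\bC_\bullet))\bigr) \simeq \colim_I \uloc\bigl(\stab(\ell(\bC_\bullet))\bigr) \simeq \colim_I \wuloc(\bC_\bullet), \]
in which the first and last identifications are the definition of $\wuloc$, the second is \cref{prop:filtered-colimits}, the third uses that $\stab$ preserves colimits, and the fourth uses that $\uloc$ is finitary. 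Since each identification is the canonical comparison map for the corresponding functor and comparison maps compose, the composite $\colim_I \circ \wuloc \to \wuloc \circ \colim_I$ is an equivalence.

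I expect no genuine obstacle here; the only point demanding a little care is keeping track of where the colimits are formed. Filtered colimits in $\catrex$ and in $\catex$ agree with those computed in $\catinf$ by \cite[Proposition~5.5.7.11]{LurieHTT} (as already invoked in the proof of \cref{prop:filtered-colimits}), so that the three comparison maps above are indeed the canonical ones and their composability is unproblematic. Thus the statement is essentially a formal consequence of \cref{prop:filtered-colimits} together with the left adjointness of $\stab$ and the finitariness of $\uloc$.
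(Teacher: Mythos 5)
Your proposal is correct and coincides with the paper's own argument: the paper likewise notes that $\uloc$ commutes with filtered colimits by definition, that $\stab$ does so as a left adjoint, and then concludes from \cref{prop:filtered-colimits}. The extra care you take in chaining the comparison maps is fine but adds nothing beyond the paper's reasoning.
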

\begin{proof}
 $\uloc$  commutes with filtered colimits by definition, and $\stab$ commutes with filtered colimits since it is a left adjoint.
 Hence the corollary is a consequence of \cref{prop:filtered-colimits}.
\end{proof}

Let $f \colon \bC \to \bD$ be an exact functor between homotopical Waldhausen categories.
\begin{ddd}\label{geroiger334g43g34g}
 The functor  $f \colon \bC \to \bD$ satisfies the \emph{approximation property} if the following conditions are satisfied:
 \begin{enumerate}
  \item $f$ preserves and detects weak equivalences;
  \item\label{geiogergerg43t3t34t} For every object $C$ of $\bC$ and morphism $y \colon f(C) \to D$ in $\bD$,
   there exist a morphism $x \colon C \to C'$ in $\bC$, a weak equivalence $w \colon f(C') \xrightarrow{\sim} D'$ and a weak equivalence $v \colon D \xrightarrow{\sim} D'$ such that the diagram
   \[\xymatrix{
    f(C)\ar[r]^-{y}\ar[d]_-{f(x)} & D\ar[d]^-{v}_-{\sim} \\
    f(C')\ar[r]_-{w}^-{\sim} & D'
   }\]
   commutes. \qedhere
 \end{enumerate} 
\end{ddd}

Recall the functor $\ell$ from \eqref{vrtojp4gf4rvvfeveve}.
\begin{theorem}[Cisinski Approximation theorem]\label{thm:approximation}
 If $f \colon \bC \to \bD$ is an exact functor between homotopical Waldhausen categories satisfying the approximation property,
 then $\ell(f) \colon \ell(\bC) \to \ell(\bD)$ is an equivalence.
\end{theorem}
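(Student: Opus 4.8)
The plan is to show that $\ell(f)\colon\ell(\bC)\to\ell(\bD)$ is essentially surjective and fully faithful, which together characterise equivalences of $\infty$-categories; throughout I use the localization functors $l\colon\bC\to\ell(\bC)$, $l\colon\bD\to\ell(\bD)$ of \cref{prop:exact-localization}, intertwined by $\ell(f)$, and the identifications $\ell(\bC)\simeq\bC[w\bC^{-1}]$, $\ell(\bD)\simeq\bD[w\bD^{-1}]$. Essential surjectivity is the easy half and uses only condition \eqref{geiogergerg43t3t34t} of \cref{geroiger334g43g34g}: given an object $D$ of $\bD$, exactness of $f$ gives $f(0)\cong 0$, so the approximation property applied to $0\in\bC$ and the canonical morphism $f(0)\to D$ produces an object $C'$ of $\bC$ together with weak equivalences $w\colon f(C')\xrightarrow{\sim}D'$ and $v\colon D\xrightarrow{\sim}D'$; in $\ell(\bD)$ the images of $w$ and $v$ are equivalences, so $\ell(f)(l(C'))\simeq l(D')\simeq l(D)$ and $l(D)$ lies in the essential image.

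Full faithfulness is the substantive part. For fixed objects $X,Y$ of $\bC$ one must show that the map
\[ \Map_{\ell(\bC)}(l X,l Y)\longrightarrow\Map_{\ell(\bD)}(l(fX),l(fY)) \]
induced by $\ell(f)$ is an equivalence of spaces, and the plan is to compute both sides by an explicit combinatorial model. I would present $\bC[w\bC^{-1}]$ and $\bD[w\bD^{-1}]$ via their Dwyer--Kan hammock localizations, and use the factorization axiom of \cref{def:homotopical-Waldhausen-cat} together with the gluing lemma for weak equivalences (one of the Waldhausen axioms) to obtain a homotopy calculus of fractions: by factorization and the two-out-of-six property, every weak equivalence factors as a weak-equivalence cofibration followed by a weak equivalence, and the cobase change of a weak-equivalence cofibration along an arbitrary morphism is again a weak equivalence by the gluing lemma. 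Granted this, every morphism of the hammock is represented by a cospan $X\to Z\xleftarrow{\sim}Y$, and $\Map_{\ell(\bC)}(l X,l Y)$ is modelled by the classifying space of the category $\mathcal E_\bC(X,Y)$ whose objects are such cospans and whose morphisms are weak equivalences of the middle term, and likewise for $\bD$. It then remains to prove that the induced functor $\mathcal E_\bC(X,Y)\to\mathcal E_\bD(fX,fY)$ is a weak homotopy equivalence, and this is where the approximation property is used in full by way of Quillen's Theorem A: condition \eqref{geiogergerg43t3t34t} shows that for any cospan $fX\to W\xleftarrow{\sim}fY$ over $\bD$ the relevant comma category is non-empty and, applied iteratively, filtered, hence contractible, while the requirement that $f$ \emph{detects} weak equivalences (the first clause of \cref{geroiger334g43g34g}) prevents distinct cospans upstairs from being identified and so controls $\pi_0$. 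Combining these inputs gives the desired weak equivalence, hence the comparison of mapping spaces is an equivalence and $\ell(f)$ is fully faithful.

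I expect essentially all of the real work to lie in this last step: pinning down the mapping-space model without the benefit of \emph{functorial} factorizations, verifying the homotopy calculus of fractions from the Waldhausen axioms, and organising the Theorem A argument so that each clause of the approximation property is invoked exactly where it is needed. A more economical route---presumably the one reflected in the attribution---is to bypass this hands-on model and instead feed \cref{geroiger334g43g34g} directly into the localization machinery for categories with weak equivalences developed in \cite[Chapter~7]{CisBook}, using \cref{prop:exact-localization} to translate between the Waldhausen and the $\infty$-categorical pictures; then one only has to match our hypotheses with those of the relevant statement there.
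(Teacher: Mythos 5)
The paper's entire proof of \cref{thm:approximation} is the one-line citation you anticipate in your closing paragraph: the statement is a special case of the dual of \cite[Proposition~7.6.15]{CisBook}. So your ``more economical route'' is not an alternative to the paper's argument---it \emph{is} the paper's argument, and to that extent your proposal lands on target. Your essential surjectivity step (apply the approximation property to $f(0)\to D$) is also correct and is indeed the formal half.

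The hands-on route you sketch for full faithfulness, however, has a genuine gap at its central step. The cospan model for mapping spaces is legitimate---it is the dual of \cite[Corollary~7.2.9]{CisBook}, which the paper itself uses in \cref{rfiuwefwefewf23r23r2r}, and the calculus of fractions does come from factorizations together with the gluing lemma as you say. But the assertion that the comma categories in your Theorem~A argument are ``applied iteratively, filtered, hence contractible'' is unsubstantiated, and this is exactly where the entire difficulty of the Approximation theorem is concentrated. One application of condition \eqref{geiogergerg43t3t34t} of \cref{geroiger334g43g34g} gives non-emptiness (and even that needs care: to produce a cospan upstairs whose right leg is a weak equivalence you must combine the approximation property with the fact that $f$ detects weak equivalences and two-out-of-three). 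Filteredness would require both common refinements of pairs of objects \emph{and} coequalization of parallel arrows in these comma categories, and nothing in the axioms supplies the latter; these categories are not filtered in general. Waldhausen's proof of \cite[Theorem~1.6.7]{Wald85} needs a cylinder functor and a delicate induction over simplices of the nerve precisely to establish the contractibility you are asserting, and Cisinski's proof of \cite[Proposition~7.6.15]{CisBook} replaces the cylinder functor by factorizations but remains a substantial argument; your parallel claim that detection of weak equivalences ``controls $\pi_0$'' is likewise too vague to certify. So either fill in the contractibility argument in detail, or do what the paper does: check that a homotopical Waldhausen category is the opposite of a category with weak equivalences and fibrations admitting factorizations, that \cref{geroiger334g43g34g} dualizes to Cisinski's approximation property, and then quote the dual of \cite[Proposition~7.6.15]{CisBook}.
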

\begin{proof}
 This is a special case of the dual of \cite[Proposition~7.6.15]{CisBook}.
\end{proof}

 As a first application of \cref{thm:approximation}, we establish a criterion to decide whether full inclusions of homotopical Waldhausen categories induce a fully faithful functor in $\catrex$.
This is the first step in proving the Cofinality theorem, which is our next goal.

\begin{ddd}\label{def:waldsubcat}
 A subcategory $\bC\subseteq \bD$  of the homotopical Waldhausen category $\bD$
 is a \emph{homotopical Waldhausen subcategory} if $(\bC, \bC \cap \cof\bD, \bC \cap w\bD)$ is also a homotopical Waldhausen category.
\end{ddd}

\begin{rem}
 Note that being a homotopical Waldhausen subcategory is a stronger assumption than
 being a `subcategory with cofibrations and weak equivalences' in the sense of \cite[page~321 and 327]{Wald85} which is in addition homotopical: If $\bC \subseteq \bD$ is a subcategory with cofibrations, then   $\cof\bC \subseteq \bC \cap \cof\bD$,
 whereas \cref{def:waldsubcat} requires $\cof\bC = \bC \cap \cof\bD$.
\end{rem}

\begin{ddd}\label{def:mca}
 The inclusion $\bC \subseteq \bD$ \emph{admits a mapping cylinder argument} if
 for every morphism $C\to D$ in $\bD$ such that $C$ belongs to $\bC$ and  $D$ is the target of a weak equivalence from an object of $\bC$ there exists a factorization
 \[\xymatrix@R=1em{
  & C^{\prime }\ar[dr]^-{\sim}& \\
  C\ar[ur]\ar[rr] & & D
 }\]
 with $ C^{\prime}$ in $\bC$.
\end{ddd}

Note that the weak equivalence $C^{\prime}\stackrel{\sim}{\to} D$ is not necessarily the one in the assumption on $D$.
A similar condition has been considered by Sagave~\cite[Theorem~3.1]{Sagave04}.

\begin{rem}
 The name of the property defined in \cref{def:mca} comes from the observation that this property is typically verified using a mapping cylinder construction, see for example the proof of \cref{cor:a-homology-coarsely-excisive}.
\end{rem}

\begin{rem}
 If $\bC \subseteq \bD$ admits a mapping cylinder argument, then the following seemingly stronger condition is also satisfied:
 Every diagram
 \[ C_1 \xrightarrow{y} D \xleftarrow[\sim]{w}  C_2 \]
 in $\bD$, in which $C_1$ and $C_2$ are objects in $\bC$,
 can be extended to a commutative diagram of the shape
 \begin{equation}\label{wefwef32r23r23}
  \xymatrix@R=1em{
   & D & \\
   C_1\ar[ur]^-{y}\ar[dr] & & C_2\ar[ul]_{w}^{\sim}\ar[dl]_{\sim} \\
   & C^{\prime}\ar[uu]^{\sim} &
  }
 \end{equation} 
 in which $C^{\prime}$ is an object in $\bC$.
 This follows from \cref{def:mca} by applying it to the morphism
 \[ C_1 \vee C_2 \xrightarrow{y  {+} w} D \]
 and rewriting the resulting diagram
 \[\xymatrix@R=1em{
  & C^{\prime}\ar[dr]^-{\sim} & \\
  C_{1} \vee C_{2} \ar[ur]\ar[rr]^-{y  {+} w} & & D
 }\]
 in the form \eqref{wefwef32r23r23}.
\end{rem}
 Let $\bC \subseteq \bD$ be a full homotopical Waldhausen subcategory of the homotopical Waldhausen category $\bD$.
 Recall the functor $\ell$ from \eqref{vrtojp4gf4rvvfeveve}.
\begin{prop}\label{lem:localization-fully-faithful}
 If the inclusion $\bC \subseteq \bD$ admits a mapping cylinder argument, then $\ell(\bC) \to \ell(\bD)$ is fully faithful.
\end{prop}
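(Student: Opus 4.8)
The plan is to produce, for any two objects $C_1, C_2$ of $\bC$, an equivalence between the mapping space $\Map_{\ell(\bC)}(C_1, C_2)$ and $\Map_{\ell(\bD)}(C_1, C_2)$, and to do so by exhibiting a zig-zag model for these mapping spaces (via hammock localization or the calculus of left/right fractions) that is visibly the same whether computed in $\bC$ or in $\bD$. Concretely, I would use the description of mapping spaces in a localization as a homotopy colimit over a category of zig-zags $C_1 \to \bullet \xleftarrow{\sim} \bullet \to \cdots \xleftarrow{\sim} C_2$. The mapping cylinder argument is exactly the tool that lets one shorten such zig-zags: given the existence of factorizations (part of ``homotopical''), every morphism factors as a cofibration followed by a weak equivalence, so every zig-zag in $\bD$ between objects of $\bC$ can be reduced to a short zig-zag of the form $C_1 \to C' \xleftarrow{\sim} C_2$, and \cref{def:mca} together with the strengthened diagram \eqref{wefwef32r23r23} ensures this intermediate object $C'$ can be chosen inside $\bC$.

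First I would set up the comparison cleanly: since $\ell(\bC) \to \ell(\bD)$ is an exact functor between right-exact $\infty$-categories, and both are localizations of the respective Waldhausen categories, I want to show it is fully faithful. I would reduce to checking that the induced map on mapping spaces out of representable objects is an equivalence. Here I expect the cleanest route is \emph{not} to reprove a general hammock-localization statement but to factor the inclusion $\bC \subseteq \bD$ through an auxiliary comparison that falls under \cref{thm:approximation}. Namely, I would consider a suitable category built from $\bC$ and $\bD$ — for instance, letting $\bD_{\bC}$ be the full subcategory of $\bD$ on those objects weakly equivalent to an object of $\bC$ — and argue in two stages: (a) $\bC \subseteq \bD_{\bC}$ induces an equivalence on localizations by the Approximation theorem, because the mapping cylinder argument supplies exactly the approximation lifting property of \cref{geroiger334g43g34g}\eqref{geiogergerg43t3t34t} (given $C$ in $\bC$ and $y\colon C \to D$ with $D$ weakly equivalent to an object of $\bC$, factor and apply \cref{def:mca}); and (b) $\bD_{\bC} \subseteq \bD$ is the inclusion of a full subcategory closed under weak equivalence, hence induces a fully faithful functor on localizations essentially by the universal property, since localizing a full subcategory closed under weak equivalences gives a full subcategory of the localization. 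Composing, $\ell(\bC) \simeq \ell(\bD_{\bC}) \hookrightarrow \ell(\bD)$ is fully faithful.

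The main obstacle I anticipate is step (b): one must be careful that passing to the localization of a full, weak-equivalence-closed subcategory really does produce a fully faithful functor on $\infty$-categorical localizations, and that $\ell(\bD_{\bC})$ indeed computes the full subcategory of $\ell(\bD)$ spanned by the objects of $\bD_{\bC}$. I would justify this using the universal property of localization (\cref{rem:explicit-localization}) combined with the fact that mapping spaces in $\bD[w\bD^{-1}]$ between objects of $\bD_{\bC}$ can be computed by zig-zags staying within $\bD_{\bC}$ — which again uses factorizations to confine the apices of zig-zags, together with the observation that $\bD_{\bC}$ is closed under the relevant cofibration–weak-equivalence factorizations up to weak equivalence. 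A secondary technical point is verifying that $\bD_{\bC}$, with the induced cofibrations and weak equivalences, is genuinely a homotopical Waldhausen \emph{sub}category in the strong sense of \cref{def:waldsubcat}; since $\bC$ is already assumed to be one and $\bD_{\bC}$ is sandwiched between $\bC$ and $\bD$, this should be a direct check, but it is where the hypothesis that $\bC$ is a full homotopical Waldhausen subcategory gets used in an essential way. If the two-stage argument proves awkward, the fallback is the direct zig-zag computation: model $\Map$ via the hammock localization $\hammock$, use factorizations plus \eqref{wefwef32r23r23} to show every hammock between objects of $\bC$ is connected, through hammocks, to one lying entirely in $\bC$, and conclude that $\hammock(\bC)(C_1,C_2) \to \hammock(\bD)(C_1,C_2)$ is a weak equivalence.
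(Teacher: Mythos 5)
Your proposal follows essentially the same route as the paper's proof: the paper factors the inclusion through the saturation $\overline{\bC}$ of $\bC$ in $\bD$ (your $\bD_{\bC}$), shows $\ell(\bC)\to\ell(\overline{\bC})$ is an equivalence via the Approximation theorem (\cref{thm:approximation}), and shows $\ell(\overline{\bC})\to\ell(\bD)$ is fully faithful using the calculus-of-fractions formula for mapping spaces in the localization together with the fact that $\overline{\bC}$ is full and closed under weak equivalences, exactly as you outline. The one step you underestimate is the verification that the saturation is a homotopical Waldhausen subcategory (\cref{gbioregreggergreg}): closure under pushouts along cofibrations is not a ``direct check'' following from $\bC$ being a Waldhausen subcategory, but is the most technical part of the paper's argument and uses the mapping cylinder argument (not merely fullness of $\bC$) in an essential way.
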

\begin{proof}
 The proof consists of the following steps:
 \begin{enumerate}
  \item\label{34t3t34rgergergerg} We consider the saturation $\overline{\bC}$ of $\bC$ in $\bD$ defined as the full subcategory consisting of all objects of $\bD$
   which are connected by a sequence of zig-zags of weak equivalences with an object of $\bC$.
   We first show in \cref{gbioregreggergreg} that $\overline{\bC}$ is a homotopical Waldhausen subcategory of $\bD$.
   \item We then show in \cref{erguiergrgegg}, using \cref{thm:approximation}, that the inclusion $\bC \to \overline{\bC}$ induces
   an equivalence $\ell(\bC) \xrightarrow{\simeq} \ell(\overline{\bC})$ of $\infty$-categories.
  \item Finally, we show in \cref{rfiuwefwefewf23r23r2r} that the functor $\ell(\overline{\bC}) \to \ell(\bD)$ induced by the inclusion $\overline{\bC} \to \bD$ is fully faithful.
 \end{enumerate}
 This implies the assertion since $\ell(\bC) \to \ell(\bD)$ is the composition of an equivalence and a fully faithful functor.
\end{proof}

As a preparation, we show that a sequence of zig-zags of weak equivalences in $\bD$ can be reduced to a single inward-pointing one.
\begin{lem}\label{htrtoihjiorgergergerge}
 If two objects $D$ and $D^{\prime}$ in $\bD$ are connected by a sequence of zig-zags of weak equivalences,
 then there exists a zig-zag of weak equivalences
 \[ D \xrightarrow{\sim} D'' \xleftarrow{\sim} D'\ .\]
 \end{lem}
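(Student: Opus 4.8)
The plan is to turn an arbitrary zig-zag of weak equivalences connecting $D$ and $D'$ into a single inward span $D \xrightarrow{\sim} D'' \xleftarrow{\sim} D'$ by induction on the length of the zig-zag, using the existence of factorizations (part of the hypothesis that $\bD$ is homotopical) together with the two-out-of-six property to repeatedly absorb one leg at a time. The base cases (a single weak equivalence in either direction, or the empty zig-zag) are immediate: for $D \xrightarrow{\sim} D'$ one takes $D'' := D'$ with the second leg the identity, and symmetrically for $D' \xrightarrow{\sim} D$ one takes $D'' := D$.

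For the inductive step, the key reduction is the following: given an outward span $A \xleftarrow{\sim} B \xrightarrow{\sim} C$, one can replace it by an inward span $A \xrightarrow{\sim} E \xleftarrow{\sim} C$. Indeed, factor the composite $A \xleftarrow{\sim} B \xrightarrow{\sim} C$ — more precisely, one of the legs composed with a lift — no: instead, apply the factorization axiom to the morphism $B \to A$ obtained by inverting nothing; the cleanest route is to factor the weak equivalence $B \xrightarrow{\sim} C$ through a cofibration followed by a weak equivalence, $B \rightarrowtail E' \xrightarrow{\sim} C$, which does not immediately help. The efficient argument is: take the coproduct/pushout along the cospan is not available either. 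The standard trick that does work using only factorizations and two-out-of-six is to factor the map $B \to A$ (a weak equivalence) as $B \rightarrowtail E \xrightarrow{\sim} A$; since $B \xrightarrow{\sim} A$ and $E \xrightarrow{\sim} A$ are weak equivalences, two-out-of-three (a consequence of two-out-of-six) forces the cofibration $B \rightarrowtail E$ to be a weak equivalence, so we obtain $C \xleftarrow{\sim} B \xrightarrow{\sim} E \xrightarrow{\sim} A$, i.e.\ a zig-zag with one fewer outward-to-inward turn; repeating, one shortens the zig-zag. Concretely, whenever two consecutive arrows point in the pattern $\leftarrow\ \rightarrow$ (a "valley"), this move turns the left-pointing arrow into a right-pointing one by pre-composition, reducing the number of valleys by one; after finitely many steps the zig-zag has no valleys, hence is of the form $D \to \cdots \to X \leftarrow \cdots \leftarrow D'$, and composing the outward-pointing runs (each a composite of weak equivalences, hence a weak equivalence by two-out-of-six/closure under composition) yields the desired span $D \xrightarrow{\sim} D'' \xleftarrow{\sim} D'$ with $D'' := X$.

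The main obstacle is bookkeeping rather than conceptual: one must be careful that the "valley-removal" move genuinely decreases a well-founded complexity measure (e.g.\ the number of left-pointing arrows occurring to the left of some right-pointing arrow, or simply the number of direction changes) so that the induction terminates, and one must check at each stage that the newly produced arrows are still weak equivalences — which is exactly where two-out-of-six (or its two-out-of-three consequence) and the factorization axiom are invoked. No functoriality of factorizations is needed, since each application is to a single morphism. Once the zig-zag has been reduced to the monotone shape $D \to \cdots \to D'' \leftarrow \cdots \leftarrow D'$, closure of weak equivalences under composition (again from two-out-of-six) finishes the proof.
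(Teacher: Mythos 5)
There is a genuine gap at the heart of your argument: the ``valley-removal'' move does not actually remove a valley. Given an outward span $A \xleftarrow[\sim]{x} B \xrightarrow[\sim]{x'} C$, factoring the leg $x \colon B \to A$ as $B \rightarrowtail E \xrightarrow{\sim} A$ merely inserts an intermediate object on that leg: the resulting diagram is $A \xleftarrow{\sim} E \xleftarrow{\sim} B \xrightarrow{\sim} C$, which is still an outward span centred at $B$ with exactly the same number of direction changes as before. Factorization can never reverse the direction of an arrow in a zig-zag, so your induction has no decreasing complexity measure and never reaches the monotone shape $D \to \cdots \to X \leftarrow \cdots \leftarrow D'$. (The final step --- composing the two monotone runs --- would be fine if you could get there, but you cannot with this move.)

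The tool you dismiss as ``not available'' --- a pushout --- is exactly what is needed, and it \emph{is} available: a Waldhausen category has pushouts along cofibrations, and the factorization axiom lets you replace a leg of the span by a cofibration. The paper's proof of \cref{htrtoihjiorgergergerge} factors $\id + x \colon D \vee D'' \to D$ as a cofibration followed by a weak equivalence $D \vee D'' \rightarrowtail D''' \xrightarrow{\sim} D$, deduces via two-out-of-six that both induced maps $D \rightarrowtail D'''$ and $z \colon D'' \rightarrowtail D'''$ are trivial cofibrations, and then forms the pushout of $x' \colon D'' \to D'$ along the cofibration $z$. The gluing axiom guarantees that $D''' \to D''' \sqcup_{D''} D'$ and $D' \rightarrowtail D''' \sqcup_{D''} D'$ are weak equivalences, yielding the inward-pointing cospan $D \xrightarrow{\sim} D''' \sqcup_{D''} D' \xleftarrow{\sim} D'$. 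So the correct proof needs factorization, two-out-of-six, \emph{and} the pushout/gluing axioms of the Waldhausen structure; your proposal tries to get by with only the first two, and that is where it fails.
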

\begin{proof}
Cf.~\cite[Lemma~5.10]{BM11} for a very similar argument.

 We claim that we can replace outward pointing zig-zags of weak equivalences by inward-pointing zig-zags of weak equivalences.
 If $D$ and $D^{\prime}$ in $\bD$ are connected by a sequence of zig-zags of weak equivalences, then  we can
 apply the claim repeatedly to parts of the sequences and finally compose the maps in order to obtain the desired single inward-pointing zig-zag of weak equivalences connecting $D$ and $D^{\prime}$.

 We now show the claim. We consider an outward pointing zig-zag
 \[ D \xleftarrow[\sim]{x} D'' \xrightarrow[\sim]{x'} D' \]
 of weak equivalences in $\bD$.
 Then we choose a factorization
 \[ D \vee D'' \rightarrowtail D''' \xrightarrow{\sim} D \]
 of the morphism $D \vee D''\xrightarrow{\id + x} D$ into a cofibration followed by a weak equivalence.
 We have a commuting diagram
 \[\xymatrix{
  D \vee D'' \ar@{>->}[r]&D''' \ar[d]^{\sim} \\
  D\ar@{=}[r]\ar@{>->}[u]\ar@{>..>}[ur]^{y}_{\sim} & D
 }\]
 which shows that the morphism $y$ is a cofibration and a weak equivalence.
 Similarly, the commuting diagram
 \[\xymatrix{
  D \vee D'' \ar@{>->}[r]&D'''  \ar[d]^{\sim} \\
  D''\ar[r]^{\sim}_{x}\ar@{>->}[u]\ar@{>..>}[ur]^{z}_{\sim} & D
 }\]
 shows that $z$ is a cofibration and a weak equivalence.
 Since $z$ is cofibration, we can form a push-out square
 \[\xymatrix{
  D''\ar[r]^{x'}_{\sim}\ar@{>->}[d]_{z}^{\sim} & D'\ar@{>->}[d] \\
  D'''\ar[r] & D'''\sqcup_{D''} D'\ .
 }\]
 The lower horizontal arrow is a weak equivalence because it is the pushout of a weak equivalence.
 We now see that
 \[ D \xrightarrow{\sim} D''' \xrightarrow{\sim} D''' \sqcup_{D''} D' \xleftarrow{\sim} D' \]
 is the desired inward-pointing  zig-zag of weak equivalences by composing the first two morphisms.
 \end{proof}

Let $\overline{\bC}$ be the {saturation} of $\bC$ in $\bD$, i.e.~the full subcategory consisting of all objects of $\bD$
which are connected by a sequence of zig-zags of weak equivalences with an object of $\bC$.
\begin{lem}\label{gbioregreggergreg}
  If $\bC \subseteq \bD$ admits a mapping cylinder argument, then $\overline{\bC}$ is a full homotopical Waldhausen subcategory of $\bD$.
 \end{lem}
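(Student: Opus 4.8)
The plan is to verify directly that $(\overline{\bC},\ \overline{\bC}\cap\cof\bD,\ \overline{\bC}\cap w\bD)$ is a homotopical Waldhausen category. Most of the requirements are inherited from $\bD$ with essentially no work: $\overline{\bC}$ is a full subcategory of $\bD$ containing the zero object (which already lies in $\bC$); the isomorphisms and the maps $0\rightarrowtail A$ with $A\in\overline{\bC}$ are cofibrations in $\bD$; and both Waldhausen's gluing axiom and the two-out-of-six property for weak equivalences restrict from $\bD$ as soon as the relevant pushouts are known to stay inside $\overline{\bC}$. Furthermore $\overline{\bC}$ is closed under weak equivalences by its very definition, which in particular gives factorizations for free: any factorization $X\rightarrowtail Z\xrightarrow{\sim}Y$ in $\bD$ of a morphism of $\overline{\bC}$ has $Z\in\overline{\bC}$, because $Z$ is weakly equivalent to $Y$. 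Thus the only substantial point is that $\overline{\bC}$ is closed under pushouts along cofibrations: if $A\rightarrowtail B$ is a cofibration and $A\to D$ a morphism in $\bD$ with $A,B,D\in\overline{\bC}$, then the pushout $B\sqcup_A D$ formed in $\bD$ lies in $\overline{\bC}$.

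To establish this I would run a chain of reductions based on Waldhausen's gluing lemma, bringing the problem to the case $A\in\bC$. First, factoring $A\to D$ into a cofibration followed by a weak equivalence and comparing the two resulting pushouts reduces to the case that $A\to D$ is also a cofibration. Next, using \cref{htrtoihjiorgergergerge} pick a zig-zag $A\xrightarrow{\sim}E_A\xleftarrow{\sim}C_A$ with $C_A\in\bC$; pushing the span $B\leftarrowtail A\rightarrowtail D$ out along $A\xrightarrow{\sim}E_A$ and invoking the gluing lemma — the map $A\to B\sqcup_A D$ is a cofibration, so pushing a weak equivalence out along it is again a weak equivalence — reduces to the case in which $A$ is the target of a weak equivalence from $C_A\in\bC$. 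Finally, factoring the composites $C_A\to A\rightarrowtail B$ and $C_A\to A\rightarrowtail D$ and applying the gluing lemma once more reduces to the case $A\in\bC$ with both $A\rightarrowtail B$ and $A\rightarrowtail D$ cofibrations.

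In this reduced situation the mapping cylinder hypothesis enters. By \cref{htrtoihjiorgergergerge} choose a weak equivalence $\beta\colon B\to E_B$ and a weak equivalence $C_B\to E_B$ with $C_B\in\bC$, so that $E_B$ is the target of a weak equivalence from an object of $\bC$; applying the mapping cylinder argument to the composite $A\rightarrowtail B\xrightarrow{\beta}E_B$ yields a cofibration $A\rightarrowtail B_\ast$ with $B_\ast\in\bC$ and a weak equivalence $B_\ast\xrightarrow{\sim}E_B$ compatible with the maps out of $A$. Forming the pushout $B\sqcup_A B_\ast$ along these cofibrations, equipping it with its induced map to $E_B$, and factoring that map as $B\sqcup_A B_\ast\rightarrowtail F\xrightarrow{\sim}E_B$, the two-out-of-three property (a consequence of two-out-of-six) shows that both $B\to F$ and $B_\ast\to F$ are weak equivalences; they are compatible with the canonical cofibrations from $A$ into $F$, which agree. (This pushout-then-factor construction is the ``mapping cylinder'' referred to in \cref{def:mca}.) Carrying out the analogous construction for $D$ produces $D_\ast\in\bC$ with a cofibration $A\rightarrowtail D_\ast$ and an object $G\in\overline{\bC}$ receiving weak equivalences from both $D$ and $D_\ast$ over $A$. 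Applying the gluing lemma to the two maps of spans $(B\leftarrowtail A\rightarrowtail D)\to(F\leftarrowtail A\rightarrowtail G)$ and $(B_\ast\leftarrowtail A\rightarrowtail D_\ast)\to(F\leftarrowtail A\rightarrowtail G)$ then gives weak equivalences $B\sqcup_A D\xrightarrow{\sim}F\sqcup_A G\xleftarrow{\sim}B_\ast\sqcup_A D_\ast$. Since $\bC$ is closed under pushouts along cofibrations, $B_\ast\sqcup_A D_\ast\in\bC$, whence $B\sqcup_A D\in\overline{\bC}$.

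The main obstacle is exactly this closure under pushouts along cofibrations. One cannot simply replace $A$, $B$, and $D$ individually by weakly equivalent objects of $\bC$: \cref{htrtoihjiorgergergerge} only produces weak equivalences pointing \emph{out of} these objects, whereas compatibility with the structure maps $A\rightarrowtail B$ and $A\to D$ would require weak equivalences pointing \emph{into} them, which need not be available. The role of the mapping cylinder argument is precisely to circumvent this: instead of a weak equivalence resolving $B$, it supplies a cofibration $A\rightarrowtail B_\ast$ with $B_\ast\in\bC$ together with a common ``cylinder'' $F$ that relates $B$ and $B_\ast$ over $A$, and this is what makes the concluding double application of the gluing lemma legitimate. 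Beyond that, the argument is routine bookkeeping with the gluing lemma, factorizations, and two-out-of-six in $\bD$.
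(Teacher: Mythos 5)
Your proposal is correct and follows essentially the same route as the paper: both proofs observe that factorizations are automatic because $\overline{\bC}$ is closed under weak equivalences, and both reduce the pushout question to a span in $\bC$ by a zig-zag of weak equivalences of spans, first normalizing the apex via \cref{htrtoihjiorgergergerge} and then the legs via the mapping cylinder argument, concluding with the gluing axiom. One small imprecision: \cref{def:mca} only furnishes a factorization $A \to B_\ast \xrightarrow{\sim} E_B$ with $B_\ast \in \bC$, not a \emph{cofibration} $A \rightarrowtail B_\ast$; you need one further factorization of $A \to B_\ast$ inside $\bC$ (exactly the paper's final column) before you can form $B_\ast \sqcup_A D_\ast$ in $\bC$ and apply the gluing lemma to the span $(B_\ast \leftarrow A \rightarrowtail D_\ast)$.
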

 \begin{proof}
  Since $\overline{\bC}$ obviously admits factorizations, we must only show that $\overline{\bC}$ is closed under pushouts along cofibrations.
  
 Let the diagram
 \[ \overline{C}_2 \leftarrow \overline{C}_0 \rightarrowtail \overline{C}_1 \]
 in $\overline{\bC}$ be given.  In the following, we describe step by step how to construct the diagram
 \[\xymatrix@R=3em{
  \overline{C}_1 &  \overline{C}_1\ar@{=}[l]_-{\id}\ar[r]^-{\sim} &  \overline{C}_1 \sqcup_{\overline{C_0}} D_0 & \overline{C}_{1}' \ar[l]_-{\sim}\ar@{=}[r]^-{\id} &  \overline{C}'_{1}&  \overline{C}'_{1}\ar@{=}[l]_-{\id} &  \overline{C}'_{1}\ar@{=}[l]_-{\id}  \\
   \overline{C}_0\ar@{>->}[u]\ar[d] &  \overline{C}_0\ar @{>->}[u]\ar@{>->}[d]\ar@{=}[l]_-{\id}\ar[r]^-{\sim} & D_0\ar@{>->}[u]\ar@{>->}[d] & C_0\ar@{>->}[u]\ar@{>->}[d]\ar[l]_-{\sim}\ar@{=}[r]^-{\id} & C_0\ar@{>->}[u]\ar[d] & C_0\ar@{>->}[u]\ar[d]\ar@{=}[l]_-{\id} & C_0\ar@{>->}[u]\ar@{>->}[d]\ar@{=}[l]_-{\id} \\
   \overline{C}_2 & \overline{C}'_{2}\ar[l]_-{\sim}\ar[r]^-{\sim} & \overline{C}_{2}' \sqcup_{ \overline{C}_0} D_0 & \overline{C}''_{2}\ar[l]_-{\sim}\ar[r]^-{\sim} & D_2 & C'_2\ar[l]_-{\sim} & C_2\ar[l]_-{\sim}
 }\]
 
 The first column is the original span.
 
 The second column is obtained from the first by factoring the morphism $ \overline{C}_0 \to  \overline{C}_2$ into a cofibration followed by a weak equivalence.
 By \cref{htrtoihjiorgergergerge}, there exists a zig-zag
 \[ \overline{C}_0 \xrightarrow{\sim} D_0 \xleftarrow{\sim} C_0 \]
 of weak equivalences with $C_0 \in \bC$.
 
 Taking the indicated pushouts produces the third column of the diagram.
 
 The fourth column arises from the third by factoring the morphisms $C_0 \to  \overline{C}_1 \sqcup_{\overline{C}_0} D_0$ and $C_0 \to \overline{C}_{2}' \sqcup_{\overline{C}_0} D_0$
 into a cofibration followed by a weak equivalence.
 Again using \cref{htrtoihjiorgergergerge} and the fact that $\overline{C}_{2}'' \in \overline{\bC}$, we choose a zig-zag
 \[ \overline{C}''_2 \xrightarrow{\sim} D_2 \xleftarrow{\sim} C \]
 of weak equivalences with $C \in \bC$.
 
 The fifth column arises canonically by composing existing morphisms.
 
 For the next column, we  {use that $\bC \subseteq \bD$ admits a mapping cylinder argument} (note that $D_2$ is the target of a weak equivalence from the object $C $ of $\bC$)
 in order to factor the morphism $C_0 \to D_2$ as
 \[ C_0 \to C'_2 \xrightarrow{\sim} D_2 \]
 with $C'_{2}\in \bC$.

 The final column is then obtained by factoring the morphism $C_0 \to C'_2$ in $\bC$ into a cofibration followed by a weak equivalence; in particular, we have $C_2 \in \bC$.
 
 Note that the final column has the same shape as the fourth column,
 with the difference that $C_0$ and $C_2$ both lie in $\bC$.
 Repeating the argument that gave the right half of the diagram with $\overline{C}'_{1}$ in place of $\overline{C}''_2$ finally yields a zig-zag of weak equivalences
 between the original span and a span in $\bC$.
 By virtue of the gluing axiom in a Waldhausen category, it follows that $\overline{C}_2 \sqcup_{\overline{C}_0} \overline{C}_1$ lies in $\overline{\bC}$.
\end{proof}
 
 \begin{lem}\label{erguiergrgegg}
 If $\bC \subseteq \bD$ admits a mapping cylinder argument, then the inclusion $\bC\to \overline{\bC}$  induces an equivalence  $\ell(\bC)\to \ell(\overline{\bC})$.
 \end{lem}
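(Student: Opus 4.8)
The idea is to apply the Cisinski Approximation theorem (\cref{thm:approximation}) to the full inclusion $f\colon\bC\to\overline{\bC}$. By \cref{gbioregreggergreg}, $\overline{\bC}$ is a full homotopical Waldhausen subcategory of $\bD$, so $\bC$ and $\overline{\bC}$ are both homotopical Waldhausen categories; moreover $f$ is exact in the sense of Waldhausen, since it preserves the zero object and cofibrations, and pushouts along cofibrations in $\bC$ and in $\overline{\bC}$ are both computed as in $\bD$. Thus everything reduces to verifying that $f$ has the approximation property of \cref{geroiger334g43g34g}.

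Condition (1) is immediate: a morphism of $\bC$ (resp.\ of $\overline{\bC}$) is a weak equivalence exactly when it lies in $w\bD$, so $f$ both preserves and detects weak equivalences. For condition (2), suppose $C\in\bC$ and $y\colon f(C)=C\to \overline D$ is a morphism in $\overline{\bC}$. By definition of the saturation, $\overline D$ is connected to some object of $\bC$ by a sequence of zig-zags of weak equivalences; applying \cref{htrtoihjiorgergergerge} we obtain a single inward-pointing zig-zag $\overline D\xrightarrow{\sim}D''\xleftarrow{\sim}C_0$ with $C_0\in\bC$, and in particular $D''\in\overline{\bC}$.

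Now I would invoke the mapping cylinder argument (\cref{def:mca}) for the composite $C\xrightarrow{y}\overline D\xrightarrow{\sim}D''$: this is a morphism of $\bD$ whose source $C$ lies in $\bC$ and whose target $D''$ is the target of a weak equivalence $C_0\xrightarrow{\sim}D''$ out of an object of $\bC$, so it factors as $C\xrightarrow{x}C'\xrightarrow[\sim]{w}D''$ with $C'\in\bC$. Taking $v\colon\overline D\xrightarrow{\sim}D''$ to be the first leg of the zig-zag, the square demanded in \cref{geroiger334g43g34g} (with $D'=D''$) commutes by construction, because both $w\circ x$ and $v\circ y$ equal the chosen composite $C\to D''$; all morphisms involved lie in $\overline{\bC}$, and $w,v$ are weak equivalences. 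Hence $f$ has the approximation property, and \cref{thm:approximation} gives that $\ell(f)\colon\ell(\bC)\to\ell(\overline{\bC})$ is an equivalence.

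I do not expect a serious obstacle here: the statement is essentially a packaging of the Approximation theorem together with \cref{gbioregreggergreg} and \cref{htrtoihjiorgergergerge}. The one point requiring a little care is that the mapping cylinder argument must be applied to a morphism landing in the strictified target $D''$ rather than in the given object $\overline D$, which is precisely why one first replaces the defining zig-zag for $\overline D$ by an inward-pointing one via \cref{htrtoihjiorgergergerge}; everything else is bookkeeping.
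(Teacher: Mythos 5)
Your proposal is correct and follows essentially the same route as the paper: reduce to the Approximation theorem via \cref{gbioregreggergreg}, replace the defining zig-zag for the target by a single inward-pointing one using \cref{htrtoihjiorgergergerge}, and then apply the mapping cylinder argument to the composite into the strictified target to produce the required commutative square. No gaps.
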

 \begin{proof}
  {We verify that the assumptions of \cref{thm:approximation} are satisfied.
  The saturation $(\overline{\bC}, \overline{\bC} \cap \cof\bD, \overline{\bC} \cap w\bD)$ of $\bC$ in $\bD$ is a homotopical Waldhausen category by \cref{gbioregreggergreg}.}
  It remains to show that the inclusion $\bC\to \overline{\bC}$ has the approximation property, see \cref{geroiger334g43g34g}.
 
  The inclusion of course preserves and detects equivalences. 
  
  In order to verify the second condition \ref{geroiger334g43g34g}.\ref{geiogergerg43t3t34t}, we consider a morphism
  $y \colon C \to \overline{C}$ with $C \in \bC$ and $\overline{C} \in \overline{\bC}$.
  By \cref{htrtoihjiorgergergerge}, we find a zig-zag of weak equivalences in $\bD$
  \[ \overline{C} \xrightarrow{\sim} \overline{C}^{\prime}  \xleftarrow{\sim} C'' \]
  with $C''\in \bC$ and  $\overline{C}^{\prime} \in \overline{\bC}$.
  In particular, $\overline{C}^{\prime} $ is the target of a weak equivalence from an object of $\bC$
  and we can  {use that $\bC \subseteq \bD$ admits a mapping cylinder argument} in order to factorize the  {composed map
  $C \to \overline{C}^{\prime}$ as $C \to C' \xrightarrow{\sim} \overline{C}^{\prime}$} with $C^{\prime}\in \bC$.
  This determines the desired commutative square
  \[\begin{gathered}[b]
  \xymatrix{
   C\ar[r]\ar[d] & \overline{C}\ar[d]^-{\sim} \\
   C'\ar[r]^{\sim} & \overline{C}^{\prime}
  }\\ \end{gathered}\qedhere\]
 \end{proof}

 \begin{lem}\label{rfiuwefwefewf23r23r2r}
  {If $\bC \subseteq \bD$ admits a mapping cylinder argument, then the} inclusion $\overline{\bC}\to \bD$ induces a fully faithful functor $\ell(\overline{\bC})\to \ell(\bD)$.
 \end{lem}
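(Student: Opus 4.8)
The plan is to show that for any two objects $X, Y$ of $\overline{\bC}$, the induced map on mapping spaces
\[ \Map_{\ell(\overline{\bC})}(X, Y) \to \Map_{\ell(\bD)}(X, Y) \]
is an equivalence. The point-set model from \cref{rem:explicit-localization} describes $\ell(\overline{\bC})$ and $\ell(\bD)$ as fibrant replacements of the nerves of the relative categories $(\overline{\bC}, w\overline{\bC})$ and $(\bD, w\bD)$ in the marked model structure on $\sSet^+$. Since the saturated category $\overline{\bC}$ is, by construction, closed under zig-zags of weak equivalences in $\bD$ and (by \cref{gbioregreggergreg}) admits factorizations with the two-out-of-six property, mapping spaces in these localizations can be computed via a calculus of fractions. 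Concretely, by \cref{lem:localization-fully-faithful} we already know it suffices to treat the inclusion $\overline{\bC} \to \bD$, and the key structural input is that every object of $\overline{\bC}$ is connected to one by an \emph{inward-pointing} zig-zag of weak equivalences (\cref{htrtoihjiorgergergerge}).

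First I would reduce to a hammock-type description of the mapping spaces. Using the explicit fibrant model, a morphism $X \to Y$ in $\ell(\bD)$ is represented by a zig-zag $X \to Z_1 \xleftarrow{\sim} \cdots \xrightarrow{\sim} Y$ in $\bD$, and two such are identified when connected by a zig-zag of such zig-zags. The essential observation is that because $\overline{\bC}$ is saturated, \emph{every} object appearing in such a zig-zag that is connected by weak equivalences to $X$ or $Y$ already lies in $\overline{\bC}$; more precisely, applying \cref{htrtoihjiorgergergerge} one can always shorten a zig-zag representing a morphism $X \to Y$ to the normal form $X \to W \xleftarrow{\sim} Y$, and since $Y \in \overline{\bC}$ and $W$ receives a weak equivalence from $Y$, we get $W \in \overline{\bC}$. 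Thus every morphism in $\ell(\bD)$ between objects of $\overline{\bC}$ is already represented by a zig-zag entirely inside $\overline{\bC}$; this gives essential surjectivity on morphisms (i.e.~surjectivity on $\pi_0$ of mapping spaces). For injectivity on $\pi_0$ and the higher homotopy groups, one runs the same argument one simplicial degree up: a homotopy in $\ell(\bD)$ between two morphisms with representatives in $\overline{\bC}$ is itself a zig-zag in $\bD$ whose intermediate objects are again forced into $\overline{\bC}$ by saturation, and \cref{htrtoihjiorgergergerge} allows one to normalize it to lie in $\overline{\bC}$.

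An alternative, cleaner route is to appeal directly to a general principle: the localization $\ell(\bD)$ can be modeled by the hammock localization $\hammock(\bD, w\bD)$ (the observations cited after \cref{def:homotopical-Waldhausen-cat} refer to \cite{BM11, Weiss99}), and a full relative subcategory $\overline{\bC} \subseteq \bD$ which is closed under weak equivalences and such that every weak equivalence into an object of $\overline{\bC}$ comes from $\overline{\bC}$ induces a full inclusion on hammock localizations, since hammocks between objects of $\overline{\bC}$ necessarily pass only through objects of $\overline{\bC}$. One then transports this along the comparison between the hammock localization and the $\infty$-categorical localization.

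The main obstacle I anticipate is the bookkeeping in the first approach: making precise that "every intermediate object is forced into $\overline{\bC}$" at the level of the marked-simplicial-set model, and checking that the zig-zag normalization of \cref{htrtoihjiorgergergerge} is compatible with the higher simplices needed to compare full mapping spaces rather than just $\pi_0$. This is exactly the kind of argument where citing \cite[Lemma~5.10]{BM11} and the surrounding comparison results for the hammock localization is the efficient move, so I expect the actual proof to be short: invoke that $\overline{\bC}$ is a homotopical Waldhausen subcategory (\cref{gbioregreggergreg}), note it is closed under weak equivalences in $\bD$ by definition of saturation, and conclude full faithfulness from the cited comparison, with \cref{htrtoihjiorgergergerge} supplying the one nontrivial closure property.
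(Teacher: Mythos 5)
Your proposal is correct and follows essentially the same route as the paper: the paper also computes mapping spaces in the localization by a calculus of right fractions (citing the dual of \cite[Example~7.6.4]{CisBook} and \cite[Corollary~7.2.9]{CisBook}, which expresses $\Map_{\ell(\bD)}(l(D_1),l(D_2))$ as a colimit over weak equivalences out of $D_2$) and then observes that, since $\overline{\bC}$ is full and closed under weak equivalences by saturation, the indexing diagrams for $\overline{\bC}$ and $\bD$ coincide. The only cosmetic difference is that the bookkeeping you worried about is delegated to Cisinski's formula rather than to the hammock localization, and the closure under weak equivalences is immediate from the definition of the saturation rather than needing \cref{htrtoihjiorgergergerge}.
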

\begin{proof}
 $\overline{\bC}$ is a homotopical Waldhausen category by \cref{gbioregreggergreg}.

 Noting that the opposite of a homotopical Waldhausen category is a category with weak equivalences and fibrations,
 we apply dualized versions of the results in \cite[Chapter~7]{CisBook}.
 
 \cite[Example~7.6.4]{CisBook} shows that there exists a \emph{calculus of fractions} at each object of $\bD$.
 This allows us to apply \cite[Corollary~7.2.9]{CisBook}:
 Let $l \colon \bD \to \ell(\bD)$ be the localization functor.
 For two objects $D_1$ and $D_2$ in $\bD$, we have a canonical equivalence
 \[ \Map_{\ell(\bD)}(l(D_1),l(D_2)) \simeq \colim_{D_2 \xrightarrow{\sim} D_2' \in W_\bD(D_2)} \Map_{\bD}(D_1,D_2'), \]
 where the colimit is indexed by the full subcategory $W_{\bD}(D_2)$ of the comma category $D_2/\bD$ spanned by the weak equivalences with domain $D_2$.
 
 Applying the same formula for $\overline{\bC}$, the inclusion functor $\overline{\bC}\to \bD$ induces for every two objects $\overline{C}_1$, $\overline{C}_2$ in $\overline{\bC}$ the canonical map
 \[ \colim_{(\overline{C}_2 \xrightarrow{\sim} \overline{C}_2') \in W_{\overline{\bC}}(\overline{C}_2)} \Map_{\overline{\bC}}(\overline{C}_1,\overline{C}_2') \to \colim_{(\overline{C}_2 \xrightarrow{\sim} D) \in W_{\bD}(\overline{C}_2)} \Map_{\bD}(\overline{C}_1,D)\ . \]
 Since $\overline{\bC}$ is a full subcategory of $\bD$ which is closed under weak equivalences,
 both sides are colimits of the same diagram.
 Hence $\ell(\overline{\bC}) \to \ell(\bD)$ is fully faithful.
\end{proof}

 This finishes the proof of \cref{lem:localization-fully-faithful}. 

 Recall the idempotent completion functor
 \[ \Idem \colon \catinf \to \catinf \]
 from \cite[Proposition~5.4.2.18]{LurieHTT}.
 Denote by $\catperf$ the full subcategory of $\catex$ spanned by the idempotent complete stable $\infty$-categories,
 and let $\catrperf$ be the full subcategory of $\catrex$ spanned by the idempotent complete right-exact $\infty$-categories.

\begin{lem}\label{lem:idem-exact}
 The idempotent completion functor induces functors
 \[ \Idem^{\mathrm{Rex}} \colon \catrex \to \catrperf \quad\text{and}\quad \Idem^{\mathrm{ex}} \colon \catex \to \catperf \]
 which are left adjoint to the inclusions $\catrperf \to \catrex$ and $\catperf \to \catex$. respectively.
\end{lem}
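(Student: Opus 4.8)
The plan is to realize both functors by restricting the idempotent completion adjunction of \cite[Proposition~5.4.2.18]{LurieHTT} between $\catinf$ and its (non-full) subcategory of idempotent complete $\infty$-categories. Concretely, I would check that for every $\cC$ in $\catrex$ (resp.\ $\catex$) the unit $\eta_\cC\colon\cC\to\Idem(\cC)$ is an exact functor whose target is again right-exact and idempotent complete (resp.\ stable and idempotent complete), and that precomposition with $\eta_\cC$ induces an equivalence
\[ \eta_\cC^*\colon \Fun^{\mathrm{ex}}(\Idem(\cC),\ccD)\xrightarrow{\sim}\Fun^{\mathrm{ex}}(\cC,\ccD) \]
for every idempotent complete $\ccD$ in $\catrex$ (resp.\ $\catex$). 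Granting this, the inclusions $\catrperf\to\catrex$ and $\catperf\to\catex$ admit left adjoints by the pointwise criterion for the existence of adjoints \cite[Proposition~5.2.4.2]{LurieHTT}, and these are given on objects by $\cC\mapsto\Idem(\cC)$ with unit $\eta_\cC$.

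For the first point I would use the standard identification of $\Idem(\cC)$ with the full subcategory $\Ind(\cC)^\omega\subseteq\Ind(\cC)$ of compact objects, see \cite[Section~5.4.2]{LurieHTT}. If $\cC$ is right-exact, then $\Ind(\cC)$ is cocomplete, the Yoneda embedding $\cC\to\Ind(\cC)$ preserves finite colimits and carries $0_\cC$ to a zero object, and finite colimits of compact objects are compact; hence $\Ind(\cC)^\omega$ is closed under finite colimits in $\Ind(\cC)$ and contains a zero object, so it is right-exact and $\eta_\cC$ is exact. Since $\Idem$ commutes with passage to opposite $\infty$-categories, the dual argument handles finite limits, so $\Idem$ also preserves stability with $\eta_\cC$ exact; alternatively one may invoke that $\Ind(\cC)$ is stable when $\cC$ is and that its compact objects form a stable subcategory, cf.\ \cite{BGT13}.

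The step I expect to be the main obstacle is the second point: showing that the essentially unique extension $G\colon\Idem(\cC)\to\ccD$ of an exact functor $F\colon\cC\to\ccD$ — which exists by the universal property of $\Idem$ in $\catinf$, as $\ccD$ is idempotent complete — is again exact (the reverse implication is immediate, since $\eta_\cC$ is exact). I would left Kan extend the composite $\cC\xrightarrow{F}\ccD\hookrightarrow\Ind(\ccD)$ along the Yoneda embedding to a colimit-preserving functor $F_!\colon\Ind(\cC)\to\Ind(\ccD)$. As $F_!$ sends representables to representables and preserves retracts, it restricts to a functor $\Ind(\cC)^\omega\to\Ind(\ccD)^\omega$; since $\ccD$ is idempotent complete, $\Ind(\ccD)^\omega\simeq\Idem(\ccD)\simeq\ccD$, so this restriction is an extension of $F$ along $\eta_\cC$ and hence agrees with $G$ up to equivalence. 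Because $F_!$ preserves all colimits, while finite colimits and zero objects in $\Idem(\cC)$ and $\Idem(\ccD)$ are computed in the ambient $\Ind$-categories, it follows that $G$ preserves finite colimits and the zero object, i.e.\ is exact (in the stable case this is exactly exactness in $\catex$). Combined with the trivial reverse implication this yields the equivalence $\eta_\cC^*$, and therefore the two adjunctions.
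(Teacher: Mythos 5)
Your proposal is correct and follows essentially the same route as the paper's proof: identify $\Idem(\cC)$ with the compact objects of $\Ind(\cC)$ (the paper works with $\Ind_\kappa$ for a general regular cardinal $\kappa$ and cites \cite[Propositions~5.1.3.2, 5.3.5.14, 5.3.5.15 and Lemma~5.4.2.4]{LurieHTT}), deduce that $\Idem(\cC)$ is right-exact with exact unit, check that the extension of an exact functor to the idempotent completion is again exact by passing through the induced colimit-preserving functor on $\Ind$-categories, and read off the adjunction from the universal property; the stable case then follows as in \cite[Corollary~1.1.3.7 and Proposition~1.1.4.1]{LurieHA}. The only differences are cosmetic (your use of a left Kan extension $F_!$ where the paper reapplies \cite[Proposition~5.3.5.15]{LurieHTT}).
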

\begin{proof}
 We begin by showing that $\Idem(\cC)$ is right-exact for any $\cC \in \catrex$.
 The argument is very similar to the proof of \cite[Corollary~1.1.3.7]{LurieHA}, but easier.
 
 Let $\kappa$ be a regular cardinal and consider the canonical functor $j \colon \cC \to \Ind_\kappa(\cC)$.
 By \cite[Proposition~5.1.3.2]{LurieHTT} and \cite[Proposition~5.3.5.14]{LurieHTT}, $\Ind_\kappa(\cC)$ has a zero object and $j$ is exact.
 Using \cite[Proposition~5.3.5.15]{LurieHTT}, every finite diagram in $\Ind_\kappa(\cC)$ is a $\kappa$-filtered colimit of finite diagrams in $\cC$.
 Since $\cC$ admits all finite colimits and colimits commute with each other, this shows that $\Ind_\kappa(\cC)$ is right-exact.
 
 By \cite[Lemma~5.4.2.4]{LurieHTT}, $\Idem(\cC)$ is equivalent to the full subcategory of $\kappa$-compact objects in $\Ind_\kappa(\cC)$.
 Since zero objects are compact and the collection of $\kappa$-compact objects is closed under finite colimits,
 this shows that $\Idem(\cC)$ is right-exact and that the canonical functor $\cC \to \Idem(\cC)$ is exact.
 
 Suppose now that $f \colon \cC \to \ccD$ is an exact functor, where $\ccD \in \catrex$ is idempotent complete.
 Another application of \cite[Proposition~5.3.5.15]{LurieHTT} shows that the induced functor $\Ind_\kappa(\cC) \to \Ind_\kappa(\ccD)$ preserves finite colimits.
 Restricting to the full subcategories of $\kappa$-compact objects and invoking \cite[Lemma~5.4.2.4]{LurieHTT} again,
 we see that the induced functor $\Idem(\cC) \to \ccD$ is exact.
 
 Hence the idempotent completion functor restricts to a functor
 \[ \Idem^{\mathrm{Rex}} \colon \catrex \to \catrperf\ .\]
 The universal property of $\Idem(\cC)$ implies that the canonical functor $\cC \to \Idem(\cC)$ induces an equivalence 
 \[ {\Map_{\catrex}(\Idem(\cC), \ccD) \xrightarrow{\sim} \Map_{\catrex}(\cC, \ccD)}\ , \]
 exhibiting $\Idem^{\mathrm{Rex}}$ as a left adjoint of the inclusion functor $\catrperf \to \catrex$.
 
 Since the idempotent completion of stable $\infty$-category is stable by \cite[Corollary~1.1.3.7]{LurieHA},
 it follows from \cite[Proposition~1.1.4.1]{LurieHA} that $\Idem^{\mathrm{Rex}}$ further induces a functor
 \[ \Idem^{\mathrm{ex}} \colon \catex \to \catperf \] 
 which is left adjoint to the inclusion $\catperf \to \catex$.
\end{proof}

Let $\bC\subseteq \bD$ be a full homotopical Waldhausen subcategory and let $D$ be an object of $\bD$.

\begin{ddd}\label{ergiofrggergrge}
 We say that $D$ is \emph{dominated by $\bC$} if there exists 
 a diagram
 \[\xymatrix@C=2em{
  D'\ar[dr]\ar[rr]^-{\sim} & & D \\
  & C\ar[ur] & \\
 }\]
 in $\bD$ with $C $ in $\bC$.

 We further say that \emph{$\bD$ is dominated by $\bC$ }if every object of $\bD$ is dominated by $\bC$.
\end{ddd}

\begin{theorem}[Cofinality theorem]\label{thm:cofinality}
 Let $\bC \subseteq \bD$ be the inclusion of a full homotopical Waldhausen subcategory.
 Suppose that
 \begin{enumerate}
  \item\label{it:cofinality-1} the inclusion $\bC \subseteq \bD$ admits a mapping cylinder argument, see \cref{def:mca};
  \item\label{it:cofinality-2} $\bD$ is dominated by $\bC$, see \cref{ergiofrggergrge}.
 \end{enumerate}
 Then the inclusion $\bC \to \bD$ induces an equivalence $\Idem(\ell(\bC)) \to \Idem(\ell(\bD))$.
\end{theorem}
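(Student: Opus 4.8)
The plan is to deduce the theorem by combining \cref{lem:localization-fully-faithful} with the domination hypothesis: the mapping cylinder assumption \eqref{it:cofinality-1} already yields full faithfulness before idempotent completion, and the domination assumption \eqref{it:cofinality-2} supplies exactly the surjectivity onto retracts that is needed afterwards.

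First I would note that by \cref{lem:localization-fully-faithful}, hypothesis \eqref{it:cofinality-1} implies that the inclusion $\bC \subseteq \bD$ induces a fully faithful functor $\ell(\bC) \to \ell(\bD)$. Since passing to idempotent completions preserves fully faithful functors (one computes mapping spaces in an idempotent completion as retracts of mapping spaces in the ambient $\infty$-category; cf.~\cite{LurieHTT}), the induced functor
\[ F \colon \Idem(\ell(\bC)) \to \Idem(\ell(\bD)) \]
is fully faithful as well; by \cref{lem:idem-exact} it is moreover a functor in $\catrperf$. It therefore remains to show that $F$ is essentially surjective.

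Next I would translate the domination hypothesis into a statement about retracts in the localization. Write $l \colon \bD \to \ell(\bD)$ for the localization functor, which is exact by \cref{prop:exact-localization} and in particular inverts weak equivalences. Let $D$ be an object of $\bD$ and choose, using \eqref{it:cofinality-2} and \cref{ergiofrggergrge}, morphisms $D' \to C \to D$ in $\bD$ with $C$ in $\bC$ whose composite is a weak equivalence $D' \xrightarrow{\sim} D$. Applying $l$ and precomposing with the inverse of the equivalence $l(D') \xrightarrow{\sim} l(D)$ exhibits $l(D)$ as a retract of $l(C)$ in $\ell(\bD)$. Since $C$ lies in $\bC$, the object $l(C)$ lies in the essential image of $\ell(\bC) \to \ell(\bD)$, hence, viewed in $\Idem(\ell(\bD))$, it lies in the essential image of $F$. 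Thus every object of $\ell(\bD)$ is a retract, in $\Idem(\ell(\bD))$, of an object in the essential image of $F$.

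Finally I would conclude by a closure-under-retracts argument. The essential image of $F$ is a full subcategory of $\Idem(\ell(\bD))$ that is closed under retracts, because $F$ is fully faithful and its source $\Idem(\ell(\bC))$ is idempotent complete. By the previous step it therefore contains every object of $\ell(\bD)$; and since, by construction of the idempotent completion, every object of $\Idem(\ell(\bD))$ is a retract of an object of $\ell(\bD)$, it follows that the essential image of $F$ is all of $\Idem(\ell(\bD))$. Hence $F$ is fully faithful and essentially surjective, i.e.\ an equivalence. Given that \cref{lem:localization-fully-faithful} carries the substantial weight, the only delicate point is the $\infty$-categorical bookkeeping of idempotents: one must know that a retract in an $\infty$-category gives rise to a coherent idempotent, that coherent idempotents are transported along fully faithful functors, and that in an idempotent complete $\infty$-category such an idempotent admits an essentially unique splitting; these facts (for which we refer to \cite{LurieHTT}) are exactly what underlies the assertion that the essential image of $F$ is closed under retracts.
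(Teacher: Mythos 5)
Your proposal is correct and follows exactly the same route as the paper's own (much terser) proof: full faithfulness of $\ell(\bC) \to \ell(\bD)$ via \cref{lem:localization-fully-faithful}, the observation that domination exhibits every object of $\ell(\bD)$ as a retract of an object coming from $\bC$, and the standard idempotent-completion formalities to conclude. The extra detail you supply on closure under retracts and essential surjectivity is accurate and simply makes explicit what the paper leaves implicit.
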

\begin{proof}
 By \cref{lem:localization-fully-faithful} and assumption~\eqref{it:cofinality-1}, the induced functor $\ell(\bC) \to \ell(\bD)$ is fully faithful.
 Assumption~\eqref{it:cofinality-2} implies that every object in $\ell(\bD)$ is a retract of an object in $\ell(\bC)$.
 Hence, the induced functor $\Idem(\ell(\bC)) \xrightarrow{\sim} \Idem(\ell(\bD))$ is an equivalence. 
\end{proof}

\begin{lem}\label{lem:stab-idem}
 The stabilization functor $\stab \colon \catrex \to \catex$ restricts to a functor
 \[ \stab^{\mathrm{perf}} \colon \catrperf \to \catperf \]
 which is left adjoint to the inclusion $\catperf \to \catrperf$.
\end{lem}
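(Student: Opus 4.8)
The plan is to treat the two halves of the statement separately: first to check that $\stab$ really does carry idempotent-complete objects to idempotent-complete objects, so that it restricts to a functor $\stab^{\mathrm{perf}}\colon\catrperf\to\catperf$, and then to deduce the adjunction, which will be formal.

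For the restriction, let $\cC\in\catrperf$. Since $\stab(\cC)$ already lies in $\catex$ it is a stable $\infty$-category, so I would invoke the standard fact that a stable $\infty$-category is idempotent complete as soon as its homotopy category has split idempotents (homotopy idempotents in an additive $\infty$-category lifting to coherent ones). Thus it suffices to show that $\Ho(\stab(\cC))$ has split idempotents. Combining the description \eqref{eq:stab}, the fact — used already in the proof of \cref{prop:filtered-colimits} — that filtered colimits in $\catrex$ are computed in $\catinf$, and the fact that $\Ho\colon\catinf\to\mathrm{Cat}$ preserves colimits, one obtains
\[ \Ho(\stab(\cC)) \simeq \colim\bigl( \Ho(\cC)\xrightarrow{\Sigma}\Ho(\cC)\xrightarrow{\Sigma}\cdots \bigr), \]
a sequential colimit of ordinary categories. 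An idempotent in such a colimit is represented, at a finite stage, by an endomorphism of some suspension $\Sigma^k C$ of an object $C$ of $\cC$ which becomes a genuine idempotent at a later stage; so the whole question reduces to showing that such an idempotent endomorphism in $\Ho(\cC)$ becomes split after applying $\Sigma$ sufficiently often, using that $\cC$ is idempotent complete as an $\infty$-category.

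This last reduction is where I expect the real difficulty to lie. Idempotent completeness of $\cC$ does not by itself make $\Ho(\cC)$ idempotent complete, so one genuinely has to exploit that stabilization improves matters: after enough suspensions a homotopy idempotent on an object of $\cC$ should lift to a coherent idempotent, and hence split. (When $\cC$ is the category of finitely dominated retractive spaces over a point this specializes to the classical assertion that the homotopy category of finite spectra is idempotent complete.) If one prefers not to argue this by hand, the statement that $\stab$ preserves idempotent completeness can instead be extracted from the treatment of the stabilization functor in \cite{BGT13}.

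Once it is known that $\stab(\cC)\in\catperf$ whenever $\cC\in\catrperf$, the adjunction follows by a string of identifications: for $\cC\in\catrperf$ and $\ccD\in\catperf$,
\[ \Map_{\catperf}\bigl(\stab^{\mathrm{perf}}(\cC),\ccD\bigr) \simeq \Map_{\catex}\bigl(\stab(\cC),\ccD\bigr) \simeq \Map_{\catrex}(\cC,\ccD) \simeq \Map_{\catrperf}(\cC,\ccD), \]
naturally in both variables, where the outer two equivalences come from $\catperf\subseteq\catex$ and $\catrperf\subseteq\catrex$ being full subcategories and the middle one is the adjunction between $\stab$ and the inclusion $\catex\hookrightarrow\catrex$. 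Hence $\stab^{\mathrm{perf}}$ is left adjoint to the inclusion $\catperf\hookrightarrow\catrperf$, as claimed.
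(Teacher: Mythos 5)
Your decomposition of the statement is the right one, and the adjunction half of your argument is complete and correct: once one knows that $\stab(\cC)$ is idempotent complete, the chain of natural equivalences of mapping spaces you write down (fullness of $\catperf\subseteq\catex$ and $\catrperf\subseteq\catrex$, plus the adjunction between $\stab$ and the inclusion $\catex\to\catrex$) is exactly the formal argument required, and it is also all the paper has in mind for that half. The problem is the other half. You reduce the preservation of idempotent completeness to the claim that a homotopy idempotent on some $\Sigma^kC$ in $\Ho(\cC)$ splits after sufficiently many further suspensions, ``using that $\cC$ is idempotent complete as an $\infty$-category'', and then you acknowledge that this is where the real difficulty lies and fall back on ``extract it from \cite{BGT13}''. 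This is a genuine gap, not a routine omission: idempotent completeness of $\cC$ is a statement about \emph{coherent} idempotents, whereas your reduction hands you only a homotopy idempotent in $\Ho(\cC)$, and (as you yourself note) homotopy idempotents in a general $\infty$-category need not lift to coherent ones. So it is not clear how the hypothesis on $\cC$ can be applied at the point to which you have reduced the problem; passing to homotopy categories discards precisely the coherence data one would need, and lifting the idempotent coherently inside the stable category $\stab(\cC)$ and splitting it there is circular, since that is what is being proved. No argument for the key step is offered.

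The paper avoids this entirely by a formal manipulation: it writes $\stab(\cC)$ as the sequential colimit \eqref{eq:stab} and uses that $\Idem^{\mathrm{Rex}}\colon\catrex\to\catrperf$ is a left adjoint (\cref{lem:idem-exact}), hence commutes with this colimit, to obtain $\stab(\cC)\simeq\stab(\Idem(\cC))\simeq\Idem(\stab(\cC))$, the first equivalence coming from $\cC\simeq\Idem(\cC)$. If you prefer an argument at the level of actual idempotents, you must carry out in detail the suspension-and-splitting analysis you allude to (or supply a precise reference in which it is carried out); as written, the central assertion of the lemma is flagged rather than proved.
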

\begin{proof}
 Let $\cC$ be an idempotent complete, right-exact $\infty$-category.
 Since $\Idem^{\mathrm{Rex}}$ is a left adjoint functor, formula~\eqref{eq:stab} implies
 \[ \stab(\cC) \simeq \stab(\Idem(\cC)) \simeq \Idem(\stab(\cC))\ , \]
 so $\stab$ preserves idempotent completeness as claimed.
\end{proof}
\begin{kor}\label{lem:idem-and-stab}
There is a canonical equivalence
 \[ \stab^{\mathrm{perf}} \circ \Idem^{\mathrm{Rex}} \simeq \Idem^{\mathrm{ex}} \circ \stab \]
 of functors from $\catrex$ to $\catperf$.
\end{kor}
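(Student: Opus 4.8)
The plan is to deduce the equivalence purely formally from the uniqueness of adjoint functors, without reworking the point-set arguments. First I would record what each of the four functors involved is a left adjoint to: by \cref{lem:stab-idem}, $\stab^{\mathrm{perf}}$ is left adjoint to the inclusion $\catperf \to \catrperf$; by \cref{lem:idem-exact}, $\Idem^{\mathrm{Rex}}$ is left adjoint to the inclusion $\catrperf \to \catrex$, and $\Idem^{\mathrm{ex}}$ is left adjoint to the inclusion $\catperf \to \catex$; and $\stab$ is left adjoint to the inclusion $\catex \to \catrex$ by construction. Composing adjunctions, $\stab^{\mathrm{perf}} \circ \Idem^{\mathrm{Rex}}$ is then left adjoint to the composite inclusion $\catperf \to \catrperf \to \catrex$, while $\Idem^{\mathrm{ex}} \circ \stab$ is left adjoint to the composite inclusion $\catperf \to \catex \to \catrex$.

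Next I would observe that these two composite right adjoints agree: each is the fully faithful inclusion of the subcategory of idempotent complete stable $\infty$-categories (and exact functors) into the subcategory of right-exact $\infty$-categories, since $\catperf$, $\catrperf$, $\catex$ and $\catrex$ are all full subcategories of $\catinf$ and the inclusions filling the evident square of subcategories are the literal ones; the underlying $\infty$-category of an object of $\catperf$ does not depend on whether it is regarded inside $\catrex$ via $\catrperf$ or via $\catex$. Since left adjoints are essentially unique, $\stab^{\mathrm{perf}} \circ \Idem^{\mathrm{Rex}}$ and $\Idem^{\mathrm{ex}} \circ \stab$ are canonically equivalent as functors $\catrex \to \catperf$, which is the assertion.

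The only point that needs a word of care is the claim that the two composite inclusions genuinely coincide; as just noted, this is immediate from the fact that everything takes place inside $\catinf$, so I do not expect a real obstacle here. As a consistency check one can note that on objects the resulting equivalence is precisely $\stab(\Idem(\cC)) \simeq \Idem(\stab(\cC))$, the equivalence already produced in the proof of \cref{lem:stab-idem}; the advantage of the adjoint-functor argument is that it supplies the naturality in $\cC$ automatically.
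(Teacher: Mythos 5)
Your argument is correct and is exactly the paper's: the paper also observes that the four inclusion functors form a commutative square and then invokes the adjunctions from \cref{lem:idem-exact} and \cref{lem:stab-idem}, calling the equivalence a "formal consequence" — which is precisely the uniqueness-of-left-adjoints step you spell out. No issues.
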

\begin{proof}
 Since the various inclusion functors form a commutative square
 \[\xymatrix{
  \catrperf\ar[r]\ar[d] & \catrex\ar[d] \\
  \catperf\ar[r] & \catex
 }\]
 the equivalence of functors $\stab^{\mathrm{perf}} \circ \Idem^{\mathrm{Rex}} \simeq \Idem^{\mathrm{ex}} \circ \stab$ is a formal consequence of \cref{lem:idem-exact} and \cref{lem:stab-idem}.
\end{proof}

\begin{kor}
 Let $\bC \subseteq \bD$ be a full homotopical Waldhausen subcategory satisfying the assumptions of \cref{thm:cofinality}.
 Then the induced map $\wuloc(\bC) \to \wuloc(\bD)$ is an equivalence.
\end{kor}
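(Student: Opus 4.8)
The plan is to chase the inclusion $\bC\subseteq\bD$ through the three functors comprising $\wuloc$. By \cref{efgieofewfefewfwefwefewfwef} we have $\wuloc=\uloc\circ\stab\circ\ell$, so it suffices to control the effect of the inclusion after applying $\ell$, then $\stab$, and finally $\uloc$.

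First I would apply the Cofinality theorem \cref{thm:cofinality}: since $\bC\subseteq\bD$ satisfies its two hypotheses, the induced functor $\Idem(\ell(\bC))\to\Idem(\ell(\bD))$ is an equivalence in $\catrperf$. Next I would apply $\stab^{\mathrm{perf}}\colon\catrperf\to\catperf$ from \cref{lem:stab-idem}; as a functor it preserves equivalences, so $\stab^{\mathrm{perf}}(\Idem^{\mathrm{Rex}}(\ell(\bC)))\to\stab^{\mathrm{perf}}(\Idem^{\mathrm{Rex}}(\ell(\bD)))$ is an equivalence. Rewriting both sides by means of the canonical equivalence $\stab^{\mathrm{perf}}\circ\Idem^{\mathrm{Rex}}\simeq\Idem^{\mathrm{ex}}\circ\stab$ of \cref{lem:idem-and-stab}, I obtain that $\Idem^{\mathrm{ex}}(\stab(\ell(\bC)))\to\Idem^{\mathrm{ex}}(\stab(\ell(\bD)))$ is an equivalence in $\catperf$; equivalently, the exact functor $\stab(\ell(\bC))\to\stab(\ell(\bD))$ of small stable $\infty$-categories is a Morita equivalence.

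Finally I would invoke the fact that the universal localizing invariant $\uloc$ of \cite{BGT13} is Morita-invariant, i.e.\ it sends the canonical functor $\cA\to\Idem^{\mathrm{ex}}(\cA)$, and hence every Morita equivalence, to an equivalence (this is either built into the definition of $\uloc$ on $\catex$ via precomposition with $\Idem^{\mathrm{ex}}$, or one of the first basic properties established in \cite{BGT13}). Applying this to $\cA=\stab(\ell(\bC))$ and $\cA=\stab(\ell(\bD))$ and combining with the naturality of $\cA\mapsto(\uloc(\cA)\to\uloc(\Idem^{\mathrm{ex}}(\cA)))$, the previous paragraph yields that $\uloc(\stab(\ell(\bC)))\to\uloc(\stab(\ell(\bD)))$, that is $\wuloc(\bC)\to\wuloc(\bD)$, is an equivalence. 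I do not expect any genuine obstacle here: all of the real content sits in \cref{thm:cofinality}, \cref{lem:stab-idem} and \cref{lem:idem-and-stab}, and the only thing requiring attention is the bookkeeping of which version of $\Idem$ and $\stab$ is applied to which category, together with the (standard) Morita-invariance of $\uloc$.
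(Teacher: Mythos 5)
Your argument is correct and is exactly the paper's proof, just written out in more detail: the paper likewise combines \cref{thm:cofinality}, the commutation $\stab^{\mathrm{perf}}\circ\Idem^{\mathrm{Rex}}\simeq\Idem^{\mathrm{ex}}\circ\stab$ of \cref{lem:idem-and-stab}, and the fact that $\uloc$ sends the units $\cA\to\Idem(\cA)$ to equivalences by definition. No gaps.
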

\begin{proof}
 Since $\uloc$ sends the units $\cC\to \Idem(\cC)$ to equivalences by definition
 and $\Idem$ commutes with $\stab$ by \cref{lem:idem-and-stab},
 this follows from \cref{thm:cofinality}.
\end{proof}

To conclude, we discuss the Fibration theorem.
The Additivity theorem will follow from this as a corollary.

\begin{lem}\label{lem:stabilization-fully-faithful}
 If $f \colon \cC \to \ccD$ is a fully faithful functor in $\catrex$,
 then its stabilization $\stab(f) \colon \stab(\cC) \to \stab(\ccD)$ is also fully faithful.
\end{lem}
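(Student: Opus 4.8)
The plan is to use the presentation of stabilization as a sequential colimit along suspension, see \eqref{eq:stab}, together with the fact that mapping spaces in filtered colimits of $\infty$-categories are computed termwise. Write $\stab(\cC) \simeq \colim(\cC \xrightarrow{\Sigma} \cC \xrightarrow{\Sigma} \cdots)$ and similarly for $\ccD$; since this is a sequential, hence filtered, colimit it may be computed in $\catinf$ by \cite[Proposition~5.5.7.11]{LurieHTT}. As $f$ is exact it preserves zero objects and pushouts, so it commutes with suspension up to a canonical natural equivalence $f\circ\Sigma \simeq \Sigma\circ f$. Hence $f$ assembles into a natural transformation of the two sequential diagrams, with every vertical arrow (equivalent to) $f$ and every horizontal arrow a suspension functor, and the map it induces on colimits is precisely $\stab(f)$.

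Next I would invoke the standard description of mapping spaces in a filtered colimit $\cA_\infty = \colim_n \cA_n$ of $\infty$-categories with cocone functors $a_n \colon \cA_n \to \cA_\infty$: every object of $\cA_\infty$ lies in the essential image of some $a_n$, and for objects $x,y$ of $\cA_n$ there is a natural equivalence $\Map_{\cA_\infty}(a_n x, a_n y) \simeq \colim_{k\ge 0}\Map_{\cA_{n+k}}(x_{n+k}, y_{n+k})$, where $x_{n+k}$, $y_{n+k}$ denote the images of $x$, $y$ under the transition functors. Applied to the diagram defining $\stab(\cC)$, any pair of objects of $\stab(\cC)$ is represented by objects $X$, $Y$ of $\cC$ and $\Map_{\stab(\cC)}(X,Y) \simeq \colim_{k\ge 0}\Map_{\cC}(\Sigma^k X, \Sigma^k Y)$; similarly $\Map_{\stab(\ccD)}(\stab(f)X, \stab(f)Y) \simeq \colim_{k\ge 0}\Map_{\ccD}(\Sigma^k fX, \Sigma^k fY)$. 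Using the equivalences $\Sigma^k f \simeq f\Sigma^k$ coming from exactness of $f$, the map induced by $\stab(f)$ between these colimits is the colimit over $k$ of the comparison maps $\Map_{\cC}(\Sigma^k X, \Sigma^k Y) \to \Map_{\ccD}(f\Sigma^k X, f\Sigma^k Y)$. Each of these is an equivalence because $f$ is fully faithful, and colimits preserve equivalences, so $\stab(f)$ induces an equivalence on all mapping spaces, i.e.\ it is fully faithful.

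The main point requiring care is the naturality bookkeeping in the second step: one must check that the identifications of the mapping spaces of $\stab(\cC)$ and $\stab(\ccD)$ with the respective filtered colimits are compatible with the map induced by $\stab(f)$ and with the termwise comparison maps, which in turn uses that the equivalences $f\Sigma \simeq \Sigma f$ can be chosen coherently---automatic since $f$ is a morphism in $\catrex$ and suspension is a finite-colimit construction preserved by exact functors. Once this compatibility is in place the conclusion is immediate, and there is no need to analyze essential surjectivity since only the statement on mapping spaces is claimed.
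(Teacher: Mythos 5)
Your proof is correct and follows essentially the same route as the paper's: both use the colimit description \eqref{eq:stab}, represent a pair of objects of $\stab(\cC)$ in a common stage $\cC$, compute mapping spaces as the filtered colimit $\colim_k \Map_{\cC}(\Sigma^k X, \Sigma^k Y)$, and conclude via the termwise equivalences coming from full faithfulness of $f$ together with $f\Sigma\simeq\Sigma f$. The extra care you flag about coherence of these identifications is left implicit in the paper as well.
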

\begin{proof}
 {We use the colimit description \eqref{eq:stab} of $\stab(\cC)$ and $\stab(\ccD)$, respectively.
 Let $i_n \colon \cC \to \stab(\cC)$ and $j_n \colon \ccD \to \stab(\ccD)$ denote the $n$-th structure morphism of the colimit systems.}
 For any two objects $\overline X$ and $\overline Y$ of $\stab(\cC)$, there exist some natural number $n$ and objects $X, Y \in \cC$ satisfying $\overline X \simeq i_n(X)$ and $\overline Y \simeq i_n(Y)$.
 Since $\stab(f)(\overline X) \simeq j_n(f(X))$ and using that $f$ is exact and fully faithful, we have
 \begin{align*}
  \Map_{\stab(\cC)}(\overline X, \overline Y)
  &\simeq \Map_{\stab(\cC)}(i_n(X), i_n(Y)) \\
  &\simeq \colim_k \Map_{\cC}(\Sigma^{k-n} X, \Sigma^{k-n} Y) \\
  &\simeq \colim_k \Map_{\ccD}(f(\Sigma^{k-n} X), f(\Sigma^{k-n} Y)) \\
  &\simeq \colim_k \Map_{\ccD}(\Sigma^{k-n} f(X), \Sigma^{k-n} f(Y)) \\
  &\simeq \Map_{\ccD}(j_n(f(X)), j_n(f(Y))) \\
  &\simeq \Map_{\stab(\ccD)}(\stab(f)(\overline X), \stab(f)(\overline Y)).
 \end{align*}
 So $\stab(f)$ is fully faithful.
\end{proof}

\begin{theorem}[Fibration theorem]\label{thm:fibration}
 Let $(\bC,v\bC)$ and $(\bC,w\bC)$ be homotopical Waldhausen categories having the same underlying category with cofibrations such that $v\bC \subseteq w\bC$.
 Denote by $\bC^w$ the full subcategory of $\bC$ spanned by those objects which are $w$-equivalent to zero.
 
 Then the inclusion $\bC^w \subseteq \bC$ and the identity on $\bC$ induce a fiber sequence
 \[ \wuloc(\bC^w,v\bC^w) \to \wuloc(\bC,v\bC) \to \wuloc(\bC,w\bC) \]
 in $\mloc$.
\end{theorem}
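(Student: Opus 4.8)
The plan is to exhibit the asserted fiber sequence as the image under the universal localizing invariant $\uloc$ of a Verdier sequence of stable $\infty$-categories obtained by applying $\stab\circ\ell$ to the triple $(\bC^w,v\bC^w)\to(\bC,v\bC)\to(\bC,w\bC)$; since $\wuloc=\uloc\circ\stab\circ\ell$, this gives the theorem. Throughout, write $l_v\colon\bC\to\stab\ell(\bC,v\bC)$ for the canonical exact functor, i.e.\ the composite of the localization $\bC\to\ell(\bC,v\bC)$ with $\Sigma^\infty$.

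First I would check that $(\bC^w,v\bC^w)$ is a homotopical Waldhausen subcategory of $(\bC,v\bC)$ and that the inclusion admits a mapping cylinder argument in the sense of \cref{def:mca}: if $C\to D$ is a morphism with $C\in\bC^w$ and $D$ receiving a $v$-equivalence from an object of $\bC^w$, then $D$ is $w$-acyclic because $v\bC\subseteq w\bC$, so any factorization $C\rightarrowtail C'\xrightarrow{\sim}D$ provided by the factorization axiom of $(\bC,v\bC)$ automatically has $C'\in\bC^w$ by the two-out-of-six property of $w\bC$. By \cref{lem:localization-fully-faithful}, the functor $\ell(\bC^w,v\bC^w)\to\ell(\bC,v\bC)$ is then fully faithful, and \cref{lem:stabilization-fully-faithful} promotes this to a fully faithful functor on stabilizations.

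Next I would identify $\ell(\bC,w\bC)$ with the right-exact localization of $\ell(\bC,v\bC)$ at the set $\bar W$ of images of the $w$-equivalences: by \cref{prop:exact-localization}\eqref{it:exact-localization-2}, an exact functor out of $\ell(\bC,w\bC)$ into a right-exact $\infty$-category is the same as an exact functor out of $\bC$ inverting $w\bC$, equivalently (using $v\bC\subseteq w\bC$ and that $\cof\bC$ is unchanged) an exact functor out of $\ell(\bC,v\bC)$ inverting $\bar W$. Because $\stab$ is a left adjoint, a manipulation of mapping spaces then yields $\stab\ell(\bC,w\bC)\simeq\stab\ell(\bC,v\bC)/T$, where $T$ is the thick subcategory of $\stab\ell(\bC,v\bC)$ generated by the cofibers of the images of the $w$-equivalences; in particular $\stab\ell(\bC,v\bC)\to\stab\ell(\bC,w\bC)$ is a Verdier projection. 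The heart of the matter is to show that $T$ agrees with the thick closure of the image of $\stab\ell(\bC^w,v\bC^w)$. For one inclusion: a $w$-equivalence $x\colon X\to Y$ factors in $(\bC,v\bC)$ as $X\rightarrowtail\tilde Y\xrightarrow{\sim}Y$, so $\cofib(l_v(x))\simeq\cofib(l_v(X\rightarrowtail\tilde Y))\simeq l_v(\tilde Y/X)$; as $X\rightarrowtail\tilde Y$ is a cofibration and a $w$-equivalence by two-out-of-three, the gluing axiom shows $\tilde Y/X$ is $w$-acyclic, so the generators of $T$ lie in the image of $\stab\ell(\bC^w,v\bC^w)$. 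For the converse: given $C\in\bC^w$, \cref{htrtoihjiorgergergerge} applied to $(\bC,w\bC)$ produces a zig-zag $0\xrightarrow{\sim}Z\xleftarrow{\sim}C$ of $w$-equivalences in $\bC$, whence $l_v(Z)\simeq\cofib(l_v(0\to Z))\in T$ and, from the cofiber sequence $l_v(C)\to l_v(Z)\to\cofib(l_v(C\to Z))$, also $l_v(C)\in T$; since $\stab\ell(\bC^w,v\bC^w)$ is generated under finite colimits, shifts and fibers by the image of $\bC^w$, its image is contained in $T$.

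To conclude, I would invoke the universal property of $\uloc$. Since $T$ and the image of $\stab\ell(\bC^w,v\bC^w)$ have the same thick closure and $\uloc$ inverts the inclusion of a stable subcategory into its thick closure (being a localizing invariant, it inverts idempotent completions), one gets $\uloc(T)\simeq\uloc\stab\ell(\bC^w,v\bC^w)=\wuloc(\bC^w,v\bC^w)$, compatibly with the map induced by $\bC^w\subseteq\bC$. The Verdier sequence $T\to\stab\ell(\bC,v\bC)\to\stab\ell(\bC,w\bC)$ is an exact sequence of stable $\infty$-categories in the sense of \cite{BGT13}, hence $\uloc$ carries it to a fiber sequence in $\mloc$, which under the above identifications is exactly the sequence $\wuloc(\bC^w,v\bC^w)\to\wuloc(\bC,v\bC)\to\wuloc(\bC,w\bC)$. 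I expect the main obstacle to be the kernel computation together with the bookkeeping in the final step: one must carefully separate a stable subcategory from its thick closure and verify that $\uloc$ really does not distinguish them, so that the cofiber of $\wuloc(\bC^w,v\bC^w)\to\wuloc(\bC,v\bC)$ comes out correctly. A secondary technical point is the mapping-space argument showing that the right-exact localization $\ell(\bC,v\bC)\to\ell(\bC,w\bC)$ becomes a Verdier projection after stabilization and that the resulting sequence satisfies the definition of an exact sequence from \cite{BGT13}.
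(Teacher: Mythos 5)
Your proposal is correct, and it shares the paper's overall strategy --- realize the claimed fiber sequence as the image under $\uloc$ of a Verdier sequence obtained by stabilizing the right-exact localizations --- but the two key steps are executed differently. For the quotient, you identify $\stab(\ell(\bC,w\bC))$ with $\stab(\ell(\bC,v\bC))/T$ purely by comparing the functors that both sides corepresent on stable $\infty$-categories; the paper instead constructs an explicit candidate inverse $\overline{q}$ to the comparison functor $\overline{p}$ between Verdier quotients and verifies $\overline{p}\,\overline{q}\simeq\id$ by establishing a universal property of $L_w$, which amounts to the same chain of equivalences of functor categories you use. For the kernel, the paper introduces the auxiliary subcategory $\bC_0$ of objects annihilated by $\Sigma^\infty_w\circ l_w$ and shows that $\stab(\ell(\bC^w,v\bC^w))\to\bC^s_{0,v}$ is an honest equivalence (fully faithful via the mapping cylinder argument, essentially surjective because every object of $\bC_0$ has an iterated suspension lying in $\bC^w$); your kernel is the thick subcategory $T$ generated by the cofibers of $w$-equivalences, which you identify with $\stab(\ell(\bC^w,v\bC^w))$ only up to thick closure, so you must additionally invoke the Morita invariance of $\uloc$ at the end. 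Both routes are legitimate: yours is somewhat more economical (no auxiliary $\bC_0$ and no suspension argument for essential surjectivity), at the price of leaning on Morita invariance and of the compatibility bookkeeping you flag yourself --- checking that the corepresentability identification of $\stab(\ell(\bC,v\bC))/T$ with $\stab(\ell(\bC,w\bC))$ intertwines the two projections out of $\stab(\ell(\bC,v\bC))$, which is precisely the ``tracing through the definitions'' step that the paper carries out explicitly. Your two-sided computation of $T$ (cofibers of $w$-equivalences are images of $w$-acyclic quotients via the gluing axiom; conversely $w$-acyclic objects land in $T$ by a two-out-of-three argument in the stable subcategory) is sound and is the genuinely new content relative to the paper's treatment of the kernel.
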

\begin{proof}
 Let us introduce some shorthand notation. Set
 \begin{align*}
  \bC_v := \ell(\bC, v\bC), &\quad \bC_w := \ell(\bC,w\bC), \\
  \bC^s_v := \stab(\ell(\bC,v\bC)), &\quad \bC^s_w := \stab(\ell(\bC,w\bC)).
 \end{align*}
 Since $v\bC \subseteq w\bC$, we have a commutative diagram
 \[\xymatrix{
  \bC\ar[r]^{l_v}\ar[rd]_{l_w} & \bC_v\ar[r]^{\Sigma^\infty_v}\ar[d]^{p} & \bC_v^s\ar[d]^{p^s} \\
   & \bC_w\ar[r]^{\Sigma^\infty_w} & \bC^s_w
 }\]
 in which $l_v$ and $l_w$ are the respective localization functors, $\Sigma^\infty_v$ and $\Sigma^\infty_w$ are instances of the counit $\Sigma^\infty$,
 and $p$ and $p^s$ are induced by the identity on $\bC$.
 
 Denote by $\bC_0 \subseteq \bC$ the full subcategory spanned by those objects whose image under $\Sigma^\infty_w \circ l_w$ is zero.
 Since $l_w$ is exact by \cref{prop:exact-localization}\eqref{it:exact-localization-1}, $\bC_0$ inherits Waldhausen structures $(\bC_0, v\bC_0 := \bC_0 \cap v\bC)$ and $(\bC_0, w\bC_0 := \bC_0 \cap w\bC)$.
 As above, we abbreviate $\bC_{0,v} := \ell(\bC_0,v\bC_0)$, and similarly for $\bC_{0,w}$, $\bC_{0,v}^s$ and $\bC_{0,w}^s$.
 
 Since $\bC_0$ is closed under $w$-equivalences and $v\bC \subseteq w\bC$,
 the inclusions $(\bC_0, v\bC_0) \subseteq (\bC,v\bC)$ and $(\bC_0, w\bC_0) \subseteq (\bC,w\bC)$ both admit a mapping cylinder argument (see \cref{def:mca}).
 It follows from \cref{lem:localization-fully-faithful} that the induced functors $\bC_{0,v} \to \bC_v$ and $\bC_{0,w} \to \bC_w$ are fully faithful.
 Hence $\bC_{0,v}^s \to \bC_v^s$ and $\bC_{0,w}^s \to \bC_w^s$ are also fully faithful by \cref{lem:stabilization-fully-faithful}.

 Recall from \cite[Theorem~I.3.3]{NS} the Verdier localization of stable $\infty$-categories:
 Given a stable $\infty$-category $\cC$ and a full stable subcategory $\ccD$,
 the localization $\cC/\ccD$ of $\cC$ at all morphisms whose cofiber lies in $\ccD$ is a stable $\infty$-category.
 For any stable $\infty$-category $\cE$, restriction along the localization functor $\cC \to \cC/\ccD$ induces an equivalence
 $\Fun^{\mathrm{ex}}(\cC/\ccD,\cE) \xrightarrow{\sim} \Fun^{\mathrm{ex}}_\ccD(\cC,\cE)$,
 where the right hand side denotes the full subcategory of those exact functors which vanish on $\ccD$.
 
 By the universal property, $p^s$ induces an exact functor
 \[ \overline{p} \colon \bC_v^s/\bC_{0,v}^s \to \bC_w^s/\bC_{0,w}^s \]
 on Verdier localizations. We show that $\overline{p}$ is an equivalence by constructing an inverse equivalence $\overline{q}$.
 
 Consider the functor
 \[ L_v \colon \bC \xrightarrow{l_v} \bC_v \xrightarrow{\Sigma^\infty_v} \bC_v^s \to \bC_v^s/\bC_{0,v}^s. \]
 Let $x \colon X \to Y$ be a morphism in $w\bC$, and denote by $\cofib_v(x) \in \bC_v$ the cofiber of $l_v(x)$.
 Since $p$ is exact, we have $p(\cofib_v(x)) \simeq 0$. Hence $\cofib_v(x)$ lies in the essential image of the functor $\bC_0 \subseteq \bC \xrightarrow{l_v} \bC_v$.
 It follows that $\cofib_v(x)$ is sent to zero in $\bC_v^s/\bC_{0,v}^s$.
 Since a morphism in a stable $\infty$-category is an equivalence if and only if its cofiber vanishes,
 this proves that $L_v$ sends all morphisms in $w\bC$ to equivalences.
 By the universal property of localization and stabilization, there exists an induced functor $q^s \colon \bC_w^s \to \bC_v^s/\bC_{0,v}^s$ which fits into the commutative diagram
 \[\xymatrix{
  \bC_{0,v}^s\ar[r]\ar[d] & \bC_v^s\ar[d]_{p^s}\ar[r] & \bC_v^s/\bC_{0,v}^s\\
  \bC_{0,w}^s\ar[r] & \bC_w^s\ar[ru]_{q^s} & \\
 }\]
 Since the functor $\bC_{0,v}^s \to \bC_{0,w}^s$ is essentially surjective, $q^s$ vanishes on $\bC_{0,w}^s$ and hence induces an exact functor
 \[ \overline{q} \colon \bC_w^s/\bC_{0,w}^s \to \bC_v^s/\bC_{0,v}^s. \]
 By construction, we have $\overline{q}\overline{p} \simeq \id$.
 
 To verify that $\overline{p}\overline{q} \simeq \id$ also holds, we show that $L_w \colon \bC \to \bC_w^s/\bC_{0,w}^s$, which is defined analogously to $L_v$, enjoys a universal property.
 Let $\ccD$ be a stable $\infty$-category.
 By the universal property of the Verdier localization, we have an equivalence
 \[ \Fun^{\mathrm{ex}}(\bC_w^s/\bC_{0,w}^s, \ccD) \xrightarrow{\sim} \Fun^{\mathrm{ex}}_{\bC_{0,w}^s}(\bC_w^s, \ccD). \]
 Restriction along $\Sigma^\infty_w$ induces a functor
 \[ (\Sigma^\infty_w)^* \colon \Fun^{\mathrm{ex}}_{\bC_{0,w}^s}(\bC_w^s, \ccD) \to \Fun^{\mathrm{ex}}_{\bC_{0,w}}(\bC_w, \ccD). \]
 Let $f \colon \bC_w \to \ccD$ be a functor vanishing on $\bC_{0,w}$,
 and let $\widehat{f} \colon \bC_w^s \to \ccD$ denote the essentially unique exact functor satisfying $\widehat{f} \circ \Sigma^\infty_w \simeq f$.
 Let $\overline{X} \in \bC_{0,w}^s$.
 Using the colimit description \eqref{eq:stab} of the stabilization,
 there exists some natural number $n$ such that $\overline{X} \simeq i_n(X)$ for some $X \in \bC_{0,w}^s$,
 where $i_n \colon \bC_{0,w} \to \bC_{0,w}^s$ is the $n$-th structure map of the colimit.
 By assumption, $\Sigma^n\overline{X}$ gets mapped to zero in $\ccD$. Since $\Sigma$ is an equivalence on $\bC_{0,w}^s$, it follows that $\widehat{f}(\overline{X}) \simeq 0$,
 so $\widehat{f}$ vanishes on $\bC_{0,w}^s$.
 This proves that the functor $(\Sigma^\infty_w)^*$ is an equivalence. 
 
 Finally, restriction along $l_w$ induces a functor
 \[ l_w^* \colon \Fun^{\mathrm{ex}}_{\bC_{0,w}}(\bC_w, \ccD) \to \Fun^{\mathrm{ex}}_{\bC_0}(\bC, \ccD), \]
 where $\Fun^{\mathrm{ex}}_{\bC_0}(\bC, \ccD)$ denotes the full subcategory of $\Fun^{\mathrm{ex}}(\bC, \ccD)$ containing those functors
 which send all objects in $\bC_0$ to zero (see \cref{def:exact-functor} for the notion of exactness of a functor $\bC \to \ccD$).
 Since the localization $\bC_0 \to \bC_{0,w}$ is essentially surjective, $l_w^*$ is also an equivalence.
 
 Consequently, $L_w$ induces an equivalence
 \[ \Fun^{\mathrm{ex}}(\bC_w^s/\bC_{0,w}^s, \ccD) \xrightarrow{\sim} \Fun^{\mathrm{ex}}_{\bC_0}(\bC, \ccD) \]
 for any stable $\infty$-category $\ccD$.
 Choosing $\ccD = \bC_w^s/\bC_{0,w}^s$ and tracing through the definitions,
 we find that $\overline{p}\overline{q}$ corresponds to $L_w$ under this equivalence.
 Therefore, $\overline{p}\overline{q} \simeq \id$, so $\overline{p}$ is an equivalence.
 Since we have a commutative diagram
 \[\xymatrix{
  \bC_{0,v}^s\ar[r]\ar[d]^{\id} & \bC_v^s\ar[r]\ar[d]^{\id} & \bC_v^s/\bC_{0,v}^s\ar[d]^{\overline{p}}_{\sim} \\
  \bC_{0,v}^s\ar[r] & \bC_v^s\ar[r] & \bC_w^s/\bC_{0,w}^s
 }\]
 the lower line is a Verdier sequence.
 
 Consider now the Verdier sequence
 \[ \bC_{0,w}^s \to \bC_w^s \to \bC_w^s/\bC_{0,w}^s. \]
 By definition, the first map in this sequence is zero, so $\bC_{0,w}^s \simeq 0$.
 It follows that $\bC_w^s \to \bC_w^s/\bC_{0,w}^s$ is an equivalence, and hence
 \[ \bC_{0,v}^s \to \bC_v^s\xrightarrow{p^s} \bC_w^s \]
 is a Verdier sequence.
 
 Observe that $\bC^w \subseteq \bC_0$. We next claim that the induced functor
 \[ \stab(\ell(\bC^w,v\bC^w)) \to \bC_{0,v}^s \]
 is an equivalence.
 
 Since $\bC^w$ is closed under $v$-equivalences, the inclusion $\bC^w \subseteq \bC_0$ admits a mapping cylinder argument.
 It follows from \cref{lem:localization-fully-faithful} and \cref{lem:stabilization-fully-faithful} that $\stab(\ell(\bC^w,v\bC^w)) \to \bC_{0,v}^s$ is fully faithful.
 
 Let $X$ be an object in $\bC_0$, which is equivalent to saying that $\Sigma^\infty_w(l_w(* \to X))$ is an equivalence.
 Using the colimit description \eqref{eq:stab} of stabilization,
 this implies that $0 \to \Sigma^nl_w(X)$ is an equivalence in $\bC_w$ for some natural number $n$.
 Since $p$ is exact, we have $p(\Sigma^nl_v(X)) \simeq \Sigma^nl_w(X)$ in $\bC_w$.
 As $l_v$ is essentially surjective, there exists some $Y \in \bC$ such that $l_v(Y) \simeq \Sigma^nl_v(X)$.
 In particular, $p(0 \to l_v(Y))$ is an equivalence.
 Since $p \circ l_v \simeq l_w$ and $l_w$ detects weak equivalences by \cref{prop:exact-localization}\eqref{it:exact-localization-1},
 we conclude that $Y$ is weakly contractible with respect to $w\bC$, i.e.~$Y \in \bC^w$.
 
 Therefore, some iterated suspension of each object in $\bC_{0,v}$ lies in the essential image of the functor $\ell(\bC^w,v\bC^w) \to \ell(\bC_0,v\bC_0)$.
 Hence $\stab(\ell(\bC^w,v\bC^w)) \to \stab(\ell(\bC_0,v\bC_0))$ is essentially surjective.
 
 We conclude that
 \[ \stab(\ell(\bC^w,v\bC^w)) \to \stab(\ell(\bC,v\bC)) \xrightarrow{p^s} \stab(\ell(\bC,w\bC)) \]
 is a Verdier sequence.
 The theorem is now a consequence of \cite[Theorem~9.34]{BGT13} and \cite[Proposition~I.3.5]{NS}.
\end{proof}

\begin{kor}[Additivity theorem]\label{thm:additivity}
 The projection functor
 \[ (s,q) \colon S_2\bC \to \bC \times \bC,\quad (X \rightarrowtail Y \twoheadrightarrow Z) \mapsto (X,Z) \]
 induces an equivalence
 \[ \wuloc(S_2\bC) \xrightarrow{\sim} \wuloc(\bC) \oplus \wuloc(\bC). \]
\end{kor}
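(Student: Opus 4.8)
The plan is to deduce additivity from the Fibration theorem in the classical way, by equipping $S_2\bC$ with a second, coarser class of weak equivalences. Write $v(S_2\bC)$ for the levelwise weak equivalences, so that $\wuloc(S_2\bC)=\wuloc(S_2\bC,v(S_2\bC))$, and set $w(S_2\bC):=s^{-1}(w\bC)$, i.e.\ a morphism of cofibration sequences lies in $w(S_2\bC)$ exactly when its subobject component lies in $w\bC$. Then $v(S_2\bC)\subseteq w(S_2\bC)$. Since $w(S_2\bC)$ is the preimage of $w\bC$ under the functor $s$, the two-out-of-six property is inherited from $\bC$, and $(S_2\bC,w(S_2\bC))$ admits factorizations because $(S_2\bC,v(S_2\bC))$ does and $v(S_2\bC)\subseteq w(S_2\bC)$; hence $(S_2\bC,w(S_2\bC))$ is a homotopical Waldhausen category with the same underlying category with cofibrations. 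By \cref{thm:fibration} we obtain a fiber sequence
\[ \wuloc((S_2\bC)^w,v) \to \wuloc(S_2\bC,v) \to \wuloc(S_2\bC,w) \]
in $\mloc$, where $(S_2\bC)^w$ is the full subcategory of those $X\rightarrowtail Y\twoheadrightarrow Z$ with $X$ weakly contractible.

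Next I would identify the two outer terms with $\wuloc(\bC)$. For the quotient term, I claim that $s\colon (S_2\bC,w)\to(\bC,w\bC)$ satisfies the approximation property of \cref{geroiger334g43g34g}: it preserves and detects $w$-equivalences by construction, and given an object $X\rightarrowtail Y\twoheadrightarrow Z$ together with a morphism $y\colon X\to D$, the pushout object $D\rightarrowtail Y\sqcup_X D\twoheadrightarrow Z$ and the canonical map into it supply the required factorization (no cylinder functor is needed, since one only modifies the subobject). \cref{thm:approximation} then shows that $\ell(s)$, and hence $\wuloc(s)\colon\wuloc(S_2\bC,w)\xrightarrow{\sim}\wuloc(\bC)$, is an equivalence. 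For the fiber term, observe that when $X$ is weakly contractible the map $Y\twoheadrightarrow Z$ is a weak equivalence, being the cobase change of the weak equivalence $X\to 0$ along the cofibration $X\rightarrowtail Y$ (gluing axiom). Consequently the exact functors $j\colon\bC\to(S_2\bC)^w$, $X\mapsto(0\rightarrowtail X\xrightarrow{=}X)$, and the restriction of $q$ to $(S_2\bC)^w$ satisfy $q\circ j=\id$ and admit a natural transformation $\id\Rightarrow j\circ q$ with values in levelwise weak equivalences; hence $\ell(q)$, and therefore $\wuloc(q)\colon\wuloc((S_2\bC)^w,v)\xrightarrow{\sim}\wuloc(\bC)$, is an equivalence.

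Under these two identifications the fiber sequence reads
\[ \wuloc(\bC)\xrightarrow{\wuloc(\tilde\jmath)}\wuloc(S_2\bC)\xrightarrow{\wuloc(s)}\wuloc(\bC), \]
where $\tilde\jmath\colon X\mapsto(0\rightarrowtail X\xrightarrow{=}X)$ is the inclusion of $(S_2\bC)^w$ precomposed with $j$. Now $q\circ\tilde\jmath=\id_\bC$, so $\wuloc(q)$ is a retraction of the first map; a fiber sequence in a stable $\infty$-category admitting such a retraction is a biproduct sequence, and one checks by a map of fiber sequences that $(\wuloc(q),\wuloc(s))\colon\wuloc(S_2\bC)\to\wuloc(\bC)\oplus\wuloc(\bC)$ is an equivalence (the exact functor $\tilde\imath\colon X\mapsto(X\xrightarrow{=}X\twoheadrightarrow 0)$ with $s\circ\tilde\imath=\id$ provides a compatible splitting of the second map). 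Composing with the interchange automorphism of $\wuloc(\bC)\oplus\wuloc(\bC)$ identifies this with the map induced by $(s,q)$, which proves the claim. The only non-formal points in this plan are the verification that $(S_2\bC,w(S_2\bC))$ is a homotopical Waldhausen category and, relatedly, that $s\colon(S_2\bC,w)\to(\bC,w\bC)$ has the approximation property; everything after that is a formal consequence of the Fibration and Approximation theorems together with stability of $\mloc$.
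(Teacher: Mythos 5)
Your argument is correct and follows essentially the same route as the paper: both deduce additivity from the Fibration theorem by introducing a coarser class of weak equivalences on $S_2\bC$ pulled back along one of the two projections, identify the resulting fiber and cofiber terms with $\wuloc(\bC)$ via explicit sections that are inverses up to weak equivalence, and conclude by splitness in the stable $\infty$-category $\mloc$. The only differences are cosmetic: the paper takes $w_q := q^{-1}(w\bC)$ where you take $s^{-1}(w\bC)$ (whence your final interchange automorphism), and it identifies the quotient term using that the section $Z \mapsto (* \rightarrowtail Z \twoheadrightarrow Z)$ of $q$ is an inverse up to $w_q$-equivalence rather than invoking the Approximation theorem.
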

\begin{proof}
 Consider the subcategory of weak equivalences $w_qS_2\bC$ consisting of those morphisms whose image under $q$ is a weak equivalence.
 By \cref{thm:fibration}, we obtain a fiber sequence
 \[ \wuloc(S_2\bC^{w_q},w) \to \wuloc(S_2\bC,w) \to \wuloc(S_2\bC,w_q). \]
 The projection functor $q \colon S_2\bC \to \bC$ admits the section
 \[ \bC \to S_2\bC,\quad Z \mapsto (* \rightarrowtail Z \twoheadrightarrow Z). \]
 If we consider the Waldhausen structure $(S_2\bC,w_q)$,
 this section is an inverse up to weak equivalence, so  {$q$ induces an equivalence $\wuloc(S_2\bC,w_q) \xrightarrow{\sim} \wuloc(\bC)$.}
 
 An object $X \rightarrowtail Y \twoheadrightarrow Z$ lies in $S_2\bC^{w_q}$ precisely if $Z$ is  weakly contractible.
 Hence, the functor $i \colon \bC \to S_2\bC^{w_q}$ sending $X$ to $X \rightarrowtail X \twoheadrightarrow *$ is a right-inverse to $s|_{S_2\bC^{w_q}}$ and a left-inverse up to weak equivalence.
 It follows that $i$ induces an equivalence $\wuloc(\bC) \xrightarrow{\sim} \wuloc(S_2\bC^{w_q})$.
 
 Therefore, we obtain a fiber sequence
 \[ \wuloc(\bC) \xrightarrow{\wuloc(i)} \wuloc(S_2\bC) \xrightarrow{\wuloc(q)} \wuloc(\bC). \]
 Since $i$ is split by $s$ and $\mloc$ is stable, the claim follows.
\end{proof}

\subsection{Algebraic \texorpdfstring{$K$}{K}-theory and infinite products}\label{sec:products}

\begin{ddd}\label{fiuewhfweiufhweiufhiu23r23r23r23r32r}
 Define the \emph{nonconnective algebraic $K$-theory functor} on homotopical Waldhausen categories by the composition
 \[ \wK := \bK \circ \wuloc \colon \Waldone \to \Sp \]
 of $\wuloc$ with the nonconnective algebraic $K$-theory functor $\bK \colon \mloc \to \Sp$ of \cite[Section~9]{BGT13}.
\end{ddd}

 Since $\bK$ is a colimit-preserving functor \cite[Theorem~9.8]{BGT13}, all structural results established about $\wuloc$ in \cref{sec:fundamental-theorems} carry over for $\wK$.
We refrain from stating them explicitly.

To finish our discussion of the general properties of algebraic $K$-theory  {of homotopical Waldhausen categories},
we address one of its more exotic properties, namely its compatibility with infinite products.
This was originally shown for connective algebraic $K$-theory by Carlsson in the setting of Waldhausen categories with a cylinder functor \cite{Carlsson95}.

\begin{theorem}\label{thm:products}
 Let $(\bC_i)_{i \in I}$ be a family of homotopical Waldhausen categories.
 Then the canonical map
 \[ \wK( \prod_{i \in I} \bC_i ) \xrightarrow{\sim} \prod_{i \in I} \wK(\bC_i) \]
 is an equivalence.
\end{theorem}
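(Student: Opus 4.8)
The plan is to reduce the nonconnective assertion to the classical fact that Waldhausen's \emph{connective} $K$-theory commutes with infinite products. Write $\wK=\bK\circ\wuloc$ with $\wuloc=\uloc\circ\stab\circ\ell$ as in \cref{sec:fundamental-theorems}, and recall from \cite[Section~9]{BGT13} the Bass-type delooping description of nonconnective $K$-theory: unwinding the colimit \eqref{eq:stab}, $\wK(\bC)$ is the filtered colimit, over the Bass structure maps, of the shifted connective $K$-theory spectra $\Omega^nK(\cS^n\bC)$ of the iterated suspension categories of $\bC$, where $\cS\bC$ denotes the quotient of a flasque ``cone'' category built on $\bC$. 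Two features of this description are decisive: (a) the suspension construction is formed objectwise and morphismwise, hence commutes with arbitrary products, so $\cS^n\bigl(\prod_{i\in I}\bC_i\bigr)\simeq\prod_{i\in I}\cS^n\bC_i$ for every $n$; and (b) the delooping colimit is degreewise eventually constant with a bound depending only on the degree: for each integer $m$ there is $N=N(m)$, independent of the input category, such that $\pi_m\wK(\bC)\cong\pi_{m+N}K(\cS^N\bC)$. (Feature (b) is where one must match $\bK\circ\wuloc$ with the classical nonconnective deloopings; it reflects that the discrepancy introduced at each Bass step sits in a single fixed low degree.)

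Granting this, the argument is short. The connective input is that the canonical map $K(\prod_i\bD_i)\to\prod_iK(\bD_i)$ is an equivalence for every family $(\bD_i)$ of homotopical Waldhausen categories; for categories admitting a cylinder functor this is Carlsson's theorem \cite{Carlsson95}, and in general it follows from the uniform bound on the complexity of cycles representing elements of $\prod_i\pi_nK(\bD_i)$ provided by the shortening-of-binary-complexes technique of Kasprowski and Winges. Fixing $m$ and setting $N=N(m)$, features (a) and (b) together with the connective input give
\begin{align*}
 \pi_m\wK\Bigl(\prod_{i\in I}\bC_i\Bigr)
 &\;\cong\; \pi_{m+N}K\Bigl(\cS^N\!\prod_{i\in I}\bC_i\Bigr)
 \;\cong\; \pi_{m+N}K\Bigl(\prod_{i\in I}\cS^N\bC_i\Bigr)\\
 &\;\cong\; \prod_{i\in I}\pi_{m+N}K\bigl(\cS^N\bC_i\bigr)
 \;\cong\; \prod_{i\in I}\pi_m\wK(\bC_i).
\end{align*}
One checks that these identifications are induced by the canonical comparison map $\wK(\prod_i\bC_i)\to\prod_i\wK(\bC_i)$; since $m$ was arbitrary, this map is an equivalence.

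The main obstacle is the connective input in the stated generality. Without a cylinder functor the crux is precisely the uniform control: an element of $\prod_i\pi_nK(\bD_i)$ must be represented by cycles of $I$-independent bounded complexity, so that the geometric realization defining $K$-theory---which does not commute with infinite products---may be replaced by a finite approximation that does. A secondary, more bookkeeping-type obstacle is establishing feature (b): one has to compare the delooping implicit in $\wuloc=\uloc\circ\stab\circ\ell$ with the explicit suspension construction on Waldhausen categories and extract the degreewise, input-independent bound $N(m)$, using that $\uloc$ and $\bK$ preserve filtered colimits (as already exploited in \cref{prop:k-filtered-colimits}) in order to form the relevant colimit at the level of $K$-theory spectra.
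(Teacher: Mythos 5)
Your feature (a) is where the argument breaks down, and it is precisely the point the paper has to work for. The suspension/stabilization construction does \emph{not} commute with infinite products of categories: an object of $\prod_{i\in I}\cS^{n}\bC_i$ (or of $\prod_{i\in I}\stab(\ell(\bC_i))$) is a family of objects desuspended by amounts $n_i$ that are unbounded over $I$, and no single finite suspension brings it back into the image of $\cS^{n}\bigl(\prod_i\bC_i\bigr)$. The paper states this explicitly in the remark preceding \cref{lem:stab-and-prod}: the canonical functor $\stab(\prod_i\cC_i)\to\prod_i\stab(\cC_i)$ is not essentially surjective, hence not an equivalence. What is true is only that this functor becomes an equivalence after applying a localizing invariant, and proving that is the actual content: one must show that an arbitrary family $(\Sigma^{n_i})_{i\in I}$ of iterated suspensions induces the identity on $\uloc$, which cannot be done by iterating additivity $n_i$ times (the number of applications would have to be uniform in $i$). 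The paper's \cref{lem:stab-and-prod} supplies exactly this, via the cofiber sequences built from the auxiliary endofunctor $S=\bigl(\bigoplus_{0<k<n_i}\Sigma^{k}\bigr)_{i\in I}$, which use a fixed number of applications of additivity independent of the $n_i$. Your proposal treats this step as formal ("formed objectwise and morphismwise, hence commutes with arbitrary products"), so the central difficulty is not addressed.

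Two further points. First, your feature (b) — identifying $\bK\circ\uloc\circ\stab\circ\ell$ with a cone-based Bass delooping and extracting a degreewise bound $N(m)$ — is not mere bookkeeping in the BGT framework: $\uloc$ is only defined on stable $\infty$-categories, so one cannot simply interchange it with the colimit \eqref{eq:stab}, and no cone category appears anywhere in the paper's setup; this comparison would be a substantial construction in its own right. Second, for orientation: the paper's actual route avoids the connective Waldhausen statement entirely. It factors the comparison map as $\bK(\uloc(\stab(\ell(\prod_i\bC_i))))\to\bK(\uloc(\stab(\prod_i\ell(\bC_i))))\to\bK(\uloc(\prod_i\stab(\ell(\bC_i))))\to\prod_i\bK(\uloc(\stab(\ell(\bC_i))))$, where the first map is an equivalence because $\infty$-categorical localization commutes with products (Cisinski), the second is \cref{lem:stab-and-prod}, and the third is the product theorem for nonconnective $K$-theory of stable $\infty$-categories from the Kasprowski--Winges paper the theorem is derived from. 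If you want to salvage your connective-delooping route, you would still need an argument (flasqueness of the cone on a product plus the localization sequence, or the paper's uniform additivity trick) showing that $\cS^{N}(\prod_i\bC_i)\to\prod_i\cS^{N}\bC_i$ is a $K$-equivalence even though it is not an equivalence of categories.
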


 {At the time of writing, we are not aware of an analogous statement being true for $\wuloc$.}
We derive \cref{thm:products} from the analogous statement for stable $\infty$-categories \cite[Theorem~1.3]{KWstable}.

\begin{lem}\label{lem:stab-and-prod}
 Let $(\cC_i)_{i \in I}$ be a family of right-exact $\infty$-categories.
 Then the canonical map
 \[ \uloc(\stab(\prod_{i \in I} \cC_i)) \to \uloc(\prod_{i \in I} \stab(\cC_i)) \]
 is an equivalence.
\end{lem}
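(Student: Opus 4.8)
The plan is to rewrite both sides as filtered colimits over the poset $\bbN^{I}$ (ordered coordinatewise) of an \emph{essentially constant} diagram with value $\uloc(\stab(\prod_{i\in I}\cC_{i}))$, so that the canonical map becomes a comparison between two colimits of such a diagram. To set this up I would first recall from \eqref{eq:stab} that $\stab(\cD)\simeq\colim(\cD\xrightarrow{\Sigma}\cD\xrightarrow{\Sigma}\cdots)$, that this colimit is filtered and may be computed in $\catinf$ (see the proof of \cref{prop:filtered-colimits}), that suspension on a right-exact $\infty$-category is an exact functor, and that in a product $\prod_{i\in I}\cC_{i}$ the suspension is formed factorwise, $\Sigma\simeq\prod_{i\in I}\Sigma_{i}$. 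I would then use that a product of filtered colimits indexed by filtered posets $J_{i}$ agrees with the colimit of the product diagram over $\prod_{i}J_{i}$: this comparison is an equivalence already on objects and on mapping spaces, because $\prod_{i}J_{i}$ is again filtered and in it the separate factors may be advanced independently. Applied to $J_{i}=\bbN$ and the diagrams $\cC_{i}\xrightarrow{\Sigma}\cC_{i}\to\cdots$ this gives
\[ \prod_{i\in I}\stab(\cC_{i})\ \simeq\ \colim_{f\in\bbN^{I}}\Big(\prod_{i\in I}\cC_{i}\Big), \]
with the transition functor attached to $f\le f+e_{k}$ being the partial suspension $\Sigma_{k}$ (suspend the $k$-th factor, identity elsewhere). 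Restricting this $\bbN^{I}$-diagram along the diagonal $\delta\colon\bbN\hookrightarrow\bbN^{I}$ recovers $\colim(\prod_{i}\cC_{i}\xrightarrow{\Sigma}\cdots)=\stab(\prod_{i}\cC_{i})$, and the canonical functor $F\colon\stab(\prod_{i}\cC_{i})\to\prod_{i}\stab(\cC_{i})$ — produced by the universal property of stabilization from the exact functor $\prod_{i}\Sigma^{\infty}_{i}$ — is the map on colimits induced by $\delta$.

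Next I would observe that every partial suspension becomes invertible after stabilization: $\stab(\Sigma_{k})$ and $\stab(\prod_{i\ne k}\Sigma_{i})$ are endofunctors of the stable $\infty$-category $\stab(\prod_{i}\cC_{i})$ which commute (their defining operations affect disjoint factors) and whose composite $\stab(\Sigma_{k})\circ\stab(\prod_{i\ne k}\Sigma_{i})=\stab(\Sigma)$ is its suspension equivalence; a commuting pair of endomorphisms with invertible composite consists of equivalences, so $\stab(\Sigma_{k})$ is an equivalence. Then I would apply $\uloc\circ\stab\colon\catrex\to\mloc$, which preserves filtered colimits since $\stab$ is a left adjoint and $\uloc$ is finitary (as in \cref{prop:k-filtered-colimits}). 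As $\prod_{i}\stab(\cC_{i})$ is already stable, $\stab$ leaves it unchanged, so applying $\uloc\circ\stab$ to the displayed colimit gives
\[ \uloc\Big(\prod_{i\in I}\stab(\cC_{i})\Big)\ \simeq\ \colim_{f\in\bbN^{I}}\uloc\big(\stab(\prod_{i}\cC_{i})\big). \]
The diagram on the right is constant with value $\uloc(\stab(\prod_{i}\cC_{i}))$ and has all transition maps equivalences, and $\bbN^{I}$ is filtered, hence weakly contractible; so this colimit is canonically $\uloc(\stab(\prod_{i}\cC_{i}))$ via the structure map attached to $0\in\bbN^{I}$. Running the same argument along the diagonal identifies $\uloc(\stab(\prod_{i}\cC_{i}))$ with $\colim_{\bbN}\uloc(\stab(\prod_{i}\cC_{i}))$ compatibly; since $\delta(0)=0$, tracing the identifications shows that $\uloc(F)$ corresponds to the identity, hence is an equivalence.

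I expect the colimit interchange to be the step that requires real care: one must verify that $\prod_{i\in I}\colim_{\bbN}$ agrees with $\colim_{\bbN^{I}}\prod_{i\in I}$ for these diagrams of $\infty$-categories — equivalently, on objects and on mapping spaces, where it reduces to the same statement for filtered colimits of sets — and recognise that this is exactly what \emph{fails} for the naive comparison: the diagonal $\delta$ is not final once $I$ is infinite (for an unbounded $f\in\bbN^{I}$ there is no $n$ with $\delta(n)\ge f$), which is precisely why $F$ itself is not an equivalence. Everything else is formal; in particular this lemma uses nothing about the product theorem for stable $\infty$-categories \cite{KWstable}, which enters only afterwards, when one passes from $\uloc$ to $\bK$ in the proof of \cref{thm:products}.
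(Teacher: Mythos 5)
Your reduction is the same as the paper's: rewrite $\prod_{i\in I}\stab(\cC_i)$ as $\colim_{\bN^I}\ccD$ with $\ccD(\alpha)\simeq\prod_i\cC_i$ and transition functors $\prod_i\Sigma^{\beta(i)-\alpha(i)}$, identify the comparison map with the structure map at $0$, and try to show the diagram $\uloc(\stab(\ccD))$ is essentially constant. The gap is the step where you declare all transition maps to be equivalences. Your commuting-endofunctor trick shows that $\stab(\Sigma_k)$ is invertible, and more generally that $\stab\bigl(\prod_i\Sigma^{n_i}\bigr)$ is invertible whenever $(n_i)_{i\in I}$ is \emph{bounded} (take the commuting partner $\prod_i\Sigma^{N-n_i}$ with $N=\max_i n_i$). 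But for infinite $I$ a morphism $\alpha\leq\beta$ in $\bN^I$ is in general not a finite composite of single-coordinate increments (consider $0\leq(1,1,1,\dots)$); the finitely supported functions are not cofinal in $\bN^I$, for exactly the reason you correctly give for the diagonal not being final. And for unbounded $(n_i)$ the functor $\stab\bigl(\prod_i\Sigma^{n_i}\bigr)$ is genuinely \emph{not} an equivalence: with $\cC_i$ the finite pointed spaces and $n_i=i$, the object $(S^0)_{i}$ is not in the essential image, since that would exhibit each $S^0$ as an $(i-N)$-fold suspension in finite spectra for a fixed $N$. So ``everything else is formal'' is precisely where the content of the lemma sits.

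What the paper proves instead is the weaker but sufficient statement that $\uloc$ (not $\stab$) inverts these transition functors: for every sequence of even numbers $(n_i)_{i\in I}$, possibly unbounded, one has $\uloc(\id)\simeq\uloc\bigl((\Sigma^{n_i})_{i\in I}\bigr)$. This is not formal; it uses the Additivity theorem. One forms the endofunctor $S=\bigl(\bigoplus_{0<k<n_i}\Sigma^k\bigr)_{i\in I}$ of $\prod_i\cC_i$ and exhibits two cofiber sequences of exact endofunctors with the same outer terms and with middle terms $\id\oplus S$ and $(\Sigma^{n_i})_i\oplus S$ respectively; Additivity then gives $\uloc(\id\oplus S)\simeq\uloc((\Sigma^{n_i})_i\oplus S)$, and cancelling $\uloc(S)$ yields the claim. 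The point, flagged in the remark preceding the paper's proof, is that this uses a number of applications of additivity independent of the $n_i$, which is what lets it handle an unbounded family all at once; one then concludes by cofinality of the even-valued functions in $\bN^I$. Your argument needs this (or an equivalent) non-formal input; the remaining parts of your outline, including the colimit interchange and the observation that the product theorem of \cite{KWstable} enters only later, do agree with the paper.
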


\begin{rem}
 Note that the canonical functor $\stab(\prod_{i \in I} \cC_i) \to \prod_{i \in I} \stab(\cC_i)$ is not essentially surjective, and hence not an equivalence.
 The claim of \cref{lem:stab-and-prod} boils down to the assertion that any sequence $(\Sigma^{2n_i})_{i \in I}$ of iterated suspension functors induces the identity after applying $\cU_{loc}$. 
 The standard argument to show that $\Sigma^{2n}$ induces the identity map  after applying $\cU_{loc}$ requires an increasing number of applications of additivity as $n$ grows,
 and thus runs into problems for such an infinite sequence.
 
 However, there is a slight variation of the argument which only requires a fixed number of applications of additivity, regardless of $n$.
 For example, taking the coproduct of the cofiber sequences
 \[ \id \xrightarrow{\id} \id \to 0,\quad \Sigma \xrightarrow{0} \Sigma \to \Sigma \sqcup \Sigma^2,\quad \Sigma^2 \xrightarrow{\id} \Sigma^2 \to 0,\quad \Sigma^3 \xrightarrow{0} \Sigma^3 \to \Sigma^3 \sqcup \Sigma^4 \] 
 gives the cofiber sequence
 \[ \id \sqcup \Sigma \sqcup \Sigma^2 \sqcup \Sigma^3 \xrightarrow{\id \sqcup 0 \sqcup \id \sqcup 0} \id \sqcup \Sigma \sqcup \Sigma^2 \sqcup \Sigma^3 \to \Sigma \sqcup \Sigma^2 \sqcup \Sigma^3 \sqcup \Sigma^4.\]
 Similarly, we have a cofiber sequence
 \[ \id \sqcup \Sigma \sqcup \Sigma^2 \sqcup \Sigma^3 \xrightarrow{0 \sqcup \id \sqcup 0 \sqcup \id} \Sigma^4 \sqcup \Sigma \sqcup \Sigma^2 \sqcup \Sigma^3 \to \Sigma^4 \sqcup \Sigma \sqcup \Sigma^2 \sqcup \Sigma^3.\]
 Noting that the two cofiber sequences differ only in the second term, and that the second term is given by $\id \sqcup S$ and $\Sigma^4 \sqcup S$ for some  {endofunctor $S$}, respectively, it follows that $\bK(\id) \simeq \bK(\Sigma^4)$.
\end{rem}

\begin{proof}[Proof of \cref{lem:stab-and-prod}]
 Recall that we have an equivalence
 \[ \stab(\cC) \simeq \colim ( \cC \xrightarrow{\Sigma} \cC \xrightarrow{\Sigma} \cC \xrightarrow{\Sigma} \ldots ) \]
 for every right-exact $\infty$-category $\cC$.
 In particular, $\prod_{i \in I} \stab(\cC_i)$ admits the following description:
 Let $\bN$ denote the set of natural numbers, and let $\bN^I$ be the set of functions $I \to \bN$,
 equipped with the partial ordering such that $\alpha \leq \beta$ if and only if $\alpha(i) \leq \beta(i)$ for all $i \in I$.
 Then for the functor $\ccD \colon \bN^I \to \catrex$ which satisfies $\ccD(\alpha) \simeq \prod_{i \in I} \cC_i$ for all $\alpha$,
 and sends $\alpha \leq \beta$ to the functor
 \[ \prod_{i \in I} \Sigma^{\beta(i) - \alpha(i)} \colon \prod_{i \in I} \cC_i \to \prod_{i \in I} \cC_i, \]
 we have
 \[ \prod_{i \in I} \stab(\cC_i) \simeq \colim_{\bN^I} \ccD. \]
 Since $\prod_{i \in I} \stab(\cC_i) \simeq \stab(\prod_{i \in I} \stab(\cC_i))$, it suffices to consider the map
 \[ \uloc(\stab(\prod_{i \in I} \cC_i)) \to \uloc( \stab(\prod_{i \in I} \stab(\cC_i)) ). \]
 As $\stab$ commutes with filtered colimits in $\catrex$ and $\uloc$ commutes with filtered colimits in $\catex$, the canonical map
 \[ \colim_{\alpha \in \bN^I} \uloc(\stab(\ccD(\alpha))) \to \uloc(\stab(\prod_{i \in I} \stab(\cC_i)) ) \]
 is an equivalence, and the map we are interested in corresponds to the structural inclusion
 \[ \uloc(\stab(\ccD(0))) \to \colim_{\alpha \in \bN^I} \uloc(\stab(\ccD(\alpha))), \]
 where $0$ denotes the constant map $0 \colon I \to \bN$.
 
 To prove the claim, it is enough to show that all maps in the diagram $\uloc(\stab(\ccD))$ are equivalences.
 Let $(n_i)_{i \in I}$ be an arbitrary sequence of even natural numbers.
 Consider the endofunctor
 \[ S := \big( \bigoplus_{0 < k < n_i} \Sigma^k \big)_{i \in I} \colon \prod_{i \in I} \cC_i \to \prod_{i \in I} \cC_i. \]
 Then we have a cofiber sequence of exact functors $\prod_i \cC_i \to \prod_i \cC_i$
 \[ \id \oplus S \to \id \oplus S \to S \oplus ( \Sigma^{n_i} )_{i \in I}, \]
 in which the first transformation is given by
 \[ \big( \id \oplus \Sigma \oplus \Sigma^2 \oplus \ldots \oplus \Sigma^{n_i-1} \xrightarrow{\id \oplus 0 \oplus \id \oplus \ldots \oplus 0} \id \oplus \Sigma \oplus \Sigma^2 \oplus \ldots \oplus \Sigma^{n_i-1} \big)_{i \in I} \] ($\id$ and $0$ alternate).
 Moreover, there also exists a cofiber sequence
 \[ \id \oplus S \to ( \Sigma^{n_i} )_{i \in I} \oplus S \to S \oplus ( \Sigma^{n_i} )_{i \in I}, \]
 in which the first transformation is given by
 \[ \big( \id \oplus \Sigma \oplus \Sigma^2 \oplus \ldots \oplus \Sigma^{n_i-1} \xrightarrow{0 \oplus \id \oplus 0 \oplus \ldots \oplus \id} \Sigma^{n_i} \oplus \Sigma \oplus \Sigma^2 \oplus \ldots \oplus \Sigma^{n_i -1} \big)_{i \in I}. \]
 By virtue of the Additivity theorem, we have $\cU_{loc}(\id \oplus S) \simeq  \cU_{loc}((\Sigma^{n_i})_{i \in I} \oplus S)$, and conclude that
 \[ \cU_{loc}(\id) \simeq  \cU_{loc}((\Sigma^{n_i})_{i \in I}). \]
 Since the subset of all functions $I \to 2\bN$ is cofinal in $\bN^I$, this suffices to show that the diagram $\cU_{loc}(\stab(\ccD))$ is essentially constant,
 and thus proves our claim.
\end{proof}

\begin{proof}[Proof of \cref{thm:products}]
 Unravelling the definition of the functor $\wK$, we can factor the comparison map as
 \begin{align*}
  \bK( \cU_{loc}(\stab(\ell(\prod_{i \in I} \bC_i))))
  &\to \bK(\cU_{loc}(\stab(\prod_{i \in I} \ell(\bC_i)))) \\
  &\to \bK(\cU_{loc}(\prod_{i \in I}  \stab(\ell(\bC_i)))) \\
  &\to \prod_{i \in I} \bK(\cU_{loc}(\stab(\ell(\bC_i)))).
 \end{align*}
 The first map is an equivalence by \cite[Proposition~7.7.1]{CisBook},
 the second map is an equivalence by \cref{lem:stab-and-prod},
 and the third map is an equivalence by \cite[Theorem~1.3]{KWstable}.
\end{proof}


\section{Controlled retractive spaces over a bornological coarse space}\label{sec:controlled-retractive-spaces}

To produce a coarse variant of $A$-theory, we have to transfer the notion of controlled retractive spaces from \cite{Weiss02} and \cite{UW} to the setting of bornological coarse spaces.
While this is relatively straightforward, we try to make our treatment self-contained
(modulo the terminology introduced in \cite[Section~2]{BE} and \cite[Sections~2 and 3]{equicoarse},
which we will use freely throughout).
The main deviation from \cite{UW} in our treatment lies in the proof of the gluing lemma for controlled equivalences.

\subsection{Controlled CW-complexes}

Let $W$ be a $G$-space and let $K$ be a $G$-CW-complex relative to $W$.
Recall that a relative open $n$-cell of $K$ is a path component of $\sk_n(K) \setminus \sk_{n-1}(K)$, where $\sk_n(K)$ denotes the $n$-skeleton of $K$. A relative open cell of $K$ is a relative open $n$-cell for some $n$.

\begin{ddd}
 Denote by $\cells K$ the $G$-set of relative open cells of $K$.
 Let $\cells_k K$ denote the $G$-set of relative open $k$-cells in $K$, and set
 \[ \cells_{\leq k} K := \bigcup_{l \leq k} \cells_l K. \qedhere \]
\end{ddd}

\begin{ddd}
 For a subset $L \subseteq K$, denote by $\gen{L}$ the smallest (non-equivariant) subcomplex of $K$ containing $L$.
\end{ddd}

\begin{ddd}
 Let $X$ be a $G$-set.
 An \emph{$X$-labeling} of $K$ is a $G$-equivariant function
 \[ \lambda \colon \cells K \to X. \qedhere \]
\end{ddd}

Let $(X,\cU)$ be a $G$-coarse space.
Let $(K,\lambda_K)$ and $(L,\lambda_L)$ be $X$-labeled $G$-CW-complexes relative $W$, and let $\phi \colon K \to L$ be a $G$-equivariant and cellular map relative $W$.  
\begin{ddd}\label{def:controlled-map}
 The map $\phi$ is \emph{$(X,\cU)$-controlled} if there exists an entourage $U$ in $\cU$ such that
 \[ \{ (\lambda_L(e'),\lambda_K(e)) \mid e \in \cells K,\ e' \in \cells\gen{\phi(e)} \} \subseteq U. \]
 If $\phi$ is the identity map on $K$, we say that $(K,\lambda_K)$ is an \emph{$(X,\cU)$-controlled $G$-CW-complex}
 (or simply \emph{controlled $G$-CW-complex} if the $G$-coarse space $(X,\cU)$ is clear from context).
\end{ddd}

We denote by $\ccw{G}{X,\cU}{W}$ the category of $(X,\cU)$-controlled $G$-CW-complexes relative $W$,
and $(X,\cU)$-controlled, $G$-equivariant and cellular maps.

\begin{rem}\label{rem:comparison-UW-1}
 \cref{def:controlled-map} requires maps to be uniformly controlled by a single entourage,
 whereas \cite[Definition~2.3]{UW} enforces this condition only on each skeleton.
 See \cref{rem:comparison-UW-2} for further discussion.
\end{rem}

\begin{ddd}
 A \emph{subcomplex inclusion} is a morphism of the form
 \[ (K',\lambda|_{K'}) \hookrightarrow (K,\lambda) \]
 for some $G$-invariant subcomplex $K' \subseteq K$ relative $W$.
\end{ddd}

\begin{lem}\label{lem:pushouts}
 Consider a diagram $(L,\lambda_L) \xleftarrow{f} (K',\lambda|_{K'}) \xhookrightarrow{i} (K,\lambda)$ in which $i$ is a subcomplex inclusion.
 Then there exists a pushout diagram
 \[\xymatrix{
  (K',\lambda|_{K'}) \ar@{^(->}[r]^{i}\ar[d]_{f} & (K,\lambda)\ar[d]_{g} \\
  (L,\lambda_L)\ar@{^(_->}[r]^{j} & (L \sqcup_{K'} K, \lambda_{L \sqcup_{K'} K})
 }\]
 in $\ccw{G}{X,\cU}{W}$ such that $j$ is a subcomplex inclusion.
\end{lem}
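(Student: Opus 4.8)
The plan is to form the pushout in the underlying category of $G$-CW-complexes relative $W$ and then equip it with a suitable $X$-labeling, checking afterwards that all the required control conditions hold.

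First I would form $P := L \sqcup_{K'} K$ as a topological pushout. Since $i$ is a subcomplex inclusion, it is classical that $P$ carries a canonical structure of a $G$-CW-complex relative $W$: its relative open cells are the relative open cells of $L$ (attached along $j$) together with those relative open cells of $K$ which do not lie in $K'$ (attached along $g$), the maps $g\colon K\to P$ and $j\colon L\to P$ are $G$-equivariant cellular maps relative $W$, $j$ is a subcomplex inclusion, and $(P,g,j)$ realizes the pushout in the category of $G$-CW-complexes relative $W$ and $G$-equivariant cellular maps relative $W$. Because $f$ and $i$ are $G$-equivariant and $K'$ is $G$-invariant, this identification is $G$-equivariant, so $\cells P \cong \cells L \sqcup (\cells K \setminus \cells K')$ as $G$-sets. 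I then define $\lambda_{L\sqcup_{K'}K}$ to restrict to $\lambda_L$ on the cells coming from $L$ and to $\lambda_K$ on the cells coming from $K\setminus K'$; this is a $G$-equivariant $X$-labeling.

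Next I would verify the control conditions. Fix entourages $U_L,U_K\in\cU$ witnessing that $(L,\lambda_L)$ and $(K,\lambda_K)$ are controlled and $U_f\in\cU$ witnessing that $f$ is controlled, and put $U:=U_L\cup U_K\cup(U_f\circ U_K)\in\cU$, using that a coarse structure is closed under finite unions and composition. The combinatorial heart of the argument is the description of $\gen{e}$ for $e\in\cells P$: if $e$ comes from $L$, then $\gen{e}$ is the image under $j$ of the corresponding subcomplex of $L$; if $e=g(\bar e)$ for a cell $\bar e$ of $K$ not in $K'$, then --- by exhibiting an explicit finite subcomplex of $P$ containing $e$ and invoking minimality --- the cells of $\gen{e}$ are the $g$-images of the cells of $\gen{\bar e}$ not lying in $K'$ together with the $j$-images of the cells of $\gen{f(c)}$ for those cells $c$ of $\gen{\bar e}$ that do lie in $K'$. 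Running through these cases and combining the three entourages shows that $(\lambda_{L\sqcup_{K'}K}(e'),\lambda_{L\sqcup_{K'}K}(e))\in U$ for all $e\in\cells P$ and $e'\in\cells\gen{e}$, so $(L\sqcup_{K'}K,\lambda_{L\sqcup_{K'}K})$ is an $(X,\cU)$-controlled $G$-CW-complex. The same cell description shows that $g$ is controlled (a cell of $K$ in $K'$ being handled via $U_f$, a cell not in $K'$ via the above), and $j$ is a subcomplex inclusion into a controlled complex and hence controlled; thus $g$ and $j$ are morphisms in $\ccw{G}{X,\cU}{W}$.

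Finally I would check the universal property. Given $(M,\lambda_M)$ in $\ccw{G}{X,\cU}{W}$ together with controlled maps $a\colon L\to M$ and $b\colon K\to M$ with $a\circ f = b\circ i$, the pushout in $G$-CW-complexes relative $W$ already supplies a unique $G$-equivariant cellular map $h\colon L\sqcup_{K'}K\to M$ relative $W$ with $h\circ j = a$ and $h\circ g = b$; it remains only to see that $h$ is controlled. But a cell $e$ of $L\sqcup_{K'}K$ coming from $L$ satisfies $h(e)=a(e)$ and a cell coming from $K\setminus K'$ satisfies $h(e)=b(e)$, so if $U_a,U_b\in\cU$ control $a$ and $b$, then $U_a\cup U_b$ controls $h$. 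I expect the main obstacle to be precisely the bookkeeping in the third paragraph --- determining which cells occur in $\gen{e}$ for $e\in\cells P$ and checking that their labels remain within a single entourage; once the underlying pushout of $G$-CW-complexes is in hand, everything else is formal.
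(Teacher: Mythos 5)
Your proposal is correct and follows the same route as the paper: form the pushout of underlying $G$-CW-complexes, identify $\cells(L\sqcup_{K'}K)$ with $\cells L\sqcup(\cells K\setminus\cells K')$, and define the labeling by cases exactly as you do. The paper leaves the control estimates and the universal property as "easy to verify," so your third and fourth paragraphs (the description of $\gen{e}$ in the pushout and the entourage $U_L\cup U_K\cup(U_f\circ U_K)$) simply supply details the paper omits, and they are the right ones.
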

\begin{proof}
 Let $L \sqcup_{K'} K$ be the usual pushout in $G$-CW-complexes. Note that $L \sqcup_{K'} K$ arises from $L$ by successively attaching cells from $K$ which do not lie in $K'$, so
 there is a canonical identification
 \[ \cells(L \sqcup_{K'} K) \cong \cells L \sqcup (\cells K \setminus \cells K'). \]
 We define a labeling on $L \sqcup_{K'} K$ by
 \[ \lambda_{L \sqcup_{K'} K} \colon \cells(L \sqcup_{K'} K) \to X,\quad e \mapsto \begin{cases}
 			\lambda_L(e) & e \in \cells L \\
 			\lambda_K(e) & e \in \cells K \setminus \cells K'
 		\end{cases}	
  \]
 The universal property is easy to verify.
\end{proof}

\begin{ddd}\label{def:tensor}
 Let $(K,\lambda)$ be an object in $\ccw{G}{X,\cU}{W}$, and let $L$ be a $G$-CW-complex.
 Define $(K,\lambda) \otimes L$ as the $G$-CW-complex given by the pushout
 \[\xymatrix{
  W \times L\ar[r]\ar[d] & K \times L\ar[d] \\
  W\ar[r] & K \otimes L
 }\]
 equipped with the labeling
 \[ \lambda \otimes L \colon \cells(K \otimes L) \cong \{ e \times e' \mid e \in \cells K,\ e' \in \cells L \} \to X,\ (e,e') \mapsto \lambda(e). \qedhere \]
\end{ddd}

The product $\otimes$ of \cref{def:tensor}  {defines} a functor
\[ \otimes \colon \ccw{G}{X,\cU}{W} \times G\CW \to \ccw{G}{X,\cU}{W}. \]
 {The following proposition summarizes some properties of the functor $\otimes$.}

\begin{prop}
 Let $(K',\lambda|_{K'}) \hookrightarrow (K,\lambda)$ be a subcomplex inclusion
 and let $L' \hookrightarrow L$ be an inclusion of $G$-CW-complexes.
 \begin{enumerate}
  \item The map $(K',\lambda|_{K'}) \otimes L \to (K,\lambda) \otimes L$ is a subcomplex inclusion.
  \item The map $(K,\lambda) \otimes L' \to (K,\lambda) \otimes L$ is a subcomplex inclusion.
  \item The pushout-product-axiom holds: The induced map
   \[(K,\lambda) \otimes L' \mathop{\sqcup}\limits_{(K',\lambda|_{K'}) \otimes L'} (K',\lambda|_{K'}) \otimes L \to (K,\lambda) \otimes L\]
   is a subcomplex inclusion.
 \end{enumerate}
\end{prop}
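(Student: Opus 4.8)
The plan is to reduce all three claims to elementary bookkeeping with cells. The key observation is that a morphism in $\ccw{G}{X,\cU}{W}$ is a subcomplex inclusion precisely when its underlying map of $G$-CW-complexes relative $W$ is the inclusion of a $G$-invariant subcomplex and the labelings are compatible; such an inclusion is automatically $(X,\cU)$-controlled, since the diagonal lies in $\cU$ and each cell is sent to itself, so \cref{def:controlled-map} holds with $U = \Delta_X$. Throughout I will use that, by \cref{def:tensor}, the $G$-set $\cells((K,\lambda)\otimes L)$ is naturally identified with $\cells K \times \cells L$, the cell $e \times e'$ carrying the label $\lambda(e)$, and that under this identification the closure of a relative cell $e \times e'$ contains only relative cells of the form $f \times f'$ with $f$ a relative cell of $K$ lying in $\gen e$ and $f'$ a cell of $L$ lying in $\gen{e'}$ (the cells of $W$ occurring in the closure of $e$ being collapsed in the formation of $\otimes$).

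For part (1), if $(K',\lambda|_{K'}) \hookrightarrow (K,\lambda)$ is a subcomplex inclusion then $\cells K' \times \cells L$ is a $G$-invariant subset of $\cells K \times \cells L$. It spans a subcomplex relative $W$: for a relative cell $e$ of $K'$ we have $\gen e \subseteq K'$, so every relative cell $f \times f'$ occurring in the closure of $e \times e'$ has $f$ a relative cell of $K'$. The labeling restricts correctly, because $\lambda \otimes L$ sends $e \times e'$ with $e \in \cells K'$ to $\lambda(e) = \lambda|_{K'}(e)$, which is exactly the label assigned by $\lambda|_{K'} \otimes L$. Part (2) is the mirror-image statement: $\cells K \times \cells L'$ is a $G$-invariant subset of $\cells K \times \cells L$, it spans a subcomplex because $L' \hookrightarrow L$ being a subcomplex inclusion forces the closure of any $e \times e'$ with $e' \in \cells L'$ to involve only cells $f \times f'$ with $f' \in \cells L'$, and compatibility of the labelings is immediate, since both $\lambda \otimes L$ and $\lambda \otimes L'$ assign $\lambda(e)$ to $e \times e'$.

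For part (3), I would deduce it from (1), (2) and \cref{lem:pushouts}. By (1) the map $f \colon (K',\lambda|_{K'}) \otimes L' \to (K,\lambda) \otimes L'$ is a subcomplex inclusion, and by (2) the map $i \colon (K',\lambda|_{K'}) \otimes L' \hookrightarrow (K',\lambda|_{K'}) \otimes L$ is a subcomplex inclusion. Applying \cref{lem:pushouts} to the span $(K,\lambda)\otimes L' \xleftarrow{f} (K',\lambda|_{K'})\otimes L' \xhookrightarrow{i} (K',\lambda|_{K'})\otimes L$ produces the pushout $P$ appearing in the statement, whose $G$-set of relative cells is $(\cells K \times \cells L') \sqcup \bigl((\cells K' \times \cells L) \setminus (\cells K' \times \cells L')\bigr)$, equipped with the labeling inherited from $\lambda|_{K'} \otimes L$ and $\lambda \otimes L'$; both of these are restrictions of $\lambda \otimes L$. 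By (1) and (2) the two maps $(K,\lambda)\otimes L' \to (K,\lambda)\otimes L$ and $(K',\lambda|_{K'})\otimes L \to (K,\lambda)\otimes L$ agree on $(K',\lambda|_{K'})\otimes L'$, so the universal property of $P$ supplies the comparison map $P \to (K,\lambda)\otimes L$. On relative cells this map is the inclusion of the $G$-invariant subset $\{\,e \times e' \mid e \in \cells K',\ e' \in \cells L\,\} \cup \{\,e \times e' \mid e \in \cells K,\ e' \in \cells L'\,\}$ of $\cells K \times \cells L$; applying the closure argument from (1) to the first piece and the one from (2) to the second shows that this subset spans a subcomplex relative $W$, and the labelings agree tautologically. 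Hence $P \to (K,\lambda)\otimes L$ is a subcomplex inclusion.

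The only genuinely delicate point is verifying that the pertinent subsets of $\cells K \times \cells L$ span subcomplexes relative $W$, which is exactly the closure argument used above; its one wrinkle is that the closure of a relative cell of $K$ may meet $W$, but these are precisely the cells collapsed in the formation of $\otimes$, so they cause no trouble. Everything else is a routine unwinding of \cref{def:tensor} and \cref{lem:pushouts}.
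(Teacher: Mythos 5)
The paper does not actually prove this proposition; it says only that it ``follows from standard considerations about CW-complexes'' and omits the argument. Your write-up supplies exactly the standard argument that is being alluded to: identify $\cells((K,\lambda)\otimes L)$ with $\cells K\times\cells L$ via \cref{def:tensor}, check that the relevant $G$-invariant subsets of cells span subcomplexes relative $W$ by the closure computation $\overline{e\times e'}=\bar e\times\bar e'$ (with the $W\times L$ part collapsed), check that the labelings restrict correctly, and obtain part (3) from parts (1) and (2) together with \cref{lem:pushouts} and the fact that a union of two subcomplexes is a subcomplex. This is correct and complete in substance.

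One technical slip worth fixing: your claim that a subcomplex inclusion is automatically controlled ``with $U=\Delta_X$'' misreads \cref{def:controlled-map}. The control condition there ranges over all $e'\in\cells\gen{\phi(e)}$, and for an inclusion $\gen{\phi(e)}=\gen{e}$ contains the boundary cells of $e$, whose labels need not equal $\lambda(e)$; so the diagonal does not witness controlledness in general. The correct justification is that the target is itself an $(X,\cU)$-controlled complex, i.e.\ its identity map is controlled by some entourage $U$, and the set of pairs occurring for the subcomplex inclusion is a subset of the one occurring for the identity, hence is contained in the same $U$. This is a one-line repair and does not affect the rest of your argument.
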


This follows from standard considerations about CW-complexes, and we omit the proof.

For any two objects $(K,\lambda_K)$ and $(L,\lambda_L)$ in $\ccw{G}{X,\cU}{W}$, the assigment
\[ [n] \mapsto \hom_{\ccw{G}{X,\cU}{W}}((K,\lambda_K) \otimes \abs{\Delta^n},(L,\lambda_L)) \]
gives a simplicial set $\hom((K,\lambda_K),(L,\lambda_L))_\bullet$, and  {induces} a simplicial enrichment of $\ccw{G}{X,\cU}{W}$ via the natural identification
$(K,\lambda_K) \otimes \Delta^0 \cong (K,\lambda_K)$.

\begin{lem}\label{lem:controlled-deformation-retraction}
 Let $(K',\lambda_K|_{K'}) \hookrightarrow (K,\lambda_K)$ be a subcomplex inclusion,
 and suppose that $L' \subseteq L$ is an inclusion of CW-complexes which is also a homotopy equivalence.
 Then
 \[ ((K',\lambda_K|_{K'}) \otimes L) \mathop{\sqcup}\limits_{(K',\lambda_K|_{K'}) \otimes L'} (K,\lambda_K) \otimes L' \hookrightarrow (K,\lambda_K) \otimes L \]
 is a controlled strong deformation retract.
\end{lem}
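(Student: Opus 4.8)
The plan is to reduce the assertion to a purely topological construction of a strong deformation retraction of $G$-CW-pairs that is cellular and suitably confined in the $K$-direction, and then to observe that such data is automatically controlled because the labeling only depends on the $K$-coordinate.

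First I would record what has to be produced. Writing $Z_0 := (K' \times L) \cup_{K' \times L'} (K \times L') \subseteq K \times L$, the pushout--product property established above identifies $Z_0$, after collapsing $W \times L$ to $W$, with the subcomplex
\[ Z := ((K',\lambda_K|_{K'}) \otimes L) \mathop{\sqcup}\limits_{(K',\lambda_K|_{K'}) \otimes L'} (K,\lambda_K) \otimes L' \]
of $(K,\lambda_K) \otimes L$; so a controlled strong deformation retract amounts to a retraction $(K,\lambda_K)\otimes L \to Z$ together with a homotopy to $\id$ which is constant on $Z$, all realized by morphisms in $\ccw{G}{X,\cU}{W}$. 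Since the labeling of any $\otimes$-product is determined by the $K$-coordinate alone (see \cref{def:tensor}), I claim it suffices to build a $G$-equivariant strong deformation retraction of $K \times L$ onto $Z_0$, relative to $W \times L$, which is cellular and \emph{$K$-cellwise confined}, meaning that it carries $(\gen{e} \times L) \times \abs{\Delta^1}$ into $\gen{e} \times L$ for every relative cell $e$ of $K$. Indeed such a homotopy descends along the collapse $K \times L \to (K,\lambda_K)\otimes L$, and since $e' \in \cells\gen{e}$ forces $(\lambda_K(e'),\lambda_K(e))$ into the entourage $U_0$ witnessing that $\id_{(K,\lambda_K)}$ is $(X,\cU)$-controlled, the induced retraction and homotopy on $(K,\lambda_K)\otimes L$ are $(X,\cU)$-controlled in the sense of \cref{def:controlled-map}.

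To construct this topologically I would use that $L' \hookrightarrow L$ is a cellular inclusion and a homotopy equivalence, hence (by the equivariant Strøm theorem, as $(L,L')$ is a $G$-CW-pair) the inclusion of a $G$-strong deformation retract; accordingly fix a cellular representative of a $G$-NDR structure $(v,g)$ on $(L,L')$ with $v < 1$ everywhere. On the $G$-CW-pair $(K,K')$ I would build a $G$-NDR structure $(u,k)$ cell by cell from the NDR structure of $(D^n,\partial D^n)$, arranged so that $k_t(\gen{e}) \subseteq \gen{e}$ for every relative cell $e$ of $K$ (this is inductive over the relative skeleta: extending over a cell only combines a radial push onto $\partial D^n$ with the homotopy already defined on the subcomplex generated by the attaching sphere). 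The standard product-of-NDR-pairs formula then yields a $G$-NDR representation $(w,\ell)$ of $(Z_0,K\times L)$ with $w = \min(u,v)$; since $v < 1$ we get $w < 1$, so $\ell(-,1)$ is a strong deformation retraction of $K\times L$ onto $Z_0$ relative to $W\times L$, and since that formula moves the $K$-coordinate only through $k$ (at a reparametrized time), the homotopy $\ell$ is $K$-cellwise confined.

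The remaining issue, which I expect to be the main obstacle, is that the NDR homotopy $\ell$ reparametrizes time by $u$ and $v$ and is therefore not cellular, so it must be replaced by a cellular homotopy without destroying either constraint it currently satisfies: being a strong deformation retraction onto $Z_0$, and being $K$-cellwise confined. I would achieve this by equivariant cellular approximation carried out by induction over the relative skeleta of $K$, keeping the homotopy fixed on the $G$-subcomplex $(K\times L\times\{0\}) \cup (Z_0 \times \abs{\Delta^1})$ on which $\ell$ is already cellular, and treating one $G$-orbit of cells $e$ at a time so that the image of $(\gen{e}\times L)\times\abs{\Delta^1}$ remains inside the $G$-subcomplex $\gen{e}\times L$ (possible because $\ell$ already does this). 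A parallel cellwise cellular approximation of $\ell(-,1)\colon K\times L\to Z_0$ relative to $Z_0$ upgrades the endpoint to a genuine cellular $G$-retraction $\rho\colon K\times L\to Z_0$ with $\rho|_{Z_0}=\id$, after which $\ell$ is adjusted near the far endpoint of $\abs{\Delta^1}$ so as to terminate at $\rho$. Descending the resulting cellular, $G$-equivariant, $K$-cellwise confined strong deformation retraction along the collapse then gives, by the reduction in the second paragraph, the desired controlled strong deformation retract.
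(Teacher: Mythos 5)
Your argument is correct, and the decisive observation is the same one the paper uses: a strong deformation retraction of $(K,\lambda_K)\otimes L$ whose trace on each cell $e\times e'$ stays inside $\gen{e}\otimes L$ is automatically $(X,\cU)$-controlled, with the very entourage that witnesses that $(K,\lambda_K)$ is a controlled complex. Where you genuinely differ is in how the confined retraction is produced. The paper argues skeleton by skeleton: the standard strong deformation retraction of $D^k\times L$ onto $(S^{k-1}\times L)\sqcup_{S^{k-1}\times L'}(D^k\times L')$ gives, cell by cell, a confined retraction of $\sk_k(K,K')\otimes L$ onto $(\sk_{k-1}(K,K')\otimes L)\sqcup_{\sk_{k-1}(K,K')\otimes L'}(\sk_k(K,K')\otimes L')$, and these homotopies are then stacked (infinitely many, reparametrized into $[0,1]$) to reach the subcomplex in question, the concatenation staying controlled because $K$ is controlled by a single entourage. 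You instead package the induction over relative cells into a cellwise-confined $G$-NDR representation of $(K,K')$ and feed it, together with a DR representation of $(L,L')$, into Steenrod's product formula, obtaining the whole retraction in one step; confinement survives because that formula moves the $K$-coordinate only along the chosen homotopy for $(K,K')$, evaluated at reparametrized times. Both routes are sound, and they share the same underlying cell-by-cell induction; yours trades the explicit stacking for NDR calculus. In addition you explicitly repair cellularity by equivariant cellular approximation rel the subcomplex where the NDR homotopy is already cellular — a legitimate concern, since the retraction and homotopy must be morphisms, respectively $1$-simplices of the enriched hom, in $\ccw{G}{X,\cU}{W}$, and a point the paper's proof leaves implicit.
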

\begin{proof}
 Since $(S^{k-1} \times L) \mathop{\sqcup}\limits_{S^{k-1} \times L'} (D^k \times L')$
 is a strong deformation retract of $D^k \times L$ for all $k > 0$,
 one argues cell by cell to show that there exists a deformation retraction of $\sk_k(K,K') \otimes L$ onto $(\sk_{k-1}(K,K') \otimes L) \mathop{\sqcup}\limits_{\sk_{k-1}(K,K') \otimes L'} (\sk_k(K,K') \otimes L')$,
 where $\sk_k(K,K')$ denotes the relative $k$-skeleton of the pair $(K,K')$.
 {On each cell $e$, the trace of this deformation retraction is contained in the subcomplex generated by $e$.
 Since $K$ is a controlled $G$-CW-complex, this proves that the deformation retraction is a controlled homotopy.} 
 
 The desired deformation retraction on the whole complex $K$ is then obtained by stacking these homotopies.
 {It} is controlled since the control of $K$ is measured by a single entourage.
\end{proof}

\begin{kor}[Controlled homotopy extension property]\label{cor:chep}
 If $(K',\lambda_K|_{K'}) \hookrightarrow (K,\lambda_K)$ is a subcomplex inclusion,
 then any controlled map
 \[ ((K',\lambda_K|_{K'}) \otimes [0,1]) \mathop{\sqcup}\limits_{(K',\lambda_K|_{K'}) \otimes \{0\}} ((K,\lambda_K) \otimes \{0\}) \to (L,\lambda_L) \]
 extends to a controlled map
 \[ (K,\lambda_K) \otimes [0,1] \to (L,\lambda_L). \]
\end{kor}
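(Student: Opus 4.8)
The plan is to obtain this as an immediate consequence of \cref{lem:controlled-deformation-retraction}. First I would apply that lemma to the inclusion of CW-complexes $L' := \{0\} \hookrightarrow [0,1] =: L$ (with trivial $G$-action); this inclusion is visibly an inclusion of CW-complexes and a homotopy equivalence, so the lemma yields that
\[ ((K',\lambda_K|_{K'}) \otimes [0,1]) \mathop{\sqcup}\limits_{(K',\lambda_K|_{K'}) \otimes \{0\}} ((K,\lambda_K) \otimes \{0\}) \hookrightarrow (K,\lambda_K) \otimes [0,1] \]
is a controlled strong deformation retract. In particular, unwinding the definition of strong deformation retract, there is a retraction
\[ r \colon (K,\lambda_K) \otimes [0,1] \longrightarrow ((K',\lambda_K|_{K'}) \otimes [0,1]) \mathop{\sqcup}\limits_{(K',\lambda_K|_{K'}) \otimes \{0\}} ((K,\lambda_K) \otimes \{0\}) \]
which is a morphism of $\ccw{G}{X,\cU}{W}$ (hence a $G$-equivariant, cellular, controlled map) restricting to the identity on the displayed subcomplex. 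If one prefers to work directly with the controlled deformation homotopy $H$ furnished by \cref{lem:controlled-deformation-retraction}, one takes $r$ to be the restriction of $H$ to time $1$ and checks that, $H$ being controlled and cellular, $r$ is a controlled cellular $G$-map into the subcomplex.

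Next, given a controlled map $f$ out of the pushout as in the statement, I would set $\bar f := f \circ r$. Since $r$ is a retraction of the subcomplex inclusion, $\bar f$ restricts to $f$ on the subcomplex, so $\bar f$ is an extension of $f$. It remains only to see that $\bar f$ is again controlled, and hence a genuine morphism of $\ccw{G}{X,\cU}{W}$: choosing entourages $U$ and $V$ in $\cU$ witnessing control of $r$ and of $f$ respectively, one checks that for every cell $e$ of $(K,\lambda_K) \otimes [0,1]$ and every $e'' \in \cells\gen{\bar f(e)}$ one has $(\lambda_L(e''),\lambda(e)) \in V \circ U$, so that $\bar f$ is $(V \circ U)$-controlled. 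This is nothing more than the standard fact—already implicit in $\ccw{G}{X,\cU}{W}$ being a category—that controlled maps are closed under composition, with the composite controlled by a composite entourage.

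I do not anticipate a genuine obstacle here: the substantive geometric input has already been carried out in the proof of \cref{lem:controlled-deformation-retraction}, where control of the deformation is obtained because the control of $K$ is measured by a single entourage. The only point deserving a moment of care is the one indicated above, namely extracting from ``controlled strong deformation retract'' an honest retraction \emph{morphism} $r$ of $\ccw{G}{X,\cU}{W}$ (rather than merely a homotopy), so that the composite $f \circ r$ is meaningful and cellular within the category. Once that is in place, the remainder of the argument is a one-line composition.
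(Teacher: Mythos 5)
Your proposal is correct and is exactly the paper's intended argument: the statement is presented as an immediate corollary of \cref{lem:controlled-deformation-retraction} applied to $\{0\}\hookrightarrow[0,1]$, with the extension obtained by composing the given map with the controlled retraction. Your added care about extracting an honest retraction morphism and checking that the composite is controlled by a composite entourage is the right (and only) point to verify.
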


\begin{kor}\label{lem:mapping-spaces-fibrations}
 If $\iota \colon (K',\lambda_K|_{K'}) \to (K,\lambda_K)$ is a subcomplex inclusion, the induced map
 \[ \iota^* \colon \hom((K,\lambda_K),(L,\lambda_L))_\bullet \to \hom((K',\lambda_K|_{K'}),(L,\lambda_L))_\bullet \]
 is a Kan fibration for any controlled $G$-CW-complex $(L,\lambda_L)$.
\end{kor}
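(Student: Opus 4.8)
The plan is to check that $\iota^*$ has the right lifting property against every horn inclusion $\Lambda^n_k \hookrightarrow \Delta^n$ (with $n \geq 1$, $0 \leq k \leq n$), which is the standard characterization of Kan fibrations.

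The first step would be to translate such a lifting problem into a controlled extension problem in $\ccw{G}{X,\cU}{W}$. Since the simplicial enrichment is built from the assignment $[m] \mapsto \hom_{\ccw{G}{X,\cU}{W}}((K,\lambda_K) \otimes \abs{\Delta^m},(L,\lambda_L))$ together with the identification $(K,\lambda_K) \otimes \abs{\Delta^0} \cong (K,\lambda_K)$, and since $(K,\lambda_K) \otimes \abs{-}$ carries the colimit presenting a finite simplicial set by its nondegenerate simplices to the corresponding iterated pushout along subcomplex inclusions in $\ccw{G}{X,\cU}{W}$ (which exists by \cref{lem:pushouts} and the pushout-product properties of $\otimes$), a commutative square
\[\xymatrix{
\Lambda^n_k \ar[r]\ar[d] & \hom((K,\lambda_K),(L,\lambda_L))_\bullet \ar[d]^-{\iota^*} \\
\Delta^n \ar[r] & \hom((K',\lambda_K|_{K'}),(L,\lambda_L))_\bullet
}\]
will be the same datum as a controlled map
\[ ((K,\lambda_K) \otimes \abs{\Lambda^n_k}) \mathop{\sqcup}\limits_{(K',\lambda_K|_{K'}) \otimes \abs{\Lambda^n_k}} ((K',\lambda_K|_{K'}) \otimes \abs{\Delta^n}) \to (L,\lambda_L), \]
and a diagonal lift will correspond to extending this map to a controlled map out of $(K,\lambda_K) \otimes \abs{\Delta^n}$ along the canonical inclusion of its source.

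Next I would invoke \cref{lem:controlled-deformation-retraction} with the subcomplex inclusion $(K',\lambda_K|_{K'}) \hookrightarrow (K,\lambda_K)$ and the inclusion of CW-complexes $L' := \abs{\Lambda^n_k} \hookrightarrow \abs{\Delta^n} =: L$, which is a homotopy equivalence since a horn is a strong deformation retract of the simplex. That lemma gives precisely that
\[ ((K',\lambda_K|_{K'}) \otimes \abs{\Delta^n}) \mathop{\sqcup}\limits_{(K',\lambda_K|_{K'}) \otimes \abs{\Lambda^n_k}} ((K,\lambda_K) \otimes \abs{\Lambda^n_k}) \hookrightarrow (K,\lambda_K) \otimes \abs{\Delta^n} \]
is a controlled strong deformation retract, hence admits a controlled retraction $r$. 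Precomposing the controlled map from the first step with $r$ then produces the required controlled extension, so the lift exists and $\iota^*$ is a Kan fibration.

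I expect the only genuine obstacle to be the bookkeeping in the first step: one must make sure that the simplicial mapping sets really corepresent controlled cellular maps out of $(K,\lambda_K) \otimes \abs{-}$ and that, under this correspondence, horn filling is exchanged with the stated extension problem. This is formal once one combines the definition of the simplicial enrichment with \cref{lem:pushouts}, and after that the statement is immediate from \cref{lem:controlled-deformation-retraction}. (Equivalently, one may phrase the whole argument as the standard enriched-lifting $\mathrm{SM7}$-type manipulation, with \cref{lem:controlled-deformation-retraction} supplying the needed anodyne lifting data.)
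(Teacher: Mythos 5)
Your proposal is correct and follows essentially the same route as the paper: translate the horn-lifting problem into the extension problem for the pushout $\big((K,\lambda_K) \otimes \abs{\Lambda^n_k}\big) \sqcup_{(K',\lambda_K|_{K'}) \otimes \abs{\Lambda^n_k}} \big((K',\lambda_K|_{K'}) \otimes \abs{\Delta^n}\big) \hookrightarrow (K,\lambda_K) \otimes \abs{\Delta^n}$ and then apply \cref{lem:controlled-deformation-retraction} to the homotopy equivalence $\abs{\Lambda^n_k} \subseteq \abs{\Delta^n}$. The only difference is that you spell out the retraction-precomposition step, which the paper leaves implicit.
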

\begin{proof}
 We have to show that the lifting problem
 \[\xymatrix{
  \Lambda^n_i\ar[r]\ar@{^(->}[d] & \hom((K,\lambda_K),(L,\lambda_L))_\bullet\ar[d]^{\iota^*} \\
  \Delta^n\ar[r]\ar@{-->}[ur] & \hom((K',\lambda_K|_{K'}),(L,\lambda_L))_\bullet
 }\]
 always has a solution.
 By definition, the given lifting problem corresponds to the extension problem
 \[\xymatrix{
  \big((K, \lambda_K) \otimes \abs{\Lambda^n_i}\big) \mathop{\sqcup}\limits_{(K',\lambda_K"_{K'}) \otimes \abs{\Lambda^n_i}} \big((K',\lambda_K|_{K'}) \otimes \abs{\Delta^n}\big)\ar[r]\ar@{^(->}[d] & (L,\lambda_L) \\
  (K,\lambda_K) \otimes \abs{\Delta^n}\ar@{-->}[ur] &
 }\]
 Since the inclusion $\abs{\Lambda^n_i} \subseteq \abs{\Delta^n}$ is a homotopy equivalence, the claim follows from \cref{lem:controlled-deformation-retraction}.
\end{proof}

\begin{kor}\label{cor:mapping-spaces-fibrant}
 $\ccw{G}{X,\cU}{W}$ is enriched in Kan complexes.
\end{kor}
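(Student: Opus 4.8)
The plan is to deduce this directly from \cref{lem:mapping-spaces-fibrations}. Since the simplicial enrichment of $\ccw{G}{X,\cU}{W}$ is already in place, it suffices to show that for any two objects $(K,\lambda_K)$ and $(L,\lambda_L)$ the simplicial mapping space $\hom((K,\lambda_K),(L,\lambda_L))_\bullet$ is a Kan complex.

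First I would observe that $W$, regarded as a $G$-CW-complex relative to itself with no relative cells and hence carrying the empty labeling $\emptyset \to X$, is an object of $\ccw{G}{X,\cU}{W}$, and moreover it is the initial object: any $(X,\cU)$-controlled, $G$-equivariant, cellular map $W \to L$ relative $W$ is forced to be the structural inclusion, and the control condition is vacuous since $\cells W = \emptyset$. In particular the inclusion $W \hookrightarrow (K,\lambda_K)$ is a subcomplex inclusion. Using the identification $(W,\emptyset) \otimes \abs{\Delta^n} \cong W$, which is immediate from \cref{def:tensor}, the mapping space $\hom((W,\emptyset),(L,\lambda_L))_\bullet$ has exactly one simplex in every degree, so it is (isomorphic to) $\Delta^0$ and in particular a Kan complex.

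Finally, apply \cref{lem:mapping-spaces-fibrations} to the subcomplex inclusion $W \hookrightarrow (K,\lambda_K)$: the induced map
\[ \hom((K,\lambda_K),(L,\lambda_L))_\bullet \to \hom((W,\emptyset),(L,\lambda_L))_\bullet \cong \Delta^0 \]
is a Kan fibration, and a simplicial set admitting a Kan fibration to $\Delta^0$ is a Kan complex; hence $\hom((K,\lambda_K),(L,\lambda_L))_\bullet$ is a Kan complex, as desired. There is no genuine obstacle in this argument: all the real work has already been done in \cref{lem:controlled-deformation-retraction} and \cref{lem:mapping-spaces-fibrations}. The only point requiring (trivial) care is the verification that $W$ with the empty labeling is a legitimate and initial object of $\ccw{G}{X,\cU}{W}$, so that the Kan fibration criterion may be invoked for the inclusion $W \hookrightarrow K$.
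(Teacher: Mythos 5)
Your argument is correct and is essentially identical to the paper's proof: both identify $(W,\emptyset)$ as the initial object, note that $W \hookrightarrow (K,\lambda_K)$ is a subcomplex inclusion with $\hom((W,\emptyset),(L,\lambda_L))_\bullet \cong \Delta^0$, and then invoke \cref{lem:mapping-spaces-fibrations} to conclude that the mapping space fibers over $\Delta^0$ and is hence a Kan complex. No issues.
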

\begin{proof}
 Note that $\ccw{G}{X,\cU}{W}$ has an initial object $(W,\emptyset)$, and that the unique map $(W,\emptyset) \to (K,\lambda_K)$ is a subcomplex inclusion.
 By \cref{lem:mapping-spaces-fibrations}, the restriction map
 \[ \hom((K,\lambda_K), (L,\lambda_L))_\bullet \to \hom((W,\emptyset), (L,\lambda_L))_\bullet \]
 is a Kan fibration for all $(L,\lambda_L)$.
 Since $(W,\emptyset) \otimes \Delta^n \cong (W,\emptyset)$ for all $n$, we have $\hom((W,\emptyset), (L,\lambda_L))_\bullet \cong \Delta^0$.
 Hence, $\hom((K,\lambda_K), (L,\lambda_L))_\bullet$ is a Kan complex.
\end{proof}

In the next step, we introduce a variant of the simplicial enrichment we have just discussed.

\begin{ddd}
 Let $Y \subseteq X$ be a $G$-invariant subset of $X$ and let $(K,\lambda)$ be a controlled $G$-CW-complex relative $W$.
 
 The \emph{restriction} $(K,\lambda)|_Y$ of $(K,\lambda)$ to $Y$ is defined as the subcomplex
 \[ (K,\lambda)|_Y := (K',\lambda|_{K'}), \]
 where $K'$ denotes the largest ($G$-invariant) subcomplex of $K$ satisfying $\lambda(\cells K') \subseteq Y$.
\end{ddd}

Let $\cY = \{ Y_i \}_{i \in I}$ be a big family of $G$-invariant subsets of $X$.
We introduce the simplicially enriched category $\ccw{G}{X,\cU}{W}^\cY$ of \emph{controlled $G$-CW-complexes mod $\cY$}:

It has the same objects as $\ccw{G}{X,\cU}{W}$, and its morphism spaces are given by
\[ \hom^\cY((K,\lambda_K),(L,\lambda_L))_\bullet := \colim_{i \in I} \hom((K,\lambda_K)|_{X \setminus Y_i},(L,\lambda_L))_\bullet, \]
where the colimit is taken along the obvious restriction maps.
The composition operation in this category is defined as follows.
Let $\phi \colon (K,\lambda_K)|_{X \setminus Y_i} \to (L,\lambda_L)$ and $\psi \colon (L,\lambda_L)|_{X \setminus Y_j} \to (M,\lambda_M)$ represent morphisms
$[\phi] \colon (K,\lambda_K) \to (L,\lambda_L)$ and $[\psi] \colon (L,\lambda_L) \to (M,\lambda_M)$ in $\ccw{G}{X,\cU}{W}^\cY$.
Since $\phi$ is a controlled morphism, there exists an entourage $U$ of $X$ such that $\lambda_L(\gen{\phi(e)}) \subseteq X \setminus Y_j$ if $\lambda_K(e) \in X \setminus U[Y_j]$.
As $\cY$ is a big family, there exists some $i' \geq i,j$ such that $U[Y_j] \subseteq Y_{i'}$.
Setting
\[ [\psi] \circ [\phi] := [ \psi \circ \phi|_{(K,\lambda_K)|_{X \setminus Y_{i'}}} ] \]
gives a well-defined composition operation and generalizes readily to higher simplices.

If $\cY$ is the trivial big family $\{ \emptyset \}$, the simplicially enriched category $\ccw{G}{X,\cU}{W}^\cY$ coincides with $\ccw{G}{X,\cU}{W}$.
The structure map
\[ \hom((K,\lambda_K),(L,\lambda_L))_\bullet \to \hom^\cY((K,\lambda_K),(L,\lambda_L))_\bullet \]
of the colimit provides a simplicially enriched functor
\[ q_\cY \colon \ccw{G}{X,\cU}{W} \to \ccw{G}{X,\cU}{W}^\cY. \]

\begin{ddd}
 A morphism $\phi \colon (K,\lambda_K) \to (L,\lambda_L)$ in $\ccw{G}{X,\cU}{W}$ is a \emph{controlled equivalence mod $\cY$} if the restriction map
 \[ q_\cY(\phi)^* \colon \hom^\cY((L,\lambda_L),(M,\lambda_M))_\bullet \to \hom^\cY((K,\lambda_K),(M,\lambda_M))_\bullet \]
 is a weak equivalence for all $(M,\lambda_M)$.
 
 For $\cY = \{ \emptyset \}$, we call $\phi$ simply a \emph{controlled equivalence}.
\end{ddd}

\begin{rem}\label{rem:equivalences-explicit}
 For any category $\bC$ enriched in Kan complexes and morphism $\phi \colon K \to L$ in $\bC$, the following are equivalent:
 \begin{enumerate}
  \item The restriction map $\phi^* \colon \bC(L,M)_\bullet \to \bC(K,M)_\bullet$ is a weak equivalence for all $M$ in $\bC$.
  \item There exists a morphism $\psi \colon L \to K$ and $1$-simplices $\eta_K \in \bC(K,K)_1$ and $\eta_L \in \bC(L,L)_1$ such that
   $d_0(\eta_K) = \id_K$, $d_1(\eta_K) = \psi\phi$, $d_0(\eta_L) = \id_L$ and $d_1(\eta_L) = \phi\psi$.
 \end{enumerate}
 Unwinding the latter characterization for $\ccw{G}{X,\cU}{W}^\cY$ gives an explicit description of controlled equivalences mod $\cY$
 in analogy to \cite[Definition~3.12]{UW}:
 A morphism $\phi \colon (K,\lambda_K) \to (L,\lambda_L)$ is a controlled equivalence mod $\cY$ if and only if there exist
 \begin{enumerate}
  \item some $i \in I$ and a morphism $\psi \colon (L,\lambda_K)|_{X \setminus Y_i} \to (K,\lambda_K)$;
  \item some $i' \geq i$ such that the composition $\psi \circ \phi|_{(K,\lambda_K)|_{X \setminus Y_{i'}}}$ is defined and some $i_K \geq i'$ together with\
   a morphism $(K,\lambda_K)|_{X \setminus Y_{i_K}} \otimes [0,1] \to (K,\lambda_K)$ restricting to the canonical subcomplex inclusion and $\psi \circ \phi|_{(K,\lambda_K)|_{X \setminus Y_{i'}}}$, respectively;
  \item some $i_L \geq i$ and a morphism $(L,\lambda_L)|_{X \setminus Y_{i_L}} \otimes [0,1] \to (L,\lambda_L)$ restricting to the canonical subcomplex inclusion and $\phi \circ \psi$, respectively. \qedhere
 \end{enumerate}
\end{rem}

\begin{ex}\label{ex:cofinal-inclusion}
 For every object $(K,\lambda)$ in $\ccw{G}{X}{W}$ and every $Y_i \in \cY$, the inclusion map $(K,\lambda)|_{X \setminus Y_i} \to (K,\lambda)$ is a controlled equivalence mod $\cY$.
\end{ex}
 
\begin{theorem}[Gluing lemma]\label{thm:gluing}
 Suppose we have a commutative diagram
 \[\label{eq:gluing-complexes}
 \xymatrix{
  (L_1,\lambda_{L_1})\ar[d]^-{\sim}_{\psi_L} & (K_1',\lambda_{K_1}|_{K_1'})\ar[l]_-{\phi_1}\ar@{^(->}[r]^-{\iota_1}\ar[d]^-{\sim}_{\psi_K|_{K_1'}} & (K_1,\lambda_{K_1})\ar[d]^-{\sim}_{\psi_K} \\
  (L_2,\lambda_{L_2}) & (K_2',\lambda_{K_2}|_{K_2'})\ar[l]_-{\phi_2}\ar@{^(->}[r]^-{\iota_2} & (K_2,\lambda_{K_2}) \\
 }\]
 in which all vertical morphisms are controlled equivalences mod $\cY$, and in which $\iota_1$ and $\iota_2$ are subcomplex inclusions.
 
 Then the induced morphism
 \[ (L_1,\lambda_{L_1}) \mathop{\sqcup}\limits_{(K_1',\lambda_{K_1}|_{K_1'})} (K_1,\lambda_{K_1}) \to (L_2,\lambda_{L_2}) \mathop{\sqcup}\limits_{(K_2',\lambda_{K_2}|_{K_2'})} (K_2,\lambda_{K_2}) \]
 is a controlled equivalence mod $\cY$.
\end{theorem}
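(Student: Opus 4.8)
The plan is to deduce the statement directly from the definition of a controlled equivalence mod $\cY$. Write $P_i := (L_i,\lambda_{L_i}) \sqcup_{(K_i',\lambda_{K_i}|_{K_i'})} (K_i,\lambda_{K_i})$ for the pushout of the $i$-th row, which exists in $\ccw{G}{X,\cU}{W}$ by \cref{lem:pushouts}, and let $\rho \colon P_1 \to P_2$ be the induced morphism. It suffices to show that $\rho$ induces a weak equivalence $\hom^\cY(P_2,M)_\bullet \to \hom^\cY(P_1,M)_\bullet$ for every object $M$. The guiding idea is that, since each $P_i$ is a pushout along a subcomplex inclusion, $\hom^\cY(P_i,M)_\bullet$ should be the homotopy pullback of $\hom^\cY(L_i,M)_\bullet \to \hom^\cY(K_i',M)_\bullet \leftarrow \hom^\cY(K_i,M)_\bullet$; the map of these homotopy pullbacks induced by the diagram in the statement is then built from $\psi_L^*$, $(\psi_K|_{K_1'})^*$ and $\psi_K^*$, each of which is a weak equivalence because the corresponding vertical map is a controlled equivalence mod $\cY$, so it is a weak equivalence by invariance of homotopy pullbacks.

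To make this rigorous I would establish two points. First, for every subcomplex inclusion $\iota \colon (K',\lambda|_{K'}) \hookrightarrow (K,\lambda)$ the induced map $\iota^* \colon \hom^\cY(K,M)_\bullet \to \hom^\cY(K',M)_\bullet$ is a Kan fibration: restriction to the complement of a member of the big family takes subcomplex inclusions to subcomplex inclusions, so $\iota^*$ is a filtered colimit of the Kan fibrations produced by \cref{lem:mapping-spaces-fibrations}, and such a colimit is again a Kan fibration because the relevant lifting problems involve only finite simplicial sets; moreover all the spaces $\hom^\cY(-,M)_\bullet$ are Kan complexes, being filtered colimits of the Kan complexes of \cref{cor:mapping-spaces-fibrant}. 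Second, $\hom^\cY(-,M)_\bullet$ carries the pushout $P_i$ to the strict pullback $\hom^\cY(L_i,M)_\bullet \times_{\hom^\cY(K_i',M)_\bullet} \hom^\cY(K_i,M)_\bullet$. Granting these, applying $\hom^\cY(-,M)_\bullet$ to the given diagram yields a levelwise weak equivalence of cospans of Kan complexes whose right-hand legs are Kan fibrations; such a map induces a weak equivalence on pullbacks, and that map is exactly the one induced by $\rho$. As $M$ was arbitrary, $\rho$ is a controlled equivalence mod $\cY$.

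I expect the second point to be the crux. The obstacle is that restriction $(-)|_{X \setminus Y_j}$ is only lax functorial on $\ccw{G}{X,\cU}{W}$: the inclusions $\iota_i$ restrict on the nose, whereas $\phi_i$ only restricts to a morphism $(K_i',\lambda_{K_i}|_{K_i'})|_{X \setminus U_i[Y_j]} \to (L_i,\lambda_{L_i})|_{X \setminus Y_j}$ for an entourage $U_i$ controlling $\phi_i$, so $P_i|_{X \setminus Y_j}$ is not literally a pushout of restricted complexes. To get around this I would form, for each index $j$, the pushout $Q_{i,j} := (L_i,\lambda_{L_i})|_{X \setminus Y_j} \sqcup_{(K_i',\lambda_{K_i}|_{K_i'})|_{X \setminus U_i[Y_j]}} (K_i,\lambda_{K_i})|_{X \setminus U_i[Y_j]}$ and verify, by a cell-by-cell inspection using the control of $\phi_i$ (after enlarging $U_i$ to a symmetric entourage containing the diagonal) and the big-family property to choose $Y_{j'} \supseteq U_i^2[Y_j]$, that $Q_{i,j}$ is canonically a subcomplex of $P_i$ squeezed between $P_i|_{X \setminus Y_{j'}}$ and $P_i|_{X \setminus Y_j}$. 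Then, passing to the filtered colimit over $j$ and using that $\{U_i[Y_j]\}_j$ is cofinal in $\cY$, one identifies $\hom^\cY(P_i,M)_\bullet$ with $\colim_j \hom(Q_{i,j},M)_\bullet$; since $\hom(-,M)_\bullet$ sends the finite pushout $Q_{i,j}$ to a pullback of simplicial sets and filtered colimits commute with finite limits, the second point follows. It is precisely here that the single controlling entourage of \cref{def:controlled-map}, rather than the skeletonwise control of \cite[Definition~2.3]{UW} (compare \cref{rem:comparison-UW-1}), keeps $Q_{i,j}$ manageable — this is the deviation from \cite{UW} referred to at the start of the section.
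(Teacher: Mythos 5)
Your proposal is correct and follows essentially the same route as the paper: identify $\hom^\cY(P_i,M)_\bullet$ with the strict pullback of the cospan by an interleaving argument that resolves the lax functoriality of restriction (the paper uses the ``largest subcomplex mapping into $L_i|_{X\setminus Y_j}$'' where you use restriction to $X\setminus U_i[Y_j]$, but the cofinality argument is the same), note that the square is a homotopy pullback since the right-hand legs are Kan fibrations between Kan complexes, and conclude by homotopy invariance of homotopy pullbacks. No gaps.
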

\begin{proof}
 To increase legibility, we suppress the labelings from notation.
 
 Let $r \in \{1,2\}$ and let $Y_i \in \cY$.
 Denote by $\phi^*_r(L_r|_{X \setminus Y_i})$ the largest subcomplex of $K_r'$ satisfying
 \[ \phi_r(\phi^*_r(L_r|_{X \setminus Y_i})) \subseteq L_r|_{X \setminus Y_i}. \]
 Moreover, denote by $K_r^i$ the largest subcomplex of $K_r $ such that the following two conditions are satisfied:
 \begin{enumerate}
 \item $K_r^i \cap K_r' = \phi^{*}_{r}(L_r|_{X \setminus Y_i})$;
 \item for every cell $e$ in $K_{r}\setminus K^{i}_{r}$ we have $\lambda_{K_{r}}(e)\in X\setminus Y_{i}$.
 \end{enumerate}
 Then we have a canonical isomorphism
 \[ L_r|_{X \setminus Y_i} \mathop{\sqcup}\limits_{\phi^*_r(L_r|_{X \setminus Y_i})} K_r^i \cong ( L_r \mathop{\sqcup}\limits_{K_r'} K_r)|_{X \setminus Y_i}. \] 
 Since directed colimits commute with finite limits in simplicial sets, we obtain a pullback square
 \[\xymatrix{
  \colim\limits_{i \in I} \hom(( L_r \mathop{\sqcup}\limits_{K_r'} K_r)|_{X \setminus Y_i}, M)_\bullet\ar[r]\ar[d] & \colim\limits_{i \in I} \hom(K_r^i, M)_\bullet\ar[d] \\
  \colim\limits_{i \in I} \hom(L_r|_{X \setminus Y_i}, M)_\bullet\ar[r] & \colim\limits_{i \in I} \hom(\phi^*_r(L_r|_{X \setminus Y_i}), M)_\bullet
 }\]
 Suppose that $\phi_r$ is $U$-controlled.
 Let $i \in I$ and pick $j \geq i$ such that $U[Y_i] \subseteq Y_j$.
 Then $K_r'|_{X \setminus Y_j} \subseteq \phi_r^*(L_r|_{X \setminus Y_i})$
 and $\phi_r^*(L_r|_{X \setminus Y_j}) \subseteq K_r|_{X \setminus Y_i}$.
 In particular, we obtain isomorphisms
 \[ \colim\limits_{i \in I} \hom(\phi^*_r(L_r|_{X \setminus Y_i}), M)_\bullet \cong \colim\limits_{i \in I} \hom(K_r'|_{X \setminus Y_i},M)_\bullet \]
 and
 \[ \colim\limits_{i \in I} \hom(K_r^i, M)_\bullet \cong \colim\limits_{i \in I} \hom(K_r|_{X \setminus Y_i},M)_\bullet. \]
 It follows that the commutative square
 \[\xymatrix{
  \hom^\cY(L_r \mathop{\sqcup}\limits_{K_r'} K_r,M)_\bullet \ar[r]\ar[d] & \hom^\cY(K_r,M)_\bullet\ar[d]^{\iota_r^*} \\
  \hom^\cY(L_r,M)_\bullet \ar[r] & \hom^\cY(K_r',M)_\bullet
 }\]
 is a pullback.
 All corners of this square are Kan complexes by \cref{cor:mapping-spaces-fibrant},
 and the restriction map $\iota_r^*$ is a Kan fibration by \cref{lem:mapping-spaces-fibrations}.
 Hence, the square is a homotopy pullback.
 
 Since the transformation of homotopy pullback squares induced by $\psi_K$ and $\psi_L$ is an equivalence on all but the top left corner by assumption,
 it follows that the induced map on the top left corner is also an equivalence.
 This proves the claim of the theorem.
\end{proof}

 \subsection{Controlled retractive spaces}

Let $W$ be a $G$-space and let $(X,\cU)$ be a $G$-coarse space.

\begin{ddd}
 An \emph{$(X,\cU)$-controlled retractive space} $(K,\lambda,r)$ over $W$ is an $(X,\cU)$-controlled $G$-CW-complex $(K,\lambda)$ over $W$
 together with a $G$-equivariant retraction $r \colon K \to W$ to the structural inclusion $W \to K$.
 
 A \emph{morphism} of $(X,\cU)$-controlled retractive spaces is an $(X,\cU)$-controlled morphism of $(X,\cU$)-controlled $G$-CW-complexes
 which is additionally compatible with the chosen retractions.
\end{ddd}

The $(X,\cU)$-controlled retractive spaces and their morphisms form a category $\ret{G}{X,\cU}{W}{}$.

\begin{ddd}
 Define the \emph{cofibrations} $\cof\ret{G}{X,\cU}{W}{} \subseteq \ret{G}{X,\cU}{W}{}$ to be the collection of all morphisms
 which are isomorphic to a morphism given by a subcomplex inclusion.
\end{ddd}

\begin{ddd}
 Let $\cY$ be a big family of $G$-invariant subsets of $X$.
 A \emph{weak equivalence mod $\cY$} is a morphism
 which is sent to a controlled equivalence mod $\cY$ by the canonical functor $\ret{G}{X,\cU}{W}{} \to \ccw{G}{X,\cU}{W}$.
 Denote by $h^\cY\ret{G}{X,\cU}{W}{}$ the collection of all weak equivalences mod $\cY$.
 
 If $\cY = \{ \emptyset \}$, we typically omit $\cY$ from notation.
\end{ddd}

\begin{prop}\label{prop:ret-waldhausen-cat}
 The triple $(\ret{G}{X,\cU}{W}{}, \cof\ret{G}{X,\cU}{W}{}, h^\cY\ret{G}{X,\cU}{W}{})$ is a homotopical Waldhausen category.
\end{prop}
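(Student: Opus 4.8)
The plan is to verify the axioms of \cref{def:homotopical-Waldhausen-cat} one at a time, feeding in the structural results already established for $\ccw{G}{X,\cU}{W}$ through the faithful forgetful functor $U\colon\ret{G}{X,\cU}{W}{}\to\ccw{G}{X,\cU}{W}$ and through the simplicially enriched functor $q_\cY\colon\ccw{G}{X,\cU}{W}\to\ccw{G}{X,\cU}{W}^\cY$. The zero object is $(W,\emptyset,\id_W)$: a morphism of retractive spaces into it is forced to equal the structural retraction, and a morphism out of it is forced to equal the structural inclusion, so it is both terminal and initial.

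For the category-with-cofibrations axioms, observe that a morphism of retractive spaces is a cofibration exactly when the underlying morphism of controlled $G$-CW-complexes is, and that these are precisely the monomorphisms which restrict to an isomorphism onto a ($G$-invariant) subcomplex of the target. From this description it is routine that the cofibrations contain all isomorphisms, that every $(W,\emptyset,\id_W)\to(K,\lambda,r)$ is a cofibration, and that they are closed under composition. Pushouts along cofibrations are obtained by first forming the pushout of underlying controlled $G$-CW-complexes via \cref{lem:pushouts} and then equipping it with the retraction supplied by the universal property — the two structural retractions agree on the common subcomplex precisely because the span consists of morphisms of retractive spaces — and these are again pushouts in $\ret{G}{X,\cU}{W}{}$; stability of cofibrations under cobase change is inherited from \cref{lem:pushouts}.

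Next I would pin down the weak equivalences. Writing $Q:=q_\cY\circ U$, a morphism of $\ret{G}{X,\cU}{W}{}$ is a weak equivalence mod $\cY$ if and only if $Q$ sends it to a simplicial homotopy equivalence in the Kan-enriched category $\ccw{G}{X,\cU}{W}^\cY$ (here one uses \cref{cor:mapping-spaces-fibrant}, together with the fact that filtered colimits of Kan complexes are Kan complexes, and the characterisation in \cref{rem:equivalences-explicit}). Equivalently, since a morphism of a Kan-enriched category is a simplicial homotopy equivalence precisely when it becomes an isomorphism in the homotopy category (the ordinary category with the $\pi_0$ of the mapping spaces as hom-sets), the weak equivalences mod $\cY$ are the $Q$-preimages of the isomorphisms of $\Ho(\ccw{G}{X,\cU}{W}^\cY)$. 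It is therefore immediate that they form a subcategory containing all isomorphisms, and — because isomorphisms satisfy the two-out-of-six property and this property is reflected by any composition-preserving functor — that the two-out-of-six property holds, which also settles the ``homotopical'' part of \cref{def:homotopical-Waldhausen-cat}. For the gluing axiom of a Waldhausen category I would reduce to \cref{thm:gluing}: given a map of cofibration spans in $\ret{G}{X,\cU}{W}{}$ with weak-equivalence verticals, replace the targets of the two cofibrations by isomorphic objects so that the cofibrations become honest subcomplex inclusions (this changes the pushouts only up to isomorphism, hence only up to weak equivalence), apply $U$, invoke \cref{thm:gluing} to conclude that the induced map of pushouts in $\ccw{G}{X,\cU}{W}$ is a controlled equivalence mod $\cY$, and transport the conclusion back using that the pushouts in $\ret{G}{X,\cU}{W}{}$ are computed as in \cref{lem:pushouts} with the induced retraction.

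It remains to construct factorizations. Given $\phi\colon(K,\lambda,r)\to(L,\mu,s)$, form the mapping cylinder
\[ \mathrm{Cyl}(\phi):=(L,\mu)\mathop{\sqcup}\limits_{(K,\lambda)\otimes\{0\}}\big((K,\lambda)\otimes[0,1]\big), \]
glued along $\phi$, and endow it with the retraction that restricts to $s$ on $L$ and to $(K,\lambda)\otimes[0,1]\to(K,\lambda)\xrightarrow{r}W$ on the cylinder (these agree on the glued copy of $K$ because $s\phi=r$). One checks that $\mathrm{Cyl}(\phi)$ is a controlled $G$-CW-complex over $W$ — the only delicate point being that a single entourage controlling it can be produced as a composition of the entourages controlling $L$, $(K,\lambda)\otimes[0,1]$ and $\phi$ — that the front inclusion $(K,\lambda)\cong(K,\lambda)\otimes\{1\}\hookrightarrow\mathrm{Cyl}(\phi)$ is a label-preserving subcomplex inclusion compatible with retractions, hence a cofibration, and that $\phi$ equals the composite of the front inclusion with the collapse map $\mathrm{Cyl}(\phi)\to(L,\mu)$. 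The collapse map is a controlled strong deformation retraction: $(K,\lambda)\otimes\{0\}\hookrightarrow(K,\lambda)\otimes[0,1]$ is one by \cref{lem:controlled-deformation-retraction} (the inclusion $\{0\}\hookrightarrow[0,1]$ being a homotopy equivalence of CW-complexes), controlled strong deformation retractions are preserved by the cobase change along $\phi$, and applying $q_\cY$ turns any controlled homotopy equivalence into a controlled equivalence mod $\cY$ by \cref{rem:equivalences-explicit} and \cref{ex:cofinal-inclusion}; hence the collapse is a weak equivalence mod $\cY$. The step I expect to be the main technical obstacle is precisely this last one: keeping everything uniformly controlled by a single entourage while running \cref{lem:controlled-deformation-retraction} (and its stability under the pushout), so that the collapse is genuinely a \emph{controlled} deformation retraction and not merely a homotopy equivalence of underlying spaces; the remaining axioms are formal consequences of the results cited above.
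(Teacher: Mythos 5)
Your proposal is correct and follows essentially the same route as the paper: the same zero object and pushout construction via \cref{lem:pushouts}, the same observation that the weak equivalences are pulled back from a Kan-enriched category (giving two-out-of-six), the gluing axiom via \cref{thm:gluing}, and factorizations via the mapping cylinder with \cref{lem:controlled-deformation-retraction} supplying the controlled deformation retraction. The only cosmetic difference is that you verify the collapse map directly as a controlled strong deformation retraction, whereas the paper deduces that the back inclusion $L\hookrightarrow M(\phi)$ is a weak equivalence from the gluing lemma and then concludes for the collapse; both are valid.
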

\begin{proof}
 The category $\ret{G}{X,\cU}{W}{}$ has a zero object given by the controlled $G$-CW-complex $(W,\emptyset)$ together with the retraction $\id_W$.

 The unique map $(W,\emptyset,\id_W) \to (K,\lambda,r)$ is a cofibration for every controlled retractive space,
 and all isomorphisms are cofibrations.
 The existence of pushouts along cofibrations follows from \cref{lem:pushouts} together with the observation
 that the pushout of retractive spaces inherits a retraction by the universal property of the pushout.
 This also implies that $\cof\ret{G}{X,\cU}{W}{}$ is a subcategory.
 
 By definition, all isomorphisms are $h^\cY$-equivalences.
 The gluing lemma for $h^\cY$-equivalences is precisely \cref{thm:gluing}
 
 Since the weak equivalences are pulled back from a fibrant simplicially enriched category,
 they are closed under retracts and satisfy the two-out-of-six property.
 Moreover, we can use the product construction from \cref{def:tensor} to show that all morphisms in $\ret{G}{X,\cU}{W}{}$
 admit a factorization into a cofibration followed by a weak equivalence:
 If $\phi \colon (K,\lambda_K,r_K) \to (L,\lambda_L,r_L)$ is a morphism, define the mapping cylinder of $\phi$ by the pushout
 \[\xymatrix{
  (K,\lambda_K,r_K) \otimes \{1\} \ar[r]^-{\phi}\ar@{^(->}[d] & (L,\lambda_L,r_L)\ar@{^(->}[d] \\
  (K,\lambda_K,r_K) \otimes [0,1]\ar[r] & M(\phi)
 }\]
 Since the left vertical subcomplex inclusion is a weak equivalence,
 \cref{thm:gluing} implies that the right vertical morphism is also a weak equivalence.
 Hence, the map $\pi \colon M(\phi) \to (L,\lambda_L,r_L)$ induced by 
 $(K,\lambda_K,r_K) \otimes [0,1] \to (K,\lambda_K,r_K) \xrightarrow{\phi} (L,\lambda_L,r_L)$ and $\id_L$
 via the universal property of the pushout is a weak equivalence.
 The cofibration $(K,\lambda_K,r_K) \otimes \{0\} \hookrightarrow (K,\lambda_K,r_K) \otimes [0,1]$ induces a cofibration $(K,\lambda_K,r_K) \hookrightarrow M(\phi)$,
 and it is easy to check that the composition $(K,\lambda_K,r_K) \hookrightarrow M(\phi) \xrightarrow{\pi} (L,\lambda_L,r_L)$ equals $\phi$.
\end{proof}

We will usually abbreviate notation and write $(\ret{G}{X,\cU}{W}{},h^\cY)$ for the Waldhausen category $(\ret{G}{X,\cU}{W}{}, \cof\ret{G}{X,\cU}{W}{}, h^\cY\ret{G}{X,\cU}{W}{})$.

Note that $\ret{G}{X,\cU}{W}{}$ admits arbitrary coproducts.
Since we wish to take the algebraic $K$-theory of the Waldhausen category $(\ret{G}{X,\cU}{W}{},h^\cY)$,
we impose additional finiteness properties on objects in $\ret{G}{X,\cU}{W}{}$.
In order to do so, we have to additionally assume that $X$ comes equipped with a bornology.

\begin{ddd}\label{def:locally-finite}
 Let $(X,\cB,\cU)$ be a $G$-bornological coarse space.
 An object $(K,\lambda)$ of $\ccw{G}{X,\cU}{W}$ is called \emph{locally finite} if $\lambda^{-1}(B)$ is a finite set for every bounded subset $B$ of $X$.
 
 A controlled retractive space is \emph{locally finite} if it is locally finite as an object in $\ccw{G}{X,\cU}{W}$.
 Denote the full subcategory of locally finite controlled retractive spaces by $\ret{G}{X,\cB,\cU}{W}{\lf}$.
\end{ddd}

To save space, we will typically suppress the bornology and coarse structure on $X$ from now on and write $\ret{G}{X}{W}{\lf}$ for $\ret{G}{X,\cB,\cU}{W}{\lf}$.
Since $\ret{G}{X}{W}{\lf}$ is closed under pushouts along cofibrations,
it forms a full homotopical Waldhausen subcategory $(\ret{G}{X}{W}{\lf},h^\cY)$ of $(\ret{G}{X}{W}{}, h^\cY)$.

\begin{rem}\label{rem:comparison-UW-2}
 The finiteness condition introduced in \cref{def:locally-finite} is weaker than the notion of finiteness employed in \cite[Definition~3.3]{UW}:
 \begin{enumerate}
  \item We make no requirements about the image of the retraction map.
   This condition is irrelevant for the question whether the algebraic $K$-theory of $\ret{G}{X}{W}{\lf}$ defines a coarse homology theory (as a functor of $X$),
   and only affects how this theory behaves as a functor with respect to $W$.
   One can impose conditions on the images of the retraction maps without affecting the discussion in \cref{sec:a-homology} and \cref{sec:injectivity} except \cref{prop:a-homology-strongly-additive}.
   For example, one could require the images of the retraction maps to be contained in a $G$-compact subset of $W$ in order to obtain a functor in $W$ which is compatible with directed colimits.
  \item We do not require complexes to be finite-dimensional.
   This is important in our context as it produces a strongly additive coarse homology theory, see \cref{prop:a-homology-strongly-additive}.
   In \cite{UW}, this additional requirement ensures that locally finite
   complexes are uniformly controlled by a single entourage.
   Moreover, finite-dimensionality plays an important role in the proof of the Farrell--Jones conjecture for finitely $\cV\cC yc$-amenable groups in \cite{ELPUW18}.
   As illustrated by \cite[Sections~4 and 5]{UW}, the proof that the algebraic $K$-theory of $\ret{G}{X}{W}{\lf}$ produces a continuous coarse homology theory
   also goes through if we impose finite-dimensionality as an additional requirement (but we see no reason why the resulting theory should still be strongly additive). \qedhere
 \end{enumerate}
\end{rem}

To conclude this section, we discuss the functoriality of $\ret{G}{X}{W}{}$ and $\ret{G}{X}{W}{\lf}$ in $X$.
Let $f \colon (X,\cU_X) \to (Y,\cU_Y)$ be a morphism of $G$-coarse spaces.
Since $f$ is controlled, we obtain an induced exact functor
\[ f_* \colon (\ret{G}{X}{W}{},h) \to (\ret{G}{Y}{W}{},h),\quad (K,\lambda,r) \mapsto (K, f\circ \lambda, r). \]
Evidently, this defines a functor
\[ (\ret{G}{-}{W}{}, h) \colon G\mathbf{Coarse} \to \Waldone. \]
If $X$ and $Y$ are $G$-bornological coarse spaces and $f$ is in addition proper (i.e.~a morphism a $G$-bornological coarse spaces),
then the exact functor $f_*$ restricts to an exact functor
\[ f_* \colon (\ret{G}{X}{W}{\lf},h) \to (\ret{G}{Y}{W}{\lf},h). \]
Consequently, we have a functor
\[ (\ret{G}{-}{W}{\lf},h) \colon G\BC \to \Waldone. \]


\section{Coarse \texorpdfstring{$A$}{A}-homology}\label{sec:a-homology}

The goal of this section is to prove that the functor
\[ \ret{G}{-}{W}{\lf} \colon G\BC \to \Waldone \]
induces an equivariant coarse homology theory
after composing with the algebraic $K$-theory functor $\wK$.
For the convenience of the reader, we recall the definition of an equivariant coarse homology theory from \cite[Definition~3.10]{equicoarse}.

\begin{ddd}
 Let $\cC$ be a cocomplete stable $\infty$-category.
 A functor
 \[ E \colon G\BC \to \cC \]
 is a \emph{$\cC$-valued $G$-equivariant coarse homology theory} if it satisfies the following properties:
 \begin{enumerate}
  \item $E$ is \emph{coarsely invariant}: For every $G$-bornological coarse space $X$, the morphism $E(\{0,1\}_{max,max} \otimes X \to X)$ is an equivalence.
  \item $E$ is \emph{coarsely excisive}: We have $E(\emptyset) \simeq 0$, and for every complementary equivariant pair $(Z,\cY)$ in $X$ the induced square
   \[\xymatrix{
    E(Z \cap \cY)\ar[r]\ar[d] & E(Z)\ar[d] \\
    E(\cY)\ar[r] & E(X)
   }\]
   is a pushout, where $E(\cY):=\colim_{Y\in \cY} E(Y)$.
  \item $E$ \emph{vanishes on flasques}: If $X$ is flasque, then $E(X) \simeq 0$.
  \item $E$ is \emph{$u$-continuous}: For every $G$-bornological coarse space $X$, the natural map
  \[ \colim\limits_{U \in \cU^G} E(X_U) \to E(X) \]
  is an equivalence, where $X_{U}$ denotes the $G$-bornological coarse space obtained from $X$ by replacing the coarse structure by the smallest coarse structure containing $U$. \qedhere
 \end{enumerate}
\end{ddd}

\begin{ddd}[{\cite[Definition~5.15]{equicoarse}}]
 An equivariant coarse homology theory $E$ is \emph{continuous} if the following holds:
 For every filtered family $\cY = \{ Y_i \}_{i \in I}$ of $G$-invariant subsets such that for every $G$-invariant, locally finite subset $F$ of $X$
 there exists some $i \in I$ with $F \subseteq Y_i$, the canonical map $E(\cY) \to E(X)$ is an equivalence.
\end{ddd}

Recall \cref{efgieofewfefewfwefwefewfwef} of the functor $\wuloc$ and
\cref{fiuewhfweiufhweiufhiu23r23r23r23r32r} of $\wK$.
\begin{ddd}
 \emph{Universal coarse $A$-homology} (relative to the base space $W$) is the functor
 \[ \UAX^G_W \colon G\BC \xrightarrow{\ret{G}{-}{W}{\lf}} \Waldone \xrightarrow{\wuloc} \mloc. \]
 \emph{Coarse equivariant $A$-homology} (relative to the base space $W$) is the functor
 \[ \AX^G_W := \bK \circ \UAX^G_W \simeq \wK \circ \ret{G}{-}{W}{\lf} \colon G\BC \to \Sp. \qedhere \]
\end{ddd}

\begin{theorem}\label{thm:a-homology}
 Both $\UAX^G_W$ and $\AX^G_W$ are continuous $G$-equivariant coarse homology theories.
\end{theorem}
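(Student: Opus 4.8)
The plan is to verify the four defining properties of an equivariant coarse homology theory, together with continuity, for the $\mloc$-valued functor $\UAX^G_W$; the statement for $\AX^G_W = \bK\circ\UAX^G_W$ then follows formally. Indeed $\bK$ is colimit-preserving by \cite[Theorem~9.8]{BGT13}, hence an exact functor between cocomplete stable $\infty$-categories, so it preserves zero objects, (co)fiber sequences, pushouts and filtered colimits, and therefore transports each property of $\UAX^G_W$ to the corresponding property of $\AX^G_W$. So I focus on $\UAX^G_W$. Note that $\UAX^G_W(\emptyset)\simeq 0$ is immediate, since $\ret{G}{\emptyset}{W}{\lf}$ is equivalent to the zero category.

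For $u$-continuity and continuity I would use a filtered-colimit decomposition in $\Waldone$. By \cref{def:controlled-map} each controlled retractive space, and each morphism between such, is controlled by a single entourage, while local finiteness (\cref{def:locally-finite}) depends only on the bornology; hence the canonical functors exhibit $\ret{G}{X}{W}{\lf}$ as the filtered colimit, in $\Waldone$ and along the pushforward functors, of $\ret{G}{X_U}{W}{\lf}$ over $U\in\cU^G$, and of $\ret{G}{Y_i}{W}{\lf}$ over a filtered family $\cY=\{Y_i\}_{i\in I}$ exhausting the $G$-invariant locally finite subsets of $X$ (here one uses that the support $\lambda(\cells K)$ of a locally finite complex is itself locally finite and $G$-invariant, so lies in some $Y_i$). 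Since $\wuloc$ commutes with filtered colimits by \cref{prop:k-filtered-colimits}, both $u$-continuity and continuity follow; the same decomposition also shows that $\UAX^G_W$ preserves the colimits $\UAX^G_W(\cY):=\colim_{Y\in\cY}\UAX^G_W(Y)$ and $\UAX^G_W(Z\cap\cY)$ entering the excision axiom.

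Coarse invariance and the vanishing on flasque spaces both reduce to the observation that close morphisms induce the same map. If $f_0,f_1\colon X\to Y$ are close morphisms of $G$-bornological coarse spaces, witnessed by an entourage $V$ of $Y$, then for each object $(K,\lambda,r)$ of $\ret{G}{X}{W}{\lf}$ the identity of $K$ is an isomorphism $f_{0*}(K,\lambda,r)\xrightarrow{\cong}f_{1*}(K,\lambda,r)$ in $\ret{G}{Y}{W}{\lf}$ — it is $V$-controlled by closeness, it lands in $\ret{G}{Y}{W}{\lf}$ because both $f_i$ are proper, it is compatible with the retractions, and it is natural in $(K,\lambda,r)$ — so $f_{0*}\cong f_{1*}$ as exact functors and $\UAX^G_W(f_0)\simeq\UAX^G_W(f_1)$. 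Coarse invariance then follows because the projection $p\colon\{0,1\}_{max,max}\otimes X\to X$ has the section $i_0$ with $p\circ i_0=\id_X$ and $i_0\circ p$ close to the identity, so that $\UAX^G_W(p)$ is an equivalence with inverse $\UAX^G_W(i_0)$. For a flasque space $X$, implemented by $f\colon X\to X$, the hypotheses on $f$ ensure that $T:=\bigsqcup_{n\geq 0}f^n_*$ is a well-defined exact endofunctor of $\ret{G}{X}{W}{\lf}$ with $T\cong\id\sqcup(f_*\circ T)$ (objectwise coproduct of functors); applying the functorial form of the Additivity theorem (\cref{thm:additivity}) and using $\UAX^G_W(f)\simeq\id$ (as $f$ is close to $\id_X$) gives $\wuloc(T)\simeq\id_{\UAX^G_W(X)}\oplus\wuloc(T)$ in the stable category $\mloc$, whence $\id_{\UAX^G_W(X)}\simeq 0$ and $\UAX^G_W(X)\simeq 0$.

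The real work is coarse excision. Fix a complementary pair $(Z,\cY)$ in $X$, so $\cY$ is a big family and $Z\cup Y=X$ for cofinally many $Y\in\cY$. Since $\mloc$ is stable it is enough to produce a natural equivalence $\cofib\bigl(\UAX^G_W(Z\cap\cY)\to\UAX^G_W(Z)\bigr)\xrightarrow{\sim}\cofib\bigl(\UAX^G_W(\cY)\to\UAX^G_W(X)\bigr)$. I would apply the Fibration theorem (\cref{thm:fibration}) to $\ret{G}{X}{W}{\lf}$ equipped with the two homotopical Waldhausen structures $h\subseteq h^\cY$ (both are of this type by \cref{prop:ret-waldhausen-cat}), obtaining a fiber sequence $\wuloc(\ret{G}{X}{W}{\lf}^{h^\cY\text{-triv}},h)\to\UAX^G_W(X)\to\wuloc(\ret{G}{X}{W}{\lf},h^\cY)$, and similarly over $Z$ with the big family $\cY\cap Z:=\{Y\cap Z\}_{Y\in\cY}$. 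It then remains to make two identifications. First, the fiber term $\wuloc(\ret{G}{X}{W}{\lf}^{h^\cY\text{-triv}},h)$ should be identified with $\UAX^G_W(\cY)=\colim_{Y\in\cY}\UAX^G_W(Y)$ by recognizing the full subcategory of $h^\cY$-trivial objects, up to the colimit over $\cY$, as the union of the subcategories of complexes supported on the members of $\cY$ — using bigness of $\cY$ together with \cref{thm:approximation} and \cref{prop:k-filtered-colimits} — and likewise over $Z$. Second, the inclusion $Z\hookrightarrow X$ should induce an equivalence $\wuloc(\ret{G}{Z}{W}{\lf},h^{\cY\cap Z})\xrightarrow{\sim}\wuloc(\ret{G}{X}{W}{\lf},h^\cY)$, which I would prove by checking the hypotheses of the Approximation theorem (\cref{thm:approximation}) for the pushforward functor: the relation $Z\cup Y=X$ lets one split off from a complex over $X$ the part lying outside $Z$, which becomes acyclic mod $\cY$, via a mapping cylinder construction in the spirit of \cref{def:mca} and \cite[Section~3]{UW}. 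Combining the two fiber sequences with this equivalence, compatibly through the inclusion functor, exhibits the excision square as a pushout. The hard part will be precisely these two identifications — the careful analysis of the $h^\cY$-trivial objects and the mapping cylinder argument; they constitute the coarse-geometric elaboration of the algebraic $K$-theory localization sequence, which is what the fundamental theorems of \cref{sec:fundamental-theorems} were developed to support.
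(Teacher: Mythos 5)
Your overall architecture matches the paper's proof: coarse invariance and vanishing on flasques via close maps and an Eilenberg swindle plus Additivity, $u$-continuity and continuity via filtered colimits and \cref{prop:k-filtered-colimits}, and excision via the Fibration theorem applied to the pairs of Waldhausen structures $(h, h^{\cY})$ on $\ret{G}{X}{W}{\lf}$ and $(h, h^{Z\cap\cY})$ on $\ret{G}{Z}{W}{\lf}$, with the cofibers compared by the Approximation theorem. All of that is correct and is exactly what the paper does.

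There is, however, one genuine gap, in the identification of the fiber term $\wuloc(\ret{G}{X}{W}{\lf}^{h^{\cY}},h)$ with $\UAX^G_W(\cY)$. You claim that, up to the colimit over $\cY$, the $h^{\cY}$-trivial objects are ``the union of the subcategories of complexes supported on the members of $\cY$,'' and you propose to prove the identification with \cref{thm:approximation}. Both points fail: an object that is controlled contractible mod $\cY$ need not be controlled homotopy equivalent to any object supported on some $Y_j$; unwinding the contracting homotopy (as in \cref{prop:excision-fibers}) only produces a \emph{domination} $(K,\lambda,\iota\pi r)\xrightarrow{\pi}(K,\lambda,r)|_{Y_j}\xrightarrow{\iota}(K,\lambda,r)$ with $\iota\pi$ a controlled equivalence, i.e.\ exhibits $(K,\lambda,r)$ as a retract in the localization of an object of $\ret{G}{\cY}{W}{\lf}$. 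Consequently the inclusion $\ret{G}{\cY}{W}{\lf}\subseteq\ret{G}{X}{W}{\lf}^{h^{\cY}}$ does not satisfy the approximation property (it is not essentially surjective up to weak equivalence), and $\ell$ applied to it is only fully faithful, not an equivalence. The correct tool is the Cofinality theorem (\cref{thm:cofinality}): one checks that the inclusion admits a mapping cylinder argument (\cref{const:zigzags}) and that every $h^{\cY}$-trivial object is dominated by an object of $\ret{G}{\cY}{W}{\lf}$ (\cref{prop:excision-fibers}), whence the inclusion becomes an equivalence after idempotent completion; this suffices because $\uloc$ inverts the units $\cC\to\Idem(\cC)$ and $\Idem$ commutes with $\stab$ (\cref{lem:idem-and-stab}). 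The same remark applies to the fiber term over $Z$. With this substitution your argument goes through.
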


The proof of \cref{thm:a-homology} occpuies the rest of this section.

\begin{lem}\label{lem:close-morphisms}
 If $f$ and $f'$ are close morphisms of $G$-bornological coarse spaces, they induce naturally isomorphic functors
 \[ f_* \cong f'_* \colon  \ret{G}{X}{W}{} \to \ret{G}{Y}{W}{}. \]
\end{lem}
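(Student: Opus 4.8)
The plan is to construct the natural isomorphism $\eta \colon f_* \Rightarrow f'_*$ componentwise by the identity on underlying controlled $G$-CW-complexes. For an object $(K,\lambda,r)$ of $\ret{G}{X}{W}{}$, the functors $f_*$ and $f'_*$ output the controlled retractive spaces $(K,f\circ\lambda,r)$ and $(K,f'\circ\lambda,r)$; these share the same underlying $G$-CW-complex and the same retraction, and differ only in their $Y$-labelings. I would therefore set $\eta_{(K,\lambda,r)} := \id_K$ and check that this underlying identity map is a morphism $(K,f\circ\lambda,r)\to(K,f'\circ\lambda,r)$ in $\ret{G}{Y}{W}{}$.

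The only point requiring a genuine argument is that $\id_K$, viewed as a map between these two labelings, is $(Y,\cU_Y)$-controlled in the sense of \cref{def:controlled-map}. I would fix an entourage $U\in\cU_X^G$ containing the diagonal which witnesses that $(K,\lambda)$ is an $(X,\cU_X)$-controlled $G$-CW-complex, and, using that $f$ and $f'$ are close, a symmetric entourage $V_0\in\cU_Y^G$ with $\{(f(x),f'(x))\mid x\in X\}\subseteq V_0$. Then for $e\in\cells K$ and $e'\in\cells\gen{e}$ one has $(\lambda(e'),\lambda(e))\in U$, hence $(f(\lambda(e')),f(\lambda(e)))\in(f\times f)(U)$, which is an entourage of $Y$ since $f$ is controlled; composing with $(f'(\lambda(e')),f(\lambda(e')))\in V_0$ shows $(f'(\lambda(e')),f(\lambda(e)))$ lies in the composite entourage $V_0\circ(f\times f)(U)=:V$. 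Thus $\id_K$ is $V$-controlled; it is manifestly $G$-equivariant, cellular and compatible with the (identical) retractions, so $\eta_{(K,\lambda,r)}$ is a well-defined morphism. Running the same estimate with $f$ and $f'$ interchanged shows $\id_K\colon(K,f'\circ\lambda,r)\to(K,f\circ\lambda,r)$ is controlled as well, so $\eta_{(K,\lambda,r)}$ is an isomorphism.

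Naturality is then automatic: for a morphism $\phi$ in $\ret{G}{X}{W}{}$ both $f_*(\phi)$ and $f'_*(\phi)$ are represented by the same underlying cellular map $\phi$, and $\eta_{(K,\lambda,r)}$, $\eta_{(L,\lambda_L,r_L)}$ are identities on underlying complexes, so the naturality square commutes strictly. I do not anticipate any real obstacle; the main (and only mildly technical) step is the entourage bookkeeping above, which goes through because the coarse structure of $Y$ is closed under composition and contains both $(f\times f)(U)$ and $V_0$. Finally, since $\id_K$ is equally a morphism of locally finite objects (the labelings $f\circ\lambda$ and $f'\circ\lambda$ are locally finite as $f$, $f'$ are proper), the same $\eta$ restricts to the desired natural isomorphism on $\ret{G}{X}{W}{\lf}$ whenever that is needed.
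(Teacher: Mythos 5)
Your proposal is correct and matches the paper's (one-line) proof exactly: the natural isomorphism is the identity on underlying complexes, and the only content is the check that this identity is controlled, which your entourage bookkeeping carries out properly.
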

\begin{proof}
 Since $f$ and $f'$ are close, the identity morphism on underlying spaces defines a natural isomorphism $f_*(K,\lambda,r) \xrightarrow{\cong} f'_*(K,\lambda,r)$.
\end{proof}

\begin{kor}\label{cor:a-homology-coarsely-invariant}
 The functor $\UAX^G_W$ is coarsely invariant.
\end{kor}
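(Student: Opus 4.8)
The plan is to realise the projection $p \colon \{0,1\}_{max,max} \otimes X \to X$ as a coarse equivalence with an explicit inverse and then feed this into \cref{lem:close-morphisms}. Write $s \colon X \to \{0,1\}_{max,max} \otimes X$ for the inclusion at the point $0$; both $p$ and $s$ are morphisms of $G$-bornological coarse spaces (they are $G$-equivariant, controlled, and proper, the latter because every subset of $\{0,1\}$ is bounded). First I would record that $p \circ s = \id_X$ on the nose, so by strict functoriality of the functor $\ret{G}{-}{W}{\lf} \colon G\BC \to \Waldone$ the induced exact functors satisfy $p_* \circ s_* = \id$, and therefore $\UAX^G_W(p) \circ \UAX^G_W(s) = \id$ in $\mloc$.

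For the reverse composite, I would observe that $s \circ p$ sends $(j,x)$ to $(0,x)$; since $\{0,1\}$ carries the maximal coarse structure, the pairs $\bigl((j,x),(0,x)\bigr)$ all lie in a single entourage of $\{0,1\}_{max,max} \otimes X$, so $s \circ p$ is close to the identity. Hence \cref{lem:close-morphisms} — whose proof applies verbatim to the locally finite subcategory, since the natural isomorphism there is just the identity on underlying controlled CW-complexes and local finiteness is preserved under pushforward along proper maps — yields a natural isomorphism $(s \circ p)_* \cong \id$ of exact endofunctors of $\ret{G}{\{0,1\}_{max,max} \otimes X}{W}{\lf}$. A natural isomorphism of exact functors of homotopical Waldhausen categories has components which are in particular weak equivalences, so it is carried by the localization functor $\ell$ to a natural equivalence; applying the functors $\stab$ and $\uloc$ then gives $\UAX^G_W(s) \circ \UAX^G_W(p) = \UAX^G_W(s \circ p) \simeq \UAX^G_W(\id) = \id$.

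Combining the two relations, $\UAX^G_W(p)$ and $\UAX^G_W(s)$ are mutually inverse equivalences in $\mloc$; in particular $\UAX^G_W(p)$ is an equivalence, which is exactly coarse invariance of $\UAX^G_W$. The only step I expect to require a little care is the claim that $\wuloc$ does not distinguish naturally isomorphic exact functors, and that is the point I would write out most explicitly (via the descent of the natural isomorphism along $\ell$ described above); everything else is routine unwinding of the definitions of the bornological coarse structures on $\{0,1\}_{max,max} \otimes X$.
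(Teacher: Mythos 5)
Your proposal is correct and follows essentially the same route as the paper: the paper's proof also reduces coarse invariance to \cref{lem:close-morphisms} together with the observation that naturally isomorphic exact functors induce equivalent maps after applying $\wuloc$, the only difference being that you spell out the section--retraction pair $(s,p)$ explicitly while the paper phrases it as the correspondence between morphisms out of $\{0,1\}_{max,max}\otimes X$ and pairs of close maps. Your extra care about descending the natural isomorphism through $\ell$, $\stab$ and $\uloc$ is a reasonable elaboration of what the paper leaves implicit.
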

\begin{proof}
 This follows directly from \cref{lem:close-morphisms} since isomorphic functors induce equivalent maps
 and morphisms $\{0,1\}_{max,max} \otimes X \to Y$ correspond to pairs of close maps $X \to Y$.
\end{proof}

\begin{lem}\label{lem:a-homology-u-continuous}
 The functor $\UAX^G_W$ is $u$-continuous.
\end{lem}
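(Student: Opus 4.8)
The plan is to reduce $u$-continuity of $\UAX^G_W$ to the statement that the homotopical Waldhausen category $\ret{G}{X}{W}{\lf}$ is the filtered colimit of the categories $\ret{G}{X_U}{W}{\lf}$ over the poset $\cU^G$ of $G$-invariant entourages of $X$, and then to apply \cref{prop:k-filtered-colimits}, which says that $\wuloc$ commutes with filtered colimits. Concretely, writing $F := \ret{G}{-}{W}{\lf} \colon G\BC \to \Waldone$, once we know that the canonical functor $\colim_{U \in \cU^G} F(X_U) \to F(X)$ is an equivalence in $\Waldone$, applying $\wuloc$ and commuting it past the filtered colimit identifies the canonical map $\colim_{U \in \cU^G} \UAX^G_W(X_U) \to \UAX^G_W(X)$ with an equivalence, which is exactly the condition to be verified.

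First I would set up the diagram. The poset $\cU^G$, ordered by inclusion, is filtered, since the union of two $G$-invariant entourages together with the diagonal is again a $G$-invariant entourage. For $U \subseteq U'$ in $\cU^G$ the identity on the underlying $G$-set is a morphism of $G$-bornological coarse spaces $X_U \to X_{U'}$: it is controlled because the coarse structure generated by $U$ is contained in the one generated by $U'$, and proper because the bornology is unchanged. Applying $F$ thus produces a filtered diagram in $\Waldone$ whose transition functors are the identity on underlying CW-data, hence injective on objects and faithful. As noted in the proof of \cref{prop:filtered-colimits}, filtered colimits in $\Waldone$ are computed on underlying categories, with cofibrations and weak equivalences given by the corresponding unions; so $\colim_{U \in \cU^G} F(X_U)$ is the union $\bigcup_{U \in \cU^G}\ret{G}{X_U}{W}{\lf}$ sitting inside $\ret{G}{X}{W}{\lf}$.

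Next I would check that this union is all of $\ret{G}{X}{W}{\lf}$. By \cref{def:controlled-map} every object $(K,\lambda,r)$ of $\ret{G}{X}{W}{\lf}$ is controlled by a single entourage of $X$, and since $\cU^G$ is cofinal in $\cU$ we may take this entourage in $\cU^G$; local finiteness refers only to the unchanged bornology, so $(K,\lambda,r)$ is already an object of $\ret{G}{X_U}{W}{\lf}$ for a suitable $U \in \cU^G$. A morphism between two such objects is again controlled by a single entourage, which can be enlarged to a member of $\cU^G$, hence is a morphism in some $\ret{G}{X_U}{W}{\lf}$; a cofibration is, up to isomorphism, a subcomplex inclusion and thus manifestly descends to finite level. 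Finally, a weak equivalence is, by the explicit description in \cref{rem:equivalences-explicit}, witnessed by a homotopy inverse together with two controlled homotopies; all of this data is controlled by entourages that can be taken in $\cU^G$, so the entire package lives over some $X_U$ and the morphism is a weak equivalence in $\ret{G}{X_U}{W}{\lf}$. Therefore the canonical functor $\colim_{U \in \cU^G}\ret{G}{X_U}{W}{\lf} \to \ret{G}{X}{W}{\lf}$ is an isomorphism of homotopical Waldhausen categories, and applying $\wuloc$ together with \cref{prop:k-filtered-colimits} shows that
\[ \colim_{U \in \cU^G} \UAX^G_W(X_U) \to \UAX^G_W(X) \]
is an equivalence.

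The only step that is not completely formal is the identification of the union with $\ret{G}{X}{W}{\lf}$ at the level of weak equivalences: one must unwind \cref{rem:equivalences-explicit} to see that a weak equivalence mod the trivial family — and not just objects, morphisms and cofibrations — is detected at finite entourage level, because its homotopy inverse and the two witnessing homotopies are themselves controlled maps and hence controlled by entourages lying in $\cU^G$. Everything else is bookkeeping with the cofinality of $\cU^G$ in $\cU$ and the fact that filtered colimits in $\Waldone$ are computed underlying-categorically.
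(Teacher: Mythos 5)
Your proposal is correct and follows essentially the same route as the paper: the paper's proof simply observes that every controlled retractive space and every controlled morphism is controlled by a single entourage, so that $\ret{G}{X}{W}{\lf} \simeq \colim_{U \in \cU^G_X} \ret{G}{X_U}{W}{\lf}$ in $\Waldone$, and then invokes \cref{prop:k-filtered-colimits}. Your additional bookkeeping (cofinality of $\cU^G$, detection of weak equivalences at finite entourage level via \cref{rem:equivalences-explicit}) just spells out what the paper leaves implicit.
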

\begin{proof}
 Since each controlled retractive space and each controlled morphism is controlled by a single entourage,
 the category of controlled retractive spaces can be written as a  filtered colimit
 \[ \ret{G}{X}{W}{\lf} \simeq \colim_{U \in \cU^G_X} \ret{G}{X_U}{W}{\lf} \]
 in $\Waldone$. The claim now follows from \cref{prop:k-filtered-colimits}.
\end{proof}

\begin{prop}\label{prop:flasque-swindle}
 Let $X$ be a $G$-bornological coarse space.
 Suppose there exists a sequence $(s_n \colon X \to X)_{n \in \bbN}$ of morphisms such that the following holds:
 \begin{enumerate}
  \item\label{it:flasque-swindle-1} $s_0 = \id_X$;
  \item\label{it:flasque-swindle-2}  {$\bigcup_n (s_n \times s_{n+1})(\diag(X))$ is an entourage of $X$, where $\diag(X)$ denotes the diagonal of $X$;}
  \item\label{it:flasque-swindle-3} $\bigcup_n (s_n \times s_n)(U)$ is an entourage of $X$ for every entourage $U$;
  \item\label{it:flasque-swindle-4} for every bounded subset $B$ of $X$ exists some natural number $n_0$ with $B \cap s_n(X) = \emptyset$   for all $n \geq n_0$.
 \end{enumerate}
 Then $\ret{G}{X}{W}{\lf}$ admits an Eilenberg swindle.
\end{prop}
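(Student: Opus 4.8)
The plan is to produce an Eilenberg swindle on $\ret{G}{X}{W}{\lf}$ using the shift maps $s_n$ to construct a countable-coproduct-shift endofunctor. First I would define, for a controlled retractive space $(K,\lambda,r)$, the infinite coproduct
\[ \Theta(K,\lambda,r) := \bigvee_{n \in \bbN} (s_n)_*(K,\lambda,r), \]
formed in $\ret{G}{X}{W}{}$, where the wedge is over $W$ (coproduct in retractive spaces). I would first check this lands in $\ret{G}{X}{W}{\lf}$: for a bounded $B \subseteq X$, condition~\eqref{it:flasque-swindle-4} guarantees that only finitely many summands contribute cells with labels in $B$, and each such summand is itself locally finite, so the preimage of $B$ under the combined labeling is finite. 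I would also check that $\Theta$ is $(X,\cU)$-controlled with a single entourage: condition~\eqref{it:flasque-swindle-3} ensures that $\bigcup_n (s_n \times s_n)(U)$ is an entourage, where $U$ controls $(K,\lambda)$, so each $(s_n)_*(K,\lambda)$ is uniformly controlled and hence so is the wedge. Exactness of $\Theta$ (preservation of the zero object, pushouts along cofibrations, and $h$-equivalences) follows because each $(s_n)_*$ is exact and coproducts in $\ret{G}{X}{W}{}$ are exact in each variable, together with the gluing lemma \cref{thm:gluing} for the weak equivalence part.

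Next I would exhibit the swindle identity. The canonical inclusion of the summand indexed by $0$ (which is the identity summand since $s_0 = \id_X$ by~\eqref{it:flasque-swindle-1}) gives a natural cofibration $\id \to \Theta$ whose cofiber is naturally isomorphic to $\bigvee_{n \geq 1} (s_n)_* \cong \bigvee_{n \geq 0} (s_{n+1})_*$. Meanwhile, the reindexing of the wedge by $n \mapsto n+1$ together with the close-morphisms lemma \cref{lem:close-morphisms}—applied using condition~\eqref{it:flasque-swindle-2}, which says exactly that $s_n$ and $s_{n+1}$ are close in the sense that $\bigcup_n (s_n \times s_{n+1})(\diag X)$ is an entourage, so $(s_n)_* \cong (s_{n+1})_*$ naturally—identifies $\bigvee_{n \geq 0}(s_{n+1})_*$ with $\bigvee_{n \geq 0}(s_n)_* = \Theta$. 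Hence there is a cofiber sequence $\id \to \Theta \to \Theta$ in the sense of homotopical Waldhausen categories, i.e.\ a short exact sequence $\id \rightarrowtail \Theta \twoheadrightarrow \Theta$ of exact endofunctors. This is precisely the data of an Eilenberg swindle: combined with the evident splitting $\Theta \simeq \id \vee \Theta$ realized by the shift, one concludes $\Theta \simeq \id \vee \Theta$ and hence that $\wuloc$ (being additive, by the Additivity theorem \cref{thm:additivity}) sends $\ret{G}{X}{W}{\lf}$ to a zero object; but for the present proposition it suffices to have produced the swindle functor $\Theta$ itself.

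The main obstacle I anticipate is the careful verification that the infinite wedge $\Theta(K,\lambda,r)$ is genuinely an object of $\ret{G}{X}{W}{\lf}$—i.e.\ that local finiteness is preserved—since this is the one place where all four hypotheses~\eqref{it:flasque-swindle-1}--\eqref{it:flasque-swindle-4} interact: \eqref{it:flasque-swindle-4} controls how many summands meet a given bounded set, \eqref{it:flasque-swindle-3} keeps the whole wedge uniformly controlled so that it is a legitimate object of $\ccw{G}{X,\cU}{W}$ at all, and one must simultaneously track that the retraction maps assemble correctly (they do, by the universal property of the coproduct in retractive spaces, since each $(s_n)_*$ preserves retractions). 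A secondary point requiring care is that the natural isomorphism $(s_n)_* \cong (s_{n+1})_*$ supplied by \cref{lem:close-morphisms} must be assembled over all $n$ into a single isomorphism of the infinite wedges that is compatible with the coproduct structure; this is routine but worth spelling out, as it is what makes the reindexing map an isomorphism rather than merely a levelwise weak equivalence. Once these are in place, the remaining assertions—exactness of $\Theta$ and the cofiber sequence $\id \rightarrowtail \Theta \twoheadrightarrow \Theta$—are formal consequences of the constructions in \cref{sec:controlled-retractive-spaces}.
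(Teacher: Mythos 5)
Your proposal is correct and follows essentially the same route as the paper: both define the endofunctor $S = \bigvee_{n\in\bbN}(s_n)_*$, use conditions \eqref{it:flasque-swindle-3} and \eqref{it:flasque-swindle-4} to see that it is a well-defined exact endofunctor of $\ret{G}{X}{W}{\lf}$, and use conditions \eqref{it:flasque-swindle-1} and \eqref{it:flasque-swindle-2} together with \cref{lem:close-morphisms} to obtain the natural isomorphism $\id \vee S \cong S$. You correctly flag the one point the paper leaves implicit, namely that the uniform entourage in \eqref{it:flasque-swindle-2} is what makes the levelwise isomorphisms $(s_n)_*\cong(s_{n+1})_*$ assemble into a single controlled isomorphism of the infinite wedges.
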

\begin{proof}
 Each morphism $s_n$ induces an endofunctor $(s_n)_*$.
 By assumption~\eqref{it:flasque-swindle-3}, we obtain the exact endofunctor
 \[ S := \bigvee_{n \in \bbN} (s_n)_* \colon \ret{G}{X}{W}{} \to \ret{G}{X}{W}{}. \]
 We claim that $S$ preserves locally finite objects:
 Suppose $(K,\lambda)$ is locally finite, and write $S(K,\lambda) = (K_\infty, \lambda_\infty)$.
 Let $B$ be a bounded subset of $X$.
 Assumption~\eqref{it:flasque-swindle-4} implies that there exists some natural number $n_0$ such that
 \[ \lambda_\infty^{-1}(B) = \bigcup_{n \in \bbN} \lambda^{-1} \circ s_n^{-1}(B) = \bigcup_{n < n_0} \lambda^{-1} \circ s_n^{-1}(B). \]
 Hence, $S(K,\lambda)$ is locally finite, and $S$ restricts to an endofunctor
 \[ S \colon \ret{G}{X}{W}{\lf} \to \ret{G}{X}{W}{\lf}. \]
 Using assumptions~\eqref{it:flasque-swindle-1} and \eqref{it:flasque-swindle-2} together with \cref{lem:close-morphisms},
 it follows that there exists a natural isomorphism $\id_{\ret{G}{X}{W}{\lf}} \vee S \cong S$,
 so $S$ is an Eilenberg swindle on $\ret{G}{X}{W}{\lf}$.
\end{proof}

\begin{rem}\label{rem:flasque-shift}
 If $f \colon X \to X$ is a morphism which implements flasqueness of $X$ in the sense of \cite[Definition~3.8]{equicoarse},
 then the sequence $(f^n)_{n \geq 0}$ satisfies the assumptions of \cref{prop:flasque-swindle}.
 {If $G$ is the trivial group, the assumption of  \cref{prop:flasque-swindle} is equivalent to the notion of flasqueness in the generalized sense as defined in \cite[Definition~3.27]{BE} (see also \cite[Definition~3.10]{Wright}).}
\end{rem}

 \begin{kor}\label{cor:a-homology-flasques}
 The functor $\UAX^G_W$ vanishes on flasque $G$-bornological coarse spaces.
\end{kor}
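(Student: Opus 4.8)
The plan is to feed the Eilenberg swindle produced by \cref{prop:flasque-swindle} into the functor $\wuloc$ and use that a localizing invariant annihilates any homotopical Waldhausen category which carries such a swindle.

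First I would reduce to the swindle. Let $X$ be a flasque $G$-bornological coarse space and pick a morphism $f\colon X\to X$ implementing flasqueness in the sense of \cite[Definition~3.8]{equicoarse}. By \cref{rem:flasque-shift} the sequence $(f^n)_{n\geq 0}$ satisfies the hypotheses of \cref{prop:flasque-swindle}, so (the proof of) that proposition provides an exact endofunctor $S$ of $\ret{G}{X}{W}{\lf}$ together with a natural isomorphism $\id_{\ret{G}{X}{W}{\lf}} \vee S \cong S$.

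Next I would apply additivity. By \cref{thm:additivity}, for exact endofunctors $F,F'$ of a homotopical Waldhausen category one has $\wuloc(F\vee F')\simeq \wuloc(F)\oplus\wuloc(F')$ as morphisms in $\mloc$; this is obtained by composing the functor $Z\mapsto (F(Z)\rightarrowtail F(Z)\vee F'(Z)\twoheadrightarrow F'(Z))$ into $S_2\ret{G}{X}{W}{\lf}$ with the splitting $(s,q)$. Applied to the isomorphism $\id\vee S\cong S$ this yields $\wuloc(\id)\oplus\wuloc(S)\simeq\wuloc(S)$ in the stable $\infty$-category $\mloc$, so cancelling gives that $\wuloc(\id_{\ret{G}{X}{W}{\lf}})$, which is the identity of $\UAX^G_W(X)=\wuloc(\ret{G}{X}{W}{\lf})$, is the zero morphism. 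Hence $\UAX^G_W(X)\simeq 0$, as claimed.

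There is no real obstacle here: the only slightly nontrivial ingredient is the statement that $\wuloc$ sends coproducts of exact endofunctors to direct sums, and this is an entirely routine consequence of \cref{thm:additivity} (equivalently, one may simply cite the general fact that a localizing invariant vanishes on categories admitting an Eilenberg swindle).
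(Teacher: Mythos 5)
Your argument is correct and is exactly the paper's proof: obtain the Eilenberg swindle from \cref{rem:flasque-shift} and \cref{prop:flasque-swindle}, then conclude via the Additivity theorem (\cref{thm:additivity}) that the identity of $\UAX^G_W(X)$ is null, hence $\UAX^G_W(X)\simeq 0$. You merely spell out the standard cancellation step that the paper leaves implicit.
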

\begin{proof}
 If $f \colon X \to X$ implements flasqueness of $X$, we can apply \cref{prop:flasque-swindle} by \cref{rem:flasque-shift}.
 Then the Additivity theorem (\cref{thm:additivity}) implies $\UAX^G_W(X) \simeq 0$.
\end{proof}

\begin{ddd}\label{wefjiwefwfwefwef}
 Let $X$ be a $G$-bornological coarse space and let $\cY=(Y_{i})_{i\in I}$ be a big family of $G$-invariant subsets of $X$.
 Then we define
 \[ \ret{G}{\cY}{W}{\lf} := \colim_{i\in I}\ret{G}{Y_{i}}{W}{\lf}.\qedhere \]
\end{ddd}

\begin{rem}
 In the situation of \cref{wefjiwefwfwefwef} we can and will identify $\ret{G}{\cY}{W}{\lf}$ with the full subcategory of $\ret{G}{X}{W}{\lf}$ spanned by those objects $(K,\lambda,r)$ for which there exists some $i\in I$ with $\lambda(\cells K) \subseteq Y_i$.
\end{rem}

Since $\ret{G}{\cY}{W}{\lf}$ is closed under pushouts, it forms a full homotopical Waldhausen subcategory of $\ret{G}{X}{W}{\lf}$.

\begin{const}\label{const:zigzags}
 Let $X$ be a $G$-bornological coarse space and consider a zig-zag
 \[ (K,\lambda_K,r_K) \xrightarrow{\phi} (L,\lambda_L,r_L) \xleftarrow{\alpha} (L',\lambda_{L'},r_{L'}) \]
 in $\ret{G}{X}{W}{}$ such that $\alpha$ is a controlled equivalence.
 
 Forgetting the retractions, we find by \cref{rem:equivalences-explicit} a controlled map of relative $G$-CW-complexes
 $\beta \colon (L,\lambda_L) \to (L',\lambda_{L'})$ as well as controlled homotopies $\id_L \simeq \alpha\beta$ and $\id_{L'} \simeq \beta\alpha$ relative $W$.
 Let
 \[ (K,\lambda_K) \xrightarrow{\iota} (M,\lambda_M) \xrightarrow{\pi} (L',\lambda_{L'}) \]
 be the factorization of $\beta\phi$ obtained by applying the mapping cylinder construction from the proof of \cref{prop:ret-waldhausen-cat}.
 Since there exists a controlled homotopy $\alpha\beta\phi \simeq \phi$,
 the controlled maps $\phi$ and $\alpha$ on the ends of the mapping cylinder extend to a controlled map $\psi \colon (M,\lambda_M) \to (L,\lambda_L)$.
 Then $\psi$ becomes a morphism of retractive spaces by equipping $M$ with the retraction $r_M := r_L\psi$.
 Since $\psi$ restricts to $\phi$ on $K$, the inclusion $\iota$ is then also a morphism of retractive spaces.
 In particular, we have a factorization
 \[ \phi \colon (K,\lambda_K,r_K) \xrightarrow{\iota} (M,\lambda_M,r_M) \xrightarrow[\sim]{\psi} (L,\lambda_L, r_L) \]
 in $\ret{G}{X}{W}{}$.
 If $(K,\lambda_K,r_K)$ and $(L',\lambda_{L'},r_{L'})$  are objects of a full homotopical Waldhausen subcategory $\bR$ of $\ret{G}{X}{W}{}$ which is closed under tensoring with $[0,1]$,
 then $(M,\lambda_M,r_M)$ also belongs to $\bR$.
\end{const}

Recall the notion of domination from \cref{ergiofrggergrge}. Let furthermore $\ret{G}{X}{W}{\lf}^{h^{\cY}}$ denote the full subcategory of $\ret{G}{X}{W}{\lf}$ on the objects which are equivalent to zero mod $\cY$.

\begin{prop}\label{prop:excision-fibers}
 Let $X$ be a $G$-bornological coarse space and let $\cY$ be a big family of $G$-invariant subsets of $X$.
 Then every object in $\ret{G}{X}{W}{\lf}^{h^{\cY}}$ is dominated by an object in $\ret{G}{\cY}{W}{\lf}$:
 
 For every object $(K,\lambda_K,r_K) \in  \ret{G}{X}{W}{\lf}^{h^\cY}$ there exist objects
 \[ (K',\lambda_{K'},r_{K'}) \in \ret{G}{X}{W}{\lf}^{h^\cY}, \quad (L,\lambda_L,r_L) \in \ret{G}{\cY}{W}{\lf} \]
 and morphisms
 \[ \phi \colon (K',\lambda_{K'},r_{K'}) \to (L,\lambda_L,r_L), \quad \psi \colon (L,\lambda_L,r_L) \to (K,\lambda_K,r_K) \]
 such that the composition $\psi\phi$ is a controlled equivalence.
\end{prop}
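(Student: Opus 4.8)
The plan is to take $K'$ to be $(K,\lambda_K,r_K)$ itself and $L$ to be the restriction of $(K,\lambda_K,r_K)$ to a suitable entourage–thickening of the ``exceptional region'' $Y_j$, with $\psi$ the corresponding subcomplex inclusion and $\phi$ obtained by extending a controlled contracting homotopy supplied by the hypothesis. First I would unwind what it means for $(K,\lambda_K,r_K)$ to be $h^{\cY}$-equivalent to zero: the structural inclusion $(W,\emptyset,\id_W)\to(K,\lambda_K,r_K)$ is then a controlled equivalence mod $\cY$, so by \cref{rem:equivalences-explicit} (for $\ccw{G}{X,\cU}{W}^{\cY}$) there are an index $j\in I$, a controlled cellular $G$-map $\psi_0\colon(K,\lambda_K)|_{X\setminus Y_j}\to W$ relative $W$, and a controlled homotopy relative $W$
\[ H\colon (K,\lambda_K)|_{X\setminus Y_j}\otimes[0,1]\to(K,\lambda_K), \qquad H_0=\text{inclusion},\quad H_1=\iota_W\circ\psi_0. \]
Concatenating $H$ with the reverse of $\iota_W\circ(r_K\circ H)$, one may assume $\psi_0=r_K|_{(K,\lambda_K)|_{X\setminus Y_j}}$; writing $A:=(K,\lambda_K)|_{X\setminus Y_j}$, the map $H_1$ then collapses $A$ onto $W$.

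Next I would extend this homotopy. By the controlled homotopy extension property \cref{cor:chep}, applied to the subcomplex inclusion $A\hookrightarrow(K,\lambda_K)$ and to the map which is $H$ on $A\otimes[0,1]$ and $\id$ on $(K,\lambda_K)\otimes\{0\}$, there is a controlled homotopy relative $W$
\[ \bar H\colon (K,\lambda_K)\otimes[0,1]\to(K,\lambda_K), \qquad \bar H_0=\id,\quad \bar H|_{A\otimes[0,1]}=H. \]
Put $f:=\bar H_1$; thus $f$ is controlled by a single entourage $U_f$, is homotopic to $\id$ through $\bar H$, and sends every cell of $A$ into $W$. The decisive observation is that $f$ has image in a ``$\cY$-part'' of $K$: if $U$ is a control entourage for $(K,\lambda_K)$ and $e$ is a cell not lying in $A$, then some cell of $\langle e\rangle$ is labeled in $Y_j$, whence $\lambda_K(e)\in U[Y_j]$, and therefore $\langle f(e)\rangle$ is labeled within a fixed thickening $U'[Y_j]$, with $U'$ depending only on $U$ and $U_f$. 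Choosing $m\in I$ with $U'[Y_j]\subseteq Y_m$ (big family), one gets $f\big((K,\lambda_K)\big)\subseteq(K,\lambda_K)|_{Y_m}$, so $f$ factors as $(K,\lambda_K)\xrightarrow{\ \phi\ }(K,\lambda_K)|_{Y_m}\hookrightarrow(K,\lambda_K)$.

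Finally I would assemble the domination data. Set $K':=(K,\lambda_K,r_K)$, which lies in $\ret{G}{X}{W}{\lf}^{h^{\cY}}$ by hypothesis; set $L:=\big((K,\lambda_K)|_{Y_m},\,r_K|\big)$, which lies in $\ret{G}{\cY}{W}{\lf}$, being supported on $Y_m\in\cY$ and locally finite as a subcomplex of the locally finite $K$; let $\psi\colon L\to K$ be the subcomplex inclusion and $\phi\colon K'\to L$ be $f$ with restricted codomain. Then $\psi\phi=f$ is controlled-homotopic to $\id_K$ via $\bar H$, hence is a controlled equivalence (apply \cref{rem:equivalences-explicit} with homotopy inverse $\id_K$ and with $\bar H$ witnessing both composites being homotopic to the identity). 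One remaining point is that $\phi$ be a morphism of \emph{retractive} spaces, i.e.\ that $f$ respect $r_K$; this is secured by the device used in \cref{const:zigzags}, transporting the retraction along a mapping-cylinder projection. I expect the main obstacle to be the containment $f\big((K,\lambda_K)\big)\subseteq(K,\lambda_K)|_{Y_m}$: this requires that the \emph{extended} homotopy $\bar H$, and with it $f$, be controlled by a single entourage — precisely the strengthening of the control convention over \cite{UW} recorded in \cref{rem:comparison-UW-1} — so that $f$ displaces the cells of $K$ near $Y_j$ only within one member of the big family. Propagating the uniform control through the homotopy extension property, and through the iterated uses of the big-family property, is the technical heart of the argument.
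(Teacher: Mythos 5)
Your proposal is correct and follows essentially the same route as the paper: unwind $h^{\cY}$-contractibility into a controlled nullhomotopy on $(K,\lambda)|_{X\setminus Y_j}$, extend it over all of $K$ by the controlled homotopy extension property (\cref{cor:chep}), and observe that the endpoint of the extended homotopy factors through the restriction of $K$ to a member of the big family, giving the domination with $\psi\varphi$ controlled-homotopic to $\id_K$. The only cosmetic difference is the retraction issue at the end: the paper simply equips $K'$ (the same underlying complex) with the retraction pulled back along $\iota\pi$ rather than invoking a mapping-cylinder construction, which is slightly cleaner but not substantively different.
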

\begin{proof}
 Let $(K,\lambda,r)$ be controlled contractible mod $\cY$.
 By \cref{rem:equivalences-explicit}, this means that there exists a controlled homotopy $\eta \colon (K,\lambda)|_{X \setminus Y_i} \otimes [0,1] \to (K,\lambda)$ between the canonical inclusion and the zero map $(K',\lambda)|_{X \setminus Y_i} \xrightarrow{r} W \hookrightarrow K$
 for some $i \in I$.
 By the controlled homotopy extension property~\ref{cor:chep}, the map
 \[ ((K,\lambda)|_{X \setminus Y_i} \otimes [0,1]) \mathop{\sqcup}\limits_{(K,\lambda)|_{X \setminus Y_i} \otimes \{0\}} (K,\lambda) \otimes \{0\}) \xrightarrow{\eta \sqcup \id_K} (K,\lambda) \]
 extends to a controlled homotopy $\overline{\eta} \colon (K,\lambda) \otimes [0,1] \to (K,\lambda)$.
 Let $U$ be an entourage such that $\overline{\eta}$ is $U$-controlled.
 Since $\cY$ is a big family, there exists some $j \in I$ such that $U[Y_i] \subseteq Y_j$.
 The endpoint $\overline{\eta}_1$ of the homotopy $\overline{\eta}$ differs from the retraction map only on cells labelled by points in $Y_i$.
 Hence, $\overline{\eta}_1$ factors as
 \[\xymatrix{
  (K,\lambda)\ar[r]^{\pi} & (K,\lambda)|_{Y_j}\ar@{^(->}[r]^-{\iota} & (K,\lambda).
 }\]
 This gives rise to a domination
 \[  (K,\lambda, \iota\pi r) \xrightarrow{\pi} (K,\lambda,r)|_{Y_j} \xrightarrow{\iota} (K,\lambda,r), \]
 whose existence is exactly what we needed to show.
\end{proof}

\begin{prop}\label{prop:excision-cofibers}
 Let $X$ be a $G$-bornological coarse space, and let $(Z,\cY)$ be an equivariant complementary pair.
 Then the exact functor
 \[ (\ret{G}{Z}{W}{\lf}, {h^{Z \cap \cY}}) \to (\ret{G}{X}{W}{\lf}, h^\cY) \]
 satisfies the assumptions of \cref{thm:approximation}
\end{prop}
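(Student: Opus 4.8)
The plan is to verify directly that the functor under consideration --- which is induced by the inclusion of $G$-bornological coarse spaces $\iota_Z\colon Z\hookrightarrow X$ and sends $(K,\lambda,r)$ to $(K,\iota_Z\circ\lambda,r)$ --- has the approximation property of \cref{geroiger334g43g34g}. So I would check its two conditions in turn.

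For the first condition I would argue as follows. For a $Z$-labeled controlled $G$-CW-complex the property of being $\cU_X$-controlled agrees with that of being $\cU_Z$-controlled, because $\cU_Z$ consists precisely of the entourages $U\cap(Z\times Z)$ with $U\in\cU_X$ and all the labels involved lie in $Z$; and for such a complex the restriction $(fK)|_{X\setminus Y_i}$ coincides with the restriction $K|_{Z\setminus(Z\cap Y_i)}$ that enters the definition of weak equivalences mod $Z\cap\cY$. Feeding this into the explicit description of controlled equivalences mod a big family from \cref{rem:equivalences-explicit}, one sees that the backwards map, the controlled homotopies and the index choices witnessing that a morphism of $Z$-labeled complexes is a controlled equivalence mod $\cY$ transport verbatim into data witnessing that the same morphism is a controlled equivalence mod $Z\cap\cY$, and conversely. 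Hence $f$ preserves and detects weak equivalences.

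The substantial part is the second condition. Given $(K,\lambda_K,r_K)$ in $\ret{G}{Z}{W}{\lf}$ and a morphism $y\colon f(K)\to D$ in $\ret{G}{X}{W}{\lf}$, I would first replace $y$ by the mapping cylinder factorization from the proof of \cref{prop:ret-waldhausen-cat}, obtaining $f(K)\xrightarrow{\iota} M\xrightarrow{\pi} D$ with $\iota$ the subcomplex inclusion at level $0$, $\pi\circ\iota=y$, and $\pi$ a weak equivalence mod $\cY$; note that $M$ is locally finite because $K$ and $D$ are. Next I would pass to the largest $Z$-labeled (hence $G$-invariant) subcomplex $M|_Z\subseteq M$, which is again locally finite and so has the form $f(K')$ for a canonically determined $K'\in\ret{G}{Z}{W}{\lf}$; since the level-$0$ copy of $f(K)$ inside $M$ is $Z$-labeled, $\iota$ corestricts to a morphism $\iota'=f(x)$ with $x\colon K\to K'$. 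The crucial claim is that $M|_Z\hookrightarrow M$ is a weak equivalence mod $\cY$: by the complementary-pair assumption choose an index $i_0$ with $Z\cup Y_{i_0}=X$, so that $X\setminus Y_{i_0}\subseteq Z$ and hence $(M|_Z)|_{X\setminus Y_{i_0}}=M|_{X\setminus Y_{i_0}}$; then by \cref{ex:cofinal-inclusion} both $M|_{X\setminus Y_{i_0}}\hookrightarrow M|_Z$ and $M|_{X\setminus Y_{i_0}}\hookrightarrow M$ are controlled equivalences mod $\cY$, and two-out-of-three (available since the weak equivalences satisfy two-out-of-six) shows that $M|_Z\hookrightarrow M$ is one too. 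Setting $D':=D$, $v:=\id_D$ and $w:=\pi\circ(M|_Z\hookrightarrow M)\colon f(K')\to D$, which is a weak equivalence mod $\cY$ as a composite of such, one has $w\circ f(x)=\pi\circ\iota=y=v\circ y$, which is exactly the commuting square required in \cref{geroiger334g43g34g}.

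The main obstacle, and the reason for the detour through the mapping cylinder, is that $y$ itself need not factor through a $Z$-labeled subcomplex of $D$: if $y$ is $U$-controlled its image only lies in $D|_{U[Z]}$, which is in general strictly larger than $D|_Z$. Forming $M(y)$ and restricting to its $Z$-labeled part instead produces a $Z$-labeled complex that is genuinely weakly equivalent mod $\cY$ to $D$ and receives the corestricted inclusion of $f(K)$. The remaining verifications --- that $M|_Z$ inherits a retraction onto $W$ and is locally finite, that $\id_D$ and the subcomplex inclusions appearing above are weak equivalences, and the minor bookkeeping needed to pass \cref{ex:cofinal-inclusion} and the mapping-cylinder statement from controlled $G$-CW-complexes to controlled retractive spaces --- are routine, and I would only indicate them.
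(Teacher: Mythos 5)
Your proof is correct, and for the essential second condition of the approximation property it takes a genuinely different route from the paper. The paper does not pass through a mapping cylinder at all: given $\phi\colon (K,\lambda_K)\to(L,\lambda_L)$ with $K$ labeled in $Z$, it chooses $i$ with $Z\cup Y_i=X$ and $j\geq i$ with $U^{-1}[Y_i]\subseteq Y_j$ (where $U$ controls $\phi$), observes that $\phi$ restricts to a morphism $(K,\lambda_K)|_{Z\setminus(Z\cap Y_j)}\to(L,\lambda_L)|_{X\setminus Y_i}$ landing in $\ret{G}{Z}{W}{\lf}$, and takes $C'$ to be the pushout $(K,\lambda_K)\sqcup_{(K,\lambda_K)|_{Z\setminus(Z\cap Y_j)}}(L,\lambda_L)|_{X\setminus Y_i}$; the gluing lemma, \cref{ex:cofinal-inclusion} and two-out-of-three then show that the induced map $\phi''$ from this pushout to $(L,\lambda_L)$ is a weak equivalence mod $\cY$, with $D'=L$ and $v=\id$ exactly as in your setup. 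Your construction instead takes $C'$ to be the $Z$-labeled part $M|_Z$ of the mapping cylinder $M(y)$, and the only place the complementary-pair hypothesis enters is the single identity $(M|_Z)|_{X\setminus Y_{i_0}}=M|_{X\setminus Y_{i_0}}$ for $Z\cup Y_{i_0}=X$; the entourage bookkeeping that the paper does by hand is outsourced to \cref{ex:cofinal-inclusion} and to the gluing lemma hidden inside the cylinder construction. This is arguably cleaner, and it is a genuine virtue of your argument that you never need to know what $M|_Z$ actually contains beyond the level-$0$ copy of $f(K)$ --- which is good, because $M|_Z$ need not contain the whole cylinder $f(K)\otimes[0,1]$ (the subcomplex generated by a cell $e\times(0,1)$ meets $D$ in cells labeled in $U[\lambda_K(e)]$, which may leave $Z$). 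The price is the list of routine checks you correctly flag (that $M$ and $M|_Z$ are locally finite, that $\iota$ corestricts, that a $Z$-labeled $\cU_X$-controlled complex is the same thing as an object of $\ret{G}{Z}{W}{\lf}$); your treatment of the first condition, via \cref{rem:equivalences-explicit}, matches the paper's.
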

\begin{proof}
 Using \cref{rem:equivalences-explicit}, it is easy to see that the first part of the approximation property holds.
 
 To verify the second part of the approximation property, let $\phi \colon (K,\lambda_K) \to (L,\lambda_L)$
 be an arbitrary morphism in $\ret{G}{X}{W}{\lf}$ such that $(K,\lambda_K)$ lies in $\ret{G}{Z}{W}{\lf}$ (which is considered here, for simplicity,  as a full subcategory of $\ret{G}{X}{W}{\lf}$).
 Choose an entourage $U$ such that $\phi$ is $U$-controlled.
 Let $i \in I$ be such that $Z \cup Y_i = X$, and let $j \geq i$ sich that $U^{-1}[Y_i] \subseteq Y_j$.
 Then $U[Z \setminus (Z \cap Y_j)] \subseteq X \setminus Y_i$, so $\phi$ restricts to a morphism
 \[ \phi' \colon (K,\lambda_K)|_{Z \setminus (Z \cap Y_j)} \to (L,\lambda_L)|_{X \setminus Y_i}. \]
 Since $X \setminus Y_i \subseteq Z$, the morphism $\phi'$ lies in $\ret{G}{Z}{W}{\lf}$.
 In particular, we may form the pushout
 \[\xymatrix{
  (K,\lambda_K)|_{Z \setminus (Z \cap Y_j)}\ar[r]^{\phi'}\ar@{>->}[d] & (L,\lambda_L)|_{X \setminus Y_i}\ar@{>->}[d] \\
  (K,\lambda_K)\ar[r] & (K,\lambda_K) \mathop{\sqcup}\limits_{(K,\lambda_K)|_{Z \setminus (Z \cap Y_j)}} (L,\lambda_L)|_{X \setminus Y_i}
 }\]
 in $\ret{G}{Z}{W}{\lf}$.
 By \cref{thm:gluing} (Glueing Lemma) and \cref{ex:cofinal-inclusion},
 the right vertical morphism is a controlled equivalence mod $Z \cap \cY$.
 By the universal property of the pushout, we obtain a factorization of the canonical inclusion $(L,\lambda_L)|_{X \setminus Y_i} \rightarrowtail (L,\lambda_L)$ as
 \[ (L,\lambda_L)|_{X \setminus Y_i} \rightarrowtail (K,\lambda_K) \mathop{\sqcup}\limits_{(K,\lambda_K)|_{Z \setminus (Z \cap Y_j)}} (L,\lambda_L)|_{X \setminus Y_i} \xrightarrow{\phi''} (L,\lambda_L). \]
 By the two-out-of-three property of weak equivalences, it follows that $\phi''$ is a controlled equivalence mod $\cY$.
 Moreover, we have a factorization of $\phi$ as
 \[ (K,\lambda_K) \to (K,\lambda_K) \mathop{\sqcup}\limits_{(K,\lambda_K)|_{Z \setminus (Z \cap Y_j)}} (L,\lambda_L)|_{X \setminus Y_i} \xrightarrow[\sim]{\phi''} (L,\lambda_L), \]
 which verifies the second part of the approximation property.
\end{proof}

\begin{kor}\label{cor:a-homology-coarsely-excisive}
 The functor $\UAX^G_W$ is coarsely excisive.
\end{kor}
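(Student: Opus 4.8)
The plan is to verify the two conditions in the definition of a coarsely excisive functor for $E = \UAX^G_W$. The vanishing $\UAX^G_W(\emptyset) \simeq 0$ is immediate: an $\emptyset$-labeling forces the underlying complex to be $W$ itself, so $\ret{G}{\emptyset}{W}{\lf}$ is the zero category, $\ell$ of it is the terminal $\infty$-category, its stabilization is $0$, and $\uloc(0) \simeq 0$.

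For the excision square, fix a $G$-bornological coarse space $X$ and a complementary equivariant pair $(Z,\cY)$, and aim to show that
\[\xymatrix{
 \UAX^G_W(Z\cap\cY)\ar[r]\ar[d] & \UAX^G_W(Z)\ar[d]\\
 \UAX^G_W(\cY)\ar[r] & \UAX^G_W(X)
}\]
is cocartesian in $\mloc$; since $\mloc$ is stable, it suffices to exhibit a map of fibre sequences whose induced map on third terms is an equivalence. The Fibration theorem \cref{thm:fibration}, applied to the two homotopical Waldhausen structures $h \subseteq h^{\cY}$ on $\ret{G}{X}{W}{\lf}$ (which share the same cofibrations), produces a fibre sequence
\[ \wuloc(\ret{G}{X}{W}{\lf}^{h^{\cY}},h) \to \UAX^G_W(X) \to \wuloc(\ret{G}{X}{W}{\lf},h^{\cY}), \]
and, applied to $Z$ with the family $Z\cap\cY$, a fibre sequence
\[ \wuloc(\ret{G}{Z}{W}{\lf}^{h^{Z \cap \cY}},h) \to \UAX^G_W(Z) \to \wuloc(\ret{G}{Z}{W}{\lf},h^{Z \cap \cY}). \]
The inclusion $\ret{G}{Z}{W}{\lf} \hookrightarrow \ret{G}{X}{W}{\lf}$ induces a map from the second sequence to the first, and by \cref{prop:excision-cofibers} together with the Approximation theorem \cref{thm:approximation} (and functoriality of $\stab$ and $\uloc$) the resulting map on third terms is an equivalence. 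As fibre sequences in the stable $\infty$-category $\mloc$ are cofibre sequences, the square whose rows are the first maps of the two sequences is therefore cocartesian.

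It remains to identify that cocartesian square with the displayed one. For this I would observe that, by \cref{ex:cofinal-inclusion}, the full subcategory $\ret{G}{\cY}{W}{\lf}$ of \cref{wefjiwefwfwefwef} lies inside $\ret{G}{X}{W}{\lf}^{h^{\cY}}$, and then check that this inclusion satisfies the hypotheses of the Cofinality theorem \cref{thm:cofinality}: domination is exactly \cref{prop:excision-fibers}, and the mapping cylinder argument is supplied by \cref{const:zigzags} — given $C \to D$ with $C \in \ret{G}{\cY}{W}{\lf}$ and a controlled equivalence $C'' \xrightarrow{\sim} D$ with $C'' \in \ret{G}{\cY}{W}{\lf}$, the mapping-cylinder factorization of \cref{const:zigzags} lands in $\ret{G}{\cY}{W}{\lf}$ because that subcategory is closed under tensoring with $[0,1]$. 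The Cofinality theorem then yields $\Idem(\ell(\ret{G}{\cY}{W}{\lf})) \xrightarrow{\sim} \Idem(\ell(\ret{G}{X}{W}{\lf}^{h^{\cY}}))$, hence, since $\uloc$ inverts the idempotent-completion units, $\wuloc(\ret{G}{\cY}{W}{\lf}) \xrightarrow{\sim} \wuloc(\ret{G}{X}{W}{\lf}^{h^{\cY}},h)$; and \cref{wefjiwefwfwefwef} together with \cref{prop:k-filtered-colimits} identifies the source with $\colim_{Y\in\cY}\UAX^G_W(Y) = \UAX^G_W(\cY)$. Running the identical argument over $Z$ with $Z\cap\cY$ in place of $\cY$ identifies $\wuloc(\ret{G}{Z}{W}{\lf}^{h^{Z \cap \cY}},h)$ with $\UAX^G_W(Z\cap\cY)$. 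These equivalences are natural with respect to $\ret{G}{Z}{W}{\lf} \hookrightarrow \ret{G}{X}{W}{\lf}$, so they turn the cocartesian square from the previous step into the excision square.

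The main obstacle I expect is the mapping cylinder argument for $\ret{G}{\cY}{W}{\lf} \subseteq \ret{G}{X}{W}{\lf}^{h^{\cY}}$: one has to be sure the factorization produced by \cref{const:zigzags} stays inside $\ret{G}{\cY}{W}{\lf}$ \emph{and} is compatible with the retraction maps, and throughout the argument one must keep careful track of which class of weak equivalences ($h$ or $h^{\cY}$) is in force, since the Fibration theorem equips the relevant sub- and quotient categories with the ``opposite'' weak equivalences from the ones one might naively expect.
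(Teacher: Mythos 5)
Your proposal is correct and follows essentially the same route as the paper: the Fibration theorem applied to $(Z, Z\cap\cY)$ and $(X,\cY)$, the Approximation theorem via \cref{prop:excision-cofibers} to identify the cofibers, and the Cofinality theorem via \cref{prop:excision-fibers} and \cref{const:zigzags} to identify the fibers with $\UAX^G_W(Z\cap\cY)$ and $\UAX^G_W(\cY)$. Your extra care in spelling out the identification of $\wuloc(\ret{G}{\cY}{W}{\lf})$ with $\colim_{Y\in\cY}\UAX^G_W(Y)$ and the empty-set case only makes explicit what the paper leaves implicit.
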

\begin{proof}
 Let $X$ be a $G$-bornological coarse space, and let $(Z,\cY)$ be an equivariant complementary pair.
 Consider the commutative diagram
 \[\xymatrix{
  (\ret{G}{Z}{W}{\lf}^{h^{Z \cap \cY}}, h)\ar[r]\ar[d] & (\ret{G}{Z}{W}{\lf}, h)\ar[r]\ar[d] & (\ret{G}{Z}{W}{\lf}, h^{Z \cap \cY})\ar[d] \\
  (\ret{G}{X}{W}{\lf}^{h^{\cY}}, h)\ar[r] & (\ret{G}{X}{W}{\lf}, h)\ar[r] & (\ret{G}{X}{W}{\lf}, h^{\cY})
 }\]
 By \cref{thm:fibration}, both rows induce cofiber sequences in $\mloc$ upon application of $\wuloc$.
 Moreover, the induced map of cofibers is an equivalence by \cref{prop:excision-cofibers} and \cref{thm:approximation}.
 Therefore, we have a pushout square
 \[\xymatrix{
  \wuloc(\ret{G}{Z}{W}{\lf}^{h^{Z \cap \cY}}, h)\ar[r]\ar[d] & \wuloc(\ret{G}{Z}{W}{\lf}, h)\ar[d] \\
  \wuloc(\ret{G}{X}{W}{\lf}^{h^{\cY}}, h)\ar[r] & \wuloc(\ret{G}{X}{W}{\lf}, h)
 }\]
 in $\mloc$.
 \cref{const:zigzags} shows that both inclusion functors
 \[ (\ret{G}{Z \cap \cY}{W}{\lf}, h) \to (\ret{G}{Z}{W}{\lf}^{h^{Z \cap \cY}}, h) \text{ and } (\ret{G}{\cY}{W}{\lf}, h) \to (\ret{G}{X}{W}{\lf}^{h^{\cY}}, h) \]
 admit a mapping cylinder argument.
 Therefore, \cref{prop:excision-fibers} and \cref{thm:cofinality} imply that these functors
 induce equivalences upon application of $\wuloc$, which yields the desired pushout square.
\end{proof}

\begin{proof}[Proof of \cref{thm:a-homology}]
 The functor $\UAX^G_W$ is an equivariant coarse homology theory by \cref{cor:a-homology-coarsely-invariant},
 \cref{lem:a-homology-u-continuous}, \cref{cor:a-homology-flasques} and \cref{cor:a-homology-coarsely-excisive}.
 
 Let $\cY = \{ Y_i \}_{i \in I}$ be a filtered family of $G$-invariant subsets such that for every $G$-invariant, locally finite subset $F$ of $X$
 there exists some $i \in I$ with $F \subseteq Y_i$.
 Since for every locally finite retractive space $(K,\lambda)$ the image $\lambda(\cells K)$ of the labeling is a locally finite subset of $X$,
 it follows that $\ret{G}{X}{W}{\lf}$ is the filtered union
 \[ \ret{G}{X}{W}{\lf} = \bigcup_{i \in I} \ret{G}{Y_i}{W}{\lf}. \]
 Continuity follows from \cref{prop:k-filtered-colimits}.
 
 Since $\bK$ commutes with filtered colimits, it is immediate that $\AX^G_W$ is also a continuous equivariant coarse homology theory.
\end{proof}


\section{Split injectivity of the \texorpdfstring{$A$}{A}-theoretic assembly map}\label{sec:injectivity}

The goal of this section is to use the equivariant coarse homology theory $\AX^G_W$ constructed in \cref{sec:a-homology}
to derive the split injectivity results stated in the introduction.

To avoid a barrage of definitions at the beginning,
the proof of the main result (\cref{thm:a-cp}) is split into a sequence of individual statements,
each of which addresses another additional property of coarse $A$-homology.

\subsection{The relation to classical \texorpdfstring{$A$}{A}-theory}\label{sec:classical-a}
First of all, let us relate coarse $A$-homology to the classical $A$-theory functor due to Waldhausen \cite[Section~2]{Wald85}.

Let $Q$ be a topological space.
The objects of the category of retractive spaces $\retuc{}{Q}{}$ are CW-complexes $K$ relative $Q$ equipped with a retraction $r \colon K \to Q$ to the inclusion $Q \hookrightarrow K$.
Morphisms in this category are cellular maps over and under $Q$.
Cofibrations are those morphisms which are isomorphic to subcomplex inclusions,
and weak equivalences are those morphisms which, as maps relative $Q$, are homotopy equivalences.
The full homotopical Waldhausen subcategory $\retuc{}{Q}{\f}$ of finite retractive spaces over $Q$ is spanned by those retractive spaces
which arise from $Q$ by attaching only a finite number of cells.

The ordinary (nonconnective) $A$-theory functor is given by the composition
\[ \bA \colon \Top \xrightarrow{\retuc{}{-}{\f}} \Waldone \xrightarrow{\wK} \Sp, \]
where $\Top$ denotes the ordinary $1$-category of topological spaces and continuous maps.

\begin{rem}\label{rem:ret-functor}In this remark we explain the functoriality of $\retuc{}{Q}{\f}$ with respect to the space $Q$.
	If $f \colon Q \to Q'$ is a continous map, the induced functor $\retuc{}{f}{\f} \colon \retuc{}{Q}{\f} \to \retuc{}{Q}{\f}$ sends the object   $(K,r)$ to the object $(f_{*}K,f_{*}r)$, where $f_{*}K$ is defined by the pushout
	\[\xymatrix{
	 Q\ar[r]\ar[d]_{f} & K\ar[d] \\
	 Q'\ar[r] & f_*K
	}\ ,\] and the 
	   retraction $f_*r \colon f_*K \to Q'$ is determined by the  universal property of the pushout. 
	  The latter is also  
	   used to define $\retuc{}{f}{\f}$ on morphisms $(K,r)\to (K^{\prime},r^{\prime})$ in $\retuc{}{Q}{\f}$.
	
	Note that the construction $K\mapsto f_{*}K$ can be made strictly functorial by suitable choices on the point-set level:
	Since we require that $Q$ is contained in $K$ (as the $(-1)$-skeleton of $K$), the pushout $f_*K$ can be chosen to have the underlying set $Q' \cup (K \setminus Q)$, equipped with the appropriate topology and induced filtration.
\end{rem}

Let now $G$ be a discrete group and let $P$ be the total space of a principal $G$-bundle.
 {Then $P$ gives rise to the functor
\[ \retuc{}{P \times_G -}{\f} \colon \orb(G) \to \Waldone,\quad S \mapsto \retuc{}{P \times_G S}{\f} \]
and thus to the $\orb(G)$-spectrum
\[ \bA_P := \wK \circ \retuc{}{P \times_G -}{\f}   \colon \orb(G) \to \Sp. \]}

Let $G_{can,min}$ be the $G$-bornological coarse space whose  $G$-set is  $G$,
 equipped with the minimal bornology and the coarse structure generated by the  subset $\{G(F \times F) \mid F \subseteq G \text{ finite} \}$ of the   power set of $G\times G$.
\begin{prop}\label{prop:org-spectra-cat}
 There is a zig-zag of equivalences
 \[ \ret{G}{G_{can,min} \otimes (-)_{min,max}}{P}{\lf} \simeq \retuc{}{P \times_G -}{\f} \]
 between functors $\orb(G) \to \Waldone$.
\end{prop}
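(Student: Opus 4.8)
The plan is to prove the proposition by passing to quotients by the free $G$-action on $P$. First I would record the freeness input: for $(K,\lambda,r)$ in $\ret{G}{G_{can,min}\otimes S_{min,max}}{P}{\lf}$ the labelling $\lambda\colon\cells K\to G\times S$ is $G$-equivariant and $G\times S$ is a free $G$-set (the action on the $G$-factor is free because $P$ is the total space of a principal $G$-bundle), so $\cells K$ is a free $G$-set and $K$ is a free relative $G$-CW-complex over $P$. Writing $\lambda_2$ for the composite of $\lambda$ with the equivariant projection $G\times S\to S$, minimality of the coarse structure on the $S$-factor forces $\lambda_2$ to be constant on $\gen{e}$ for every cell $e$; hence $K$ is the wedge over $P$ of the subcomplexes $\gen{\lambda_2^{-1}(s)}\cup P$, $s\in S$. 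Consequently $K$ determines, by replacing the base $P$ by $P\times S$ (attaching each cell $e$ to the $\lambda_2(e)$-th copy of $P$) and then dividing by the diagonal $G$-action, a relative CW-complex $\Phi_S(K)$ over $(P\times S)/G = P\times_G S$, with $\cells\Phi_S(K)\cong(\cells K)/G$ and an induced retraction. Local finiteness of $(K,\lambda,r)$ — finiteness of $\lambda^{-1}(B)$ for bounded $B$, which by minimality of the bornology on the $G$-factor amounts to finiteness of $\lambda^{-1}(\{g\}\times S)$ for each $g$, hence of $(\cells K)/G$ — says exactly that $\Phi_S(K)$ is a \emph{finite} retractive space over $P\times_G S$. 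This defines a comparison functor $\Phi_S\colon\ret{G}{G_{can,min}\otimes S_{min,max}}{P}{\lf}\to\retuc{}{P\times_G S}{\f}$ on objects, and on morphisms via the induced cellular maps of quotients.

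Next I would check that $\Phi_S$ is exact and an equivalence. Exactness is clear: forming the wedge over $P\times S$ and dividing by $G$ preserve the zero object, subcomplex inclusions, pushouts along cofibrations (\cref{lem:pushouts}) and the tensor construction of \cref{def:tensor}, hence controlled homotopies and weak equivalences; one also checks that it detects weak equivalences. For the equivalence, rather than proving essential surjectivity on the nose, I would verify that $\Phi_S$ satisfies the hypotheses of the Approximation theorem \cref{thm:approximation}: given a morphism $\Phi_S(K)\to L$ in $\retuc{}{P\times_G S}{\f}$, I would attach finitely many free $G$-cells to $K$ so as to produce $K\hookrightarrow K'$ in $\ret{G}{G_{can,min}\otimes S_{min,max}}{P}{\lf}$ whose image is weakly equivalent to $L$ relative to $\Phi_S(K)$, using the mapping cylinder in $\retuc{}{P\times_G S}{\f}$. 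The point requiring attention is that the new cells of $K'$ must again be labelled so as to be controlled by a single entourage of $G_{can,min}\otimes S_{min,max}$; here one uses that the monodromy of the $G$-covering $P\times S\to P\times_G S$ lies in a fixed conjugate of a point-stabilizer of the $G$-action on $S$ (roughly because $\pi_0(P\times S)\cong S$ as $G$-sets), which makes a consistent choice of $S$-labels on the lifted cells possible. Full faithfulness then follows because, once the complexes are divided by $G$, the condition of being controlled by \emph{some} entourage becomes vacuous — bounded subsets of $G_{can,min}$ are finite — so controlled cellular $G$-maps correspond to arbitrary cellular maps over $P\times_G S$.

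Finally I would deal with naturality in $S\in\orb(G)$. A $G$-map $S\to S'$ acts on the left-hand functor only by post-composing labels, and on the right-hand functor by the strictified pushforward of \cref{rem:ret-functor} along $P\times_G S\to P\times_G S'$; the functors $\Phi_S$ intertwine these up to a canonical isomorphism but not strictly, because forming the wedge over $P\times S$ and the quotient involve point-set choices. To obtain a genuine zig-zag of natural transformations between functors $\orb(G)\to\Waldone$, I would interpose the homotopical Waldhausen category whose objects over $S$ are triples consisting of an object of $\ret{G}{G_{can,min}\otimes S_{min,max}}{P}{\lf}$, an object of $\retuc{}{P\times_G S}{\f}$, and an isomorphism of their underlying relative CW-complexes over $P\times_G S$, with the evident cofibrations and weak equivalences; the two projections are strictly natural in $S$ and are objectwise equivalences by the previous two paragraphs.

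I expect the main obstacle to be the middle step: translating the coarse finiteness and control conditions over $G_{can,min}\otimes S_{min,max}$ exactly into finiteness over the covering space $P\times_G S$, and in particular arranging the $S$-labels on cells so that the approximation property holds — together with the bookkeeping needed to make every construction functorial in $S$, which is what forces the zig-zag rather than a single comparison functor.
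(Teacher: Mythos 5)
Your construction is, at its core, the same as the paper's: decompose $K$ over the coarse components $\pi_0(G_{can,min}\otimes S_{min,max})\cong S$, regroup the summands into a complex over $P\times S$, identify local finiteness with finiteness of $(\cells K)/G$, and pass to the quotient by the free $G$-action to land in $\retuc{}{P\times_G S}{\f}$. The difference is organizational. The paper factors the comparison as an explicit zig-zag $F_c \xleftarrow{\sim} \ret{G}{G_{can,min}}{P\times (-)}{\lf} \xrightarrow{\sim} \retuc{G}{P\times (-)}{\f} \xrightarrow{\sim} F_u$, where each arrow is an equivalence of categories with an explicitly constructed (quasi-)inverse (the middle one by choosing orbit representatives of cells and labelling them by $1\in G$, the last by Waldhausen's Lemma~2.1.3); this makes strict naturality in $S$ easy to check arrow by arrow and avoids any appeal to \cref{thm:approximation}. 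You instead compose everything into a single functor $\Phi_S$ and then reintroduce a zig-zag through a category of triples to repair strict naturality. Both work, but note that your appeal to the Approximation theorem (and to monodromy considerations for relabelling) is heavier than necessary: since objects are locally finite, every equivariant cellular map between them meets only finitely many $G$-orbits of cells of the target over each orbit of the source, so control by a single entourage is automatic; hence $\Phi_S$ is already fully faithful and essentially surjective on the nose, exactly as the paper's explicit inverses show. The one place where your version requires genuine additional care is the triples category: making the transported isomorphisms $\Phi_{S'}(f_*K)\cong f_*\Phi_S(K)$ strictly compatible with composition in $\orb(G)$ is the same strictification bookkeeping the paper disposes of via \cref{rem:ret-functor}, and you should carry it out rather than only flag it.
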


The proof of \cref{prop:org-spectra-cat} relies on the following construction which we will reuse later on.

\begin{ddd}\label{def:extraordinary-point}
 Let $I$ be a set. Define the topological space $I^+$ to have underlying set $I \sqcup \{+\}$,
 and define a non-empty subset of $I^+$ to be closed if and only if it contains the distinguished point $+$.
\end{ddd}

Let $X$ be a $G$-bornological coarse space and consider an object $(K,\lambda,r)$ of $\ret{G}{X}{P}{\lf}$.
 {Let $\pi_{0}(X)$ denote the $G$-set of coarse components of $X$.}
Due to the control conditions imposed on $(K,\lambda)$, we observe that for every coarse component $X_0$ of $X$
the set $\lambda^{-1}(X_0)$ is the set of relative open cells of a subcomplex of $K$.
Hence, $\lambda$ induces a continuous map
\[ \lambda_0 \colon K \to \pi_0(X)^+, \]
(which sends $P$ to $\{+\}$)
and we have a canonical identification (the wedge sum indicates the coproduct of CW-complexes relative $P$)
\[ K \cong \bigvee_{ X_{0}\in \pi_0(X)} \lambda_0^{-1}(\{ X_{0},+\}). \] 

\begin{proof}[Proof of \cref{prop:org-spectra-cat}]
 {Let us abbreviate
 \[ F_c := \ret{G}{G_{can,min} \otimes (-)_{min,max}}{P}{\lf} \colon \orb(G) \to \Waldone \]
 and
 \[ F_u := \retuc{}{P \times_G -}{\f} \colon \orb(G) \to \Waldone. \]
We furthermore define the functor $F_c' \colon \orb(G) \to \Waldone$ by
 \[ S \mapsto \ret{G}{G_{can,min}}{P \times S}{\lf}, \]
 where we use that $\ret{G}{G_{can,min}}{-}{\lf}$ is a functor on topological spaces with a $G$-action in the same way as explained in \cref{rem:ret-functor}.}

 Let $S \in \orb(G)$, and let $(K,r,\lambda)$ be an object in $F_c'(S)$. The composed map $K \xrightarrow{r} P \times S \xrightarrow{p} S$ induces a $G$-equivariant function $\lambda_S \colon \cells K \to S$. By taking the pushout
 \[\xymatrix{
 	P \times S\ar[r]\ar[d]_{p} & K\ar[d] \\
 	P\ar[r] & p_*K
 }\]
 of $K$ along the projection map, we obtain a retractive space over $P$. Define a labeling $p_*\lambda$ on $p_*K$ by setting
 \[ p_*\lambda \colon \cells p_*K \cong \cells K \to G \times S,\quad e \mapsto (\lambda(e),\lambda_S(e)). \]
 It is easy to check that $(p_*K,p_*r,p_*\lambda)$ is a controlled complex over $G_{can,min} \otimes S_{min,max}$.
 Since $(K,r,\lambda)$ is locally finite over $G_{can,min}$, there exist only finitely many $G$-cells in $K$. So $(p_*K,p_*r,p_*\lambda)$ is also locally finite over $G_{can,min} \otimes S_{min,max}$.
 It is straightforward to check that this construction extends to a functor $F_c'(S) \to F_c(S)$.

 Conversely, let $(K,r,\lambda)$ be an object in $F_c(S)$.
 Since $\pi_0(G_{can,min} \otimes S_{min,max}) \cong S$,
 we have seen that the complex $K$ canonically decomposes as a coproduct (relative $P$)
 \[ K \cong \bigvee_{s \in S} K_s. \]
 Set $\widetilde{K} := \bigsqcup_{s \in S} K_s$.
 Since each summand $K_s$ is a retractive space over $P$, $\widetilde{K}$ is canonically a retractive space over $P \times S$.
 Moreover, there is an induced labeling
 \[ \widetilde{\lambda} \colon \cells \widetilde{K} \cong \cells K \xrightarrow{\lambda} G \times S \to G. \]
 Since $\{1\} \times S$ is bounded in $G_{can,min} \otimes S_{min,max}$, the complex $K$ contains only finitely many $G$-cells.
 Hence, $\widetilde{K}$ together with the labeling $\widetilde{\lambda}$ is an object in $F_c'(S)$.
 It is again straightforward to check that this construction extends to a functor $F_c(S) \to F_c'(S)$.
 
 Moreover, the functors $F_c(S) \to F_c'(S)$ and $F_c'(S) \to F_c(S)$ are easily seen to be inverse to each other.
 One checks that the collection of functors $\{ F_c'(S) \to F_c(S) \}_{S \in \orb(G)}$ defines a natural transformation $F_c' \to F_c$,
 so that we have a natural equivalence $F_c' \xrightarrow{\sim} F_c$.
 
 Recall from \cite[Section~2.1]{Wald85} the equivariant version of the category of retractive spaces:
 The category of finite $G$-retractive spaces $\retuc{G}{P}{\f}$ has as objects (free) $G$-CW-complexes $K$ relative $P$ equipped with an equivariant retraction $r \colon K \to P$ such that $K$ arises from $P$ by attaching finitely many free $G$-cells.
 Morphisms in this category are $G$-equivariant and cellular maps over and under $P$.
 Cofibrations are morphisms isomorphic to subcomplex inclusions,
 and weak equivalences are those morphisms which, as maps relative $P$, are (equivariant) homotopy  equivalences.
 Similar to the explanation in \cref{rem:ret-functor}, we obtain a functor $F_u' := \retuc{G}{P \times -}{\f}$ from $\orb(G)$ to $\Waldone$.
 
 Since the underlying CW-complex of any object in $F_c'(S)$ contains only finitely many $G$-cells, forgetting labelings defines a natural transformation $F_c' \to F_u'$.
 In order to construct an inverse we consider an
    object $(K,r)$ of $F_u'(S)$. Since $G$ acts freely on $P$ all $G$-cells of $K$ are free.
    We then choose a base point in every $G$ orbit on $\cells K$ and define the equivariant labeling $\lambda:\cells K\to G$ such that its sends the chosen  base points to $1\in G$. Note that $(K,\lambda,r)$ belongs to $F_{c}^{\prime}(S)$ since $\cells K/G$ is finite.
          We define the inverse equivalence such that it sends $(K,r)$ to $(K,\lambda,r)$. 
    
 Finally, taking quotients by the $G$-actions induces a natural equivalence $F_u' \to F_u$, see \cite[Lemma~2.1.3]{Wald85}.
 {In sum, we obtain the desired zig-zag of natural equivalences
 \[ F_c \xleftarrow{\sim} F_c' \xrightarrow{\sim} F_u' \xrightarrow{\sim} F_u. \qedhere \]}
\end{proof}

By applying $\wK$, \cref{prop:org-spectra-cat} has the following immediate consequence.

\begin{kor}\label{prop:org-spectra}
 There is an equivalence of $\orb(G)$-spectra
 \[ \AX^G_P(G_{can,min} \otimes (-)_{min,max}) \simeq \bA_P(-). \]
\end{kor}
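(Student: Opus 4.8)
The plan is to post-compose the zig-zag of natural equivalences supplied by \cref{prop:org-spectra-cat} with the nonconnective algebraic $K$-theory functor $\wK \colon \Waldone \to \Sp$. There is essentially nothing more to it: all of the genuine work has already been carried out in the proof of \cref{prop:org-spectra-cat}, and the present statement is a purely formal consequence.

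Concretely, I would first unwind the two sides. By definition, coarse equivariant $A$-homology relative to $P$ is $\AX^G_P = \wK \circ \ret{G}{-}{P}{\lf}$, so the $\orb(G)$-spectrum on the left is the composite
\[ \orb(G) \xrightarrow{\ret{G}{G_{can,min} \otimes (-)_{min,max}}{P}{\lf}} \Waldone \xrightarrow{\wK} \Sp. \]
Likewise, $\bA_P$ is by definition $\wK \circ \retuc{}{P \times_G -}{\f}$. Hence both $\orb(G)$-spectra arise from functors $\orb(G) \to \Waldone$ by post-composition with the single functor $\wK$, and it suffices to compare those underlying functors to $\Waldone$.

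This comparison is exactly the content of \cref{prop:org-spectra-cat}, which provides a zig-zag
\[ \ret{G}{G_{can,min} \otimes (-)_{min,max}}{P}{\lf} = F_c \xleftarrow{\ \sim\ } F_c' \xrightarrow{\ \sim\ } F_u' \xrightarrow{\ \sim\ } F_u = \retuc{}{P \times_G -}{\f} \]
of natural transformations between functors $\orb(G) \to \Waldone$, each of which is objectwise an exact equivalence of homotopical Waldhausen categories. Applying $\wK$ objectwise yields a zig-zag of natural transformations of functors $\orb(G) \to \Sp$. The only point to verify is that $\wK$ carries each of these objectwise equivalences to an equivalence of spectra; this is immediate, since an exact equivalence of homotopical Waldhausen categories induces an equivalence of the associated right-exact $\infty$-categories under $\ell$, and hence an equivalence after applying $\stab$, $\uloc$ and $\bK$, i.e.\ after applying $\wuloc$ and therefore $\wK$. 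This produces the asserted zig-zag of equivalences of $\orb(G)$-spectra. I do not expect any real obstacle here; the entire difficulty of the comparison is concentrated in \cref{prop:org-spectra-cat}, and the present statement follows from it by functoriality of $\wK$ together with the fact that $\wK$ inverts exact equivalences.
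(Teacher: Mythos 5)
Your proposal is correct and is exactly the paper's argument: the corollary is obtained by post-composing the zig-zag of natural equivalences from \cref{prop:org-spectra-cat} with $\wK$, which inverts exact equivalences. Nothing further is needed.
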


\subsection{Transfers}
In the next step, we show that coarse $A$-homology admits a certain amount of contravariant functoriality.
To describe this additional functoriality, we use the $\infty$-category $G\BC_{\tr}$ introduced in \cite[Section~2.2]{coarsetrans},
which comes equipped with a functor $\iota \colon G\BC \to G\BC_{\tr}$.

\begin{ddd}[{\cite[Definition~2.53]{coarsetrans}}]\label{def:transfers}
 An equivariant coarse homology theory $E \colon G\BC \to \cC$ \emph{admits transfers}
 if there exists a functor
 \[ E_{\tr} \colon G\BC_{\tr} \to \cC \]
 such that $E \simeq E_{\tr} \circ \iota$.
\end{ddd}

 For our purposes, the precise definition of $G\BC_{\tr}$ is not relevant. Instead, we will rely on an explicit criterion to describe functors from $G\BC_{\tr}$ to the nerve of a strict $2$-category.

\begin{ddd}[{\cite[Definition~2.14]{coarsetrans}}]
 Let $W$ and $X$ be $G$-bornological coarse spaces.
 A \emph{bounded covering} $w \colon W \to X$ is a map of $G$-sets satisfying the following properties:
 \begin{enumerate}
  \item $w$ is controlled and bornological.
  \item The coarse structure on $W$ coincides with the coarse structure
   \[ \{ U \cap \bigcup_{W_{0} \in \pi_0(W)} (W_{0} \times W_{0}) \mid U \in w^{-1}\cU_X \}. \]
  \item For every bounded subset $B$ of $W$, there exists a pairwise coarsely disjoint partition $B = \bigcup_\alpha B_\alpha$ such that $w|_{B_\alpha} \colon [B_\alpha] \to [w(B_\alpha)]$ is an isomorphism of coarse spaces, where $[-]$ denotes the coarse closure. \qedhere
 \end{enumerate}
\end{ddd}

\begin{ddd}[{\cite[Definition~2.19]{coarsetrans}}]
 Let $W$, $U$, $X$ and $Y$ be $G$-bornological coarse spaces.
 An \emph{admissible square}
 \[\xymatrix{
  W\ar[r]^-{f}\ar[d]_-{w} & U\ar[d]^-{u} \\
  X\ar[r]^-{g} & Y
 }\]
 is a pullback square in $G\mathbf{Coarse}$ in which both $f$ and $g$ are proper and bornological\footnote{A bornological map sends bounded subsets to bounded subsets. In contrast, for a proper map preimages of bounded subsets are bounded.}, and $u$ is a bounded covering.
\end{ddd}

Let $\cC$ be a $(2,1)$-category.
Then $\cC$ gives rise to an $\infty$-category by applying the ordinary nerve to each morphism category, yielding a fibrant simplicial category $N_*\cC$, and then taking the homotopy coherent nerve to obtain
\[ \fN(\cC) := \hcnerve(N_*\cC). \]
According to \cite[Lemma~3.1]{coarsetrans}, the following data determine a functor from $G\BC_{\tr} \to \fN(\cC)$:
\begin{enumerate}
 \item a functor $F \colon G\BC \to \cC$, where we consider $\cC$ as a $1$-category by forgetting all $2$-morphisms;
 \item for every bounded covering $w \colon W \to X$ a morphism $w^* \colon F(X) \to F(W)$;
 \item for every two composable bounded coverings $v \colon V \to W$ and $w \colon W \to X$ a $2$-morphism $a_{v,w} \colon (w \circ v)^* \Rightarrow v^* \circ w^*$;
 \item for every admissible square
  \[\xymatrix{
   V\ar[r]^{d}\ar[d]_{v} & W\ar[d]^{w} \\
   X\ar[r]^{f} & Y
  }\]
 a $2$-morphism $b_{f,w} \colon d_* \circ v^* \Rightarrow w^* \circ d_*$, where we write $d_*$ and $f_*$ for $F(d)$ and $F(f)$, respectively.
\end{enumerate}
Moreover, these data have to satisfy the following compatibility conditions:
\begin{enumerate}
 \item if a bounded covering $w \colon W \to X$ is an isomorphism on the underlying $G$-coarse spaces, then $w^* = w^{-1}_*$ (note that $w^{-1}$ is a morphism of $G$-bornological coarse spaces);
 \item if two composable bounded coverings $v \colon V \to w$ and $w \colon W \to X$ are both isomorphisms on the underlying $G$-coarse spaces,
  then $a_{v,w}$ is the identity of $(wv)^{-1}_* = v^{-1}_* \circ w^{-1}_*$;
 \item for every three composable bounded coverings $u \colon U \to V$, $v \colon V \to W$ and $w \colon W \to X$, we have
  \[ (u^* \circ a_{v,w})a_{u,wv} = (a_{u,v} \circ w^*)a_{vu,w}; \]
 \item for every admissible square
  \[\xymatrix{
   V\ar[r]^{d}\ar[d]_{v} & W\ar[d]^{w} \\
   X\ar[r]^{f} & Y
  }\]
  in which $w$ (and hence also $v$) is an isomorphism of $G$-coarse spaces, $b_{f,w}$ is the identity of $d_* \circ v^{-1}_* = w^{-1}_* \circ f_*$;
 \item for every admissible square
  \[\xymatrix{
   V\ar[r]^{d}\ar[d]_{v} & W\ar[d]^{w} \\
   X\ar[r]^{f} & Y
  }\]
 in which $d$ and $f$ are identity morphisms, $b_{f,w}$ is the identity of $v^* = w^*$;
 \item for every diagram
  \[\xymatrix{
   U\ar[r]^{d}\ar[d]^{u} & V\ar[r]^{e}\ar[d]^{v} & W\ar[d]^{w} \\
   X\ar[r]^{f} & Y\ar[r]^{g} & Z
  }\]
  in which both squares are admissible we have
  \[ b_{gf,w} = (b_{g,w} \circ f_*)(e_* \circ b_{f,v});\]
 \item for every diagram
  \[\xymatrix{
   T\ar[r]^{f}\ar[d]_{t} & U\ar[d]^{u} \\
   V\ar[r]^{g}\ar[d]_{v} & W\ar[d]^{w} \\
   X\ar[r]^{h} & Y
  }\]
  in which both squares are admissible we have
  \[ (a_{w,u} \circ h_*)b_{h,wu} = (u^* \circ b_{h,w})(b_{g,v} \circ v^*)(f_* \circ a_{t,v}).\]
\end{enumerate}
Extend the category $\Waldone$ to a $(2,1)$-category $\Wald$ whose underlying $1$-category is $\Waldone$,
but whose $2$-morphisms are natural isomorphisms of functors.
Our goal is to use the above criterion to extend $\ret{G}{-}{P}{\lf}$ to a functor $G\BC_{\tr} \to \fN(\Wald)$.

Let $w \colon W \to X$ be a bounded covering.
The idea for the construction of the transfer is rather simple:
As we have seen, any object $(K,\lambda,r)$ in $\ret{G}{X}{P}{\lf}$ decomposes as a coproduct indexed over the coarse components of $X$.
Since the restriction of $w$ to a coarse  
component $W_{0}$ is an isomorphism of coarse spaces onto the image $w(W_{0})$ which sends bounded subsets to bounded subsets,
the object $w^*(K,\lambda,r)$ is obtained by taking for each coarse component $W_{0}$ of $W$
a copy of the summand of $K$ indexed by $w(W_{0})$ and equipping it with the obvious labelling induced by $\lambda$. 
One easily checks that the local finiteness condition is preserved.
In order to verify that this transfer construction is sufficiently coherent to define a functor $G\BC_{\tr} \to \fN(\Wald)$, we express the transfer in terms of universal constructions.

Let $(K,\lambda,r)$ be an object in $\ret{G}{X}{P}{\lf}$.
We may form the pullback
\begin{equation}\label{eq:transfer}
 \xymatrix{
  w^*K\ar[r]\ar[d] & K\ar[d]^-{\lambda_0} \\
  \pi_0(W)^+\ar[r]^-{\pi_0(w)^+} & \pi_0(X)^+
 }
\end{equation}
It is straightforward to check that the tensor product from \cref{def:tensor} can be used to construct $w^*K$ explicitly via the formula
\[ \bigvee_{X_{0} \in \pi_0(X)} \lambda_0^{-1}(  \{X_{0},+ \}) \otimes \pi_0(w)^{-1}(X_{0}). \]
In particular, $w^*K$ is a $G$-CW-complex relative $P$ and inherits the retraction
\[ w^*r \colon w^*K \to K \to P. \]
Note that the set of relative open cells $\cells(w^*K)$ fits into a pullback diagram
\[\xymatrix{
 \cells(w^*K)\ar[r]^-{p}\ar[d]^-{q} & \cells K \ar[d]^{\lambda} \\
 \pi_0(W)\ar[r]^-{\pi_0(w)} & \pi_0(X)
}\]
of $G$-sets.
Since the restriction of $w$ to each coarse component of $W$ is an isomorphism of coarse spaces,
it makes sense to define the labeling
\[ w^*\lambda \colon \cells(w^*K) \to W,\quad e \mapsto  w|_{q(e)}^{-1}(\lambda(p(e))). \]
Note that the map $w^*K \to \pi_0(W)^+$ in \eqref{eq:transfer} is given by $(w^*\lambda)_0$.
Define
\[ w^*(K,\lambda,r) := (w^*K,w^*\lambda,w^*r). \]
By the universal property of the pullback, we obtain an exact functor
\[ w^* \colon \ret{G}{X}{P}{\lf} \to \ret{G}{W}{P}{\lf}. \]
If $w$ is an isomorphism on the underlying $G$-coarse spaces, we can choose $w^*K := K$,
we let the top horizontal map in \eqref{eq:transfer} be the identity on $K$,
and we let the left vertical map in \eqref{eq:transfer} be given by $\pi_0(w^{-1})^+ \circ \lambda_0$.
It follows that $w^* = w^{-1}_*$ in this case.

Let now $v \colon V \to W$ and $w \colon W \to X$ be two bounded coverings.
Then the universal property of the pullback produces for every $(K,\lambda,r)$ in $\ret{G}{X}{P}{\lf}$ a natural isomorphism
\[ a_{v,w} \colon (w \circ v)^*(K,\lambda,r) \to (v^* \circ w^*)(K,\lambda,r) \]
which can be chosen to be the identity transformation if both $v$ and $w$ are isomorphisms on the underlying $G$-coarse spaces.
Note that, again by the universal property of the pullback, the relation
\[ (u^* \circ a_{v,w})a_{u,wv} = (a_{u,v} \circ w^*)a_{vu,w} \]
holds for every three composable bounded coverings $u \colon U \to V$, $v \colon V \to W$ and $w \colon W \to X$.

Suppose that
\[\xymatrix{
 V\ar[r]^{d}\ar[d]_{v} & W\ar[d]^{w} \\
 X\ar[r]^{f} & Y
}\]
is an admissible square.

\begin{lem}\label{lem:admissible-components}
 The induced square
 \[\xymatrix{
  \pi_0(V)\ar[r]^{\pi_0(d)}\ar[d]_{\pi_0(v)} & \pi_0(W)\ar[d]^{\pi_0(w)} \\
  \pi_0(X)\ar[r]^{\pi_0(f)} & \pi_0(Y)
 }\]
 is a pullback of $G$-sets.
\end{lem}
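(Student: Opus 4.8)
The plan is to show that the canonical comparison map $\Phi \colon \pi_0(V) \to \pi_0(X) \times_{\pi_0(Y)} \pi_0(W)$, sending $[a]$ to $(\pi_0(v)[a], \pi_0(d)[a])$, is a bijection; since every construction involved is $G$-equivariant, this already yields the $G$-equivariant statement. I would rely on two inputs. First, the underlying $G$-set of the pullback $V$ in $G\mathbf{Coarse}$ is the set-theoretic pullback $X \times_Y W$, equipped with the coarsest coarse structure making $v$ and $d$ controlled; concretely, a subset of $V \times V$ is an entourage precisely if it is contained in $(v \times v)^{-1}(U_X) \cap (d \times d)^{-1}(U_W)$ for entourages $U_X$ of $X$ and $U_W$ of $W$. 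Second, recall from the definition of a bounded covering (\cite[Definition~2.14]{coarsetrans}, applied to singletons; see also the discussion preceding this lemma) that $w$ restricts to an isomorphism of coarse spaces from each coarse component $W_0 \in \pi_0(W)$ onto a coarse component of $Y$, which I denote $\pi_0(w)(W_0)$; in particular $w|_{W_0}$ is a bijection onto $\pi_0(w)(W_0)$.

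Using the second input, I would decompose $V = \bigsqcup_{W_0 \in \pi_0(W)} (X \times_Y W_0)$. Fix $W_0$ and set $Y_0 := \pi_0(w)(W_0)$. Since $w|_{W_0} \colon W_0 \to Y_0$ is a bijection, the projection $v$ identifies the set $X \times_Y W_0$ with $f^{-1}(Y_0) \subseteq X$ via $(x,e) \mapsto x$, the point $e$ being forced to equal $(w|_{W_0})^{-1}(f(x))$. I claim this is an isomorphism of coarse spaces onto the coarse subspace $f^{-1}(Y_0)$ of $X$: the only thing to check is that whenever the $v$-image of a subset $E \subseteq (X \times_Y W_0)^2$ is an entourage of $X$, its $d$-image is automatically an entourage of $W_0$, and this holds because $d$ corresponds under the above bijection to $(w|_{W_0})^{-1} \circ f$, $f$ is controlled, and $w|_{W_0}$ is a coarse isomorphism. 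Moreover, since $d$ carries entourages of $V$ into entourages of $W$, hence into individual coarse components of $W$, the coarse subspaces $X \times_Y W_0$ lie in pairwise distinct coarse components of $V$. Consequently $\pi_0(V) \cong \bigsqcup_{W_0 \in \pi_0(W)} \pi_0\big(f^{-1}(\pi_0(w)(W_0))\big)$.

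To conclude, I would observe that since $f$ is controlled, $f^{-1}(Y_0)$ is exactly the union of those coarse components $X_0$ of $X$ with $\pi_0(f)(X_0) = Y_0$, so $\pi_0(f^{-1}(Y_0)) = \{\, X_0 \in \pi_0(X) \mid \pi_0(f)(X_0) = Y_0 \,\}$. Plugging this in gives $\pi_0(V) \cong \{\, (X_0, W_0) \in \pi_0(X) \times \pi_0(W) \mid \pi_0(f)(X_0) = \pi_0(w)(W_0) \,\} = \pi_0(X) \times_{\pi_0(Y)} \pi_0(W)$, and tracing through the identifications shows this bijection is precisely $\Phi$.

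The crucial point — and the only place where admissibility genuinely enters — is the structural fact that a bounded covering restricts to an isomorphism of coarse spaces on each coarse component, onto a coarse component of the target; I expect the main effort to go into citing or re-deriving this cleanly from \cite[Definition~2.14]{coarsetrans}. Everything else is bookkeeping with the pullback coarse structure. Alternatively, one could argue more pedestrianly, checking injectivity of $\Phi$ directly from the entourage description above and surjectivity by lifting a point of $Y$ that is close to some $w(e)$ through the coarse component of $e$; but the decomposition argument seems cleanest.
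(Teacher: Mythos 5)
Your argument is correct, and it reaches the conclusion by a genuinely different (more global) route than the paper. The paper works pointwise with the comparison map $c\colon \pi_0(V)\to\pi_0(X)\times_{\pi_0(Y)}\pi_0(W)$: surjectivity is proved by picking $x\in X_0$, lifting $f(x)$ against $w$ into the prescribed component $W_0$, and using $V=X\times_Y W$ to produce a point of $V$ over the resulting pair; injectivity is proved by lifting a common point of $\pi_0(v)(V_0)=\pi_0(v)(V_1)$ uniquely into $V_0$ and $V_1$ and comparing the images under $d$ via injectivity of $w$ on coarse components. This is essentially the ``pedestrian alternative'' you sketch in your last sentence. Your primary argument instead decomposes $V=\bigsqcup_{W_0\in\pi_0(W)}(X\times_Y W_0)$ and identifies each piece coarsely with $f^{-1}(\pi_0(w)(W_0))\subseteq X$; this buys an explicit description of the coarse components of $V$ themselves (not just of their indexing set), at the cost of having to check that the pieces are coarsely saturated and that the identification is a coarse isomorphism --- both of which you do. Both proofs rest on exactly the input you isolate: a bounded covering restricts, on each coarse component, to a bijection onto a \emph{full} coarse component of the target; note that surjectivity onto the whole component is genuinely needed in your argument (it is what makes $X\times_Y W_0\to f^{-1}(Y_0)$ onto), and the paper invokes the same fact in the same unproved form (``since $w$ is a bounded covering\dots''), so you are not missing anything it supplies. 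One terminological nit: the pullback coarse structure you describe --- entourages are the subsets of $(v\times v)^{-1}(U_X)\cap(d\times d)^{-1}(U_W)$ --- is the \emph{largest} coarse structure making $v$ and $d$ controlled, not the smallest; the concrete description itself is the correct one and is all you actually use.
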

\begin{proof}
 Consider the induced function $c \colon \pi_{0}(V) \to \pi_0(X) \times_{\pi_0(Y)} \pi_0(W)$.
 
 Let $(X_0,W_0) \in \pi_0(X) \times_{\pi_0(Y)} \pi_0(W)$ and let $x \in X_0$.
 Since $w$ is a bounded covering, there exists some $\widetilde y \in W_0$ such that $w(\widetilde y) = f(x)$.
 As $V$ is the pullback $X \times_Y W$, this shows that there exists a point $\widetilde x \in V$ such that $v(\widetilde x) = x$.
 Let $V_0$ be the coarse component of $\widetilde x$.
 Then $v(V_0) = X_0$ and $\pi_0(d)(V_0) = W_0$, so $c$ is surjective.
 
 Let $V_0, V_1 \in \pi_0(V)$ such that $c(V_0) = c(V_1)$.
 Pick $x \in \pi_0(v)(V_0) = \pi_0(v)(V_1)$.
 Since $v$ is a bounded covering, there exist uniquely determined points $\widetilde x_0 \in V_0$ and $\widetilde x_1 \in V_1$ such that $v(\widetilde x_0) = x = v(\widetilde x_1)$.
 Then $d(\widetilde x_0)$ and $d(\widetilde x_1)$ lie in the same coarse component of $W$ and map to the same point under $w$.
 Since $w$ is a bounded covering, it follows that $d(\widetilde x_0) = d(\widetilde x_1)$, and therefore $\widetilde x_0 = \widetilde x_1$ because $V \cong X \times_Y W$.
 Hence, $c$ is injective.
\end{proof}

Using \cref{lem:admissible-components}, both squares in the commutative diagram
\[\xymatrix{
 v^*K\ar[r]\ar[d] & \pi_0(V)^+\ar[d]_-{\pi_0(v)^+}\ar[r]^-{\pi_0(d)^+} & \pi_0(W)^+\ar[d]^-{\pi_0(w)^+} \\
 K\ar[r]^-{\lambda_0} & \pi_0(X)^+\ar[r]^-{\pi_0(f)^+} & \pi_0(Y)^+
}\]
are pullbacks.
Observing that the composition $v^*K \to \pi_0(V)^+ \xrightarrow{\pi_0(d)^+} \pi_0(W)^+$ equals $(d \circ v^*\lambda)_0$ and that $\pi_0(f)^+ \circ \lambda_0 = (f \circ \lambda)_0$,
we see that the large outer pullback square defines $w^*(f_*K)$.
Hence, the universal property of the pullback yields a natural isomorphism
\[ b_{f,w} \colon (d_* \circ v^*)(K,\lambda,r) \to (w^* \circ f_*)(K,\lambda,r). \]
If $v$ and $w$ are isomorphisms on the underlying $G$-coarse spaces, we can arrange that $b_{f,w}$ is the identity transformation.
Also, if $d$ and $f$ are identity morphisms, we may pick $b_{f,w}$ to be the identity transformation.

Let now
\[\xymatrix{
 U\ar[r]^{d}\ar[d]^{u} & V\ar[r]^{e}\ar[d]^{v} & W\ar[d]^{w} \\
 X\ar[r]^{f} & Y\ar[r]^{g} & Z
}\]
be a commutative diagram in which both squares are admissible.
By \cite[Lemma~2.20 and~2.21]{coarsetrans}, the outer square is also admissible.
For every object $(K,\lambda,r)$ in $\ret{G}{X}{P}{\lf}$, we obtain a commutative diagram
\[\xymatrix{
 v^*K\ar[r]^-{(v^*\lambda)_0}\ar[d] & \pi_0(U)^+\ar[d]^-{\pi_0(u)^+}\ar[r]^-{\pi_0(d)^+} & \pi_0(V)^+\ar[d]^-{\pi_0(v)^+}\ar[r]^-{\pi_0(e)^+} & \pi_0(W)^+\ar[d]^-{\pi_0(w)^+} \\
 K\ar[r]^-{\lambda_0} & \pi_0(X)^+\ar[r]^-{\pi_0(f)^+} & \pi_0(Y)^+\ar[r]^{\pi_0(g)^+} & \pi_0(Z)^+
}\]
Since $\pi_0(f)^+ \circ \lambda_0 = (f \circ \lambda)_0$, $\pi_0(g)^+ \circ \pi_0(f)^+ \circ \lambda_0 = (g \circ f \circ \lambda)_0$,
$\pi_0(d)^+ \circ (v^*\lambda)_0 = (d \circ v^*\lambda)_0$ and $\pi_0(e)^+ \circ \pi_0(d)^+ \circ (v^*\lambda)_0 = (e \circ d \circ v^*\lambda)_0$,
we observe that the transformations $b_{gf,w}$ and $(b_{g,w} \circ f_*)(e_* \circ b_{f,v})$ are both induced by the identity maps on each of the entries in the above diagram.
Consequently, they must be equal by the universal property of the pullback.

Similarly, the two possible natural isomorphisms
\[ (u^* \circ w^* \circ h_*)(K,\lambda,r) \xrightarrow{\cong} (f_* \circ t^* \circ v^*)(K,\lambda,r) \]
arise from the different ways to compose the squares in the diagram
\[\xymatrix{
 t^*v^*K\ar[r]\ar[d] & \pi_0(T)^+\ar[r]^-{\pi_0(f)^+}\ar[d]^-{\pi_0(t)^+} & \pi_0(U)^+\ar[d]^-{\pi_0(u)^+} \\
 v^*K\ar[r]\ar[d] & \pi_0(V)^+\ar[r]^-{\pi_0(g)^+}\ar[d]^-{\pi_0(v)^+} & \pi_0(W)^+\ar[d]^-{\pi_0(w)^+} \\
 K\ar[r]^{\lambda_0} & \pi_0(X)^+\ar[r]^-{\pi_0(h)^+} & \pi_0(Y)^+
}\]
Again, since all squares are pullbacks and both transformations are induced by the identity maps on all components, they must coincide.
The next proposition summarizes the discussion up to this point.

\begin{prop}\label{prop:ret-with-transfers}
 The above data determine a functor
 \[ \ret{G}{-}{P}{\lf}_{\tr} \colon G\BC_{\tr} \to \fN(\Wald) \]
 such that the diagram
 \[\xymatrix@C=4em{
  N(G\BC\ar[r]^-{\ret{G}{-}{P}{\lf}})\ar[d]^-{\iota} & N(\Waldone)\ar[d] \\
  G\BC_{\tr}\ar[r]^-{\ret{G}{-}{P}{\lf}_{tr}} & \fN(\Wald)
 }\]
 commutes.
\end{prop}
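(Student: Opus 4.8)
The plan is to apply the recognition criterion \cite[Lemma~3.1]{coarsetrans} recalled above: it suffices to specify a $1$-functor $F \colon G\BC \to \Wald$ (viewing $\Wald$ as the $(2,1)$-category whose $2$-morphisms are natural isomorphisms), transfer morphisms $w^*$ for every bounded covering, and coherence $2$-morphisms $a_{v,w}$ and $b_{f,w}$ subject to the seven compatibility conditions; the criterion then outputs a functor $G\BC_{\tr} \to \fN(\Wald)$ together with the asserted compatibility with $\iota$. We take $F = \ret{G}{-}{P}{\lf}$, we take $w^*$ to be the exact functor built from the pullback square \eqref{eq:transfer}, and we take $a_{v,w}$ and $b_{f,w}$ to be the natural isomorphisms obtained from the universal property of iterated pullbacks of $G$-sets and $G$-CW-complexes, exactly as constructed in the paragraphs preceding the statement.

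It then remains only to collect the verifications of conditions (1)--(7), each of which has already been carried out above. Conditions (1) and (2) record that the chosen models give $w^* = w^{-1}_*$ and make $a_{v,w}$ the identity when the relevant bounded coverings are isomorphisms of coarse spaces; condition (3) is the cocycle identity $(u^* \circ a_{v,w})a_{u,wv} = (a_{u,v} \circ w^*)a_{vu,w}$, which holds by uniqueness in the universal property of the triple pullback; conditions (4) and (5) record that $b_{f,w}$ is the identity when $w$ is a coarse isomorphism, respectively when $d$ and $f$ are identity maps; and conditions (6) and (7) are the two hexagon-type relations, both established by observing that the two natural transformations being compared are induced by the identity maps on all entries of a common commuting diagram of $\pi_0(-)^+$-spaces, hence coincide by the universal property of pullbacks. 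The commutativity of the square in the statement is then automatic, since by construction the functor $\ret{G}{-}{P}{\lf}_{\tr}$ produced by \cite[Lemma~3.1]{coarsetrans} restricts along $\iota$ to the nerve of $\ret{G}{-}{P}{\lf}$.

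The one point that is not merely formal — but which is already handled in \cref{rem:ret-functor} and in the construction of $w^*K$ above — is the strictification: to make the coherence $2$-morphisms literally equal to identities in the cases demanded by (1), (2), (4) and (5), and to make the composites in (3), (6) and (7) agree on the nose rather than only up to coherent isomorphism, one fixes point-set models for the pushout of \cref{def:tensor}, and hence for $w^*K$, for instance by taking its underlying set to be a subset of the ambient complex. I expect this bookkeeping to be the only genuine obstacle; everything else reduces to the universal property of pullbacks.
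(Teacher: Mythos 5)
Your proposal is correct and matches the paper's proof, which consists of the single line ``Apply \cite[Lemma~3.1]{coarsetrans}'' — all the verifications of the input data and compatibility conditions are, as you say, carried out in the paragraphs preceding the statement. Your added remark about strictifying the coherence isomorphisms via point-set choices of pullback/pushout models is a fair observation and consistent with how the paper arranges for $w^* = w^{-1}_*$ and the identity $2$-morphisms in the degenerate cases.
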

\begin{proof}
 Apply \cite[Lemma~3.1]{coarsetrans}.
\end{proof}

Let $\RelCat$ denote the $(2,1)$-category of relative categories, functors of relative categories and natural isomorphisms.
The forgetful functor $u \colon \Waldone \to \RelCatone$ from \cref{sec:right-exact} extends
to a functor $u_2 \colon \Wald \to \RelCat$.
Composing $\fN(u_2)$ with $\ret{G}{-}{P}{\lf}_{tr}$, we obtain a functor
\[ \fN(u_2) \circ \ret{G}{-}{P}{\lf}_{\tr} \colon G\BC_{\tr} \to \fN(\RelCat). \]
As described in \cite[Definition~A.23 and proof of Proposition~A.25]{GHN17}, the functor
\[ L \colon \RelCatone \to \sSet^+, \quad (\bC,\bW) \mapsto (N\bC, N\bW) \]
extends to a map
\[ L_2 \colon \fN(\RelCat) \to \hcnerve(\sSet^+), \]
and thus induces a map
\[  \ret{G}{-}{P}{\lf}_\ell' \colon G\BC_{tr} \to \hcnerve(\sSet^+) \]
of (large) simplicial sets.
Composing as in \cref{sec:right-exact} with a fibrant replacement functor $R \colon \sSet^+ \to \sSet^+$ yields a functor
\[ \ret{G}{-}{P}{\lf}_\ell \colon G\BC_{\tr} \to \hcnerve((\sSet^+)^\cf) = \catinf. \]
\cref{prop:ret-with-transfers} implies that we have
\begin{equation}\label{eq:transfers-compatible}
 \ret{G}{-}{P}{\lf}_\ell \circ \iota \simeq \ell \circ \ret{G}{-}{P}{\lf}.
\end{equation}
We can now prove that the $\orb(G)$-spectrum $\bA_P$ extends to a spectral Mackey functor for every finite group $G$.
\begin{proof}[Proof of \cref{thm:mackey-functor}]
 Due to \cref{prop:org-spectra} and \eqref{eq:transfers-compatible}, this is a direct consequence of \cite[Corollaries~4.7 \& 4.9]{coarsetrans}.
\end{proof}

\subsection{Strong additivity}

\begin{ddd}
 Let $(X_i,\cU_i,\cB_i)_{i \in I}$ be a family of $G$-bornological coarse spaces.
 The \emph{free union} $\bigsqcup_{i \in I}^\free X_i$ of this family is the following $G$-bornological coarse space:
 \begin{enumerate}
  \item The underlying $G$-set is the disjoint union $\bigsqcup_{i \in I} X_i$.
  \item The bornology is generated by those subsets $B$ for which $B \cap X_i \in \cB_i$ for all $i$
   and $B \cap X_i$ is non-empty for only finitely many $i$.
  \item The coarse structure is generated by the entourages $\bigsqcup_{i \in I} U_i$ {for all families of entourages $(U_i)_{i \in I}$ with $U_i \in \cU_i$.} \qedhere
 \end{enumerate}
\end{ddd}

Let $E$ be an equivariant coarse homology theory.
Let $(X_i)_{i \in I}$ be a family of $G$-bornological coarse spaces.
Since $(X_j,\bigsqcup_{j \neq i \in I} X_i)$ is a coarsely excisive pair,
we obtain by excision for every $j \in I$ a projection map $p_j \colon E(\bigsqcup_{i \in I} X_i) \to E(X_j)$.

\begin{ddd}\label{def:strongly-additive}
 Let $\cC$ be a cocomplete stable $\infty$-category which admits all products.
 An equivariant coarse homology theory $E \colon G\BC \to \cC$ is \emph{strongly additive} if for every family of $G$-bornological coarse spaces $(X_i)_{i \in I}$ the map
 \[ E(\bigsqcup_{i \in I}^\free X_i) \xrightarrow{(p_i)_i} \prod_{i \in I} E(X_i) \]
 is an equivalence.
\end{ddd}

\begin{rem}\label{rem:excision-transfers}
 Let $(X_i)_{i \in I}$ be a family of $G$-bornological coarse spaces.
 The inclusion $\inc_j \colon X_j \to \bigsqcup_{i \in I}^\free X_i$ of the $j$-th component is a bounded covering.
 If the equivariant coarse homology theory $E$ admits transfers,
 it follows formally that the projection map $p_j$ is equivalent to the transfer $\inc_j^*$ along $\inc_j$.
 See also \cite[Remark~2.62]{coarsetrans}.
\end{rem}

\begin{prop}\label{prop:a-homology-strongly-additive}
 The equivariant coarse homology theory $\AX^G_P$ is strongly additive.
\end{prop}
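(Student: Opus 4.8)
The plan is to deduce strong additivity of $\AX^G_P$ from the compatibility of nonconnective algebraic $K$-theory with infinite products (\cref{thm:products}), after identifying the homotopical Waldhausen category of controlled retractive spaces over a free union with the product of the categories over the individual summands. First I would analyse the category $\ret{G}{\bigsqcup_{i\in I}^\free X_i}{P}{\lf}$: the coarse structure of $\bigsqcup_{i\in I}^\free X_i$ consists entirely of block-diagonal entourages, i.e.\ entourages contained in $\bigsqcup_{i\in I}X_i\times X_i$, and its bornology consists of the subsets that are bounded in each $X_i$ and meet only finitely many of the $X_i$. Since every controlled $G$-CW-complex and every controlled cellular map is controlled by a single (hence block-diagonal) entourage, the decomposition of a controlled retractive space into summands indexed by coarse components (as used in the proof of \cref{prop:org-spectra-cat}) refines to a decomposition $K\cong\bigvee_{i\in I}K_i$, where $K_i$ is the subcomplex of $K$ on the cells whose label lies in $X_i$, and every controlled morphism respects this decomposition. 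Crucially, local finiteness of $K$ over $\bigsqcup_{i}^\free X_i$ is equivalent to local finiteness of each $K_i$ over $X_i$, with no further condition linking the different $K_i$. I would therefore define mutually inverse equivalences $(K,\lambda,r)\mapsto(K_i)_{i\in I}$ (with the induced labellings and retractions) and $(K_i)_{i\in I}\mapsto\bigvee_{i\in I}K_i$, and check that they are exact, preserve cofibrations, and respect the simplicial enrichments (using that $(-)\otimes|\Delta^n|$ preserves local finiteness and that a controlled map out of $\bigvee_{i}K_i$ splits as a family indexed by $I$, so that mapping complexes decompose as products over $I$). Since weak equivalences are detected on the mapping complexes and a map of products of Kan complexes is a weak equivalence iff each factor is, weak equivalences correspond on both sides, and one obtains an equivalence of homotopical Waldhausen categories $\ret{G}{\bigsqcup_{i\in I}^\free X_i}{P}{\lf}\simeq\prod_{i\in I}\ret{G}{X_i}{P}{\lf}$.

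Applying $\wK$ and \cref{thm:products} then yields a chain of equivalences
\[ \AX^G_P\big(\textstyle\bigsqcup_{i\in I}^\free X_i\big)\;\simeq\;\wK\!\Big(\prod_{i\in I}\ret{G}{X_i}{P}{\lf}\Big)\;\xrightarrow{\ \sim\ }\;\prod_{i\in I}\wK(\ret{G}{X_i}{P}{\lf})\;=\;\prod_{i\in I}\AX^G_P(X_i)\ , \]
and it remains to identify this composite with the map $(p_i)_{i\in I}$ of \cref{def:strongly-additive}. By \cref{prop:ret-with-transfers} the functor $\AX^G_P$ admits transfers in the sense of \cref{def:transfers}, so by \cref{rem:excision-transfers} the projection $p_j$ coincides with the transfer $\inc_j^*$ along the bounded covering $\inc_j\colon X_j\to\bigsqcup_{i}^\free X_i$. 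Tracing through the construction of the transfer, $\pi_0(\inc_j)^{-1}$ carries a coarse component of $X_j$ to a single point and is empty on the components of the other summands, so the pullback formula for $\inc_j^*K$ gives $\inc_j^*(\bigvee_{i}K_i)\cong K_j$; thus $\inc_j^*$ corresponds, under the equivalence of the previous paragraph, to the projection onto the $j$-th factor, up to a natural isomorphism (a $2$-morphism in $\Wald$). Hence $(p_i)_{i\in I}$ agrees up to equivalence with the canonical comparison map of \cref{thm:products}, and is therefore an equivalence.

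The main obstacle is the first paragraph, and within it the observation that local finiteness over the free union decouples into local finiteness over each summand separately — this is why the \emph{product} (rather than the coproduct) of Waldhausen categories appears. Once this is combined with the routine bookkeeping that the equivalence respects cofibrations, weak equivalences and the simplicial enrichment, the proposition follows formally from \cref{thm:products} and the transfer description of the projection maps.
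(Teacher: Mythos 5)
Your proposal is correct and follows essentially the same route as the paper: identify the projections $p_j$ with the transfers $\inc_j^*$ via \cref{rem:excision-transfers}, observe that these assemble to an exact equivalence $\ret{G}{\bigsqcup^\free_{i\in I}X_i}{P}{\lf}\simeq\prod_{i\in I}\ret{G}{X_i}{P}{\lf}$, and conclude with \cref{thm:products}. The paper dismisses the equivalence of Waldhausen categories with ``this follows directly from the definitions''; your first paragraph simply supplies the details (block-diagonal entourages, decoupling of local finiteness over the free union) that justify that claim.
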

\begin{proof}
 Let $(X_i)_{i \in I}$ be a family of $G$-bornological coarse spaces.
 Using \cref{rem:excision-transfers}, the comparison map factors as
 \begin{align*}
  \wK(\ret{G}{\bigsqcup^\free_{i \in I} X_i}{P}{\lf})
  \xrightarrow{(\inc_i^*)_i} \wK(\prod_{i \in I} \ret{G}{X_i}{P}{\lf})
  \to \prod_{i \in I} \wK(\ret{G}{X_i}{P}{\lf}).
 \end{align*}
 In view of \cref{thm:products}, it suffices to observe that the transfer functors $\inc_i^*$ induce an exact equivalence
 \[ \ret{G}{\bigsqcup^\free_{i \in I} X_i}{P}{\lf} \xrightarrow{\sim} \prod_{i \in I} \ret{G}{X_i}{P}{\lf}. \]
 This follows directly from the definitions.
\end{proof}

\subsection{Split injectivity results}\label{rgioergegegerge}
We conclude this section by summarizing the results of the previous subsections and explaining how the axiomatic framework of \cite{injectivity} applies to prove split injectivity results for the $A$-theoretic assembly map.

\begin{ddd}
 A functor $M \colon \orb(G) \to \Sp$ is a \emph{CP-functor} if there exists an equivariant coarse homology theory $E$ such that the following holds:
 \begin{enumerate}
  \item $M$ is equivalent to $E(G_{can,min} \otimes (-)_{min,max})$;
  \item $E$ is continuous;
  \item $E$ is strongly additive;  
  \item $E$ extends to an equivariant coarse homology theory with transfers.
 \end{enumerate}
 We call $M$ a \emph{hereditary CP-functor} if $M \circ \res_\phi$ is a CP-functor for every surjective homomorphism $\phi \colon G \to Q$,
 where $\res_\phi \colon \orb(Q) \to \orb(G)$ denotes the functor restricting group actions along $\phi$.
\end{ddd}

\begin{theorem}\label{thm:a-cp}
 For every discrete group $G$ and principal $G$-bundle $P$, the functor $\bA_P$ is a hereditary CP-functor.
\end{theorem}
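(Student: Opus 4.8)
The plan is to verify the four conditions in the definition of a (hereditary) CP-functor for the functor $\bA_P$, using the equivariant coarse homology theory $E := \AX^G_P = \wK \circ \ret{G}{-}{P}{\lf}$ constructed in \cref{sec:a-homology}. First I would record that $\bA_P \simeq E(G_{can,min} \otimes (-)_{min,max})$ as $\orb(G)$-spectra; this is exactly \cref{prop:org-spectra}, so condition~(1) is immediate. Condition~(2), that $E$ is continuous, is part of the statement of \cref{thm:a-homology}. Condition~(3), strong additivity of $E$, is \cref{prop:a-homology-strongly-additive}. Condition~(4), that $E$ extends to an equivariant coarse homology theory with transfers, follows by combining \cref{prop:ret-with-transfers} (which produces the functor $\ret{G}{-}{P}{\lf}_{\tr} \colon G\BC_{\tr} \to \fN(\Wald)$) with the compatibility \eqref{eq:transfers-compatible} and the fact that $\wK$ factors through $\catinf$; concretely, $E_{\tr} := \bK \circ \uloc \circ \stab \circ \ret{G}{-}{P}{\lf}_\ell \colon G\BC_{\tr} \to \Sp$ satisfies $E_{\tr} \circ \iota \simeq \wK \circ \ret{G}{-}{P}{\lf} = E$ by \eqref{eq:transfers-compatible} together with the definitions of $\wK$ and $\wuloc$. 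Hence $\bA_P$ is a CP-functor.

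For the hereditary statement, I would take a surjective homomorphism $\phi \colon G \to Q$ and show that $\bA_P \circ \res_\phi \colon \orb(Q) \to \Sp$ is again a CP-functor. The key observation is that $P$, viewed through $\phi$, is not a priori the total space of a principal $Q$-bundle, so one cannot directly appeal to \cref{thm:a-cp} for $Q$. Instead I would work with the $Q$-bornological coarse space $G_{can,min}$ equipped with the $Q$-action via $\phi$ (equivalently, $\res_\phi$ of the $G$-space $G_{can,min}$), and use the equivariant coarse homology theory $\AX^Q_P$ where now $Q$ acts on $P$ through $\phi$. The functor $S \mapsto \AX^Q_P((\res_\phi G_{can,min}) \otimes S_{min,max})$ on $\orb(Q)$ should be identified with $\bA_P \circ \res_\phi$; unwinding \cref{prop:org-spectra-cat} and \cref{prop:org-spectra} with $Q$ acting through $\phi$, an object of $\ret{Q}{(\res_\phi G_{can,min}) \otimes S_{min,max}}{P}{\lf}$ is a locally finite controlled retractive space over $P$ labelled by $G \times S$ with a $Q$-action, and since the $Q$-action on $G$ factors through $\phi$ while the cells are controlled by the minimal bornology (so only finitely many $G$-orbits of cells appear), one recovers exactly $\retuc{}{P \times_Q S}{\f}$ with $Q$ acting on $P$ via $\phi$ — which is the value of $\bA_P \circ \res_\phi$ at $S$. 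Then $\AX^Q_P$ is continuous and strongly additive by \cref{thm:a-homology} and \cref{prop:a-homology-strongly-additive} applied to the group $Q$, and admits transfers by \cref{prop:ret-with-transfers} applied to $Q$; hence $\bA_P \circ \res_\phi$ is a CP-functor.

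The main obstacle I anticipate is the bookkeeping in the hereditary step: one must be careful that ``$P$ as a $Q$-space via $\phi$'' interacts correctly with the construction $\ret{Q}{-}{P}{\lf}$ — in particular that the retractions are only required to be $Q$-equivariant (not free), which is fine because \cref{def:locally-finite} and the surrounding constructions in \cref{sec:controlled-retractive-spaces} impose no freeness or compactness hypotheses on the base $W = P$ — and that the identification with $\retuc{}{P \times_Q S}{\f}$ goes through even though $Q$ need not act freely on $P$. This is precisely the point where the non-equivariant version \cref{prop:org-spectra-cat} must be invoked rather than its equivariant refinement $F_u'$: the relevant intermediate functor is $S \mapsto \retuc{}{P \times_Q S}{\f}$ built from the (non-free) $Q$-space $P$, and the zig-zag $F_c \xleftarrow{\sim} F_c' \xrightarrow{\sim} \retuc{}{P \times_Q -}{\f}$ only uses that the controlled complexes over $G_{can,min}$ have finitely many $G$-cells, which remains true after restricting the $G$-action to $Q$ along $\phi$. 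Once this identification is in place, the remaining verifications are formal consequences of the results of Sections~\ref{sec:k-theory}--\ref{sec:injectivity} applied with $Q$ in place of $G$.
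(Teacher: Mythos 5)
Your verification that $\bA_P$ is a CP-functor is exactly the paper's argument: conditions (1)--(4) are supplied by \cref{prop:org-spectra}, \cref{thm:a-homology}, \cref{prop:a-homology-strongly-additive} and \eqref{eq:transfers-compatible}, respectively, and there is nothing to add there.

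The hereditary step, however, contains a genuine error: you have the direction of $\phi$ backwards. Since $\phi \colon G \to Q$ is a \emph{surjection}, restriction along $\phi$ turns $Q$-actions into $G$-actions; it does not produce $Q$-actions from $G$-actions. In particular there is no ``$Q$-action on $P$ through $\phi$'' and no $Q$-bornological coarse space ``$\res_\phi G_{can,min}$'': the left translation action of $\ker\phi$ on $G$ and the free action of $\ker\phi$ on $P$ are both nontrivial, so neither space descends to a $Q$-space without passing to a quotient. Consequently the categories $\ret{Q}{(\res_\phi G_{can,min})\otimes S_{min,max}}{P}{\lf}$ and $\retuc{}{P \times_Q S}{\f}$ that your argument compares are simply not defined. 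Moreover, even granting some $Q$-space structure, your claim that the identification of \cref{prop:org-spectra-cat} ``goes through even though $Q$ need not act freely'' is not tenable: the zig-zag there passes through the equivariant category $F_u'$ and uses freeness of the action on the base in an essential way, both to choose a basepoint in each (free) orbit of cells when constructing the labeling $\lambda$, and in the final quotient step $F_u' \to F_u$.

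The repair is a one-line observation, and it is what the paper does: for a $Q$-set $S$ the subgroup $\ker\phi$ acts trivially on $\res_\phi(S)$, so there is an isomorphism
\[ P \times_G \res_\phi(S) \;\cong\; (P/\ker\phi) \times_Q S \;=\; \ind_\phi(P) \times_Q S \]
natural in $S$, where $\ind_\phi(P) := P/\ker\phi$ is the total space of a principal $Q$-bundle because $G$ acts freely on $P$. Hence $\bA_P \circ \res_\phi \simeq \bA_{\ind_\phi(P)}$, and the hereditary property follows by applying the already-established CP-statement to the group $Q$ and the bundle $\ind_\phi(P)$; no new coarse homology theory and no non-free variant of \cref{prop:org-spectra-cat} are needed.
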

\begin{proof}
 $\bA_P$ is a CP-functor for every discrete group $G$ and every principal $G$-bundle $P$ by \cref{prop:org-spectra}, \cref{thm:a-homology}, \cref{prop:a-homology-strongly-additive} and \eqref{eq:transfers-compatible}.
 
 Let $\phi \colon G \to Q$ be a surjective homomorphism.
 Since there exists an isomorphism
 \[ P \times_G \res_\phi(S) \cong \ind_\phi(P) \times_Q S \]
 which is natural in $S$, we see that $\bA_P \circ \res_\phi \simeq \bA_{\ind_\phi(P)}$.
 Consequently, the previous paragraph implies that $\bA_P$ is a hereditary CP-functor.
\end{proof}

\begin{proof}[{Proof of \cref{thm:injectivity-linear,thm:split-injective-relative,thm:FDC}:}]
By \cref{thm:a-cp}, \cref{thm:injectivity-linear} follows from \cite[Corollary~2.11]{injectivity}, \cref{thm:split-injective-relative} is a consequence of \cite[Corollary~1.13]{injectivity}  {and \cref{thm:FDC} follows from \cite[Theorem~1.11]{injectivity}}.\end{proof}

\begin{proof}[Proof of \cref{thm:relhyp}]
	Let $\ccH$ be the smallest family of subgroups of $G$ that contains all virtually cyclic subgroups and all $H_i$. Repeating the proof of \cite[Theorem~4.4]{Bartels-relhyp} (invoking \cite[Theorem~6-14]{ELPUW18} instead of \cite[Theorem~4.3]{Bartels-relhyp}), the assembly map $\ass_{\bA_P}^\ccH$ is an equivalence. By the transitivity principle \cite[Proposition~11.2]{UW}, the relative assembly map $\ass_{\bA_P}^{\ccH\cap\FDC,\ccH}$ is an equivalence (here we have to use the assumptions on the groups $H_i$).
	By \cite[Theorem~1.11]{injectivity} and \cref{thm:a-cp}, the relative assembly map $\ass_{\bA_P}^{\Fin,\ccH\cap\FDC}$ admits a left inverse. The theorem now follows by combining these results.
\end{proof} 

\bibliographystyle{alpha}
\bibliography{coarseA}

\end{document}